\def\classification#1{\def\@class{#1}}
\newcommand{\Fq}{\mathbb{F}_q}
\newcommand{\Aut}{\mathrm{Aut}}
\newcommand{\PSL}{{\mathrm{PSL}}}
\newcommand{\PGL}{{\mathrm{PGL}}}
\newcommand{\PGammaL}{{\mathrm{P\Gamma L}}}
\newcommand{\GL}{{\mathrm{GL}}}
\newcommand{\veps}{{\varepsilon}}
\newcommand{\ord}{{\mathrm{ord}}}
\newcommand{\lcm}{{\mathrm{lcm}}}
\newcommand{\wt}{\widetilde}
\newcommand{\ovl}{\overline}
\newcommand{\hp}{$\phantom{|^|}$}
\DeclareFontFamily{OT1}{rsfs}{}
\DeclareFontShape{OT1}{rsfs}{n}{it}{<-> rsfs10}{}
\DeclareMathAlphabet{\mathscr}{OT1}{rsfs}{n}{it}
\newtheorem{prop}{Proposition}[section]
\newtheorem{thm}{Theorem}
\newtheorem{cor}[prop]{Corollary}
\newtheorem{lem}[prop]{Lemma}
\numberwithin{equation}{section}
\title{Non-orientable regular maps with negative prime-power Euler characteristic}
\author{Marston Conder}
\address{Department of Mathematics, University of Auckland, Private Bag 92019, Auckland\, New Zealand}
\email{m.conder@auckland.ac.nz}
\author{Nick Gill}
\address{School of Mathematics and Statistics, The Open University, Walton Hall, Milton Keynes, MK7 6AA, UK}
\email{nick.gill@open.ac.uk}
\author{Jozef {{\v{S}}ir{\'a}{\v{n}}}}
\address{Department of Mathematics, FCI, Slovak University of Technology, Bratislava, Slovakia,
and School of Mathematics and Statistics, The Open University, Milton Keynes, United Kingdom}
\email{jozef.siran@stuba.sk}
\begin{document}

\maketitle

\tableofcontents

\begin{abstract}
In this paper we provide a classification of all regular maps on surfaces of Euler characteristic $-r^d$ for some odd prime $r$ and integer $d\ge 1$.  Such maps are necessarily non-orientable, and the cases where $d = 1$ or $2$ have been dealt with previously.

This classification splits naturally into three parts, based on the nature of the
automorphism group $G$ of the map, and particularly the structure of its quotient $G/O(G)$ 
where $O(G)$ is the largest  normal subgroup of $G$ of odd order.  In fact $G/O(G)$ is isomorphic 
to either a $2$-group (in which case $G$ is soluble), or $\PSL(2,q)$ or $\PGL(2,q)$ where $q$ is 
an odd prime power.
The result is a collection of $18$ non-empty families of regular maps, with conditions on the associated parameters.
\end{abstract}


\section{Introduction}\label{sec:intro}
\medskip


A {\em map} is a cellular embedding of a connected graph on a 2-dimensional surface. 
A map is said to be {\em uniform} and of {\em type} $\{m,n\}$ if all its faces are bounded by 
closed walks of length $m$ and all its vertices have valency $n$. 
Except for degenerate cases not considered here, an {\em automorphism} of a map can be identified with a permutation of its mutually incident face-vertex-edge triples (flags) which preserves the underlying graph and all faces of the map. All such automorphisms form the {\em automorphism group} of a map under composition of permutations. As only the trivial automorphism fixes a given flag, the automorphism group of a map acts as 
a semi-regular permutation group on its flag set. 
Accordingly, the largest `level of symmetry' of a map arises when its automorphism group acts regularly on flags, and in that case, the map is called {\em regular}, and is necessarily uniform. 
Such maps with $m \le  2$ or $n \le 2$ are either degenerate or maps on the sphere or the real projective plane, 
and  henceforth we assume that $m,n\ge 3$. In what follows we sum up a few relevant facts concerning regular maps, referring to \cite{siran} for details.

The regularity of the automorphism group $G=\Aut(M)$ of a regular map $M$ of type $\{m,n\}$ on flags enables one to identify $M$ with a presentation of $G$ in terms of three particular generating involutions $a$, $b$ and $c$, in the form
\begin{equation}\label{eq:abc} G=\langle\, a,b,c\ |\ a^2,b^2,c^2,(ab)^m,(bc)^n,(ac)^2, \ldots \,\rangle,  \end{equation}
where the dots indicate the possibility of additional relators.  Without additional relators, the above presentation is one for the {\em full $(2,m,n)$-triangle group} $\Delta(2,m,n)$, and its index $2$ subgroup generated by 
$x = ab$ and $y = bc$ is the {\em ordinary $(2,m,n)$-triangle group} $\Delta^+(2,m,n)$, with definiing 
relations $x ^ m = y^n = (xy)^2 = 1$.

The  involutions $a$, $b$, $c$ generating $G$ are automorphisms taking a fixed flag $F=(f,v,e)$ formed by a mutually incident face-vertex-edge triple to the three neighbouring flags of $F$, so that the compositions $ab$, $bc$ and $ac$ act locally as an $m$-fold rotation of the map about a centre of the face $f$, an $n$-fold rotation about the vertex $v$, and a two-fold rotation about a centre of the edge $e$. Identification of $M$ with a presentation of $G$ of the form \eqref{eq:abc} will be reflected in the notation by writing $M = (G; a,b,c)$.

The group $G$ is finite if and only if the carrier surface of $M$ is compact, in which case the Euler characteristic $\chi=\chi_G$ of the surface is given by 
\begin{equation}\label{e: sunny}
-\chi_G = \frac{|G|}{2}\left(\frac12-\frac{1}{m}-\frac{1}{n}\right) = \frac{|G|}{4mn}(mn-2m-2n)\ .
\end{equation}

If the carrier surface of a regular map $M=(G;a,b,c)$ is orientable, then $a$, $b$ and $c$ are orientation-reversing automorphisms while $x=ab$ and $y=bc$ are orientation-preserving, so that $\langle ab,bc\rangle$ is a subgroup of $G$ of index $2$; the converse holds as well. It follows that the carrier surface of $M$ is non-orientable if and only if $G=\langle a,b,c\rangle = \langle x,y \rangle$.

 The facts described in this Introduction are well known, and can be tracked down in the fundamental papers \cite{JoSi,BrSi}, which developed the algebraic theory of maps and regular maps. A summary can also be found in the survey paper \cite{siran}, and in an extended form in the monograph \cite{CJST}, together with numerous connections of the theory of maps to group theory, hyperbolic geometry, Riemann surface theory, and Galois theory.

Interest in classifying all regular maps on a given compact surface goes back to as early as the beginning of the 20th century. Nevertheless, it appears that classification results for {\em infinite} families of surfaces are scarce and restricted to those of Euler characteristic $-r$ \cite{bns}, $-2r$ in the orientable case \cite{CST} (see also \cite{BeJo}), $-3r$ \cite{cns} and $r^2$ \cite{cps} for an odd prime $r$.

The first and the last of these results provide motivation to consider regular maps on surfaces of Euler characteristic $-r^d$ for some odd prime $r$ and integer $d\ge 3$; note that such maps are necessarily non-orientable. This is the task we set ourselves in this paper.

It turns out that this task splits naturally into three parts. To see this, suppose that $G=\langle a,b,c\rangle$ is the finite automorphism group of a regular map on a surface of odd Euler characteristic, and let $O=O(G)$ for the largest odd-order normal subgroup of $G$. It is easy to show that $G$ must have dihedral Sylow 2-subgroups \cite[Lemma 3.2]{cps}. This allows us to invoke the Gorenstein--Walter theorem \cite{gor2, gor3},  which states that if $G$ is an arbitrary finite group with dihedral Sylow 2-subgroups, then the quotient $G/O$ is isomorphic to either a Sylow 2-subgroup, or to $A_7$, or to a subgroup of ${\rm P\Gamma L}(2,q)$ containing $\PSL(2,q)$ for a prime-power $q\ge 5$.

We find (by Proposition \ref{p: odd order} below) that our supposition rules out many of the possibilities given by Gorenstein and Walter, and indeed $G/O$ is isomorphic either to a Sylow 2-subgroup of $G$ (so that $G$ has a 2-complement and hence is soluble), or to $\PSL(2,q)$, or to $\PGL(2,q)$, for some $q$.

This suggests approaching the problem of classification of finite regular maps $M=(G;a,b,c)$ on surfaces of Euler characteristic $-r^d$ for odd primes $r$ by separately considering the following three cases:\ , (A) $G/O\cong \PSL(2,q)$ where $q\geq 5$;\, (B) $G/O\cong \PGL(2,q)$ where $q\geq 5$; and\, (C) $G$ is soluble (in which case $G/O$ is isomorphic either to $\PGL_2(3)=S_4$ or to a 2-group, by our Theorem~\ref{t: odd order}).

To state our results we need some more terminology and notation. For a non-orientable regular map $M=(G;a,b,c)$ of type $\{m,n\}$, with $G$ as in \eqref{eq:abc} such that $G=\langle x,y\rangle$ for $x=ab$ and $y=bc$ (having orders $m$ and $n$), the group $G$ will be referred to as a $(2,m,n)^*$-group, and $\chi_G$ will be its Euler characteristic. We also say that a group is {\em almost Sylow-cyclic} (aSc) if its Sylow subgroups of odd order are cyclic and its Sylow $2$-subgroups are trivial or contain a cyclic subgroup of index $2$. The main result follows.
\medskip

\begin{thm}\label{t: main}
Let $G$ be a finite $(2,m,n)^*$-group defining a non-orientable regular map of Euler characteristic $\chi_G=-r^d$ for some odd prime $r$ and $d\ge 1$, and let $O = O(G)$ be the largest odd-order normal subgroup of $G$. Then $G/O\cong \PSL(2,q)$ or $\PGL(2,q)$ for some odd prime-power $q \ge 5$, or $G$ is soluble,
and in fact the quadruple $(m,n,r,d)$ lies in one of $18$ non-empty families, as follows:
\begin{enumerate}
\item[{\rm (A)}] Families {\rm A1} to {\rm A4} for $G/O\cong \PSL_2(q)$, as in {\rm Table \ref{t: a}};
\item[{\rm (B)}] Families {\rm B1} to {\rm B7} for $G/O\cong \PGL_2(q)$, and $N$ a normal $r$-subgroup of $O$, as  in {\rm Table \ref{t: b}};
\item[{\rm (C)}] Families {\rm C1} to {\rm C7} for soluble $G$ and a normal $r$-subgroup $N$ of $G$, as in {\rm Table \ref{t: c}}.
\end{enumerate}
\end{thm}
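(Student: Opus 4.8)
The plan is to organize the proof around the trichotomy already established: the Gorenstein--Walter structure theorem (invoked via Proposition~\ref{p: odd order}) reduces the possibilities for $G/O$ to a $2$-group, $\PSL(2,q)$, or $\PGL(2,q)$. So the first step is to fix the arithmetic constraint $-\chi_G = -r^d$ and read it through the Euler-characteristic formula \eqref{e: sunny}, namely $r^d = \tfrac{|G|}{4mn}(mn-2m-2n)$. Since $r$ is an odd prime and $r^d$ is odd, this immediately forces severe $2$-adic conditions: $|G|$ and the quantity $mn-2m-2n$ must conspire so that all factors of $2$ cancel. The key preliminary observation to extract is that $|G|_2 = |G/O|_2$ (as $O$ has odd order), and that the three cases are distinguished precisely by whether $G/O$ is a $2$-group or has the $\PSL$/$\PGL$ shape. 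I would isolate, once and for all, the divisibility relation between $r^d$, the orders $m,n$, and $|G|$, because every subsequent case analysis is just a refinement of this single equation.

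Let me sketch each branch. For case (A), with $G/O \cong \PSL(2,q)$, I would use the known subgroup structure and element orders of $\PSL(2,q)$ to pin down the possible orders $m = \ord(x)$ and $n = \ord(y)$ of images of the rotation generators, together with the condition $(xy)^2 = 1$; these element orders in $\PSL(2,q)$ divide $q$, $(q-1)/2$, or $(q+1)/2$, which sharply limits $(m,n)$. Pulling back through the extension $1 \to O \to G \to \PSL(2,q) \to 1$ and imposing that the Euler characteristic is a prime power then forces $q$ itself to be tightly constrained (typically $q = r^e$ or $q \pm 1$ a power of $2$ times a power of $r$). Case (B) for $\PGL(2,q)$ is parallel but with the larger element-order menu (orders may now also divide $q-1$ and $q+1$ in full, and $2(q\pm 1)$), and here the normal $r$-subgroup $N \trianglelefteq O$ records the residual odd part. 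Case (C), soluble $G$, is where Theorem~\ref{t: odd order} gives $G/O \cong S_4$ or a $2$-group; here I would exploit the \emph{almost Sylow-cyclic} condition together with the fact that a $(2,m,n)^*$-group is generated by $x,y$, analyzing the soluble structure via the Frattini/Fitting subgroup and the action on the normal $r$-subgroup $N$ to enumerate the seven families.

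The mechanism common to all three branches is: (i) determine which abstract groups $G/O$ can even be generated by two elements $x,y$ with $(xy)^2=1$ of the right orders (a genus/generation question), (ii) solve the number-theoretic equation $r^d = \tfrac{|G|}{4mn}(mn-2m-2n)$ for the admissible parameters, and (iii) verify non-emptiness by exhibiting, for each surviving parameter family, an actual group $G$ and generating triple $(a,b,c)$ realizing it. Step (iii) is essential because the theorem asserts all $18$ families are \emph{non-empty}; I would supply explicit constructions (often as extensions of $\PSL(2,q)$ or $\PGL(2,q)$ by an elementary abelian or homogeneous $r$-group, or explicit soluble presentations) and check the relations and the non-orientability criterion $G = \langle x,y\rangle$.

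\textbf{The hard part} will be controlling the odd-order normal subgroup $O$ (equivalently the $r$-group $N$) in cases (B) and (C): the quotient $G/O$ only sees the ``top'' of the group, so I must understand how $O$ can be built up as an $r$-group admitting the required action of $\PGL(2,q)$ or of the soluble top, while keeping the total Euler characteristic equal to $-r^d$ and the generation by involutions intact. This amounts to a cohomological/extension-theoretic analysis of which $r$-modules (or $r$-groups) can occur, constrained simultaneously by the almost-Sylow-cyclic condition and by the exact prime-power value of $\chi_G$. I expect the enumeration of admissible $N$, rather than the initial reduction to the three cases, to be the genuine obstacle, and the place where the count of exactly $18$ families ultimately crystallizes.
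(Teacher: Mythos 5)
Your plan follows essentially the same route as the paper: the Gorenstein--Walter trichotomy for $G/O$ (via Proposition~\ref{p: odd order} and Theorem~\ref{t: odd order}), the $2$-adic and $r$-adic analysis of Euler's formula \eqref{e: sunny}, the element-order constraints in $\PSL_2(q)$ and $\PGL_2(q)$ to pin down $(\overline{m},\overline{n})$, the almost Sylow-cyclic classification in the soluble case, and explicit cover constructions (the paper uses a Macbeath-type trick and branched NEC-group covers) to certify that all $18$ families are non-empty. Your identification of the hard part --- controlling the normal $r$-subgroup $N$ compatibly with the prime-power characteristic --- is exactly where the paper's case-by-case number theory lives; the only slip is that element orders in $\PGL_2(q)$ divide $p$, $q-1$ or $q+1$ (never $2(q\pm1)$), which does not affect the strategy.
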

\vspace{2mm}

\begin{center}
 \begin{table}[ht]
  \begin{tabular}{|c|c|c|c|c|c|}
   \hline
   & $q$ & $\{m,n\}$ & $r$ & $d\geq $ & $-\chi_G \ (\,= r^d)$ \\ \hline
   A1 & 5 & $\{5,5\}$ & 3 & 1 & \hp$3\,|O|$\hp \\
   A2 & 5 & $\{3,15\}$& 3 & 1 & $3\,|O|$ \\
   A3 & 13 & $\{3,13\}$ & 7 & 2 & $7^2\,|O|$ \\
   A4 & 13 & $\{3,7\}$& 13 & 1 & $13\,|O|$ \\ \hline
  \end{tabular}
\vspace{2mm}
\caption{Families in which $G$ is a $(2,m,n)^*$-group with 
$G/O=\PSL_2(q)$}\label{t: a}
 \end{table}
\end{center}
\vspace{-12mm}

\begin{center}
 \begin{table}[ht]
  \begin{tabular}{|c|c|c|c|c|c|c|}
   \hline
   & $q$ & $\{m,n\}$ & $r$ & $d\geq $ & $-\chi_G \ (\,= r^d)$  & Notes\\ \hline
   B1 & 5 & $\{4,6\}$ & 5 & 1 & $5\,|O|$ & \hp$\ell=1$ \\
   B2 & 5 & $\{20,30\}$ & 5 & 5 & $5^2\,|O|$ & \hp$\ell=1$ \\
   B3 & 7 & $\{3\ell r^s,8r^s\}$ & 7 & 1 & $7\left(\frac{|N|}{7^s}\right)(12{\cdot}7^s\ell-3\ell-8)$ & \\
   B4 & 9 & $\{5\ell r^s,8r^s\}$ & 3 & 1 & $9\left(\frac{|N|}{3^s}\right)(20{\cdot}3^s\ell-5\ell-8)$ & \\
   B5 & $p$ & $\{\ell pr^s, (p{+}1)r^s\}$ & $\frac{p{-}1}{2}{=}r^t$ & $r^t{+}3t{+}1$ & $\frac{(p{-}1)|N|}{2r^s}{\cdot} \left[r^s\ell \frac{p(p{+}1)}{2}{-}\ell p{-}p{-}1\right]$ & $p \ge 7, \, s\geq 1$ \\
   B6 & $p$ & $\{\ell p, (p{-}1)\}$ & $\frac{p{+}1}{2}{=}r^t$ & $t^2\log_2r{+}2t^*$ &
   $\frac{p{+}1}{2} {\cdot} \left[\ell \frac{p(p-1)}{2}{-}\ell p{-}p{+}1\right]$ & $|N| = 1$\\
   B7 & $p$ & $\{\ell pr^s, (p{-}1)r^s\}$ & $\frac{p{+}1}{2}{=}r^t$ & $r^t{+}3t{-}1^*$  & $\frac{(p{+}1)|N|}{2r^s}{\cdot} \left[r^s\ell \frac{p(p{-}1)}{2}{-}\ell p{-}p{+}1\right]$ & $|N| > 1$\\ \hline
   \end{tabular}
\vspace{2mm}
\caption{Families in which $G$ is a $(2,m,n)^*$-group with $G/O\cong\PGL_2(q)$}
\label{t: b}
 \end{table}
\end{center}
\vspace{-12mm}

\begin{center}
 \begin{table}[ht]
  \begin{tabular}{|c|c|c|c|c|c|c|}
   \hline
   & $G/N$ & $\{m,n\}$ & $r$ & $d\geq $ & $-\chi_G \ (\,= r^d)$ & Notes\\ \hline
   C1 & $D_{\ell}$, $\ell=3+3^i$ & $\{6,\ell\}$ & 3 & 1 & \hp$3^{i-1}|N|$ & $|N|\ge 9$ if $i = 0$ \\
   C2 & $D_{\ell}$, $\ell=1+3^i$ & $\{6,3\ell\}$ & 3 & 1 & $3^{i-1}|N|$ & \hp$|N|\ge 9$ \\
   C3 & $D_{j}{\times} D_{k}$ & $\{2j, 2k\}$ & $3$ mod $4$ & 1 & $|N|(jk-j-k)$ & $G$ aSc if $|N| = 1$ \\
   C4 & $D_{j}{\times} D_{k}$ & $\{2jr^\alpha, 2kr^\beta\}$ & $3$ mod $4$ & $5^*$ &
   $\frac{|N|}{r^\alpha}(jkr^\alpha{-}jr^{\alpha-\beta}{-}k)$ & $|N|\ge r^{\alpha{+}1}$ \\ 
   C5 & $(C_2\times C_2)\rtimes D_{\ell}$  & $\{4,\ell\}$ & $5$ mod $6$ & 1 & $|N|(\ell-4)$ & $G$ aSc if $|N| = 1$ \\
   C6 & $(C_2\times C_2)\rtimes D_{\ell}$ & $\{4{\cdot}3^\alpha,\ell{\cdot}3^\beta\}$ & $3$ & $3$ & $\frac{|N|}{3^\alpha}
   (2\ell{\cdot}3^\alpha{-}4{\cdot}3^{\alpha-\beta}{-}\ell)$ & $|N|\ge 3^{\alpha{+}1}$ \\
   C7 & $(C_2\times C_2)\rtimes D_{\ell}$  & $\phantom{\frac{^||}{|}}\{4r^\alpha,\ell r^\beta\}\phantom{\frac{|^|}{|}}$ & $5$ mod $6$ & $d_{r,\alpha,\beta}$ &
   $\frac{|N|}{r^\alpha}(2\ell r^\alpha{-}4r^{\alpha-\beta}{-}\ell)$
   &  $|N|\ge r^{\alpha{+}1}$ \\
 \hline
 \end{tabular}
\vspace{2mm}
\caption{Families in which $G$ is a soluble $(2,m,n)^*$-group}
\label{t: c}
 \end{table}
\end{center}

Some of the content of Tables~\ref{t: b} and~\ref{t: c} needs additional explanation.

For all families in Table~\ref{t: b} we have $K\cong \PSL_2(q)$, $\ell$ a non-negative integer and $G\cong N.(K{\times}C_\ell).2$, with $r\nmid\ell$ and $\gcd(\ell,|K|)=1$. For each family, $s$ is a non-negative integer, and $r^s$ divides $|N|$. In family B6, the entry `$t^2\log_2r{+}2t^*$' means that $d>t^2\log_2r{+}2t$ when $t\geq 2$; if $t=1$, then $d\geq 5$. In family B7, the entry `$r^t{+}3t{-}1^*$' means that $d>r^t{+}3t{-}1$ when $p>5$.

In Table~\ref{t: c}, we have non-negative integers $i,j,k,\ell, \alpha$ and $\beta$. For families C3 and C4, $j$ and $k$ are odd and coprime. Integers $\alpha$ and $\beta$ occur in families C4, C6 and C7 and we require that, in all three cases, $\alpha \geq 1$ and $\alpha \geq \beta$. Next, in family C4, the entry `$5^*$' means that $d \ge  5$ when $r > 3$. Finally, $d_{r,\alpha,\beta} = \alpha^2\log_2 r{-}\alpha{-}\beta{+}1$ in family C7.


Although Theorem~\ref{t: main} classifies the regular maps under consideration, some mysteries remain: For the families in Table~\ref{t: a} it does not completely specify $O(G)$, and for the families in Tables~\ref{t: b} and \ref{t: c} it does not completely specify the normal subgroup $N$. If we focus on the case $d\le 4$, however, we can give a complete list.

\begin{cor}\label{c: main}
Let $G=\langle a,b,c\rangle$ be a $(2,m,n)^*$-group with $\chi_G=-r^d$ for some odd prime $r$ and positive integer
$d\leq 4$. Then one of the following holds$\,:$
\begin{enumerate}
 \item $d\in\{1,3\}$, $r\equiv 3$ mod $4$, $G\cong D_{j}\times D_{k}$, $\{m,n\}=\{2j,2k\}$ and $r^d+1=(j-1)(k-1)$
for  odd coprime integers $j$ and $k\,;$
\item $d\in\{1,3\}$, $r\equiv 5$ mod $6$, $G\cong (C_2\times C_2)\rtimes D_{r^d+4}$ and $\{m,n\}=\{4, r^d+4\}\,;$
\item 
 the triple $(G,\{m,n\},\chi_G)$ appears in one of the rows of the following split table, in which ${\rm E}_{r^\alpha}$ is an elementary-abelian $r$-group of order $r^\alpha$ and ${\rm He}_3$ is the Heisenberg group of order $27$, the left-most column gives the relevant family from Theorem $\ref{t: main}$, and the right-most column gives the designation of the associated map in the list of \cite{C600} (if small enough)$:$
\end{enumerate}
{\small
\begin{table}[ht]
\centering
\begin{tabular}{|c|c|c|c|c|}
\hline
{\rm Case} & $G$ & $\{m,n\}$ & $-\chi_G$ & \!\cite{C600}\! \\ \hline
{\rm A1} & $\PSL_2(5)$ & $\{5,5\}$ & $3$ & {\rm N5.3} \\
{\rm A3} & $\PSL_2(13)$ & $\{3,13\}$ & $7^2$ & {\rm N51.1} \\
{\rm A4} & $\PSL_2(13)$ & $\{3,7\}$ & $13$ & {\rm N15.1} \\
{\rm A4} & $\!E_{13^3}{\cdot}\PSL_2(13)\!$ & $\{3,7\}$ & $13^4$ & -- \\ \hline
{\rm B6} & $\PGL_2(5)$  & $\{4,5\}$ & $3$ & {\rm N5.1} \\
{\rm B1} & $\PGL_2(5)$ & $\{4,6\}$ & $5$ & {\rm N7.1} \\
{\rm B3} & $\PGL_2(7)$ & $\{3,8\}$ & $7$ & \!{\rm N9.1,2}\! \\
{\rm B1} & ${\rm E}_{5^3}{\cdot}\PGL_2(5)$ & $\{4,6\}$ & $5^4$ &  -- \\
{\rm B3} & ${\rm E}_{7^3}{\cdot}\PGL_2(7)$ & $\{3,8\}$ & $7^4$ &  -- \\ \hline
{\rm C1} & ${\rm E}_{3^2}\rtimes D_4$ & $\{4,6\}$ & $3$ & {\rm N5.2} \\
{\rm C1,2,4 } & ${\rm E}_{3^2}\rtimes D_2$ & $\{6,6\}$ & $3$ & {\rm N5.4} \\ \hline
\end{tabular}
\quad
\begin{tabular}{|c|c|c|c|c|}
\hline
{\rm Case} & $G$ & $\{m,n\}$ & $-\chi_G$ & \cite{C600} \\ \hline
\ \ {\rm C1}\ \ & ${\rm He}_3\rtimes D_4$ & $\{4,6\}$ & $3^2$ & {\rm N11.1} \\
{\rm C1,2,4} & ${\rm He}_3\rtimes D_2$ & $\{6,6\}$ & $3^2$ & {\rm N11.2} \\
{\rm C6} & ${\rm E}_{3^3}{\cdot}(D_2{\rtimes}D_3)$ & $\{3,12\}$ & $3^3$ & {\rm N29.1} \\
{\rm C1,2,4} & $(C_3\wr C_3)\rtimes D_2$ & $\{6,6\}$ & $3^3$ & {\rm N29.2} \\
{\rm C1,2} & ${\rm E}_{3^3}\rtimes D_4$ & $\{6,12\}$ & $3^3$ & {\rm N29.3} \\
{\rm C2} & ${\rm He}_3\rtimes D_4$ & $\{6,12\}$ & $3^3$ & \!{\rm N29.4,5}\! \\
{\rm C1,2,4} & ${\rm E}_{3^2}\rtimes D_{10}$ & $\{6,30\}$ & $3^3$ & {\rm N29.6} \\
{\rm C1} & $({\rm E}_{3^2}{\cdot}{\rm He}_3)\rtimes D_4$ & $\{4,6\}$ & $3^4$ & {\rm N83.1} \\
{\rm C1,2,4} & $({\rm E}_{3^2}{\cdot}{\rm He}_3)\rtimes D_2$ & $\{6,6\}$ & $3^4$ & {\rm N83.2} \\
{\rm C1,2} & $(C_3{\times}{\rm He}_3)\rtimes D_4$ & $\{6,12\}$ & $3^4$ & {\rm N83.3} \\
{\rm C1,2,4} & ${\rm He}_3\rtimes D_{10}$ & $\{6,30\}$ & $3^4$ & {\rm N83.4} \\
\hline
\end{tabular}
\end{table}
}
\end{cor}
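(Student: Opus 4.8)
The plan is to deduce the corollary directly from Theorem~\ref{t: main} by a finite case analysis, treating each of the eighteen families A1--A4, B1--B7, C1--C7 in turn under the additional hypothesis $d\le 4$. Since the main theorem already guarantees that every such $G$ occurs in one of these families, the work reduces to three tasks: (i) determine, within each family, precisely which parameter values are compatible with $-\chi_G=r^d$ and $d\le 4$; (ii) for the survivors, pin down the isomorphism type of $G$, and in particular the normal $r$-subgroup ($O$ in the A-families, $N$ in the B- and C-families) that Theorem~\ref{t: main} leaves unspecified; and (iii) assemble the outcomes into the three cases of the statement, matching the sporadic maps against the census in \cite{C600}.

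First I would discard the families whose tabulated lower bound on $d$ already forces $d>4$ in all but degenerate instances. For example, family B5 has $r=(p-1)/2=r^t$ with $p\ge 7$, whence $d\ge r^t+3t+1\ge 7$ and the family is empty under our hypothesis; analogous order-of-magnitude bounds empty B5 entirely and reduce B6, B7, C4 and C7 to a handful of small cases with $p\in\{5,7\}$ and $r=3$, which must then be examined by hand. In each remaining family the identity $r^d=(\text{the tabulated expression})$ becomes a Diophantine constraint, and together with $d\le 4$ this pins every free parameter to a small explicit value: typically $\ell=1$, the exponents $s,\alpha,\beta$ lie in $\{0,1\}$, and---most importantly---$|N|$ (or $|O|$) equals a specific power $r^e$ with $e\le 3$. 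For the $\PSL$- and $\PGL$-families the recurring pattern that only $e\in\{0,3\}$ survives (giving the base map with trivial kernel and one inflated map with kernel ${\rm E}_{r^3}$) reflects the fact that the only admissible nontrivial module here is the $3$-dimensional adjoint $\mathbb{F}_r$-module for $G/N$, which exists exactly in the defining-characteristic families A4, B1 and B3; the soluble C-families admit a richer supply of $r$-modules and hence a longer list.

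Next I would isolate the two genuinely infinite sub-families, namely C3 with $N=1$ and C5 with $N=1$, which yield cases (1) and (2). Here $G\cong D_j\times D_k$ (respectively $(C_2\times C_2)\rtimes D_\ell$) and the Euler-characteristic formula reads $r^d=(j-1)(k-1)-1$ (respectively $r^d=\ell-4$). The restriction $d\in\{1,3\}$ then follows from a congruence: with $j,k$ odd, $(j-1)(k-1)\equiv 0\pmod 4$, so $r^d\equiv 3\pmod 4$, and since $r\equiv 3\pmod 4$ we get $r^d\equiv(-1)^d\pmod 4$, forcing $d$ odd; a parallel argument modulo $3$ (using that existence of the map requires $3\mid\ell=r^d+4$, together with $r\equiv 2\pmod 3$) handles case (2). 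Combined with $d\le 4$ this gives $d\in\{1,3\}$ in both cases.

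The main obstacle is task (ii) for the soluble sporadic maps with $r=3$, where several non-isomorphic $3$-groups of the same order can sit inside $G$. To resolve this one must analyse $N$ as a module for $G/N$, read off its possible structure from the action on $N/\Phi(N)$ together with the relevant second-cohomology data, and thereby separate the elementary-abelian groups ${\rm E}_{3^k}$ from the non-abelian candidates ${\rm He}_3$, $C_3\wr C_3$, $C_3\times{\rm He}_3$ and ${\rm E}_{3^2}{\cdot}{\rm He}_3$ appearing in the corollary's table. Finally, establishing that each listed group genuinely carries a regular map of the stated type---that is, that the presentation \eqref{eq:abc} defines a group of the correct order with $G=\langle x,y\rangle$---is verified by explicit coset enumeration, which simultaneously produces the correspondence with the named maps of \cite{C600}.
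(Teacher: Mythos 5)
Your overall skeleton --- running through the eighteen families of Theorem~\ref{t: main}, using the tabulated lower bounds on $d$ to empty B5 and shrink B6, B7, C4, C7 down to small exceptional cases, and extracting the two infinite families from C3 and C5 with trivial kernel --- is exactly the paper's strategy, and your congruence arguments for cases (1) and (2) (that $(j-1)(k-1)\equiv 0$ mod $4$ forces $r\equiv 3$ mod $4$ and $d$ odd, and that $3\mid\ell=r^d+4$ forces $r\equiv 2$ mod $3$ and $d$ odd) are correct and match what is proved inside Lemmas~\ref{l: soluble G2lk} and~\ref{l: soluble G3k}.

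The genuine gap lies in the completeness of the sporadic table, i.e.\ the non-existence half of the classification. The Diophantine constraints do not ``pin every free parameter'': they leave many candidate configurations that must then be shown to support no $(2,m,n)^*$-group at all. Concretely: family A2 with $|O|\in\{3,9,27\}$ (hypothetical maps of type $\{3,15\}$ with $-\chi=3^2,3^3,3^4$; none exist, the smallest genuine A2 example having $d=7$); family B3 with $|N|\in\{7,7^2\}$; family B4 in its entirety; family C1 with $i=0$ and $|N|=3^4$ (a hypothetical $\{4,6\}$ map with $-\chi=3^3$, absent from the table); and C3 with non-trivial non-cyclic $N$ when $r>3$. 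Neither of your proposed tools closes these cases as described: coset enumeration certifies existence, never non-existence; and a module/cohomology analysis, as sketched, ignores the requirement that $G$ be generated by the three involutions $a,b,c$ of \eqref{eq:abc}, which is essential. For instance $C_3\times A_5$ fits the A2 extension data perfectly (it has $O(G)\cong C_3$, $G/O\cong\PSL_2(5)$, and a generating pair of orders $3$ and $15$ whose product is an involution, giving $-\chi=3^2$), yet it is not a $(2,3,15)^*$-group because all of its involutions lie in the $A_5$ factor; only a generation-by-involutions argument, not extension theory, excludes it. Likewise your assertion that only the $3$-dimensional adjoint module is ``admissible'' presupposes that $G/N$ acts faithfully on $N/\Phi(N)$, which rests on the soluble-centraliser/generalised-Fitting-subgroup machinery (Lemma~\ref{l: fstar lower bound}), not on the statement of Theorem~\ref{t: main}, and central extensions must still be excluded separately. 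The paper settles precisely these points by computation: {\sc Magma} searches for the A- and B-families (note $5^4=625$, $7^4=2401$ and $13^4=28561$ all exceed the range of the census, so \cite{C600} alone cannot be cited there) and the complete census \cite{C600} for the soluble families with $r=3$. A proof avoiding the computer would need substantial case-by-case arguments of the kind just indicated, and your proposal does not contain them.
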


In particular, it follows that there are infinitely many regular maps of Euler characteristic $-r^d$ when $r$ is an odd prime if $d\in \{1,3\}$, but there are only a finite number of such maps if $d\in \{2,4\}$.

The rest of this paper is structured as follows. In Section \ref{s: chi odd} we present general structural theory for the automorphism group $G$ of a regular map on a (non-orientable) surface with odd Euler characteristic $\chi_G$. Then in Sections \ref{s: insol-PSL} and \ref{s: insol-PGL} we use this theory to consider the cases of such groups $G$ when $\chi_G=-r^d$ and $G/O(G)$ is isomorphic to $\PSL_2(q)$ or $\PGL_2(q)$ for some prime-power $q\ge 5$, followed by an analysis of soluble groups $G$ with $\chi_G=-r^d$ in Section \ref{s: soluble}. These three sections yield a proof of Theorem~\ref{t: main}. Finally, in Section \ref{s: constructions} we present constructions for automorphism groups $G$ of regular maps with $\chi_G=-r^d$, and give a proof of Corollary~\ref{c: main}.
\medskip


\section{\texorpdfstring{Structural theory for groups of odd characteristic}{chi is odd}}\label{s: chi odd}
\medskip


Throughout this section, let $G$ be a finite $(2,m,n)^*$-group with odd Euler characteristic $\chi_G$. By
\cite[Lemma 3.2]{cps}, oddness of $\chi_G$ implies that the Sylow $2$-subgroups of $G$ are dihedral. With help of a deep theorem by Gorenstein and Walter \cite{gor2, gor3} on groups with dihedral Sylow $2$-subgroups, we will derive structural information about $G$, in order to substantially extend Propositions 5.1 to 5.3 from \cite{bns}. As before, let $O=O(G)$ be the largest odd-order normal subgroup of $G$.
Also let $\ord(g)$ denote the order of an element $g$ in a group understood from the context, and let $|k|_p$ (or $k_p$ when there is no ambiguity) denote the largest power of the prime $p$ that divides the positive integer $k$.

\begin{prop}\label{p: odd order}
Let $G$ be a $(2,m,n)^*$-group with odd $\chi_{G}$. Then one of the following holds$\,:$
\begin{enumerate}
\item[{\rm (a)}] $G$ has a normal $2$-complement (so $G/O(G)$ is isomorphic to a Sylow $2$-subgroup $P$ of $G$);
\item[{\rm (b)}] $G/O(G)$ is isomorphic to $\PSL_2(q)$ or $\PGL_2(q)$ for some odd prime-power $q$.
\end{enumerate}
\end{prop}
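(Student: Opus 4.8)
The plan is to combine the Gorenstein--Walter dichotomy, already quoted in the introduction, with two extra features of a $(2,m,n)^*$-group: that $G$ is generated by involutions, and that its Sylow $2$-subgroups are dihedral. First I would record that, by \cite[Lemma 3.2]{cps}, the oddness of $\chi_G$ forces the Sylow $2$-subgroups of $G$ to be dihedral; since $|O|$ is odd, the Sylow $2$-subgroups of $G/O$ are isomorphic to those of $G$ and hence dihedral as well. Applying the Gorenstein--Walter theorem \cite{gor2,gor3} to $G/O$ then leaves exactly three possibilities: $G/O$ is a dihedral $2$-group, or $G/O\cong A_7$, or $\PSL_2(q)\le G/O\le \PGammaL(2,q)$ for some odd prime-power $q=p^f\ge 5$. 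In the first case $|G/O|$ equals the order of a Sylow $2$-subgroup $P$, so $O$ is a normal $2$-complement and conclusion (a) holds. It then remains to eliminate $A_7$ and to cut the range in the third case down to $\PSL_2(q)$ and $\PGL_2(q)$.

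The key observation for the third case is that $G/O$ is generated by involutions: the images $\bar a,\bar b,\bar c$ of the canonical generators generate $G/O$, and none can be trivial, since $a,b,c$ have order $2$ while $O$ has odd order. Write $H=G/O$ and $A=H/\PSL_2(q)$, an abelian group contained in $\PGammaL(2,q)/\PSL_2(q)\cong C_2\times C_f$. Every involution of $H$ maps into the $2$-torsion of $A$, while $\PSL_2(q)$, being simple, is generated by its own involutions; hence $H$ is generated by involutions if and only if the $2$-torsion of $A$ is all of $A$, that is, $A$ is an elementary abelian $2$-group, so $A\in\{1,\,C_2,\,C_2\times C_2\}$. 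I would then invoke the $2$-local structure of almost simple groups with socle $\PSL_2(q)$: if $A$ is neither trivial nor the diagonal subgroup $\PGL_2(q)/\PSL_2(q)$, then $A$ contains a field automorphism of order $2$, and adjoining such an automorphism makes the Sylow $2$-subgroups of $H$ non-dihedral. Combined with the dihedral hypothesis this leaves exactly $A=1$ (so $H\cong\PSL_2(q)$) or $A$ equal to the diagonal $C_2$ (so $H\cong\PGL_2(q)$), which is conclusion (b).

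To dispose of $A_7$ the involution criterion is not by itself enough, since $A_7$ is simple and hence generated by involutions; instead I would check directly that $A_7$ admits no generating triple $(\bar a,\bar b,\bar c)$ of involutions with $\bar a\bar c=\bar c\bar a$. Since $A_7$ has a single class of involutions (cycle type $2^2 1^3$), the commuting pairs $\{\bar a,\bar c\}$ fall into only a few types of Klein four-subgroups, and for each the possible completions $\bar b$ can be enumerated and shown never to generate $A_7$; this is a short finite verification, conveniently carried out by machine.

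I expect the main obstacle to be the precise Sylow-$2$ analysis in the third case, namely showing that adjoining any nontrivial field automorphism to $\PSL_2(q)$ or $\PGL_2(q)$ destroys the dihedral shape of the Sylow $2$-subgroup. This is exactly the point at which the generation-by-involutions and dihedral-Sylow hypotheses must be made to interact, and it requires a careful description of the $2$-local structure of the groups lying strictly between $\PSL_2(q)$ and $\PGammaL(2,q)$.
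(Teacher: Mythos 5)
Your overall skeleton matches the paper's proof: dihedral Sylow $2$-subgroups via \cite[Lemma 3.2]{cps}, Gorenstein--Walter, the observation that $H=G/O$ is generated by involutions so that $A=H/\PSL_2(q)$ must be an elementary abelian $2$-group, and a finite verification to kill $A_7$ (the paper cites the map censuses \cite{condersmall2,C600}; your direct machine check is equivalent in spirit). The genuine gap is in your case analysis of the order-$2$ subgroups $A\le \PGammaL_2(q)/\PSL_2(q)\cong C_2\times C_e$ when $e$ is even. There are then \emph{three} subgroups of order $2$, not two: the diagonal one $\PGL_2(q)/\PSL_2(q)$, the field one generated by the image of the order-$2$ field automorphism $\delta$, and the \emph{mixed} one generated by the image of $g\delta$, where $g\in\PGL_2(q)\setminus\PSL_2(q)$. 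Your claim that any $A$ other than the trivial or diagonal subgroup ``contains a field automorphism of order $2$'' fails for the mixed subgroup: conjugation acts trivially on the abelian quotient $\PGammaL_2(q)/\PSL_2(q)$, so no element of the coset $\PSL_2(q)\,g\delta$ is conjugate to any power of the Frobenius, and the group $H=\langle \PSL_2(q), g\delta\rangle$ contains no field automorphism at all. Consequently the elimination you propose --- a centralizer/$2$-local analysis of a field automorphism --- can never be applied to this group, and it is precisely this group that consumes most of the paper's proof.

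For the cases you do cover, the non-dihedrality you flag as ``the main obstacle'' is in fact quick: $\delta$ centralises the subfield subgroup $\PSL_2(q_0)$ with $q_0=p^{e/2}$, so any group containing $\PSL_2(q)$ and $\delta$ has a $2$-subgroup of the form $\langle\delta\rangle\times V$ with $V$ a Klein four-subgroup of $\PSL_2(q_0)$, and no dihedral $2$-group contains an elementary abelian subgroup of rank $3$. It is the mixed case that needs a different idea, and the paper supplies it as follows: if the Sylow $2$-subgroups of $H=\langle \PSL_2(q), g\delta\rangle$ were dihedral, they would have order $2^{j+1}$ where $2^j=(q-1)_2\ge 8$, and then every $2$-element of $H\setminus\PSL_2(q)$ would have order $2$ or $2^j$ --- in particular no element of order $4$ lies outside $\PSL_2(q)$; an explicit matrix computation then shows that $g\delta$ (when $q_0\equiv 3$ mod $4$) or $hg\delta$, with $h$ the antidiagonal involution (when $q_0\equiv 1$ mod $4$), has order exactly $4$, a contradiction. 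Without an argument of this kind (or a citation to a refinement of Gorenstein--Walter that already lists only $\PSL_2(q)$, $\PGL_2(q)$, $A_7$ and $2$-groups), your proof of conclusion (b) is incomplete.
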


\begin{proof}
Since $G$ has dihedral Sylow $2$-subgroups, we may apply the Gorenstein-Walter Theorem \cite{gor2, gor3}, which tells us that $H=G/O(G)$ is isomorphic to a Sylow $2$-subgroup of $G$, or to $A_7$, or to a subgroup of $\PGammaL_2(q)$ containing $\PSL_2(q)$ for some odd prime-power $q$.

If $H$ is isomorphic to a Sylow $2$-subgroup of $G$, then (a) holds and we are done. Next suppose that $H\cong A_7$. In this case, the lists at \cite{condersmall2,C600} show that $A_7$ is not a $(2,m',n')^*$-group for any positive integers $m',n'\le 7$  (the only possible orders of elements in $A_7$), and hence this possibility can be excluded.

So from now on we can assume that $\PSL_2(q)\le H\le \PGammaL_2(q)$ for some odd $q=p^e$. If $q$ is prime, then (b) must hold. Hence we may assume that $q\geq 9$ and, in particular, that $H$ is insoluble.

As $G$ is generated by involutions, so are $H$ and $H/\PSL_2(q)$. On the other hand $\PGammaL(2,q) /\PSL_2(q)$ is isomorphic to $C_2 \times C_e$ for some $e$, by \cite[Prop. 2.2.3]{kl}, and therefore $H/\PSL_2(q)$ is an elementary abelian $2$-group (of order at most 4).
Also if $e$ is odd, then $H\cong \PSL_2(q)$ or $H\cong \PGL_2(q)$, and (b) holds.
To complete the proof, we assume that $e$ is even, and that $H$ is not isomorphic to $\PSL_2(q)$ or $\PGL_2(q)$, and derive a contradiction.

Under the above assumptions, $q = p^e \equiv 1$ mod $4$,
and $H$ is isomorphic to one of the following three groups:
$\PSL_2(q)\rtimes \langle \delta \rangle,\,$ or $\,\langle \PSL_2(q), g\delta \rangle,\,$
or $\,\PGL_2(q)\rtimes \langle \delta \rangle,\,$ where $\delta$ is the unique non-trivial involution
in the Galois group of $\Fq$, taking $z$ to $z^{q_0}$ for all $z$, where $q_0 = p^{e/2}$ (so $q = q_0^{\,2}$),
and $g$ is an element of $\PGL_2(q) \backslash \PSL_2(q)$. 
Note here that $\delta$ centralises the `canonical' subgroup $\PSL_2(q_0) < \PSL_2(q)$ defined
over the subfield ${\mathbb F}_{q_0}$.
It follows that $H$ cannot contain $\delta$, because the Sylow $2$-subgroups of $H$ are dihedral
while the Sylow $2$-subgroups of $\PSL_2(q)\rtimes \langle \delta \rangle$
and $\PGL_2(q)\rtimes \langle \delta \rangle$ are not.
Hence $H = \langle \PSL_2(q), g\delta \rangle$.
Moreover, as $q = q_0^{\,2} \equiv 1$ mod $8$, a Sylow $2$-subgroup of $\PSL_2(q)$ is dihedral of order $2^j$ for some $j\ge 3$, and so $q - 1$ is divisible by $2^j$, and also a Sylow $2$-subgroup of $H$ is dihedral of order $2^{j+1}$. In particular,  all $2$-elements in $H\backslash \PSL_2(q)$ have order $2$ or $2^j$, and hence no element of $H\backslash \PSL_2(q)$ has order $4$.

Next, let $L$ be the subgroup of $\GL_2(q)$ consisting of scalar matrices, and consider elements of $\PSL_2(q)$ as projective images of those elements of $\GL_2(q)$ with determinant being a square in $\Fq.$ We may take $g$ to be the projective image of the matrix
$$\left(\begin{matrix}
   \zeta & 0 \\
 0 & 1 \\
  \end{matrix}\right)$$
where $\zeta$ is a non-square element in $\Fq^*$ of multiplicative order $(q-1)_2 = 2^j$. 
Now
$$(g\delta)^2 L = \left(\begin{matrix}
   \zeta & 0 \\
 0 & 1 \\
  \end{matrix}\right)\delta\left(\begin{matrix}
   \zeta & 0 \\
 0 & 1 \\
  \end{matrix}\right)\delta L = \left(\begin{matrix}
   \zeta & 0 \\
 0 & 1 \\
  \end{matrix}\right)\left(\begin{matrix}
   \zeta^{q_0} & 0 \\
 0 & 1 \\
  \end{matrix}\right) L = \left(\begin{matrix}
  \zeta^{1+q_0} & 0 \\
 0 & 1 \\
  \end{matrix}\right) L \ .$$
If $q_0\equiv 3$ mod $4$, then since $q-1 = q_0^{\,2}-1 = (q_0 +1)(q_0-1)$ we see that $2^{j-1}$ divides $q_0+1$, but $2^j$ does not, and $\zeta^{1+q_0} = \zeta^{q_0+1} = \zeta^{(q-1)/2} = \zeta^{2^j/2} = -1$, so that $(g\delta)^2 L \ne L$ but $(g\delta)^4 L = L$ and hence $g\delta$ has order $4$, a contradiction. Hence $q_0\equiv 1$ mod $4$.  This time let $h$ be the projective image of the matrix
$$\left(\begin{matrix}
   0& 1 \\
 1 & 0 \\
  \end{matrix}\right), $$
which is an element of $\PSL_2(q)$ because $-1$ is a square in $\Fq$. Then we find
$$(hg\delta)^2 L = \left(\begin{matrix}
   0 & 1 \\
 \zeta & 0 \\
  \end{matrix}\right)\delta\left(\begin{matrix}
   0 & 1 \\
 \zeta & 0\\
  \end{matrix}\right)\delta L = \left(\begin{matrix}
   0 & 1 \\
 \zeta & 0 \\
  \end{matrix}\right)\left(\begin{matrix}
   0 & 1 \\
 \zeta^{\delta} & 0\\
  \end{matrix}\right) L = \left(\begin{matrix}
  \zeta^{\delta} & 0 \\
 0 & \zeta \\
  \end{matrix}\right) L\ .$$
Now $2^{j-1}$ divides $q_0-1$, but $2^j$ does not, which implies that $\zeta^2\in{\mathbb F}_{q_0}$ and so $\zeta^2 =
(\zeta^2)^\delta = (\zeta^\delta)^2$. Thus $(hg\delta)^2 L \ne L$ but $(hg\delta)^4 L = L$, another contradiction.
\end{proof}
\medskip

The main aim of this section is to further develop the findings of Proposition \ref{p: odd order} and describe the overall structure of $(2,m,n)^*$-groups with odd Euler characteristic. In preparation for that, we need to introduce a few concepts and prove a number of helpful auxiliary facts.

A group $K$ is a {\em section} of a group $H$ if $K$ is isomorphic to a quotient of a subgroup of $H$. Our aim is to look at possibilities for a section $K$ of a group $\PGL_k(r^j)$ where $r$ is an odd prime, such that $K\cong \PSL_2(q)$ for an odd prime-power $q\ge 5$. For this, let $\veps(q,r)$ be the integer defined for such $q$ and $r$ as follows:
$$\veps(q,r)=\left\{\begin{array}{cl}
2 & \hbox{when} \ q=p^e  \ \hbox{and} \  r = p; \\
3 & \hbox{when} \ q=9 \ \hbox{and} \  r\neq 3; \\
\frac{q-1}{2} & \hbox{when} \ q=p^e \ne 9 \ \hbox{and} \  r \neq p.
\end{array}\right.$$

\begin{lem}\label{l: covers of K}
Let $r$ be a prime, let $j$ and $k$ be positive integers, and suppose that the group $\PGL_k(r^j)$ contains a section isomorphic to $\PSL_2(q)$ for some odd prime-power $q=p^e\geq 5$. Then $k\geq \veps(q,r)$.
\end{lem}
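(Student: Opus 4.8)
The plan is to recognise $\veps(q,r)$ as the minimal dimension $R_r$ of a nontrivial (hence, by simplicity, faithful) projective representation of $\PSL_2(q)$ over $\ovl{\mathbb{F}}_r$, and then to argue that a section of $\PGL_k(r^j)$ isomorphic to $\PSL_2(q)$ forces $k\ge R_r$. In the defining-characteristic case $r=p$ we have $\veps(q,r)=2$, and here nothing is needed beyond the observation that $\PSL_2(q)$ is nonabelian and so cannot be a section of the trivial group $\PGL_1(r^j)$, giving $k\ge 2$. The substance is therefore the cross-characteristic case $r\ne p$, where $\veps(q,r)=(q-1)/2$ unless $q=9$, in which case $\veps(9,r)=3$.

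First I would pass from a projective section to a linear one. If $\PSL_2(q)\cong S/N$ with $S\le\PGL_k(r^j)$ and $N\trianglelefteq S$, let $\hat S$ and $\hat N$ be the full preimages of $S$ and $N$ under $\GL_k(r^j)\to\PGL_k(r^j)$; then $\hat S/\hat N\cong\PSL_2(q)$, so $\PSL_2(q)$ is a section of $\GL_k(r^j)$ and, after extension of scalars, of $\GL_k(\ovl{\mathbb{F}}_r)$. Write $T=\PSL_2(q)$ and $V=\ovl{\mathbb{F}}_r^{\,k}$, and fix $S\le\GL(V)$ with $T\cong S/N$. I would then reduce to the irreducible case: the set of elements of $S$ acting trivially on every composition factor of $V$ consists of unipotent elements and so is a normal $r$-subgroup $U$ of $S$. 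Since the simple nonabelian group $T$ has no nontrivial normal $r$-subgroup, $U\le N$, whence $T$ is a quotient of $S/U$, which embeds into the product of the groups $\GL$ of the composition factors of $V$. Because a simple group is a section of a direct product only if it is a section of one factor, $T$ is a section of $\GL(W)$ for some composition factor $W$ of $V$, with $\dim W\le k$ and $W$ irreducible. Renaming, I may assume $S\le\GL(V)$ acts irreducibly.

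Next I would apply Clifford's theorem to $N\trianglelefteq S$ and split into the usual cases. If $N$ acts with $t\ge 2$ homogeneous components then $S$ permutes them transitively with $N$ in the kernel, so $T=S/N$ embeds into the symmetric group $S_t$; comparing with the minimal faithful permutation degree of $\PSL_2(q)$ (equal to $q+1$, or slightly less for a few small $q$, but always at least $\veps(q,r)$) gives $k\ge\dim V\ge t\ge\veps(q,r)$. If $N$ acts homogeneously by scalars, then $T$ embeds into $\PGL(V)$, a faithful projective representation of degree $\dim V\le k$. In the remaining homogeneous non-scalar case, Clifford theory produces a nontrivial projective representation of $T$ of degree at most $\dim V\le k$. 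In either of the last two cases $T$ has a faithful projective representation of degree $\le k$, so $k\ge R_r=\veps(q,r)$ by the cross-characteristic lower bound of Landazuri and Seitz, together with the fact that the reduced value $3$ for $q=9$ is exactly accounted for by the exceptional triple cover of $A_6\cong\PSL_2(9)$.

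I expect the main obstacle to be the Clifford-theoretic passage from a section to an honest projective representation, and in particular the homogeneous non-scalar case: one must extract a faithful projective representation of $T$ of the right degree, and when the multiplicity is $1$ descend to a smaller normal subgroup and argue by induction on $\dim V$. A secondary delicate point is the bookkeeping for small $q$, namely verifying that $\PSL_2(9)\cong A_6$ genuinely attains $\veps(9,r)=3$ through its exceptional cover, and checking the handful of $q$ where the permutation-degree and representation-degree bounds are tight.
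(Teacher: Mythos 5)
Your opening reductions are correct: passing to preimages in $\GL_k(r^j)$, extending scalars to $\ovl{\mathbb{F}}_r$, the unipotent-kernel argument reducing to an irreducible module, the fact that a nonabelian simple section of a direct product is a section of one factor, the permutation case ($t\ge 2$ homogeneous components, comparing with the minimal faithful permutation degree), and the scalar case (embedding $T$ into $\PGL(V)$ and quoting the cross-characteristic Landazuri--Seitz bound) all hold up. The genuine gap is exactly where you flagged an ``obstacle'': the homogeneous non-scalar case with multiplicity one, i.e.\ when $N$ itself acts irreducibly on $V$. There Clifford theory produces nothing: in the tensor factorisation $V\cong W\otimes U$ one has $\dim U=1$, so the projective representation of $T=S/N$ you hope to extract is trivial, and the assertion in your proof body (``Clifford theory produces a nontrivial projective representation of $T$ of degree at most $\dim V$'') is simply false in this case. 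What one is left with is that $T$ acts as outer automorphisms of a group $N$ possessing a faithful irreducible representation of degree $\dim V$; analysing this (reduction to symplectic-type $N$, embeddings of $T$ into ${\rm Sp}_{2m}(\ell)$ with $\ell^m\le\dim V$, and so on) is precisely the content of the Feit--Tits theorem, not a routine induction. Moreover, the general principle you are implicitly invoking --- that a simple section of $\GL_d$ in cross characteristic has a projective representation of degree at most $d$ --- is false for general simple groups: the exceptions come from extraspecial $2$-groups, and by \cite[Theorem~3]{kl1} they occur exactly when $T$ is of Lie type in characteristic $2$. Your groups are in the danger zone, since $\PSL_2(5)\cong\PSL_2(4)$, $\PSL_2(7)\cong\PSL_3(2)$ and $\PSL_2(9)\cong{\rm Sp}_4(2)'$ are of Lie type in characteristic $2$ via exceptional isomorphisms. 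Concretely: for any odd $r$, take $N=2^{1+4}\circ C_4$ of symplectic type acting irreducibly on $V=\ovl{\mathbb{F}}_r^{\,4}$ and let $S$ be the preimage of $A_6\le {\rm Sp}_4(2)$ in $N_{\GL(V)}(N)$; then $\PSL_2(9)\cong S/N$ is a section of $\GL_4(\ovl{\mathbb{F}}_r)$ realised with $N$ homogeneous, non-scalar and of multiplicity one, and your case (c) argument extracts no representation at all. (The lemma survives because $4\ge\veps(9,r)=3$, but that has to be checked against each such configuration; it does not fall out of Clifford theory.)

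By contrast, the paper makes no attempt at this analysis: it disposes of $r=p$ trivially, handles $q=9$ by \cite{dickson}, quotes the classical minimal-degree results \cite[Proposition 5.3.9 and Table 5.3.A]{kl} when $\PSL_2(q)$ is an honest subgroup, and for proper sections cites exactly the two results whose proofs you are sketching, namely \cite[Proposition 4.1]{FT} and \cite[Theorem~3]{kl1}. So your proposal is the ``unpacked'' version of the paper's citations; to turn it into a proof you must either rederive the Feit--Tits/Kleidman--Liebeck analysis in the multiplicity-one case (including the characteristic-$2$ exceptional configurations above), or cite those results as the paper does.
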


\begin{proof}
If $r=p$  then the conclusion is immediate, while if $q=9$ and $r\neq 3$, then it follows from \cite{dickson}.  Hence we may suppose that $r \neq p$ and $q\neq 9$. If the section $\PSL_2(q)$ itself is isomorphic to a {\em subgroup} of $\PSL_k(r^j)$, then the required conclusion is classical and originates from the work of Frobenius on minimum-degree projective modular representations, and is summed up as part of \cite[Proposition 5.3.9 and Table 5.3.A]{kl}. On the other hand, if
$\PSL_2(q)$ is a section but not a subgroup of $\PSL_k(r^j)$, then the bound $k\geq \veps(q,r)$ is a consequence of observations in \cite[Proposition 4.1]{FT} and then \cite[Theorem~3]{kl1}. (We note that the last two citations apply to sections of the groups $\PGL_k(F)$ for algebraically closed fields of
characteristic $r$, and our conclusion is a consequence of them.)
\end{proof}

The statement of our next helpful fact involves the Frattini subgroup $\Phi(L)$ of a finite group $L$, which is the (normal) subgroup of $L$ formed by intersection of all maximal subgroups of $L$. All we need here is that if $L$ is a $p$-group, then $\Phi(L) = L^pL'$, which is the subgroup of $L$ generated by the commutator subgroup $L'$ and $p$-th powers of all elements of $L$, so that $L/\Phi(L)$ is an elementary abelian $p$-group, isomorphic to $C_p^{\, j}$ for some $j\ge 1$. 

\begin{lem}\label{l: vector block}
Let $H$ be a group, $p$ a prime, and $L$ a normal $p$-subgroup of $H$. If $L/\Phi(L)\cong C_p^{\, j}$,
then $|\ord(h)|_p \leq |H|_p/p^{j-1}$ for every $h \in H$.
\end{lem}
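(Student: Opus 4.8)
The plan is to reduce to $p$-elements and then exhibit a single explicit $p$-subgroup of $H$ that is forced to be large.

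First I would reduce to the case where $h$ is a $p$-element. Writing $h = h_p h_{p'}$ for the commuting $p$- and $p'$-parts of $h$ inside the cyclic group $\langle h\rangle$, we have $|\ord(h)|_p = \ord(h_p)$, and $h_p$ is a $p$-element. So it suffices to prove that any $p$-element $g \in H$, say of order $p^a$, satisfies $p^a \le |H|_p/p^{j-1}$. Since $L$ is a normal $p$-subgroup it lies in every Sylow $p$-subgroup, so $|L|$ divides $|H|_p$ and in particular $|H|_p \ge |L| \ge p^{j}$; hence the desired inequality is equivalent to the divisibility statement that $p^{a+j-1}$ divides $|H|_p$.

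The main idea is to consider the subgroup $M = \langle g\rangle L$. Because $L$ is normal in $H$ this product is a subgroup, and being the product of the cyclic $p$-group $\langle g\rangle$ with the normal $p$-subgroup $L$ it is itself a $p$-subgroup of $H$; hence $|M|$ divides $|H|_p$. The whole argument then reduces to proving the lower bound $|M| \ge p^{a+j-1}$, since together with $|M|$ dividing $|H|_p$ this yields exactly $p^a \le |H|_p/p^{j-1}$. For this I would use the product formula
\[
|M| \;=\; \frac{|\langle g\rangle|\,|L|}{|\langle g\rangle \cap L|} \;=\; \frac{p^a\,|L|}{|\langle g\rangle \cap L|},
\]
together with $|L| = p^{j}\,|\Phi(L)|$, which holds because $L/\Phi(L)\cong C_p^{\, j}$ has order $p^j$.

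The one step that requires care — and the only place the hypothesis on the rank of $L$ is used — is controlling the cyclic intersection $\langle g\rangle\cap L$ against $\Phi(L)$. Here I would argue that the image of $\langle g\rangle\cap L$ in the quotient $L/\Phi(L)\cong C_p^{\, j}$ is a cyclic subgroup of an elementary abelian $p$-group, hence has order at most $p$; consequently $|\langle g\rangle\cap L| \le p\,|\Phi(L)|$. Substituting this into the product formula gives $|M| \ge p^a\,p^{j}|\Phi(L)|/(p\,|\Phi(L)|) = p^{a+j-1}$, which completes the proof. The main obstacle is precisely this intersection estimate: the rest is index bookkeeping, whereas it is the elementary-abelian structure of $L/\Phi(L)$ of rank exactly $j$ that pins down the loss of the factor $p^{j-1}$ and nothing smaller.
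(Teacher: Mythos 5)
Your proof is correct, and it takes a genuinely different route from the paper's. The paper reduces to $p$-elements exactly as you do, but then argues by a filtration on powers of $h$: writing $p^i = |H/L|_p$ and $p^k = |\Phi(L)|$, it notes that $h^{p^i}\in L$, hence $h^{p^{i+1}}\in\Phi(L)$ because $L/\Phi(L)$ is elementary abelian, hence $h^{p^{i+1+k}}=1$, so $\ord(h)_p$ divides $p^{i+1+k}=|H|_p/p^{j-1}$. You instead form the $p$-subgroup $M=\langle g\rangle L$, bound $|M|$ above by $|H|_p$ (Lagrange, since $|M|$ is a power of $p$ dividing $|H|$), and bound it below by $p^{a+j-1}$ via the product formula together with the estimate $|\langle g\rangle\cap L|\le p\,|\Phi(L)|$. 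Both arguments pivot on the same key fact — a cyclic group can meet the elementary abelian quotient $L/\Phi(L)\cong C_p^{\,j}$ in a subgroup of order at most $p$ — but the paper applies it to the single element $h^{p^i}$ (its $p$-th power falls into $\Phi(L)$), whereas you apply it to the subgroup $\langle g\rangle\cap L$. The paper's version is a pure exponent-chasing argument through the chain $H\ge L\ge\Phi(L)\ge 1$ and makes the decomposition $|H|_p=p^{i+j+k}$ explicit; yours localizes the constraint in a concrete subgroup, showing $|\langle g\rangle L|\ge p^{a+j-1}$, which is marginally more structural information, at the modest cost of invoking the product formula. Either way the loss of exactly $p^{j-1}$ is accounted for identically, and both proofs are elementary (Lagrange-level) and complete.
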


\begin{proof}
First let $p^i = |H/L|_p$ and $p^k = |\Phi(L)|_p$, so that $|H|_p = |H:L|_p\,|L: \Phi(L)||\Phi(L)| = p^{i+j+k}$.
Next, if $\ord(h) = uv$ with $p$-part $u$ ($= \ord(h)_p$) and $p'$-part $v$, then we may take $h' = h^v$,
and find that $\ord(h)_p = u = \ord(h') = \ord(h')_p$, so we can replace $h$ by $h'$, or more simply,
assume without loss of generality that $h$ is a $p$-element.  Then $h^{p^i} \in L$, so $h^{p^{i+1}} = (h^{p^{i}})^p \in \Phi(L)$
and therefore $h^{p^{i+1+k}} = (h^{p^{i+1}})^{p^k} = 1$, so $\ord(h)_p$ divides $p^{i+1+k} = |H|_p/(p^{j-1})$.
\end{proof}

We continue by taking a more detailed look into the structure of an insoluble $(2,m,n)^*$-group $G$ with $\chi_G$ odd, that is, one satisfying property (b) in Proposition \ref{p: odd order}. Actually, instead of working with $\chi_G$ and its prime factorisation, we will use the the prime factorisation of $|G|/ \lcm(m,n)$, because it appears to behave better when taking quotients of $G$. In the proof we will use properties of the Fitting subgroup and the generalised Fitting subgroup of $G$.

The Fitting subgroup $F(G)$ of $G$ is the (unique) largest nilpotent normal subgroup of $G$, while the generalised Fitting subgroup $F^*(G)$ of $G$ is the unique subgroup of $G$ of the form $E(G)F(G)$, where $F(G)$ is the Fitting subgroup of $G$ and $E(G)$ is the so-called {\em layer} of $G$, which is the unique largest semi-simple normal subgroup of $G$. Note that semi-simplicity of $E(G)$ means that $E(G)$ is a central product of a finite number of quasi-simple subgroups of $G$, where a subgroup $H < G$ is quasi-simple if it is perfect (that is, its commutator subgroup $H'$ is equal to $H$) and the quotient $H/Z(H)$ of $H$ by its centre $Z(H)$ is a simple group. For more details about the generalised Fitting subgroup of a group we refer the reader to \cite[Chapter 11]{aschfgt}.

\begin{prop}\label{l: odd order ind}
Let $G$ be a $(2,m,n)^*$-group such that $G/O(G)$ is isomorphic to $\PSL_2(q)$ or $\PGL_2(q)$ for some odd prime-power $q\geq 5$, and suppose that $|G|/\lcm(m,n)=2^{a_0}p_1^{a_1}\cdots p_k^{a_k}$, where $p_1,\dots, p_k$ are odd primes and $a_1,\dots, a_k$ are positive integers.

Then $G$ has a single non-abelian composition factor isomorphic to $\PSL_2(q)$, and has a normal subgroup $N$ of odd order such that {\rm (i)} every prime divisor of $|N|$ is equal to $p_i$ for some $i$, and {\rm (ii)} if $p_i$ divides $|N|$ then $a_i\geq \veps(q,p_i)-1$, and {\rm (iii)} one of the following holds$\,:$
\begin{enumerate}
\item\label{e(K)} $N=O(G)$ and $G/O(G)\cong \PSL_2(q)$ for some odd prime-power $q\ge 5\,;$
\item $G/N\cong 3.A_6\,;$
\item\label{direct cyclic} $G/N\cong (\PSL_2(q) \times C).2$ for some odd prime-power $q\ge 5$ and a cyclic group
$C$ of odd order$\,;$
\item\label{3A6} $G/N$ is isomorphic to $3.(A_6\times C).2$ where $C$ is a cyclic group of odd order.
\end{enumerate}
 \end{prop}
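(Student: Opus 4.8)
The plan is to analyze the structure of $G$ using the generalised Fitting subgroup $F^*(G)$, leveraging the fact that $G/O(G)$ is already known to be $\PSL_2(q)$ or $\PGL_2(q)$ by hypothesis. First I would establish that $G$ has a single non-abelian composition factor isomorphic to $\PSL_2(q)$: since $O(G)$ is soluble (being of odd order, though I only need that its composition factors are abelian), all non-abelian composition factors of $G$ arise from $G/O(G)$, which has exactly one, namely $\PSL_2(q)$. Next I would examine the layer $E(G)$. Because $E(G)$ is a central product of quasi-simple subgroups and each contributes a non-abelian composition factor, the uniqueness just established forces $E(G)$ to be either trivial or a single quasi-simple group $Q$ with $Q/Z(Q)\cong \PSL_2(q)$. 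The possible such $Q$ are $\PSL_2(q)$ itself or one of its proper covers; the only covers with a centre of odd order (which is what can survive inside the relevant normal structure) are the exceptional one $3.A_6$ when $q=9$, since $\PSL_2(9)\cong A_6$ has Schur multiplier of order $6$. This is where cases (2) and (4) with the factor $3.A_6$ will come from.

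Then I would set $N$ to be an appropriate odd-order normal subgroup — concretely, I would take $N = O(G)\cap F(G)$, or more precisely the product of the odd-order Fitting-type radical with the odd-order centre of the layer, arranged so that $G/N$ collapses to one of the four listed quotients. The claims (i) and (ii) about prime divisors are the arithmetic heart of the argument. For (i), every prime dividing $|N|$ divides $|G|$, and since $N$ has odd order and $G/N$ is essentially $\PSL_2(q)$ or a small cover times a cyclic group, I would check that these primes must appear in the factorisation of $|G|/\lcm(m,n)$ rather than being absorbed entirely into $\lcm(m,n)$; this uses that $N$ is normal and acts on the structure above it. For (ii), the bound $a_i \ge \veps(q,p_i)-1$ is exactly where Lemma~\ref{l: covers of K} enters: the quotient $G/N$ contains a section isomorphic to $\PSL_2(q)$ acting on the $p_i$-part of $N$, which I would realise as a section of some $\PGL_k(p_i^{\,j})$ with $k$ controlled by the $p_i$-structure, and Lemma~\ref{l: covers of K} then gives $k \ge \veps(q,p_i)$. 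Translating this minimal dimension into a lower bound on the exponent $a_i$ in $|G|/\lcm(m,n)$ will require Lemma~\ref{l: vector block} to relate the order of a relevant $p_i$-element (appearing in $m$ or $n$) to the $p_i$-part of $|G|$.

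The main obstacle I anticipate is the bookkeeping in step (ii): making precise how the action of the $\PSL_2(q)$-section on the chief factors of the $p_i$-part of $N$ yields a genuine section of $\PGL_k(p_i^{\,j})$ of the right dimension, and then converting the dimension bound $k\ge\veps(q,p_i)$ into the exponent bound $a_i \ge \veps(q,p_i)-1$. The subtlety is that $a_i$ measures the $p_i$-part of $|G|/\lcm(m,n)$, not of $|N|$ directly, so I would need to track how much of the $p_i$-part is consumed by $\lcm(m,n)$ — essentially one factor of $p_i$ can be hidden in an element order, which accounts for the $-1$ in the bound. I expect the separation into the four cases (1)--(4) to be relatively clean once the layer is pinned down: case (1) is when $E(G)$ is trivial and $G/O(G)\cong\PSL_2(q)$; cases (3) and (4) arise when the layer is present (giving $\PSL_2(q)$ or $3.A_6$) and is centralised by a cyclic odd-order complement $C$ with the outer involution giving the trailing \emph{.2}; and case (2) is the degenerate $3.A_6$ situation with trivial $C$. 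Checking that no other configuration of $F^*(G)$ can occur — in particular ruling out larger central products or even-order centres incompatible with the dihedral Sylow $2$-subgroups — is the remaining delicate point.
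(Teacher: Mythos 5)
Your proposal correctly identifies several key ingredients --- Lemma~\ref{l: covers of K} and Lemma~\ref{l: vector block} as the source of the exponent bound (ii), and the Schur multiplier of $\PSL_2(9)\cong A_6$ as the source of the $3.A_6$ cases --- but the scaffolding that makes these ingredients applicable is missing, and the concrete construction you propose for $N$ does not work. The paper's proof is an \emph{induction on $|G|$}, and that induction cannot be dispensed with. Your one-shot choice $N=O(G)\cap F(G)$ (or a Fitting-radical-times-centre-of-layer variant) already fails against case (a) of the conclusion: there $N$ must be all of $O(G)$, and nothing forces $O(G)$ to be nilpotent, so your $N$ may be a proper subgroup of $O(G)$, in which case $G/N$ is an extension of a non-trivial odd-order group by $\PSL_2(q)$ --- none of the four listed quotients. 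Your matching of the four cases to the layer of $G$ is also wrong in the opposite direction: cases (c) and (d) with $N\neq 1$ typically occur with $E(G)$ \emph{trivial} (this is exactly the situation $F^*(G)=F(G)$, $|N|>1$ that dominates Section~\ref{s: insol-PGL}; see Lemmas~\ref{l: fstar nilpotent} and~\ref{l: fstar lower bound}, and groups such as ${\rm E}_{7^3}{\cdot}\PGL_2(7)$ in Corollary~\ref{c: main}). There the ``layer'' is visible only in the quotient $G/N$, not in $G$, and the paper's only mechanism for discovering it is to peel off one direct factor $R$ of $F(G)$, apply the induction hypothesis to the smaller $(2,m',n')^*$-group $G/R$, and pull the resulting subgroup $L\lhd G/R$ back to $N\lhd G$ (Claim 2 of the paper's proof). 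Your proposal has no substitute for this step.

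Two further specific gaps. First, to run your argument for (ii) you need the $\PSL_2(q)$-section to be a section of $\Aut(R)$ for the relevant normal $p_i$-subgroup $R$, i.e.\ you need $C_G(R)$ to be soluble; nothing in your proposal delivers this, since a priori the insoluble part of $G$ could centralise $R$. In the paper this is Claim 3, an application of Bender's theorem $C_G(F^*(G))\le F^*(G)$ \cite[Theorem 31.13]{aschfgt}, and even then it only yields solubility of the centraliser of \emph{some} direct factor of $F(G)$ --- which is precisely why the primes dividing $|N|$ must be processed one at a time through the induction rather than simultaneously. (Note also that ``the $p_i$-part of $N$'' need not even be a subgroup, since the $N$ produced by the proposition need not be nilpotent.) Second, the cyclicity of $C$ in cases (c) and (d) is asserted in your proposal but never proved, and it is not automatic: in the paper it follows from the $(2,m,n)^*$-structure, namely that $G/E$ is generated by the images of $a,b,c$ --- three elements of order at most $2$, two of which commute --- together with $|G/E|_2=2$, forcing $G/E$ to be cyclic or dihedral and hence its odd-order subgroup $OE/E\cong O/(O\cap E)$ to be cyclic. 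Without this argument you would only obtain $(\PSL_2(q)\times C).2$ for some odd-order group $C$, a strictly weaker conclusion than the proposition claims.
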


\begin{proof}
First, the conclusion that $G$ has a single non-abelian composition factor isomorphic to $\PSL_2(q)$ is obvious.
We will proceed by induction on the order of $G$ to prove the other conclusions (about $N$), by assuming that the entire conclusion is valid for all $(2,m',n')^*$-groups $H$ of order less than $|G|$ such that $H/O(H)$ is isomorphic to $\PSL(2,q)$ or $\PGL(2,q)$ for some odd prime-power $q\ge 5$. For a fixed $(2,m,n)^*$-group $G$ as in the statement, let $F^*(G)=E(G)F(G)$ be the generalised Fitting subgroup of $G$, where $E(G)$ is the layer of $G$ and $F(G)$ is the Fitting subgroup of $G$.

We divide the proof into a series of claims.
\medskip

{\bf Claim 1:} {\sl The statement of Proposition \ref{l: odd order ind} is valid if the layer $E(G)$ of $G$ is
non-trivial.}
\medskip

{\bf Proof of Claim 1.}
As $E(G)$ is a central product of quasi-simple groups, each of those would give rise to a non-abelian composition factor of $G$, but $G$ has a single non-abelian composition factor $K$, so $E=E(G)$ itself is quasi-simple and hence is perfect, with $E/Z(E)\cong K$, and so $E$ is a perfect central extension of $K$.
In other words, $E$ is isomorphic to a quasi-simple cover of $K$.  The theory of perfect central extensions, as initiated by Schur \cite{sch}, shows that (in our situation) the possibilities of a perfect group $E$ having the property that $E/Z(E)\cong K$ are limited in that the centre $Z(E)$ is a subgroup of a unique maximal group $M(K)$, namely the {\em Schur multiplier} of $K$. Schur multipliers of finite simple groups are known and listed (for example) in \cite[Theorem 5.1.4]{kl}. The relevant facts for us are that $M(K)\cong C_2$ if $K\cong \PSL_2(q)$ for $q\ge 5$ and $q\ne 9$, and $M(K) \cong C_6$ if $K\cong \PSL_2(9)\cong A_6$. In particular, $|M(K)|$ divides $6$ in all these cases.

Since $Z=Z(E)$ is characteristic in the normal subgroup $E$ of $G$, it follows that $Z\lhd G$. With $O=O(G)$,  there is a normal chain $O\lhd OZ\lhd OE\lhd G$ with the middle inclusion being proper, and with $|Z|\le 6$. Then factoring out $O$ gives a chain $1\lhd OZ/O\lhd OE/O\lhd G/O$, but by our assumption the last term is either simple or contains a simple subgroup of index $2$, and it follows that $OE$ is equal to $G$ or has index $2$ in $G$, and $OZ/O$ is trivial, so that $Z \le O$, and then the bound on the order of $Z$ implies that $Z$ is trivial or isomorphic to $C_3$. Invoking Schur's theory and \cite[Theorem 5.1.4]{kl} again, we find that $E=E(G)$ is isomorphic to $\PSL_2(q)$ with $q$ odd, or to $3.A_6$.

Next, as $[G:OE]\le 2$ we have $|G/E|_2\leq 2$. If $|G/E|_2=1$, then $G/E$ is a group of odd order generated by three elements of order $1$ or $2$ and hence is trivial, so $G=E=E(G)$, and then \eqref{e(K)} or (b) holds with $N$ trivial. On the other hand, if $|G/E|_2=2$, then $|G:OE|=2$ and $G/O\cong \PGL_2(q)$, and so $E\cong \PSL_2(q)$ or $3.A_6$. Moreover, if $G=\langle a,b,c\rangle$ with presentation \eqref{eq:abc}, then $G/E$ is generated by three elements $aE, bE$ and $cE$ of order $1$ or $2$ with $[bE,cE]$ trivial, and since $|G/E|_2=2$ it follows that $\langle bE, cE\rangle$ has order $1$ or $2$, and so $G/E$ is generated by at most two involutions and hence is cyclic or dihedral. Then $OE/E$ is cyclic (of odd order), and since $OE/E\cong O/(O\cap E)$ we conclude that $O/(O\cap E)$ is cyclic. If $O\cap E$ is trivial, then $EO \cong E\times O$, so $G\cong (E\times O).2$ and either \eqref{direct cyclic} or \eqref{3A6} holds with $N$ trivial, while if $E\cap O$ is non-trivial, then $E\cap O$ is normal in $E$ and the only possibility $E=3.A_6$ with  $|E\cap O|=3$, so \eqref{3A6} holds with $N$ trivial. \hfill $\Box$
\medskip

From now on we can assume that $E=E(G)$ is trivial, so that $F^*(G)=F(G)$, which is the largest normal nilpotent subgroup of $G$ and hence a direct product of groups of prime-power order, each characteristic in $F=F(G)$ and hence normal in $G$. Now if some $2$-group $L$ appears as a direct factor of $F$, then the normal chain $O\lhd OL\lhd G$ and the corresponding chain $1\lhd OL/O\lhd G/O$ together with our assumption on $G/O$ imply that $|OL|$ is equal to $|G|$ or $|G|/2$ and hence $|L|$ is equal to $|G/O|$ or $|G/O|/2$, a contradiction since $|L|$ is a power of $2$. It follows that $F = R_1 \times\cdots \times R_t$ where each $R_i$ is a non-trivial $r_i$-group for some odd prime $r_i$, and all the $r_i$ are distinct ($1\le i\le t$). In particular, $F=F(G)$ is a subgroup of $O=O(G)$, the largest odd-order normal subgroup of $G$.
\medskip

{\bf Claim 2:} {\sl If the layer $E(G)$ of $G$ is trivial, and a direct factor of the Fitting subgroup $F(G)$ has a soluble centraliser in $G$, then the conclusion of Proposition \ref{l: odd order ind} is valid for the group $G$.}
\medskip

{\bf Proof of Claim 2.}
With notation as above, suppose that for some $i\in \{1,2,\ldots,t\}$ the centraliser $C_G(R_i)$ is soluble, and for notational convenience, let $R = R_i$ and $r = r_i$ (so that $R$ is an $r$-group). Then since $R$ is normal in $G$, we know that $C_G(R)$ normal in $G$, and so the quotient $G/C_G(R)$ is insoluble and contains $K\cong \PSL_2(q)$ as a section.
Moreover, as $G/C_G(R)$ is isomorphic to a subgroup of $\Aut(R)$, the group $K$ is isomorphic to a section of $\Aut(R)$ as well.

Next, every automorphism of $R$ preserves the (characteristic) Frattini subgroup $\Phi(R)$ of $R$, and so there exists a natural epimorphism $\phi:\ \Aut(R) \to \Aut(R/\Phi(R))$.
By a theorem of Burnside \cite[Theorem 1.4, p.174]{gor}, the kernel of $\phi$ is an $r$-group, and hence is soluble, and it follows that $\Aut(R)/\ker\phi\cong \Aut(R/\Phi(R))$ also contains a copy of $K$ as a section. But $R/\Phi(R)$ is elementary abelian, so $\Aut(R/\Phi(R))$ is isomorphic to $\GL_k(r)$ for some $k$,
and then by Lemma~\ref{l: covers of K} we find that $k\geq\veps(q,r)$. Moreover, by Lemma~\ref{l: vector block} applied to elements of order $m$ and $n$ in our $(2,m,n)^*$-group $G$ and to its normal $r$-subgroup $R$, we  have $|m|_r, |n|_r \leq |G|/ r^{\veps(q,r)-1}$. It follows that $r^{\veps(q,r)-1}$ (and then also $r$) divides  $|G|/\lcm(m,n)$, and hence that $r\in \{p_1,p_2, \ldots, p_k\}$, which is the list of primes in the statement of Proposition \ref{l: odd order ind}, say $r=p_j$, and that $a_j\geq \veps(q,p_j)-1$ for the exponent $a_j$ of the prime $p_j$ in the same statement.

The normal chain $R\lhd F\lhd O\lhd G$ induces the chain $1\lhd F/R\lhd O/R \lhd G/R$, which implies that the largest odd-order normal subgroup $O(G/R)$ of the quotient $H=G/R$ may be identified with $O/R$, so that $G/O\cong (G/R)/(O/R) \cong H/O(H)$. It follows that the group $H$ inherits one of the basic properties assumed of $G/O$, namely that $H/O(H) \cong \PSL(2,q)$ or $\PGL(2,q)$ for some odd prime-power $q\ge 5$.  Thus $K= \PSL_2(q)$ is a single non-abelian composition factor of $H$. Furthermore, as $G=\langle a,b,c\rangle$ is a $(2,m,n)^*$-group with $m=\ord(ca)$ and $n=\ord(ab)$, it follows that $H = G/R = \langle aR,bR,cR\rangle$ is a $(2,m',n')^*$-group for $m'=\ord(caR)$ and $n'=\ord(abR)$, with both $m/m'$ and $n/n'$ being powers of $r$.

We may now apply our induction hypothesis, and assume that the conclusions of Proposition \ref{l: odd order ind} hold for the group $H=G/R$. In particular, writing $|H|/\lcm(m',n')=2^{a_0}p_1^{a_1}\cdots p_{j-1}^{a_{j-1}} p_j^{\alpha_j} p_{j+1}^{a_{j+1}}\cdots p_k^{a_k}$ where, except for the subscript $j$ for which $p_j=r$, the primes $p_1,\dots, p_k$ and the positive integers $a_1,\ldots, a_k$ are as in the statement of Proposition \ref{l: odd order ind}, while $\alpha_j \ge 0$. Also by the induction hypothesis, $H$ has a normal subgroup $L$ of odd order such that ${\rm (i')}$ every prime divisor of $|L|$ is equal to $p_s$ for some $s\in \{1,2,\ldots,k\}$, and ${\rm (ii')}$ if $p_s$ divides $|N|$ then $a_s\geq \veps(q, p_s)-1$ for $s\ne j$, while $\alpha_j\ge \veps(q,p_j)-1$  if (and only if) $\alpha_j\ge 1$, and ${\rm (iii')}$ one of the following holds$\,:$
\begin{enumerate}
\item[${\rm (a')}$]\label{e(K')} $L=O(H)$ and $H/O(H)\cong \PSL_2(q)$ for some odd prime-power $q\ge 5\,;$
\item[${\rm (b')}$]\label{3A7'} $H/L \cong 3.A_6\,;$
\item[${\rm (c')}$]\label{direct cyclic'} $H/L\cong (\PSL_2(q) \times C).2$ for some odd prime-power $q\ge 5$ and a cyclic group $C$ of odd order$\,;$
\item[${\rm (d')}$]\label{3A6'} $H/L$ is isomorphic to $3.(A_6\times C).2$ where $C$ is a cyclic group of odd order.
\end{enumerate}

By the correspondence theorem for groups, there exists a group $N$ such that $R\lhd N\lhd O\lhd G$ and  $N/R\cong L$, and then $G/N\cong (G/R)/(N/R) \cong H/L$. By ${\rm (i')}$, every prime divisor of $|N|$ is either $p_j=r$, or $p_s$ for some $s\ne j$. Also ${\rm (ii')}$ shows that $a_s\geq \veps(q, p_s)-1$ for $s\ne j$, and the bound $a_j\ge \veps(q, p_j)-1$ for the exponent of $p_j=r$ follows from the above, independently of the value of $\alpha_j$. This establishes properties (i) and (ii) from the statement of Proposition
\ref{l: odd order ind}, and  validity of items (a) to (d) of property (iii) for the subgroup $N\lhd G$ with $N/R\cong L$ follows from items ${\rm (a')}$ to ${\rm (d')}$ above, with the help of the isomorphism $H/L\cong G/N$.
\hfill $\Box$
\medskip

{\bf Claim 3.} {\sl If $F^*(G)=F(G)$, then some direct factor of $F(G)$ has soluble centraliser in $G$}.
\medskip

{\bf Proof of Claim 3.}
Define $G_o=G$ if $G/O\cong K$, while if $G/O\cong \PGL(2,q) \cong K{\cdot}2$ define $G_o$ as the unique subgroup of index $2$ in $G$  such that $G_o/O\cong K$.  Then $O=O(G)=O(G_o)$ in both cases. Also as before, let $F=F^*(G) = F(G) = R_1 \times\cdots \times R_t$ with non-trivial $r_i$-groups $R_i$ for distinct odd primes $r_i$ ($1\le i\le t$), so that $F\lhd O$; and now let $C_{(i)}$ be the centraliser $C_G(R_i)\cap G_o$ of $R_i$ in $G_o$ for $1 \le i \le t$.

Assume that the centralisers $C_G(R_i)$ are all insoluble, for  $1\le i\le t$.  Then so are the centralisers $C_{(i)}$ in $G_o$, and it follows that $C_{(i)}O/O \cong C_{(i)}/(C_{(i)}\cap O)$ is also insoluble, for $1 \le i \le t$. Now simplicity of $K$ and the correspondence theorem for groups imply that the normal subgroup $C_{(i)}O/O$ of $G_o/O\cong K$ is either trivial or isomorphic to $K$, which means that $C_{(i)}O = O$ or $G_o$. The first possibility would give $C_{(i)} \le O$, contrary to insolubility of $C_{(i)}$, and therefore $C_{(i)}O=G_o$ for {\em every} $i\in \{1,2,\ldots,t\}$. Then if $C_*$ is the intersection $C_{(1)}\cap \ldots\cap C_{(t)}$ of all the $C_{(i)}$,  then $C_*O=G_o$, and in particular, $C_*$ is insoluble. But also $C_*$ centralises $R_1 \times\cdots \times R_t = F$, which enables us to use Bender's theorem \cite[Theorem 31.13]{aschfgt}, which gives $C_G(F^*(G))\le (F^*(G))$ and hence implies in our case (where $F^*(G)=F(G)=F$) that $C^*$ is soluble, a contradiction.
\hfill $\Box$
\medskip

This completes the proof of Proposition \ref{l: odd order ind}: if $F^*(G)\ne F(G)$,
then it follows from Claim 1, while if $F^*(G)=F(G)$, then by Claim 3 at least one direct factor of $F(G)$ has a
soluble centraliser in $G$, and then the induction step is accomplished by Claim 2.
\end{proof}
\medskip

We can now prove our main structure theorem for odd $\chi_G$.

\begin{thm}\label{t: odd order}
Let $G$ be a $(2,m,n)^*$-group with $\chi_G$ odd,
and  suppose $|G|/\lcm(m,n)=2^{a_0}p_1^{a_1}\cdots p_k^{a_k}$
where $a_0$ is a non-negative integer, $k$ is a positive integer,
$p_1,\dots, p_k$ are odd primes, and $a_1,\dots, a_k$ are positive integers.

If $G$ is soluble, then $G/O(G)$ is isomorphic to either a $2$-group or $S_4$.

If $G$ is insoluble, then $G/O(G)$ is isomorphic to $\PSL_2(q)$ or $\PGL_2(q)$ for some odd prime-power $q\ge 5$.
Moreover, in that case $G$ has a single non-abelian composition factor $K\cong \PSL_2(q)$, and has a normal subgroup $N$ of odd order such that {\rm (i)} every prime divisor of $|N|$ is equal to $p_j$ for some $j$, {\rm (ii)}  if $p_j$ divides $|N|$ then $a_j\geq \veps(q, p_j)-1$, and {\rm (iii)} one of the following holds$\,:$
\begin{enumerate}
\item $N=O(G)$ and $G/O(G)\cong \PSL_2(q)$ for some odd prime-power $q\ge 5\,;$
\item $G/N\cong 3.A_6\,;$
\item $G/N\cong (\PSL_2(q) \times C).2$ for some odd prime-power $q\ge 5$ and cyclic group $C$ of odd order$\,;$
\item $G/N$ is isomorphic to $3.(A_6\times C).2$ where $C$ is a cyclic group of odd order.
\end{enumerate}
\end{thm}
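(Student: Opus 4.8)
The plan is to obtain Theorem~\ref{t: odd order} as a synthesis of Proposition~\ref{p: odd order} and Proposition~\ref{l: odd order ind}, together with two elementary observations. I would begin by applying Proposition~\ref{p: odd order} to the $(2,m,n)^*$-group $G$ (whose Sylow $2$-subgroups are dihedral since $\chi_G$ is odd), which yields the dichotomy: either (a) $G$ has a normal $2$-complement, so that $G/O(G)$ is isomorphic to a Sylow $2$-subgroup $P$, or (b) $G/O(G)\cong \PSL_2(q)$ or $\PGL_2(q)$ for some odd prime-power $q$. The rest of the argument is a case analysis sorting these alternatives according to whether $G$ is soluble or not.

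For the soluble case, I would argue as follows. In alternative (a) the quotient $G/O(G)\cong P$ is a $2$-group, one of the permitted conclusions. In alternative (b), solubility of $G$ forces solubility of $G/O(G)$; since $\PSL_2(q)$ is non-abelian simple for every odd prime-power $q\ge 5$, this is only possible when $q=3$, giving $G/O(G)\cong\PSL_2(3)\cong A_4$ or $G/O(G)\cong\PGL_2(3)\cong S_4$. The key observation is that any quotient of $G=\langle a,b,c\rangle$ is generated by the images of the three involutions $a,b,c$, whereas the involutions of $A_4$ generate only its Klein four subgroup; hence $A_4$ is not generated by involutions and is excluded, leaving $G/O(G)\cong S_4$. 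This establishes the soluble part.

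For the insoluble case, I would first rule out alternative (a): a normal $2$-complement would exhibit $G$ as an extension of the odd-order (hence soluble, by Feit--Thompson) group $O(G)$ by the $2$-group $P$, forcing $G$ soluble, contrary to hypothesis. Thus alternative (b) holds, and insolubility of $G$ (equivalently of $G/O(G)$) rules out $q=3$, so that $q\ge 5$. At this point the hypotheses of Proposition~\ref{l: odd order ind} are met: $G/O(G)\cong\PSL_2(q)$ or $\PGL_2(q)$ with $q\ge 5$ odd, and $|G|/\lcm(m,n)$ has the stated prime factorisation. Invoking that proposition directly supplies the single non-abelian composition factor $K\cong\PSL_2(q)$ and the normal subgroup $N$ of odd order satisfying conditions (i), (ii) and the four-way alternative (iii), which are exactly the remaining conclusions of the theorem.

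Because Propositions~\ref{p: odd order} and~\ref{l: odd order ind} already carry the analytic weight, the proof here is essentially one of assembly, and I expect no serious obstacle. The only genuinely new ingredients are the two small observations that do the sorting: the exclusion of $A_4$ via the fact that $G/O(G)$ is generated by involutions, and the use of Feit--Thompson to convert the normal-$2$-complement alternative into solubility of $G$. Care is needed only in tracking the low-degree exceptional identifications $\PSL_2(3)\cong A_4$ and $\PGL_2(3)\cong S_4$, which is precisely where the apparent extra possibility $S_4$ in the soluble conclusion originates.
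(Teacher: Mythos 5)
Your proposal is correct and follows essentially the same route as the paper: both assemble Theorem~\ref{t: odd order} from Proposition~\ref{p: odd order} and Proposition~\ref{l: odd order ind}, handle the soluble case by reducing to $q=3$ and excluding $A_4$ (your ``not generated by involutions'' observation is the same fact the paper phrases as $G$ having a non-trivial odd-order quotient), and pass the insoluble case to Proposition~\ref{l: odd order ind}. Your explicit use of Feit--Thompson to rule out the normal $2$-complement alternative for insoluble $G$ is only left implicit in the paper, so nothing essential differs.
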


\begin{proof}
We simply apply Proposition~\ref{p: odd order} and the first sentence of its proof. If $G$ is soluble, then either $G/O(G)$ is a $2$-group (and we are done), or $G/O(G)\cong \PSL_2(q)$ or $\PGL_2(q)$ for some odd $q$, necessarily with $q < 5$ and hence $q = 3$. If $G/O(G)\cong PSL_2(3) \cong A_4$, then $G$ has a non-trivial quotient of odd order, which is impossible, so $G/O(G)\cong PGL_2(3) \cong S_4$. On the other hand, if $G$ is insoluble, then Propositions~\ref{p: odd order} and ~\ref{l: odd order ind} apply, and the remaining conclusions follow.
\end{proof}

\subsection{Some notation and useful basic lemmas}

Theorem~\ref{t: odd order} will be the basis for the proof of Theorem~\ref{t: main} which now follows. Now is a good moment to fix some notation.

For $G= \langle a,b,c\rangle$ a $(2,m,n)^*$-group with presentation \eqref{eq:abc}, we will continue to write $O=O(G)$ and we will set $\overline{G}=G/O$. We will use the symbols $\overline{m}$ and $\overline{n}$ for the orders of the elements $abO$ and $bcO$ in $\overline{G}$,  respectively; in particular $\overline{G}$ is a $(2,\overline{m}, \overline{n})^*$-group. We will continue to use the notation $|j|_t$ (or sometimes $j_t$) to denote the largest power of the prime $t$ that divides the positive integer $j$.

We conclude this section with three useful lemmas, the first of which is a re-statement of Lemma 3.2 in \cite{cps}.

\begin{lem}\label{l: sylow}
If $G$ is a $(2,m,n)^*$-group, and $t$ is an odd prime divisor of $|G|$ with $\gcd(t,\chi_G)=1$, then every
Sylow $t$-subgroup of $G$ is cyclic.
\end{lem}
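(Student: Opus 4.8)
The plan is to run a fixed-point counting argument modulo $t$ on the three combinatorial orbit sets attached to the map --- its vertices, edges and faces --- and then to exploit the dihedral structure of the point-stabilisers. First I would record the relevant data. Since $G$ acts regularly on the flag set of size $|G|$, the numbers of vertices, edges and faces are $V = |G|/(2n)$, $E = |G|/4$ and $F = |G|/(2m)$, while the stabiliser of a vertex, of an edge, and of a face is, respectively, a dihedral group of order $2n$ (a conjugate of $\langle b,c\rangle$), a group of order $4$ (a conjugate of $\langle a,c\rangle$), and a dihedral group of order $2m$ (a conjugate of $\langle a,b\rangle$). Euler's formula then gives $\chi_G = V - E + F$, consistent with \eqref{e: sunny}.

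Next I would fix a Sylow $t$-subgroup $T$ of $G$, noting that $T$ is a non-trivial $t$-group since $t\mid |G|$. For any finite $T$-set $X$ one has $|X|\equiv |X^T| \pmod t$, because every non-trivial $T$-orbit has size a positive power of $t$ and hence is divisible by $t$. Applying this with $X$ equal to the vertex, edge and face sets, and writing $v_0$, $e_0$, $f_0$ for the respective numbers of $T$-fixed points, I would observe that a $T$-fixed edge would force $T$ into a stabiliser of order $4$, which is impossible for a non-trivial $t$-group with $t$ odd; hence $e_0 = 0$. Combining the three congruences with Euler's formula yields $\chi_G \equiv v_0 - e_0 + f_0 = v_0 + f_0 \pmod t$.

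Finally, the hypothesis $\gcd(t,\chi_G)=1$ means $t\nmid \chi_G$, so $v_0 + f_0 \not\equiv 0 \pmod t$; in particular $v_0 + f_0 \geq 1$, so $T$ fixes at least one vertex or at least one face. In either case $T$ is contained in the corresponding stabiliser, which is a dihedral group (of order $2n$ or $2m$). Since $t$ is odd, the Sylow $t$-subgroups of a dihedral group lie inside its cyclic rotation subgroup and are therefore cyclic; as $T$ is a $t$-subgroup of this dihedral group, $T$ itself is cyclic, which is the desired conclusion.

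The argument is essentially routine once the set-up is in place, so I do not anticipate a genuine obstacle; the only points requiring a little care are the justification that $e_0 = 0$ (which rests on the edge stabilisers being $2$-groups) and the elementary fact that dihedral groups have cyclic Sylow subgroups at odd primes. Since the statement merely re-packages \cite[Lemma 3.2]{cps}, one could alternatively cite that result verbatim; the counting proof sketched above is the natural self-contained route, and it is the $e_0=0$ step that carries the key idea that odd-order elements cannot fix edges.
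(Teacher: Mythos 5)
Your proof is correct, but it is worth noting that it is necessarily a different route from the paper's, because the paper gives no proof at all: Lemma \ref{l: sylow} is introduced there as ``a re-statement of Lemma 3.2 in \cite{cps}'' and the argument is delegated entirely to that citation. Your self-contained counting argument checks out at every step. The counts $V=|G|/(2n)$, $E=|G|/4$, $F=|G|/(2m)$ are the right ones under the paper's convention that $bc$ is the $n$-fold vertex rotation and $ab$ the $m$-fold face rotation, and they are consistent with \eqref{e: sunny} via $\chi_G=V-E+F$. The congruence $|X|\equiv|X^T| \pmod t$ for a $t$-group $T$ acting on a finite set $X$ is standard; the key step $e_0=0$ is justified exactly as you say, since a $T$-fixed edge would embed the non-trivial odd $t$-group $T$ into a conjugate of $\langle a,c\rangle\cong C_2\times C_2$; and then $t\nmid\chi_G$ forces $v_0+f_0\ge 1$, so $T$ lies in a conjugate of the dihedral group $\langle b,c\rangle$ or $\langle a,b\rangle$, where every odd-order subgroup sits inside the cyclic rotation subgroup (all elements outside it are involutions) and is therefore cyclic; Sylow conjugacy then covers all Sylow $t$-subgroups. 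As for what each approach buys: the paper's citation keeps its Section 2 short and credits the original source, whereas your argument makes the lemma self-contained and makes transparent exactly where each hypothesis enters --- oddness of $t$ is used twice (to rule out fixed edges and to place $T$ in the rotation subgroup), and coprimality of $t$ with $\chi_G$ is used only in the counting step. The one point of care you flagged, the non-degeneracy needed for $|\langle a,c\rangle|=4$, is guaranteed by the paper's standing assumption $m,n\ge 3$ with non-degenerate regular maps, so there is no gap.
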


\begin{lem}\label{l: 2}
If $G$ is a $(2,m,n)^*$-group with odd $\chi_G$, and $|\overline{G}|_2 > 2\,|\overline{m}|_2 \, |\overline{n}|_2$, then $|G|_2=4$, and $m$ and $n$ are both odd.
\end{lem}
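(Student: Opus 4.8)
The plan is to bypass the structure of $\ovl{G}$ completely and argue $2$-adically from the Euler characteristic formula \eqref{e: sunny}; the whole statement will fall out of a single valuation identity. Throughout write $v_2(k)$ for the exponent of $2$ in a nonzero integer $k$, so that $|k|_2 = 2^{v_2(k)}$.

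First I would reduce the hypothesis and the conclusion to statements about $G,m,n$ rather than $\ovl{G},\ovl{m},\ovl{n}$. Since $O=O(G)$ has odd order, $|G|=|O|\,|\ovl{G}|$ gives $v_2(|G|)=v_2(|\ovl{G}|)$; write this common value as $k$. Moreover $\ovl{m}\mid m$, and $m/\ovl{m}=\ord(x^{\ovl{m}})$ is the order of an element of $O$, hence odd, so $|m|_2=|\ovl{m}|_2$; likewise $|n|_2=|\ovl{n}|_2$. Put $\alpha=v_2(m)=v_2(\ovl{m})$ and $\beta=v_2(n)=v_2(\ovl{n})$. Then the hypothesis $|\ovl{G}|_2>2\,|\ovl{m}|_2\,|\ovl{n}|_2$ reads $2^{k}>2^{\,1+\alpha+\beta}$, i.e.\ $\alpha+\beta\le k-2$, and the conclusion we must reach is exactly $k=2$ together with $\alpha=\beta=0$.

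Next I would apply $v_2$ to \eqref{e: sunny}, rewritten as the integer identity $(-\chi_G)\cdot 4mn=|G|\,(mn-2m-2n)$. Because $\chi_G$ is odd and nonzero (note $mn-2m-2n\ne 0$, as $\chi_G\ne 0$), we have $v_2(-\chi_G)=0$, while $v_2(|G|)=k$ and $v_2(4mn)=2+\alpha+\beta$. Hence
\[
 v_2(mn-2m-2n)=2+\alpha+\beta-k .
\]
Since the left-hand side is a genuine valuation it is $\ge 0$, which forces $\alpha+\beta\ge k-2$ — precisely the reverse of the inequality extracted from the hypothesis. Combining the two bounds yields $\alpha+\beta=k-2$ and $v_2(mn-2m-2n)=0$; that is, $mn-2m-2n$ is odd. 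As $2m$ and $2n$ are even, this makes $mn$ odd, so $m$ and $n$ are both odd. Therefore $\alpha=\beta=0$, whence $k=2$ and $|G|_2=2^{k}=4$, as required.

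The only point demanding any care is the reduction in the first paragraph, namely that passing from $G$ to $G/O$ changes none of the relevant $2$-parts because $O$ has odd order; after that, the $2$-adic valuation of the Euler characteristic relation does all the work, squeezing $\alpha+\beta$ to exactly $k-2$ and simultaneously forcing $mn$ to be odd. No facts about the isomorphism type of $\ovl{G}$ (whether it is a $2$-group, $\PSL_2(q)$ or $\PGL_2(q)$) are needed, so the main "obstacle" is really just resisting the temptation to analyse the group structure when the arithmetic of \eqref{e: sunny} suffices.
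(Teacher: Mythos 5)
Your proof is correct. Every step checks out: the reduction $|G|_2=|\overline{G}|_2$ and $|m|_2=|\overline{m}|_2$, $|n|_2=|\overline{n}|_2$ is justified exactly as you say (the quotients $m/\overline{m}$ and $n/\overline{n}$ are orders of elements of $O$, hence odd), the identity $(-\chi_G)\cdot 4mn=|G|\,(mn-2m-2n)$ is legitimate with $mn-2m-2n\neq 0$ because $\chi_G$ is odd and hence nonzero, and the squeeze $\alpha+\beta\le k-2\le \alpha+\beta$ delivers both conclusions at once.

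Your route differs from the paper's in execution, though both are at bottom $2$-adic analyses of Euler's formula \eqref{e: sunny}. The paper argues by contradiction: it assumes one of $m,n$ is even, rewrites $-\chi_G$ as the product of the two odd integers $|G|/(2\,\lcm(m,n))$ and $(mn-2m-2n)/(2\gcd(m,n))$, deduces $|G|_2=2^{1+\max(i,j)}$ where $2^i=|m|_2$, $2^j=|n|_2$, and contradicts the hypothesis; only then does it handle the case $m,n$ odd to get $|G|_2=4$. That factorization requires the group-theoretic input that $2\,\lcm(m,n)$ divides $|G|$ (via the dihedral subgroups $\langle ab,b\rangle$ and $\langle bc,b\rangle$), which your argument never needs: clearing denominators first makes every quantity an integer from the outset, so a single valuation identity $v_2(mn-2m-2n)=2+\alpha+\beta-k$, combined with nonnegativity of the valuation and the hypothesis, yields $mn-2m-2n$ odd, hence $m,n$ odd, hence $k=2$, with no case split and no contradiction scaffolding. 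What the paper's version buys in exchange is the explicit intermediate fact $|G|_2=2\,|\lcm(m,n)|_2$ when some type parameter is even, a formulation that sits closer to how the lemma gets applied elsewhere (comparing $|\overline{G}|_2$ against $|\overline{m}|_2\,|\overline{n}|_2$ in specific $\PSL_2(q)$ and $\PGL_2(q)$ situations); your version is the leaner proof of the stated lemma itself.
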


\begin{proof}
Suppose at least one of $m$ and $n$ is even. Then Euler's formula \eqref{e: sunny} with $\chi_G$ odd
implies that $-\chi_G = u(m,n)\cdot v(m,n)$, where
\[u(m,n)=\frac{|G|}{2\,\lcm(m,n)} \ \ \ {\rm and} \ \ \ v(m,n)=\frac{mn-2m-2n}{2\gcd(m,n)} \]
are both odd integers, noting that $|G|$ is divisible by $2\,\lcm(m,n)$ because $G$ contains dihedral
subgroups of orders $2m$ and $2n$. Letting $m=2^im'$ and $n=2^jn'$ with $m', n'$ odd, it is easy to see
that oddness of $u(m,n)$ implies that $|G|_2=2^{\max(i,j)}$, but then $2|m|_2|n|_2 = 2^{1+i+j} \ge |G|_2$,
and hence also $2\,|\overline{m}|_2 \, |\overline{n}|_2 \ge |\overline{G}|_2$,
a contradiction.
Therefore $m$ and $n$ are both odd, and then from Euler's formula \eqref{e: sunny} it follows that
$|G|_2 = ||G|(mn-2m-2n)|_2 = |4mn|_2 = 4$.
\end{proof}

\begin{lem}\label{l: odd prime}
If $G$ is a $(2,m,n)^*$-group with $\chi_G=-r^d,$ and $t$ is an odd prime such that $|\overline{G}|_t >
|\overline{m}|_t\, |\overline{n}|_t$, then $t = r$.
\end{lem}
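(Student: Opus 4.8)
The plan is to prove the contrapositive: assuming $t\neq r$, I will show $|\overline G|_t\le |\overline m|_t\,|\overline n|_t$, which contradicts the hypothesis. The only way $\chi_G=-r^d$ can enter is through the prime $r$, so the first move is to observe that for an odd prime $t\neq r$ we have $t\nmid\chi_G$, and then to extract a $t$-adic identity from Euler's formula \eqref{e: sunny}. Rewriting $-\chi_G=\tfrac{|G|}{4mn}(mn-2m-2n)$ as $4mn\,r^{d}=|G|(mn-2m-2n)$ and comparing $t$-parts (with $t$ odd, so the factor $4$ is invisible, and $t\nmid r^{d}$), one obtains
\[ |G|_t\,|mn-2m-2n|_t=|m|_t\,|n|_t. \]
This identity is the engine of the argument; everything that follows is bookkeeping to transfer it from $G,m,n$ to $\overline G,\overline m,\overline n$.

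The second step passes to the quotient $\overline G=G/O$. Since $O\lhd G$ I record $|G|_t=|\overline G|_t\,|O|_t$, and since $\overline m=\ord(abO)$ and $\overline n=\ord(bcO)$ I factor $m=\overline m\cdot(m/\overline m)$ and $n=\overline n\cdot(n/\overline n)$, where $m/\overline m=\ord((ab)^{\overline m})$ and $n/\overline n=\ord((bc)^{\overline n})$ are the orders of elements lying in $O$. Substituting these into the displayed identity and cancelling $|\overline m|_t|\overline n|_t$, the target inequality $|\overline G|_t\le|\overline m|_t|\overline n|_t$ becomes, in additive notation for $t$-exponents,
\[ \kappa+\kappa'\ \le\ \epsilon+o, \]
where $t^{\kappa}=|m/\overline m|_t$, $t^{\kappa'}=|n/\overline n|_t$, $t^{\epsilon}=|mn-2m-2n|_t$ and $t^{o}=|O|_t$.

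The heart of the matter, and the step I expect to be the main obstacle, is the contribution of the odd-order group $O$. For the prime $t=2$ this difficulty evaporates, since $O$ has odd order and contributes nothing to a $2$-part; that is precisely why the analogous Lemma~\ref{l: 2} is so short. For odd $t$ one must genuinely bound $m/\overline m$ and $n/\overline n$, and my plan resolves this in two observations. First, both $(ab)^{\overline m}$ and $(bc)^{\overline n}$ lie in $O$, so their orders divide $|O|$; hence $\kappa\le o$ and $\kappa'\le o$, bounding the \emph{larger} of the two exponents by $o$. Second, the \emph{smaller} of the two, $\lambda=\min(\kappa,\kappa')$, satisfies $t^{\lambda}\mid m$ and $t^{\lambda}\mid n$ (each of $m,n$ already carries at least $\lambda$ factors of $t$, namely $\mu+\kappa\ge\lambda$ and $\nu+\kappa'\ge\lambda$), so $t^{\lambda}$ divides $mn-2m-2n$ term by term, giving $\epsilon\ge\lambda$. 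Adding $\max(\kappa,\kappa')\le o$ to $\min(\kappa,\kappa')\le\epsilon$ yields $\kappa+\kappa'\le o+\epsilon$, exactly the required bound, and hence the contradiction forcing $t=r$. I note that one might instinctively invoke Lemma~\ref{l: sylow} (so that $t\neq r$ makes the Sylow $t$-subgroups cyclic), but this structural input appears unnecessary: the divisibility $t^{\min(\kappa,\kappa')}\mid\gcd(m,n)$ carries the whole argument.
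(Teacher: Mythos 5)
Your proof is correct, and at its core it is the same argument as the paper's, just run in the contrapositive direction and with all the valuation bookkeeping made explicit. The paper's proof is three lines and direct: the hypothesis gives $|\overline{G}|_t > |\overline{m}|_t|\overline{n}|_t \ge |\lcm(\overline{m},\overline{n})|_t$, so $t$ divides $|\overline{G}|/\lcm(\overline{m},\overline{n})$; this lifts to $t \mid |G|/\lcm(m,n)$; and then Euler's formula forces $t \mid -\chi_G = r^d$, whence $t=r$. Your two key bounds are precisely the implicit content of the paper's two compressed steps: the bound $\max(\kappa,\kappa') \le o$ (orders of $(ab)^{\overline{m}},(bc)^{\overline{n}} \in O$ divide $|O|$) is what justifies the lift from $|\overline{G}|/\lcm(\overline{m},\overline{n})$ to $|G|/\lcm(m,n)$, and the bound $\min(\kappa,\kappa') \le \epsilon$ (a common $t$-power divisor of $m$ and $n$ divides $mn-2m-2n$) is what justifies passing from $t \mid |G|/\lcm(m,n)$ to $t \mid \chi_G$ via Euler's formula, since $-\chi_G = \frac{|G|}{\lcm(m,n)}\cdot\frac{mn-2m-2n}{4\gcd(m,n)}$. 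So your rendering trades the paper's brevity for a fully checkable $t$-adic identity; nothing is gained or lost mathematically. Your closing remark is also accurate: neither proof needs Lemma~\ref{l: sylow}, and the divisibility $t^{\min(\kappa,\kappa')} \mid \gcd(m,n)$ indeed carries the argument on its own.
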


\begin{proof}
As $|\overline{G}|_t > |\overline{m}|_t\, |\overline{n}|_t \ge |\lcm(\overline{m}, \overline{n})|_t$, we find that $t$ divides $|\overline{G}|/ \lcm(\overline{m}, \overline{n})$, and hence $t$ divides $|G|/\lcm(m,n)$. Then Euler's formula \eqref{e: sunny}  implies that $t$ is a divisor of $-\chi_G = r^d$, and so $t=r$.
\end{proof}


\section{The case where \texorpdfstring{$G/O(G)\cong \PSL_2(q)$}{G/O is PSL(2,q)}: proof of Theorem \ref{t: main} for family A}\label{s: insol-PSL}


Our aim in this section is to prove part (A) of Theorem~\ref{t: main}, the conclusions of which are summed up in Table~\ref{t: a}. Specifically, we will prove the following.

\begin{prop}\label{p: psl2 main}
Let $G$ be a $(2,m,n)^*$-group with $\chi_G=-r^d$ for some odd prime $r$, and suppose $\overline{G} = G/O\cong \PSL_2(q)$ for some odd prime-power $q\ge 5$. Then $O=O(G)$ is an $r$-group, and there are four possibilities for the quintuple $(q,m,n,r,d)$, as listed in rows {\rm A1} to  {\rm A4} of {\rm Table~\ref{t: a}}.
\end{prop}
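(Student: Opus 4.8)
The plan is to derive everything from Theorem~\ref{t: odd order} together with the three arithmetic lemmas, thereby reducing the classification to a finite Diophantine check carried out at the level of $\overline G=\PSL_2(q)$.

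First I would show that $O$ is an $r$-group. Since $\overline G=G/O\cong\PSL_2(q)$, Theorem~\ref{t: odd order} puts $G$ in its case (iii); parts (c) and (d) are impossible here, as each would make $G/O$ an index-two extension of $\PSL_2$ (hence $\PGL_2$ or a Mathieu-type extension) rather than $\PSL_2(q)$ itself. So either $N=O$, or $G/N\cong 3.A_6$ and $q=9$; in the latter case the Sylow $3$-subgroup of $3.A_6$ is non-cyclic, so if $r\ne 3$ then Lemma~\ref{l: sylow} (with $t=3$) would demand a cyclic Sylow $3$-subgroup of $G$, a contradiction, whence $r=3$ and $O$ is a $3$-group. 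In the case $N=O$, suppose an odd prime $t\ne r$ divides $|O|$. By property (i) of Theorem~\ref{t: odd order}, $t=p_j$ for some $j$, and by property (ii), $a_j\ge\veps(q,t)-1$. Writing $m=\overline m\,r^{a}$ and $n=\overline n\,r^{b}$ and comparing $t$-parts in Euler's formula \eqref{e: sunny} (which has $t$-part $1$ because $-\chi_G=r^d$) yields the identity $t^{a_j}=\min(|m|_t,|n|_t)/|mn-2m-2n|_t\le\min(|\overline m|_t,|\overline n|_t)$. Since each of $\overline m,\overline n$ is an order of an element of $\PSL_2(q)$ and so divides one of $p,(q-1)/2,(q+1)/2$, for $t\ne p$ we get $t^{\,(q-3)/2}\le t^{a_j}\le (q+1)/2$, which fails for $q\ge 7$; and for $t=p$ the divisibility $p\mid m,n$ forces $|mn-2m-2n|_p\ge p$, hence $a_j=0$, contradicting $a_j\ge 1$. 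The remaining values $q=5,9$ are dispatched directly, using that $\PSL_2(5)\cong A_5$ is not $(2,3,3)$-generated. Thus $O$ is an $r$-group.

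Next I would analyse $\overline G=\PSL_2(q)$ as a $(2,\overline m,\overline n)^*$-group. Its element orders all divide one of the three pairwise-coprime torus parameters $p,(q-1)/2,(q+1)/2$, and Lemma~\ref{l: sylow} applied to $t=p$ forces $q=p$ prime unless $p=r$ (the Sylow $p$-subgroup is elementary abelian of rank $e$, so cyclic only when $e=1$). Applying Lemma~\ref{l: odd prime} to every odd prime $t\ne r$ shows $t$ divides $\overline m\,\overline n$ to the full power $|\overline G|_t$; since $\overline m$ and $\overline n$ each lie in a single torus, at most two of the three tori can carry an odd prime $\ne r$. Hence \emph{at least one torus is a $\{2,r\}$-number}, and $\{\overline m,\overline n\}$ must between them cover the odd $r'$-parts of the remaining tori. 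Lemma~\ref{l: 2} then controls the $2$-part, typically forcing $|G|_2=4$ and $m,n$ odd.

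Substituting this torus description into \eqref{e: sunny}, and using that $O$ contributes only powers of $r$, the requirement $-\chi_G=r^d$ reduces to two conditions: one torus is $\{2,r\}$-smooth, and the odd integer $mn-2m-2n=\overline m\,\overline n\,r^{a+b}-2\overline m\,r^{a}-2\overline n\,r^{b}$ is a power of $r$. I would organise the endgame by which torus is the $\{2,r\}$-number (equivalently, whether $p=r$, or $(q-1)/2$, or $(q+1)/2$ is $\{2,r\}$-smooth) and by which tori house $\overline m$ and $\overline n$, reading off the four configurations that correspond to rows A1--A4 of Table~\ref{t: a}.

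The main obstacle is showing these conditions leave only finitely many $q$, and then pinning them down. Concretely one must bound the exponential Diophantine equations of the shape $\overline m\,\overline n-2\overline m-2\overline n=r^{j}$ together with their $r$-inflated relatives, whose left-hand sides are built from torus orders of size $\sim q$ that are largely coprime to $r$; this forces $\overline m,\overline n$, and hence $q$, to be small. Once $q$ is bounded I would check the surviving candidates directly --- confirming the existence of the required $(2,\overline m,\overline n)$-generating triple of $\PSL_2(q)$, verifying non-orientability, and invoking the census \cite{condersmall2, C600} to eliminate $q=9$ --- leaving exactly $q\in\{5,13\}$ and the four quadruples $(q,m,n,r,d)$ of Table~\ref{t: a}.
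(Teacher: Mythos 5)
Your overall architecture is the same as the paper's: Theorem~\ref{t: odd order} plus Euler's formula to make $O$ an $r$-group (your elaborate $\veps$-based case analysis there is redundant -- since $\gcd(m,n)$ divides $mn-2m-2n$, your own identity already gives $t^{a_j}\le 1$ for every odd $t\ne r$, which is essentially how the paper argues in Lemma~\ref{lem:r-gp}), then a torus analysis of $\overline{G}\cong\PSL_2(q)$ via Lemmas~\ref{l: sylow}, \ref{l: odd prime} and \ref{l: 2}, then a Diophantine endgame. The problem is that the reduction you state to drive that endgame is false: $-\chi_G=r^d$ does \emph{not} force the integer $mn-2m-2n$ to be a power of $r$. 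Family A1 itself is a counterexample: for $\overline{G}\cong\PSL_2(5)$, $\{m,n\}=\{5,5\}$ and $r=3$ one has $mn-2m-2n=5$, yet $-\chi_G=\frac{|G|}{4mn}(mn-2m-2n)=\frac{3|O|}{5}\cdot 5=3|O|$, because the non-$r$ part of the cofactor $|G|/(4mn)$ cancels the $5$. The correct condition, as in equation~\eqref{e: v1} of the paper, is that $r^d$ is half the product of the integers $q(q^2-1)/(4\,\overline{m}\,\overline{n})$, $|O|/m_o$ and $f=\overline{m}\,\overline{n}\,m_o-2\,\overline{m}\,m_o/n_o-2\,\overline{n}$, so that one of the first and third factors is a power of $r$ while the other is twice a power of $r$. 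As written, your two conditions would eliminate A1 and hence yield the wrong classification.

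Second, the part you yourself label ``the main obstacle'' is asserted rather than carried out, and it is precisely where the bulk of the paper's proof lives. One must prove that the $r$-inflation is trivial, i.e.\ $(m_o,n_o)=(1,1)$, in cases (a)--(d) (Lemma~\ref{l: a-f}), isolate the unique inflated family $\{m,n\}=\{3,15\}$ over $q=5$ (Lemma~\ref{l: q5}), and pin $q$ down to $\{5,13\}$ (Lemma~\ref{PSL lemma three}); each step needs case-specific factorisation arguments -- for instance $r\mid p^2-11p+2=(p+1)(p-12)+14$ forcing $r=7$, then $\beta=1$ and $(p+2)(p-13)=0$, or $0=q^2-8q-9=(q-9)(q+1)$ -- not a generic ``torus orders of size $\sim q$ versus a power of $r$'' bound. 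Indeed some candidates survive all the arithmetic and are only killed by the census (e.g.\ $q=11$ of type $\{3,11\}$ with $-\chi_{G/O}=25$, and $q=7$ of type $\{4,7\}$ with $-\chi_{G/O}=9$), so the finiteness you hope to extract cannot come from size estimates alone. Without this analysis, and with the incorrect reduction above as its starting point, the proposal does not establish that rows A1--A4 of Table~\ref{t: a} are the only possibilities.
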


The proof of Proposition~\ref{p: psl2 main} will be preceded by a series of lemmas. For $G$ as given, $\overline{m}$ and $\overline{n}$ are orders of elements in $\PSL_2(q)$ for $q=p^e\ge 5$ and hence each is a divisor of one of the relatively prime integers $(q-1)/2$, $(q+1)/2$ and $p$. Since the corresponding regular maps are non-orientable, the theory summed up in the Introduction implies that elements of order $\overline{m}$ and $\overline{n}$ must form a generating pair for $\overline{G}\cong \PSL_2(q)$ with the property that their product has order $2$. This means that either $\{\overline{m},\overline{n}\}=\{3,5\}$ or the pair $(\overline{m}, \overline{n})$ is hyperbolic, that is, $1/\overline{m} + 1/\overline{n} < 1/2$.
\medskip

In statements of all auxiliary findings in this section we will assume the following hypothesis.
\medskip

\noindent {\bf Hypothesis 1:} {\sl The group $G= \langle a,b,c\rangle$ with presentation of the form \eqref{eq:abc} is a $(2.m.n)^*$-group with Euler characteristic $\chi_G=-r^d$ for some odd prime $r$ and positive integer $d$, such that $G/O(G)$ is isomorphic to $\PSL_2(q)$ for some odd prime-power $q\ge 5$.}
\medskip

\begin{lem}\label{l: not A5}
Either $q=5$, and $\{\overline{m},\overline{n}\}=\{3,5\}$ or $\{5,5\}$,
or $r=p$, or $r$ divides one of $(q\pm 1)/2$.
\end{lem}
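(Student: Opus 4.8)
The plan is to prove the statement in the contrapositive direction: assuming that neither of the last two alternatives holds, i.e. $r\neq p$ and $r\nmid (q-1)/2$ and $r\nmid (q+1)/2$, I would deduce that $q=5$ and $\{\overline{m},\overline{n}\}=\{3,5\}$ (which lies in the exceptional set, so the first alternative holds).

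The first reduction is to force $q$ prime. Since $p$ is an odd prime dividing $|\overline{G}|=|\PSL_2(q)|$ and hence $|G|$, and since $\gcd(p,\chi_G)=\gcd(p,r^d)=1$ because $r\neq p$, Lemma~\ref{l: sylow} tells us that the Sylow $p$-subgroups of $G$ are cyclic. As $O\lhd G$, the image $PO/O$ of a Sylow $p$-subgroup $P$ is a Sylow $p$-subgroup of $\overline{G}=G/O$, and it is cyclic as an image of $P$. But a Sylow $p$-subgroup of $\PSL_2(p^e)$ is elementary abelian of rank $e$, so $e=1$ and $q=p$. Now every odd prime divisor of $|\PSL_2(p)|$ is either $p$ or a divisor of $(p-1)/2$ or $(p+1)/2$, so the three assumptions $r\neq p$, $r\nmid(p\pm1)/2$ give $r\nmid|\overline{G}|$.

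Next I would exploit Lemma~\ref{l: odd prime}. In contrapositive form it gives, for every odd prime $t$ dividing $|\overline{G}|$ (all of which differ from $r$), the inequality $|\overline{G}|_t\le |\overline{m}|_t\,|\overline{n}|_t$. Applying this with $t=p$, where $|\overline{G}|_p=p$, forces $p\mid\overline{m}\,\overline{n}$; since each of $\overline{m},\overline{n}$ divides $p$, $(p-1)/2$ or $(p+1)/2$, one of them must equal $p$, say $\overline{m}=p$, so $|\overline{m}|_t=1$ for $t\neq p$. If also $\overline{n}=p$, then the inequalities for all odd $t\neq p$ would force $p^2-1$ to be a power of $2$, impossible for $p\ge 5$; hence $\overline{n}$ divides $(p-1)/2$ or $(p+1)/2$. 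The remaining inequalities (for odd $t\mid p^2-1$) then force the odd part of $\overline{n}$ to be divisible by the odd part of $(p-1)(p+1)=p^2-1$. Since the odd part of $\overline{n}$ divides the odd part of just one of $p-1$, $p+1$, and these are coprime, one of $p-1,p+1$ must be a power of $2$; moreover $\overline{n}$ then equals the complementary (odd) torus order $(p+1)/2$ or $(p-1)/2$. In particular $\overline{n}$ is odd.

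The final step brings in the Euler characteristic. With $\overline{m},\overline{n}$ both odd and $m/\overline{m}$, $n/\overline{n}$ dividing the odd group $O$, both $m$ and $n$ are odd; then Euler's formula \eqref{e: sunny} together with oddness of $-\chi_G=r^d$ gives $|G|_2=4$, exactly as in the last line of the proof of Lemma~\ref{l: 2}, and so $|\overline{G}|_2=|G|_2=4$ since $O$ has odd order. However $|\PSL_2(p)|_2=\max\{|p-1|_2,|p+1|_2\}$, which is at least $8$ whenever $p-1$ or $p+1$ is a power of $2$ exceeding $4$ (Mersenne $p\ge 7$ or Fermat $p\ge 17$). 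The only prime with $p\pm1$ a power of $2$ and $|\PSL_2(p)|_2=4$ is $p=5$, of Fermat type with $(p-1)/2=2$, which forces $\overline{n}=(p+1)/2=3$. Thus $q=5$ and $\{\overline{m},\overline{n}\}=\{3,5\}$, completing the proof. I expect the main obstacle to be precisely this last collapse: the divisibility information from Lemma~\ref{l: odd prime} alone does not bound $q$, leaving (conjecturally infinitely many) Fermat and Mersenne candidates, and it is only the $2$-adic arithmetic of $-\chi_G$ — oddness forcing $|G|_2=4$ — played against $|\PSL_2(p)|_2$ that eliminates all of them but $p=5$.
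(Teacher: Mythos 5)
Your proof is correct, and although it turns on the same two workhorse facts as the paper's proof (Lemma~\ref{l: odd prime} for odd primes, and the $2$-adic constraint that oddness of $\chi_G$ forces via Euler's formula \eqref{e: sunny}, i.e.\ Lemma~\ref{l: 2}), the route is genuinely different in three respects. First, you open by invoking Lemma~\ref{l: sylow} to force $e=1$, i.e.\ $q=p$; the paper's proof of this lemma never uses Lemma~\ref{l: sylow} and works with a general prime power $q=p^e$ throughout, reserving that cyclic-Sylow reduction for Lemma~\ref{l: psl restrictions 2}. Second, you treat $q=5$ uniformly inside the same arithmetic (correctly observing that under your hypotheses $r\ne p$, $r\nmid(q\pm1)/2$, the type $\{5,5\}$ cannot occur, since for $\PSL_2(5)$ that type forces $r=3=(q+1)/2$ by Lemma~\ref{l: odd prime}), whereas the paper dispatches $q=5$ at the outset by quoting the possible generating pairs of $A_5\cong\PSL_2(5)$ and then assumes $q>5$. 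Third, and most notably, your endgame is self-contained: from $|\overline{G}|_2=\max\{|p-1|_2,|p+1|_2\}=p\mp 1$ together with $|G|_2=|\overline{G}|_2=4$ you collapse directly to $p=5$ and $\{\overline{m},\overline{n}\}=\{3,5\}$. The paper instead records $|\overline{G}|_2$ as $(q\mp1)/2$ --- which is off by a factor of $2$; your value $q\mp1$ is the correct one --- and consequently lands on $q\in\{7,9\}$, which it must then exclude by appealing to the known lists in \cite{C600} for $\PSL_2(7)$ of type $\{3,7\}$ and $\PSL_2(9)$ of type $\{3,5\}$. So your version buys a purely arithmetic, census-free argument (and as a byproduct the sharper conclusion that only $\{3,5\}$ arises when neither $r=p$ nor $r\mid(q\pm1)/2$ holds), at the modest cost of an extra appeal to Lemma~\ref{l: sylow}; the paper's version buys brevity and works for prime powers directly, but offloads its terminal small cases to computer-generated data.
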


\begin{proof}
The conclusion for $q=5$ is one easy possibility (realised by the only ways to obtain $A_5 \cong \PSL(2,5)$ as a smooth quotient of an ordinary $(2,\overline{m},\overline{n})$ triangle group),
and so now we may suppose that $q>5$.
Also if $|\overline{G}| = q(q^2-1)/2$ is divisible by some odd prime $t$ that does not divide $\overline{m}\,\overline{n}$, then Lemma~\ref{l: odd prime} implies that $t=r$, and so in that case either $r=p$, or $r$ divides one of $(q\pm 1)/2$.
Note that this must happen if $\overline{m}=\overline{n} =p$.

So from now on, suppose that no such $t$ exists, so that every odd prime divisor $t$ of $|\overline{G}|$ divides either $\overline{m}$ or $\overline{n}$; indeed we may suppose that every such $t$ divides exactly one of $\overline{m}$ and $\overline{n}$.  Taking $t = p$, this implies without loss of generality that $\overline{m}=p$,
while $\overline{n}$ divides one of $(q\pm 1)/2$, and moreover, $(q+1)/2$ and $(q-1)/2$ cannot both have
an odd prime divisor.  It follows that $q\mp 1$ must be a power of $2$, and divisible by $4$ because  $q>5$,
so $|\overline{G}|_2 = (q\mp 1)/2 \ge 4$.  Moreover, since $\gcd((q+1)/2,(q-1)/2) = 1$, we see that $\overline{n}$
must be odd. In particular, $|\overline{G}|_2 > |\overline{m}|_2|\overline{n}|_2 = 1$,
which implies that $(q\mp 1)/2 = |\overline{G}|_2 = 4$ by Lemma~\ref{l: 2}. But then $q = 7$ or $9$,
so either $\overline{G} = \PSL(2,7)$ and $(2,\overline{m},\overline{n}) = (2,7,3)$,
or $\overline{G} = \PSL(2,9)$ and $(2,\overline{m},\overline{n}) = (2,3,5)$,
neither of which is possible for a  $(2,m,n)^*$-group with negative Euler characteristic.
\end{proof}

We now look into the possibilities for $r$ in relation to $q$, in more detail.

\begin{lem}\label{l: psl restrictions 1}
If $r=p$, then $q\ge 11$ and one of the following holds$\,:$
\begin{enumerate}
\item\label{a} if $|G|_2=4$, then either $\{\overline{m},\overline{n}\} = \{(q\pm1)/2, (q\mp1)/4\}$ for $q\equiv \pm 1$ {\rm mod} $4$, \\ ${}$ \ or $\{\overline{m},\overline{n}\} = \{(q-1)/2,(q+1)/2\}\,;$
\item\label{b} if $|G|_2>4$, then $\{\overline{m},\overline{n}\} = \{(q-1)/2,(q+1)/2\}$.
\end{enumerate}
\end{lem}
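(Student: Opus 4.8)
The plan is to determine $\overline m,\overline n$ in two stages: their odd parts via Lemma~\ref{l: odd prime}, then their $2$-parts via oddness of $\chi_G$ and Euler's formula \eqref{e: sunny}. I will use throughout that, since $O=O(G)$ has odd order, $|G|_2=|\overline G|_2$ and $|m|_2=|\overline m|_2$, $|n|_2=|\overline n|_2$ (because $m/\overline m=\ord((ab)^{\overline m})$ and $n/\overline n=\ord((bc)^{\overline n})$ are odd, dividing $|O|$). Writing $s=(q-1)/2$ and $w=(q+1)/2$, and using $r=p$, the integers $p,s,w$ are pairwise coprime, each of $\overline m,\overline n$ divides one of them, every odd prime $t\neq p$ dividing $|\overline G|$ divides exactly one of $s,w$ with $|\overline G|_t=|sw|_t$, and exactly one of $s,w$ is even; set $2^k=|\overline G|_2=\max(|q-1|_2,|q+1|_2)$ and let $U$ be the odd one of $s,w$ and $T$ the even one (so $|T|_2=2^{k-1}$).

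First I would handle the odd parts. By Lemma~\ref{l: odd prime}, for each odd prime $t\neq p$ we have $|\overline m|_t\,|\overline n|_t\ge|\overline G|_t$, so $\overline m,\overline n$ must together carry the full odd part of both $s$ and $w$. Since each divides just one of the coprime numbers $p,s,w$, and $U$ is odd with $U\ge 3$ (so its odd part is $U$ itself), some element must be divisible by $U$ and hence equal to $U$. The other element must then carry the odd part $T_{\mathrm{odd}}$ of $T$; I claim that for $q\ge 11$ it lies on $T$, equalling $T_{\mathrm{odd}}\cdot 2^{c}$ with $0\le c\le k-1$. The only competing possibilities compatible with the covering condition are that this element equals $p$, or that it too divides $U$ (which can happen only when $T_{\mathrm{odd}}=1$, i.e. $T$ is a power of $2$ and $q=2^{a}\pm1$); both of these make $\overline m,\overline n$ odd, and I postpone them to the small-case analysis.

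For the $2$-parts I would use oddness of $-\chi_G=p^d$. If both $m,n$ are odd (the case $c=0$, and also the two postponed possibilities), then $|G|_2=|4mn|_2=4$, forcing $k=2$; and then $T=2^{k-1}=2$, so $w=(q+1)/2=2$ or $s=(q-1)/2=2$, giving $q\le 5$ -- this simultaneously disposes of the $p$-option and the both-on-$U$ option for $q\ge 11$. If instead one of $m,n$ is even (so $c\ge1$), then, exactly as in the proof of Lemma~\ref{l: 2}, $u(m,n)=|G|/(2\,\lcm(m,n))$ is odd, so $|G|_2=2\max(|m|_2,|n|_2)=2^{c+1}$; since $|G|_2=2^{k}$ this forces $c=k-1$, i.e. the second element equals the full torus $T$. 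Assembling: when $|G|_2=4$ (so $k=2$) both $c=0$ and $c=k-1=1$ occur, giving $\{\overline m,\overline n\}=\{(q\pm1)/2,(q\mp1)/4\}$ (for $q\equiv\pm1$ mod $4$) or $\{(q-1)/2,(q+1)/2\}$, which is (a); when $|G|_2>4$ (so $k\ge3$) only $c=k-1$ survives, giving $\{(q-1)/2,(q+1)/2\}$, which is (b).

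The step I expect to be the main obstacle is establishing $q\ge 11$, i.e. ruling out $q\in\{5,7,9\}$. For $q=5$, Lemma~\ref{l: not A5} leaves only $\{\overline m,\overline n\}\in\{\{3,5\},\{5,5\}\}$, and \eqref{e: sunny} shows each forces $r=3\neq p$. For $q=7$ the admissible hyperbolic pairs ($\{3,7\},\{4,7\},\{7,7\}$) all fail: the first makes $m,n$ odd whereas $|G|_2=|\PSL_2(7)|_2=8\neq4$, while the others leave the prime $3\mid s$ uncovered, forcing $3=r=p=7$. The genuinely delicate case is $q=9$: here $\PSL_2(9)\cong A_6$ does admit a $(2,4,5)$ pair with $-\chi=9=3^2$ and $r=p=3$, which passes both stages, so it cannot be excluded arithmetically. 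It must be ruled out because the involution reflecting such a pair is \emph{outer} (it lies in $\mathrm{P\Gamma L}_2(9)\setminus A_6$), so there is no $(2,m,n)^*$-group with $G/O\cong\PSL_2(9)$ of this type; I would confirm this from the classifications in \cite{condersmall2,C600} (consistently, no regular map of characteristic $-9$ there has automorphism group $A_6$). I expect this exclusion, rather than the two-stage analysis, to require the most care.
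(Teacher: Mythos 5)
Your strategy is essentially the paper's: Lemma~\ref{l: odd prime} forces $\overline{m},\overline{n}$ to cover the odd part of $(q^2-1)/4$, oddness of $\chi_G$ in Euler's formula (i.e.\ Lemma~\ref{l: 2}) controls the $2$-parts and ties the ``quarter'' option to $|G|_2=4$, and the small values of $q$ are removed at the end, with $q=9$ ultimately resting on the census at \cite{C600} exactly as in the paper (whose proof, and later Lemma~\ref{lem:r-gp}, use the same fact that $A_6$ admits no $(2,4,5)^*$-structure). Your arithmetic exclusion of $q=7$ is in fact cleaner than the paper's census citation, and the slight imprecision in your first stage (the claim that some element must \emph{equal} $U$ overlooks the case where $\overline{m},\overline{n}$ both properly divide $U$, e.g.\ $3$ and $5$ inside $U=15$) is harmless: that case leaves $T_{\mathrm{odd}}$ uncovered and makes both orders odd, so the argument you give for your ``both-on-$U$'' option disposes of it verbatim.

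The genuine gap is your exclusion of $q=5$. For $\{\overline{m},\overline{n}\}=\{5,5\}$ your claim is fine: Euler's formula produces the factor $(q^2-1)/8=3$, which cannot be absorbed into a power of $5$. But for $\{\overline{m},\overline{n}\}=\{3,5\}$, equation \eqref{e: sunny} alone does \emph{not} force $r=3$. Writing $m=3m_o$, $n=5n_o$ with $m_o,n_o$ odd divisors of $|O|$, the constraint is $5^d=\bigl(|O|/\lcm(m_o,n_o)\bigr)\cdot f$ with $f=(15m_on_o-6m_o-10n_o)/\gcd(m_o,n_o)$, and this Diophantine system has solutions with $r=p=5$: for instance $m_o=15$ and $n_o=5451=3\cdot 23\cdot 79$ give $f=390625=5^8$, so $|O|=27255\cdot 5^{d-8}$ makes every purely arithmetic condition (including both parity constraints and Lemma~\ref{l: sylow}) consistent. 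The point is that at this stage of the paper one does not yet know $O$ is an $r$-group --- that is Lemma~\ref{lem:r-gp}, proved \emph{after} this lemma --- so $m_o,n_o$ cannot be assumed to be powers of $5$, and Euler's formula by itself cannot finish. To close the gap you must first invoke Theorem~\ref{t: odd order}: since $G/O\cong\PSL_2(5)$ is simple, cases (b)--(d) there are impossible, so $O=N$ is a $5$-group; only then are $m_o,n_o$ powers of $5$, and the mod-$5$/mod-$25$ analysis (as in the paper's later Lemma~\ref{l: q5}) yields the contradiction. To be fair, the paper's own write-up is silent about this subcase (its ``hence'' step pinning down $\{\overline{m},\overline{n}\}$ breaks down when $(q\mp 1)/4=1$), so your instinct to treat $q=5$ separately was sound; it is only the tool you chose --- Euler's formula unaided --- that is insufficient.
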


\begin{proof}
If there exists an odd prime $t$ dividing $(q-1)/2$ or $(q+1)/2$ such that $|\overline{m}|_t\, |\overline{n}|_t < |q^2-1|_t$, then Lemma~\ref{l: odd prime} implies that $t = p$, a contradiction. Similarly, if $|\overline{m}|_2\, |\overline{n}|_2 < |(q^2-1)/8|_2$, then $|\overline{m}|_2\, |\overline{n}|_2 < |\overline{G}|_2\,/\,4$ and so
$|\overline{G}|_2>4$, contradicting Lemma~\ref{l: 2}. Hence 
one of $\overline{m}$ and $\overline{n}$ must be divisible by $(q\pm 1)/2$ and the other by $(q\mp 1)/4$,
and therefore $\{\overline{m}, \overline{n}\} = \{(q\pm1)/2, (q\mp1)/4\}$ or $\{(q-1)/2,(q+1)/2\}$. Also if $\{\overline{m}, \overline{n}\} = \{(q\pm1)/2, (q\mp1)/4\}$, then $|\overline{m}|_2\, |\overline{n}|_2 =|(q^2-1)/8|_2 = |\overline{G}|_2\,/\,4$, and then by Lemma~\ref{l: 2} we have $|G|_2=4$. Finally, the lower bound on $q$ is a consequence of the non-existence of non-orientable regular maps with the required type for $q = 7$ or $9$,
as shown by the list at \cite{C600}.
\end{proof}

Hypothesis 1 and Lemma \ref{l: psl restrictions 1} now strongly restrict the structure of the group $O=O(G)$.

\begin{lem}\label{lem:r-gp} If the group $O=O(G)$ is non-trivial, then it is an $r$-group.
\end{lem}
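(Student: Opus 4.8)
The plan is to reduce the statement to two ingredients that are already in place: the lower bound on prime exponents coming from the structure theorem, and the valuation bookkeeping that drives Lemma~\ref{l: odd prime}. Since Hypothesis~1 gives $\overline{G}=G/O\cong\PSL_2(q)$, we are in case (a) of Proposition~\ref{l: odd order ind} (equivalently Theorem~\ref{t: odd order}), so the normal subgroup $N$ produced there is exactly $O=O(G)$. Consequently property (ii) of that proposition applies verbatim to $O$: writing $|G|/\lcm(m,n)=2^{a_0}p_1^{a_1}\cdots p_k^{a_k}$, any odd prime $t=p_i$ dividing $|O|$ satisfies $a_i\ge\veps(q,t)-1$. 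First I would note that $\veps(q,t)\ge 2$ in every clause of its definition (indeed $\veps\in\{2,3,(q-1)/2\}$, and $(q-1)/2\ge 2$ since $q\ge 5$), so $a_i\ge 1$; thus every prime divisor of $|O|$ already divides $|G|/\lcm(m,n)$. As $|O|$ is odd, such a $t$ is automatically odd, so I may invoke the odd-prime valuation arguments.

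The core step is then to pass from ``$t$ divides $|G|/\lcm(m,n)$'' to ``$t=r$'', which is precisely the computation inside the proof of Lemma~\ref{l: odd prime}, but carried out for $G$ itself rather than for $\overline{G}$ (note that Lemma~\ref{l: odd prime} cannot be quoted directly here, since a prime $t\mid|O|$ need not divide $|\overline{G}|=|\PSL_2(q)|$). Concretely, I would rewrite Euler's formula \eqref{e: sunny} in the form
$$4\gcd(m,n)\,r^d=\frac{|G|}{\lcm(m,n)}\,(mn-2m-2n),$$
and compare $t$-adic valuations, using $v_t(4)=0$. Setting $\alpha=v_t(m)$ and $\beta=v_t(n)$, one checks that $v_t(mn-2m-2n)\ge v_t(\gcd(m,n))=\min(\alpha,\beta)$, with equality whenever $\alpha\ne\beta$. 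Hence $d\,v_t(r)=v_t\!\big(|G|/\lcm(m,n)\big)+\big(v_t(mn-2m-2n)-v_t(\gcd(m,n))\big)\ge 1$, and since $r$ is prime this forces $v_t(r)=1$, i.e. $t=r$.

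Since every prime divisor of $|O|$ must equal $r$, the group $O$ is an $r$-group, which is the assertion. I expect the only delicate point to be the valuation estimate for $v_t(mn-2m-2n)$: one must separate the case $\alpha\ne\beta$, where the term of smallest valuation ($-2m$ or $-2n$) dominates and equality holds, from the case $\alpha=\beta$, where cancellation can only raise the valuation but this still suffices for the inequality. Everything else is a direct appeal to Proposition~\ref{l: odd order ind}(ii) together with the arithmetic of Euler's formula already exploited in Lemmas~\ref{l: 2} and~\ref{l: odd prime}.
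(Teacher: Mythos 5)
There is a genuine gap at the very first step: you assert that because Hypothesis~1 gives $G/O\cong\PSL_2(q)$, the structure theorem must deliver case (a), so that the subgroup $N$ it produces equals $O(G)$. That inference is not valid. Theorem~\ref{t: odd order} (equivalently Proposition~\ref{l: odd order ind}) only guarantees that \emph{one} of cases (a)--(d) holds for the subgroup $N$ it constructs. Cases (c) and (d) are indeed incompatible with $G/O\cong\PSL_2(q)$ (factoring $G/N$ by its odd core in those cases yields a group with a normal subgroup of index $2$, hence not the simple group $\PSL_2(q)$), but case (b) is perfectly compatible with Hypothesis~1: if $G/N\cong 3.A_6$, then $O/N\cong C_3$ and $G/O\cong A_6\cong\PSL_2(9)$, so Hypothesis~1 holds with $q=9$ while $N$ is a \emph{proper} subgroup of $O$. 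In that situation your argument shows only that $N$ is an $r$-group and says nothing about the extra factor $C_3$ inside $O$; the conclusion that $O$ is an $r$-group would fail whenever $r\neq 3$. Closing this gap is exactly where the paper does real work: in case (b) the non-cyclic Sylow $3$-subgroups of $A_6$ together with Lemma~\ref{l: sylow} force $r=3$, then $|G|_2\ge 8$ and Lemma~\ref{l: psl restrictions 1} force $\{\overline{m},\overline{n}\}=\{4,5\}$, and the census \cite{C600} contains no non-orientable regular map of type $\{4,5\}$ with automorphism group of order $360$ --- a contradiction that eliminates case (b) and leaves case (a), where $O=N$. Your proof contains none of this.

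A secondary point: your appeal to property (ii) of Proposition~\ref{l: odd order ind} is both circular and unnecessary. Writing ``any odd prime $t=p_i$ dividing $|O|$'' already presupposes that $t$ occurs among the primes $p_1,\dots,p_k$ dividing $|G|/\lcm(m,n)$, which is the very fact you are trying to establish, and property (ii) cannot deliver it; what you need is property (i), which states outright that every prime divisor of $|N|$ equals some $p_i$. Granting that, your valuation computation from Euler's formula is correct (it is essentially the paper's opening observation that $\frac{|G|}{\lcm(m,n)}\cdot\frac{mn-2m-2n}{\gcd(m,n)}=4r^d$ forces every odd prime divisor of $|G|/\lcm(m,n)$ to equal $r$), and it does show that $N$ is an $r$-group --- but only $N$, not $O$, until case (b) has been dealt with.
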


\begin{proof} When $\chi_G=-r^d$, Euler's formula \eqref{e: sunny}  implies that the only odd prime divisor of the integer $|G|/\lcm(m,n)$ is $r$, and so $|G|/\lcm(m,n)$ has prime factorisation of the form $2^i r^j$ for some non-negative integers $i$ and $j$. Then Theorem \ref{t: odd order} implies that $G/O\cong \PSL_2(q)$,
and that $G$ has a normal subgroup $N$ of odd order, indeed a normal $r$-subgroup (because $r$ is the only odd prime divisor of $|G|/\lcm(m,n)$). Moreover, since $N \le O$, with $O/N$ having odd order, cases (c) and (d) of Theorem \ref{t: odd order} are both impossible, so case (a) or (b) holds.
If case (b) holds, however, then $G/O\cong A_6 \cong \PSL_2(9)$, with $O/N \cong C_3$.
Now the Sylow $3$-subgroups of $A_6$ are not cyclic, so by Lemma~\ref{l: sylow} we find that $3$ divides $\chi_G$ and hence $r = 3$ ($= p$), and also $|G|_2 \ge |A_6|_2 = 8$, and therefore by Lemma \ref{l: psl restrictions 1} we find that $\{\overline{m}, \overline{n}\} = \{(9-1)/2,(9+1)/2\} = \{4,5\}$. But the list at \cite{C600} does not include a non-orientable regular map of type $\{4,5\}$ with automorphism group of order $360=|A_6|$, and so we have a contradiction.
%
Hence case (a) of Theorem \ref{t: odd order} holds, and gives $O=N$, so that $O$ is either trivial or an $r$-group.
\end{proof}

\begin{lem}\label{l: psl restrictions 2}
If $r$ is a divisor of $q\pm 1$, then $q=p$, so $e=1$ and $p\ge 5$, and $\overline{G} \cong \PSL(2,p)$.
Furthermore, if $\beta\ge 1$ is the largest integer for which $r^\beta\mid p\pm 1$,
then one of the following holds$\,:$
\begin{enumerate}
\item[{\rm (c)}] 
either $\{\overline{m},\overline{n}\} = \{p, (p-1)/4\}$ where $p\ge 11$, $p\equiv 1$ {\rm mod} $4$,
   and $p+1 = 2r^\beta$, \\
or $\{\overline{m},\overline{n}\} = \{p, (p+1)/4\}$ where $p\ge 11$, $p\equiv -1$ {\rm mod} $4$,
   and $p-1 = 2r^\beta\,;$
\item[{\rm (d)}] 
either $\{\overline{m},\overline{n}\} = \{p, (p-1)/2\}$ where $p\ge 7$ and $(p+1)/r^\beta$ is a power of $2$, \\
or $\{\overline{m},\overline{n}\} = \{p, (p+1)/2\}$ where $p\ge 7$ and $(p-1)/r^\beta$ is a power of $2\,;$
\item[{\rm (e)}] $\overline{m}=\overline{n}=p=5$ and $r=3\,;$
\item[{\rm (f)}] $\overline{m}=p=5$ and $\overline{n}=r=3$.
\end{enumerate}
\end{lem}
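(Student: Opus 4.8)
The plan is to first pin down $q=p$, then to identify $\{\overline{m},\overline{n}\}$ by a prime-by-prime analysis, with the arithmetic condition $\chi_G=-r^d$ doing the final work. Since $r\mid q\pm 1$ and $p\mid q$ we have $r\neq p$, so $\gcd(p,\chi_G)=\gcd(p,r^d)=1$, and Lemma~\ref{l: sylow} makes the Sylow $p$-subgroups of $G$, and hence of $\overline{G}\cong\PSL_2(q)$, cyclic. But a Sylow $p$-subgroup of $\PSL_2(p^e)$ is elementary abelian of order $p^e$, which is cyclic only when $e=1$; thus $e=1$, $q=p$, and $p\ge 5$ because $q\ge 5$, so $\overline{G}\cong\PSL_2(p)$. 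Every element order in $\overline{G}$ now divides one of the pairwise coprime integers $p,(p-1)/2,(p+1)/2$, and by Lemma~\ref{lem:r-gp} the group $O$ is an $r$-group, so $m/\overline{m}$ and $n/\overline{n}$ are powers of $r$.

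Next I would apply Lemma~\ref{l: odd prime} with $t=p$ (valid as $p\neq r$): from $|\overline{G}|_p=p\le|\overline{m}|_p\,|\overline{n}|_p$ we get $p\mid\overline{m}\,\overline{n}$, and since the only element orders divisible by $p$ equal $p$, one of $\overline{m},\overline{n}$ is $p$; say $\overline{m}=p$. Running the same lemma over each odd prime $t\notin\{p,r\}$ dividing $p^2-1$ shows $\overline{n}$ must carry the full $t$-part of $p^2-1$; as $\overline{n}$ divides exactly one of the coprime numbers $(p\mp1)/2$, all these primes lie on a single side, so after possibly interchanging $p-1$ and $p+1$ I may take $\overline{n}\mid(p-1)/2$ with the odd part of $p+1$ a power of $r$ (possibly trivial). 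The $2$-part is then governed by Lemma~\ref{l: 2}: comparing $|\overline{G}|_2$ with $2|\overline{m}|_2\,|\overline{n}|_2$ forces $\overline{n}$ to be either $(p-1)/2$ or $(p-1)/4$ and simultaneously fixes $p$ modulo $4$ and whether $(p+1)/r^\beta$ equals $2$ or a higher power of $2$. When $r\mid p+1$, these are exactly alternatives (c) (a factor $2$ dropped) and (d) (none dropped); reinstating the other sign by the $p\mp1$ symmetry gives the \emph{or}-options of (c) and (d).

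This leaves three configurations the prime-counting does not settle by itself: $\overline{n}=p$; the non-hyperbolic generation $\{\overline{m},\overline{n}\}=\{3,5\}$; and the case where $r$ sits on the \emph{same} side as $\overline{n}$, forcing the opposite factor $p\mp1$ to be a pure power of $2$. Here the decisive extra ingredient is that $-\chi_G=r^d$ is a pure power of $r$: by \eqref{e: sunny}, and since $m/\overline{m},n/\overline{n}$ are powers of $r$, the cofactor $mn-2m-2n$ is forced to be a power of $r$, and a direct evaluation shows this fails once $p\ge 7$ (a stray factor of $2$ or a new odd prime intrudes). Only $p=5$ survives, where $\{\overline{m},\overline{n}\}=\{5,5\}$ (the case $\overline{n}=p$) gives (e) and $\{\overline{m},\overline{n}\}=\{5,3\}$ (the $\{3,5\}$ generation, $\PSL_2(5)\cong A_5$) gives (f), both with $r=3$. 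I expect this final paragraph to be the main obstacle: Lemmas~\ref{l: odd prime} and~\ref{l: 2} constrain the \emph{type} but not the Euler characteristic, so the off-list configurations must be eliminated by the sharper requirement that $mn-2m-2n$ be an $r$-power, supplemented by a finite check of the small primes $p$ (which also yields the thresholds $p\ge 11$ in (c) and $p\ge 7$ in (d)) against the censuses \cite{C600,condersmall2}.
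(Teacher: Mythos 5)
Your proposal tracks the paper's own proof almost step for step: the same use of Lemma~\ref{l: sylow} to force $e=1$ and $q=p$; the same application of Lemma~\ref{l: odd prime} with $t=p$ to conclude that one of $\overline{m},\overline{n}$ equals $p$, and then over the remaining odd primes $t\notin\{p,r\}$ to push the full odd $t$-part of $p^2-1$ into $\overline{n}$; the same use of Lemma~\ref{l: 2} on the $2$-part to arrive at $\overline{n}\in\{(p\mp1)/2,(p\mp1)/4\}$, giving (c) and (d); and the same final appeal to Euler's formula \eqref{e: sunny} to dispose of the exceptional configurations $\overline{n}=p$ and ``$r$ on the same side as $\overline{n}$''. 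Your organizational differences (deferring $\overline{m}=\overline{n}=p$ to the end, and normalizing by the side containing $\overline{n}$ rather than the side containing $r$) are harmless, as is replacing the paper's generation argument for the thresholds $p\ge 11$, $p\ge 7$ by a finite census check.

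There is, however, one genuine flaw, precisely in the step you flagged as the main obstacle. Your elimination criterion --- that since $m/\overline{m}$ and $n/\overline{n}$ are powers of $r$, formula \eqref{e: sunny} forces $mn-2m-2n$ to be a power of $r$ --- is false, and it contradicts your own conclusion that (e) survives: for the $(2,5,5)^*$-group $\PSL_2(5)$ (the map N5.3, which realizes (e)) one has $mn-2m-2n=25-10-10=5$, which is not a power of $r=3$. What \eqref{e: sunny} forces is that the \emph{full product} $\frac{|G|}{4mn}\,(mn-2m-2n)$ equals $r^d$, and the cofactor $\frac{|G|}{4mn}$ both contributes and absorbs primes other than $r$ (in the example it equals $3/5$, and the stray $5$ cancels). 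The paper's actual eliminations are: when $\overline{m}=\overline{n}=p$, Euler's formula shows that $(p^2-1)/8$ must be a power of $r$, and coprimality of $(p-1)/2$ and $(p+1)/2$ then forces $p=5$, $r=3$, which is case (e); when $r$ and $\overline{n}$ lie on the same side, one first shows $m$ and $n$ are both odd, so $mn-2m-2n$ is odd, whence the power of $2$, namely $(p-1)/4$ (resp. $(p+1)/4$), sitting inside $\frac{|G|}{4mn}$ must equal $1$, forcing $p=5$ (case (f)) resp. a contradiction. These repairs are local, and your broader plan --- direct evaluation of Euler's formula with stray $2$'s and odd primes as obstructions --- is exactly the paper's method; but the criterion as you wrote it would fail if applied literally, wrongly eliminating (e) along with everything else.
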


\begin{proof}
As $r$ divides $q\pm 1$, we have $r\ne p$, so $\gcd(p,\chi_G) = 1$, and hence by Lemma \ref{l: sylow}, the Sylow $p$-subgroups of $G$ and hence those of $\overline{G}$ are cyclic. But the latter are elementary abelian of rank $e$ when $q=p^e$, and therefore $e = 1$ and $p = q \ge 5$.

Next, suppose that $\overline{m}= \overline{n}=p$. Then $m/p$ and $n/p$ are divisors of $|O|$, so $m= pr^i$ and $n = pr^j$ for some $i$ and $j$, and $|G|=|G/O|{\cdot}|O| = p(p^2-1){\cdot}|O|/2$,
so Euler's formula \eqref{e: sunny} gives
$$
r^d = -\chi_G
= \frac{p(p^{2}-1)|O|}{2}\, \left ( \frac{p^{2}r^{i+j} - 2pr^i - 2pr^j}{4p^{2}r^{i+j}}\right )
= \frac{(p^{2}-1)|O|}{8r^{i+j}}\, \left ( pr^{i+j} - 2r^i - 2r^j \right ).
$$
Hence the integer $(p^2-1)/8$ is a power of $r$. But only one of $(p \pm 1)/2$ can be divisible by $r$ here, and so $p \mp 1$ must be equal to $4$, and then since $p\ge 5$ we find that $p=5$ and $r=3$, giving (e).

For the rest of the proof we will assume without loss of generality that $\overline{n}\ne p$. Also since $r$ divides $q \pm 1 = p\pm 1$, we will now assume that $r$ divides $p+1$, in which case $|p+1|_r = r^\beta$,
and comment on the alternative possibility that $r$ divides $p-1$ at the end.

Suppose that $t$ is an odd prime satisfying $|\overline{m}|_t\,|\overline{n}|_t < |p(p\pm 1)|_t$. Then by Lemma
~\ref{l: odd prime} with $|\overline{G}|=p(p^2-1)/2$ it follows that $t = r$. On the other hand, as $p \ne r$, Lemma
~\ref{l: odd prime} gives also $|\overline{m}|_p\, |\overline{n}|_p \ge |p(p\pm 1)|_p= p$.
Then since each of $\overline{m}$ and $\overline{n}$ is equal to $p$ or a divisor of $(p\pm 1)/2$, but $\overline{n}\ne p$ and $|\overline{m}|_p\, |\overline{n}|_p \ge p$, we find that $\overline{m}=p$, while $\overline{n}$ divides either $(p-1)/2$ or $(p+1)/2$. 
Also for every odd prime $t\ne r$, Lemma~\ref{l: odd prime} gives
$|\overline{m}|_t\, |\overline{n}|_t \ge |p(p\pm 1)|_t$, and so $|\overline{n}|_t \ge |(p\pm 1)|_t$
(including in the case where $t = p$, trivially).

Now suppose that $\overline{n}$ divides $(p-1)/2$.  Then since $p$ and $r$ do not divide $p-1$,
the odd part of $\overline{n}$ is equal to the odd part of $(p-1)/2$.
Moreover, if $|\overline{n}|_2 \ge \frac{1}{2} |\frac{p-1}{2}|_2$, then $\overline{n} = (p - 1)/4$ or $(p - 1)/2$,
while if $|\overline{n}|_2 < \frac{1}{2} |\frac{p-1}{2}|_2$, then
$2|\overline{m}|_2\, |\overline{n}|_2 = 2|\overline{n}|_2 < |\frac{p - 1}{2}|_2 < |\overline{G}|_2$,
in which case Lemma~\ref{l: 2} tells us that $|\overline{G}|_2 = |G|_2 = 4$ and $\overline{n}$ is odd,
so $|\frac{p - 1}{2}|_2 \le 2$ and again $\overline{n} = (p - 1)/4$ or $(p - 1)/2$.
Also if $\overline{n} = (p - 1)/4$ then $p \equiv 1$ mod $4$ but $|\frac{p - 1}{2}|_2 = 2$,
so $\frac{p +1}{2}$ is odd and hence $\frac{p +1}{2} = r^\beta$, while if $\overline{n} = (p - 1)/2$
then $\frac{p +1}{r^\beta}$ is a power of $2$ (and equal to $2$ or greater than $2$,
depending on whether $\frac{p +1}{2}$ is even or odd).
These give the first halves of conclusions (c) and (d), with the inequalities for $p$ resulting from the requirements that elements of order $\overline{m}$ and $\overline{n}$ form a generating pair for $\overline{G} \cong \PSL(2,p)$ where $p \ge 5$, and that the odd prime $r$ divides $(p-1)/2$.

On the other hand,  suppose $\overline{n}$ divides $(p+1)/2$.  Then $|\overline{n}|_t = |(p+1)|_t$ for every odd prime $t \ne r$ (including $p$, trivially), and it follows that $(p-1)/2$ must be a power of $2$, and so both ${\overline n}$ and $(p+1)/2$ must be odd.
Next let $\alpha$ ($\le \beta$) be the integer defined by the equation
$r^\alpha\,|\overline{n}|_r = |(p + 1)/2|_r$, so that $\overline{n} = (p+1)/(2r^\alpha)$,
and let $\lambda$ and $\mu$ be the integers defined by $m = {\overline m}r^\lambda = pr^\lambda$
and  $n = {\overline n}r^\mu$.
Also by Lemma~\ref{lem:r-gp}, we may suppose that $|O(G)| = r^c$ for some integer $c$,
which gives  $|G| = |G/O(G)||O(G)| = |{\overline G}|\,|O(G)| = \frac{p(p+1)(p-1)}{2}\,r^c$.
We can now apply Euler's formula \eqref{e: sunny} to the $(2,m,n)^*$-group $\overline{G}$, and find that
\smallskip

\begin{center}
\begin{tabular}{lcl}
$-r^d \ = \ -\chi$ & $=$ & $|G| \left ( \frac{1}{4} - \frac{1}{2m} - \frac{1}{2n} \right )
  \ = \ |\overline{G}| \left ( \frac{1}{4} - \frac{1}{2pr^\lambda} - \frac{1}{2\overline{n}r^\mu} \right )$ \\[+4pt]
 & $=$ & $\frac{p(p-1)(p+1)}{2}\,r^c
    \left ( \frac{p\overline{n}r^{\lambda+\mu} - 2\overline{n}r^\mu - 2pr^\lambda}{4p\overline{n}r^{\lambda+\mu}} \right )$ \\[+4pt]
 & $=$ & $\frac{(p-1)(p+1)}{2}\,r^c
    \left ( \frac{p\overline{n}r^{\lambda+\mu} - 2\overline{n}r^\mu - 2pr^\lambda}{4\overline{n}r^{\lambda+\mu}} \right )$ \\[+4pt]
 & $=$ & $\frac{(p-1)}{4} \frac{(p+1)}{2\overline{n}} \,r^c
    \left ( \frac{p\overline{n}r^{\lambda+\mu} - 2\overline{n}r^\mu - 2pr^\lambda}{r^{\lambda+\mu}} \right )$, \\[+4pt]
\end{tabular}
\end{center}
where $\frac{(p-1)}{4}$ is a power of $2$, and $\frac{(p+1)}{2\overline{n}} = r^\alpha$.
Then since $r^{\lambda+\mu}$ is odd,
it follows that $\frac{(p-1)}{4} = 1$, so $p = 5$, and then since $\overline{n}$ divides $(p+1)/2 = 3$,
also $\overline{n} = 3$ and $r = 3$, which is (f).

In summary, these findings for the case where $r$ divides $p+1$ imply the first halves of conclusions (c) and (d), as well as conclusion (f).

On the other hand, if $r$ divides $p-1$ then it is sufficient to interchange the `plus' signs with the `minus' signs in the above argument, and obtain the second halves of conclusions (c) and (d).
In this case there is no analogue of conclusion (f), because if $r$ divides $p-1$, and $\overline{n}$ divides $(p-1)/2$, then it follows from Euler's formula for the $(2,m,n)^*$-group $\overline{G}$ that $\frac{p+1}{4} = 1$,
and so $p = 3$ and then $r$ divides $2$, a contradiction.
\end{proof}

We can now prove our main finding of this section, namely Proposition~\ref{p: psl2 main}, with the help of a sequence of further lemmas, all of which refer to the cases listed in Lemmas \ref{l: not A5}, \ref{l: psl restrictions 1} and
\ref{l: psl restrictions 2}. In what follows we let $m_o= m/\overline{m}$ and $n_o=n/\overline{n}$, which are powers of $r$ dividing $|O|$ by Lemma \ref{lem:r-gp}, and we write $n_o/m_o=r^a$ where $a$ is an integer (which could positive, zero or negative).

\begin{lem}\label{l: q5}
If $q=5$, then $r=3$ and $(\{\overline{m},\overline{n}\}, \{m,n\}) = (\{3,5\}, \{3,15\})$ or $(\{5,5\}, \{5,5\})$.
\end{lem}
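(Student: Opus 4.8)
The plan is to reduce everything to an elementary divisibility problem about powers of $r$. By Lemma~\ref{l: not A5}, when $q=5$ the only admissible reduced types are $\{\overline{m},\overline{n}\}=\{3,5\}$ and $\{\overline{m},\overline{n}\}=\{5,5\}$, so it suffices to treat these two cases. In both I would use that $O$ is an $r$-group (Lemma~\ref{lem:r-gp}), whence $|G|=|\PSL_2(5)|\,|O|=60\,r^{c}$ for some $c\ge 0$, together with $m=\overline{m}\,r^{i}$ and $n=\overline{n}\,r^{j}$ for non-negative integers $i,j$ (since $m_o=r^{i}$ and $n_o=r^{j}$). Substituting into Euler's formula~\eqref{e: sunny} and cancelling the common factors expresses $-\chi_G=r^{d}$ as $r^{c-i-j}$ times an explicit integer combination of powers of $r$; because $-\chi_G$ is a power of $r$, that remaining combination must itself be a power of $r$, and the whole problem becomes: for which $(r,i,j)$ does this happen?

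For the case $\{5,5\}$ the substitution yields $-\chi_G=3\,r^{c-i-j}\bigl(5r^{i+j}-2r^{i}-2r^{j}\bigr)$. Clearing denominators gives $r^{d+i+j}=3\,r^{c}\bigl(5r^{i+j}-2r^{i}-2r^{j}\bigr)$, and the explicit factor $3$ on the right immediately forces $r=3$ (otherwise $3\mid r^{d}$). The residual requirement is that $5\cdot 3^{i+j}-2\cdot 3^{i}-2\cdot 3^{j}$ be a power of $3$. I would assume $i\le j$ (the expression is symmetric), factor out $3^{i}$, and read the bracket modulo $3$: unless $i=j=0$ the bracket is coprime to $3$ and, by a one-line size estimate, strictly larger than $1$, so it cannot be a power of $3$; the value $i=j=0$ gives $5-2-2=1$, hence $\{m,n\}=\{5,5\}$.

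For the case $\{3,5\}$, write $\{m,n\}=\{3r^{i},5r^{j}\}$; the substitution gives $-\chi_G=r^{c-i-j}\bigl(15r^{i+j}-6r^{i}-10r^{j}\bigr)$, so $W:=15r^{i+j}-6r^{i}-10r^{j}$ must be a \emph{positive} power of $r$. I would first dispose of $r\neq 3$: extracting the appropriate power of $r$ and splitting on the sign of $i-j$ shows the complementary cofactor is coprime to $r$, so $W$ can be a power of $r$ only if that cofactor equals $1$, which a direct size estimate rules out; the degenerate value $i=j=0$ gives $W=15-6-10=-1<0$ (the spherical type $\{3,5\}$) and is discarded. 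This leaves $r=3$, where the same bookkeeping, now comparing the $3$-adic valuations $i+j+1$, $i+1$, $j$ of the three terms, pins the solution down to $i=0,\ j=1$, giving $W=45-6-30=9=3^{2}$ and $\{m,n\}=\{3,15\}$.

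The main obstacle is the $\{3,5\}$ case with general $r$: here one must simultaneously exclude every prime $r\neq 3$ and every pair $(i,j)$, and the cleanest route is the valuation/coprimality analysis above, organised by the sign of $i-j$ so that in each branch the term of least $r$-adic valuation is identified and the complementary cofactor is shown to be both coprime to $r$ and too large to equal $1$. The arithmetic is routine once the three sub-branches are in place; the only subtlety is keeping track of which of $p=3,5$ coincides with $r$, since the factors $6=2\cdot 3$ and $10=2\cdot 5$ then contribute (or fail to contribute) an extra power to the relevant $r$-part and so shift the valuation pattern of the three terms.
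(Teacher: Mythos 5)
Your proposal is correct and follows essentially the same route as the paper: reduce to the two reduced types $\{3,5\}$ and $\{5,5\}$ via the earlier lemmas, use Lemma~\ref{lem:r-gp} to write $m_o$ and $n_o$ as powers of $r$, substitute into Euler's formula \eqref{e: sunny}, and force the resulting integer cofactor to be a power of $r$ by coprimality and size arguments. The paper's case split on the sign of $a$ (where $n_o/m_o=r^a$) together with its congruences mod $9$, $25$ and $27$ is exactly your valuation bookkeeping in different notation (its only other divergence being that it gets $r=3$ in the $\{5,5\}$ case from Lemma~\ref{l: odd prime} rather than from the explicit factor $3$ in the formula), so the two proofs coincide in substance.
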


\begin{proof}
Here $p = 5$, and Lemmas \ref{l: not A5} and \ref{l: psl restrictions 1}  when taken together for $q=5$ tell us that either $\{\overline{m},\overline{n}\} = \{3,5\}$ or $\{5,5\}$, with no immediate condition on $r$, or otherwise $r$ divides $(q\pm1)/2$ and hence $r = 3$.  But even in the latter case, Lemma \ref{l: psl restrictions 2} gives $\{\overline{m},\overline{n}\} = \{5,3\}$ or $\{5,5\}$. So now we consider these two cases in turn.

Suppose $\{\overline{m},\overline{n}\}=\{3,5\}$. Then without loss of generality, $(\overline{m}, \overline{n}) =(3,5)$, and Euler's equation \eqref{e: sunny} for the $(2,3,5)^*$-group $G$ with $|G|=|\PSL(2,5||O| = 60|O|$ and $(m,n)=(3m_o,5n_o)$ gives
\begin{equation}\label{eq:Eu60}
r^d = -\chi_{G} = 60\,|O|\left(\frac{15m_on_o-6m_o-10n_o}{60m_on_o}\right) = \frac{|O|}{\lcm(m_o, n_o)}f
\end{equation}
where the odd integer $f=(15m_on_o-6m_o-10n_o)/\gcd(m_o, n_o)$ must be a power of the prime $r$. We will show that $(m,n) = (3,15)$ and $r = 3$, by considering possibilities for the integer $a$ for which $n_o/m_o=r^a$.

If $a = 0$, then  $m_o = n_o$ and so $f=15m_o-16$, which cannot be $1$ or divisible by $r$, so $a \ne 0$..
Also if $a < 0$, then $\gcd(m_o,n_o) = n_o$ and so $f=15m_o - 6r^{-a} -10$, which is congruent to $2$ mod $3$ and hence cannot be $1$, so $f$ is divisible by $r$, and then since $m_o > n_o \ge 1$, also $10 = (15m_o - 6r^{-a}) - f$ is divisible by $r$, so $r=5$. But then  if $a = -1$ then $f = 15m_o - 40 \equiv 0-40 \equiv 10$ mod $25$, while if $a\leq -2$ then $25$ divides $m_o$ and so $f =15m_o - 6r^{-a} -10\equiv 0-10 \equiv 15$ mod $25$, and both of these are impossible for a power of $5$.

Hence $a > 0$, and $\gcd(m_o,n_o) = m_o$ with $n_o > 1$, and $f=15n_o - 6 -10r^{a}$. In particular, $f \equiv -1$ mod $5$, so $f$ cannot be $1$, and again $f$ is divisible by $r$, and then $6 = (15m_o - 10r^{a}) - f$ is  divisible by $r$, so $r=3$.
Also if $a\geq 2$, then $9$ divides $n_o$, so $f =15n_o - 6 -10r^{a} \equiv -6\mod 9$ and therefore $f = 3$, which again is impossible since $f \equiv -1$ mod $5$.  Thus $a = 1$.

Now $f=15n_o - 6 -30 = 15n_o - 36$, and if $n_o$ is divisible by $9$ then $f \equiv -36 \equiv 18$ mod~$27$, which is impossible for a power of $3$, so $n_o = 3$, giving $m_o = n_{o}/r^{a} = 1$, and so $(m,n) = (\overline{m},3\overline{n}) = (3,15)$.

For the other case, suppose $\overline{m}=\overline{n}=5$.
Then $|\overline{G}|_3 = |60|_3 = 3 > 1 = |\overline{m}|_3 |\overline{n}|_3$, so  Lemma~\ref{l: odd prime} implies that $r = 3$, and Euler's formula for $G$ reduces to $3^d = 3|O|f/\lcm(m,n)$, where $f$ is the integer $(5m_on_o-2m_o-2n_o)/\gcd(m_o,n_o)$, which must be a power of $3$.  In this case we consider the integer $a$ for which $n_o/m_o=3^a$, and suppose without loss of generality that $a\ge 0$ (since $\overline{m}=\overline{n}$).

If $a\geq 1$, then $\gcd(m_o,n_o) = m_o$ and $n_o > 1$, so $f=5n_o-2r^{a}-2$, which is not divisible by $3$, and hence  $f = 1$, and $5n_o-2r^{a} = 3$, which is not divisible by $9$, so $a = 1$, but then $5n_o = 3+2r = 9$, which is impossible. Thus $a=0$, which gives $m_o = n_o$, and so $f= (5m_on_o-2m_o-2n_o)/\gcd(m_o,n_o)=5n_o-4$, which is not divisible by $3$, and so $5n_o-4 = f = 1$, from which it follows that $m_o=n_o=1$ and therefore $(m,n) = (\overline{m},\overline{n}) = (5,5)$.
\end{proof}

\begin{lem}\label{l: a-f}
If one of the cases {\rm (a)} to {\rm (f)} of Lemmas~{\rm \ref{l: psl restrictions 1}} and~{\rm \ref{l: psl restrictions 2}} applies, then $(m_o, n_o)=(1,1)$.
\end{lem}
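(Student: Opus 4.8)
The plan is to feed the substitutions $m=\overline{m}\,m_o$ and $n=\overline{n}\,n_o$ into Euler's formula \eqref{e: sunny} and compare $r$-adic valuations, with the aim of forcing $m_o=n_o=1$. By Lemma~\ref{lem:r-gp} the group $O=O(G)$ is an $r$-group, so we may write $|O|=r^c$, $m_o=r^\lambda$ and $n_o=r^\mu$ for non-negative integers $c,\lambda,\mu$, and the goal becomes $\lambda=\mu=0$. First I would dispose of cases (e) and (f), which have $q=5$: these are precisely the situations settled in Lemma~\ref{l: q5}, so from now on we may assume that one of the cases (a)--(d) holds, in which $q\ge 7$ and, by Lemma~\ref{l: psl restrictions 1}, in fact $q\ge 11$ whenever $r=p$. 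The crucial structural fact, read off from Lemmas~\ref{l: psl restrictions 1} and~\ref{l: psl restrictions 2}, is that in each of the cases (a)--(d) \emph{both $\overline{m}$ and $\overline{n}$ are coprime to $r$}: in (a),(b) because $r=p$ while $\overline{m},\overline{n}$ divide $q\pm 1$; and in (c),(d) because $r\mid p\pm 1$ while $\overline{m}=p$ and $\overline{n}$ divides the half of $p\mp 1$ that is prime to $r$. In particular $\gcd(\overline{m},\overline{n})=1$ and $\lcm(m,n)=\overline{m}\,\overline{n}\,r^{\max(\lambda,\mu)}$.

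Assuming without loss of generality that $\lambda\le\mu$ (the ordering $\mu\le\lambda$ being treated identically, with the roles of $\overline{m},\lambda$ and $\overline{n},\mu$ interchanged), I would substitute into \eqref{e: sunny} and cancel $r^{\lambda}$ from the numerator to obtain
\[
r^d \;=\; \frac{|\overline{G}|}{\overline{m}\,\overline{n}}\cdot\frac{r^{\,c-\mu}}{4}\cdot B,\qquad B=\overline{m}\,\overline{n}\,r^{\mu}-2\overline{m}-2\overline{n}\,r^{\mu-\lambda}.
\]
A direct computation shows that the quotient $|\overline{G}|/(\overline{m}\,\overline{n})$ equals $2q$, $4q$, $p+1$, or $2(p\pm 1)$ according to the case, and in every instance it is a power of $2$ times a power of $r$, say $2^{i}r^{j}$ with $i\ge 1$. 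Since $r^d$ is a power of the odd prime $r$, matching $2$-parts forces $B=2^{b}r^{k}$ with $b=2-i\le 1$; hence $B$ is an odd-or-even power of $r$ with $2$-part at most $2$, and whenever $r\nmid B$ we must have $B\in\{1,2\}$.

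The next step is the easy regime. If $\lambda<\mu$, then reducing the display modulo $r$ gives $B\equiv -2\overline{m}\pmod r$, which is nonzero because $\gcd(2\overline{m},r)=1$; thus $r\nmid B$ and so $B\in\{1,2\}$. This is impossible by a crude size estimate, since $B=\overline{n}\,r^{\mu-\lambda}(\overline{m}\,r^{\lambda}-2)-2\overline{m}$ grows at least quadratically in $q$ once $\mu\ge 1$ (using hyperbolicity, $\overline{m}\,\overline{n}\ge 9$, and $q\ge 7$). Hence $\lambda=\mu$. When $\lambda=\mu\ge 1$ we have $B=\overline{m}\,\overline{n}\,r^{\lambda}-2(\overline{m}+\overline{n})$, and the argument splits on whether $r\mid\overline{m}+\overline{n}$. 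In case (a) with $\{\overline{m},\overline{n}\}=\{(q\pm1)/2,(q\mp1)/4\}$ and in cases (c),(d), one checks that $r\nmid\overline{m}+\overline{n}$ (for example in (d) with $r\mid p+1$ one gets $\overline{m}+\overline{n}=(3p-1)/2\equiv-2\pmod r$), so again $r\nmid B$, $B\in\{1,2\}$, and size kills it.

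The genuinely delicate case—and the step I expect to be the main obstacle—is $\lambda=\mu\ge 1$ in the configuration $\{\overline{m},\overline{n}\}=\{(q-1)/2,(q+1)/2\}$ with $r=p$ (occurring in case (b) and the second alternative of case (a)), since there $\overline{m}+\overline{n}=q=r^{e}$ and so $B=\tfrac{q^2-1}{4}\,r^{\lambda}-2r^{e}$ genuinely has positive $r$-valuation; the clean ``reduce mod $r$'' shortcut fails. Here $i=v_2(2q)=1$, so $B=2r^{k}$, and I would split into the subcases $\lambda<e$, $\lambda=e$, $\lambda>e$, computing $v_r(B)$ exactly and equating the forced non-$r$-part of $B$ with $2$. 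Each branch collapses to an explicit equation such as $\tfrac{q^2-1}{4}=2(1+r^{e-\lambda})$, or $\tfrac{q^2-9}{4}=2$, or $\tfrac{q^2-1}{4}\,r^{\lambda-e}=4$, and all of these fail either by the lower bound $q\ge 11$ or by forcing the excluded value $q=9$ (with the subcase $r=3$, $\lambda=e$ needing a short extra $3$-adic check). This contradiction yields $\lambda=\mu=0$, that is $(m_o,n_o)=(1,1)$, completing the proof.
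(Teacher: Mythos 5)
Your overall strategy is essentially the paper's: substitute $m=\overline{m}\,m_o$, $n=\overline{n}\,n_o$ into Euler's formula \eqref{e: sunny}, exploit the fact (checked case by case) that $\overline{m}$ and $\overline{n}$ are coprime to $r$ to force $m_o=n_o$, and then rule out $m_o=n_o\ge 3$ by showing the bracketed factor cannot be a power of $r$ times at most $2$. Your dispatch of cases (e) and (f) via Lemma~\ref{l: q5}, your treatment of the regime $\lambda<\mu$, and your three-way split on $\lambda$ versus $e$ in the configuration $\{\overline{m},\overline{n}\}=\{(q-1)/2,(q+1)/2\}$ with $r=p$ are all sound, and the last of these is a correct (if slightly more laborious) variant of the paper's argument, which instead compares $r$-parts in the identity $f/2+q=m_o(q^2-1)/8$ and ends with a contradiction modulo $27$.

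The genuine gap is in case (c). You assert that in cases (c) and (d) one has $r\nmid\overline{m}+\overline{n}$, so that $r\nmid B$, forcing $B\in\{1,2\}$, which a size estimate kills. This is correct in case (d), but false in case (c) when $r=3$. There $\overline{m}+\overline{n}=p+(p\mp 1)/4=(5p\mp 1)/4$, and since $r\mid p\pm 1$ this is congruent to $\mp 6/4\equiv\mp 3/2\pmod r$; hence $r\mid \overline{m}+\overline{n}$ exactly when $r=3$, and this genuinely occurs within case (c): for instance $p=17$, $r=3$, $\beta=2$ (where $\overline{m}+\overline{n}=21$), or $p=53$, $r=3$, $\beta=3$ (where $\overline{m}+\overline{n}=66$). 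In such a situation $B=\overline{m}\,\overline{n}\,3^{\lambda}-2(\overline{m}+\overline{n})$ is divisible by $3$ --- e.g.\ for $p=53$, $\lambda=1$ one gets $B=1935=3^2\cdot 215$, which is odd, so even the parity constraint does not dispose of it --- and nothing in your proposal excludes these values. This is exactly where the paper works hardest: having deduced $r=3$ from $r\mid\overline{m}+\overline{n}$, it writes $p=2\cdot 3^{\delta}\mp 1=6t\mp 1$ with $t=3^{\delta-1}$, uses $3$-adic valuation of $2(\overline{m}+\overline{n})=(5p\mp1)/2$ to bound $m_o\le 3$ (resp.\ $m_o\le 9$), and then verifies by explicit computation that the resulting quantities, such as $\tfrac{3(18t^2-19t+3)}{2}$ and $\tfrac{3(54t^2+17t+11)}{2}$, are never powers of $3$. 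Some argument of this kind is indispensable to finish case (c); it is not a routine extension of your size estimates, so as written your proof is incomplete.
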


\begin{proof}
Observe that in cases (e) and (f) of Lemma ~\ref{l: psl restrictions 2}, the conclusion that $m_o=n_o=1$ follows from Lemma \ref{l: q5}.
For the remaining cases, we may take $|G|=|\PSL_2(q)|\,|O|$, and $m= \overline{m}m_o$ and $n=\overline{n}n_o$ in Euler's formula \eqref{e: sunny}, and suppose without loss of generality that $m_o\ge n_o$, and obtain
\begin{equation}\label{e: v1}
r^d = \frac{1}{2} \cdot \frac{q(q^2-1)}{4\,\overline{m}\,\overline{n}}\cdot \frac{|O|}{m_o}\cdot f\  \ \ \
{\rm where} \ \ \ f=\overline{m}\,\overline{n}\,m_o-2\,\overline{m}\frac{m_o}{n_o}-2\,\overline{n}.
\end{equation}

Also one may check in the cases (a) to (d) of Lemmas~~\ref{l: psl restrictions 1} and ~\ref{l: psl restrictions 2} that $q(q^2-1)/(4\,\overline{m}\, \overline{n})$ is always an integer, and so is $f$ since $m_o$ is a multiple of $n_o$, while $|O|/m_o$ is quite obviously a power of $r$. Hence from the expression for $r^d$  in equation \eqref{e: v1} as half of a product of three positive integers, it follows that one of $q(q^2-1)/(4\,\overline{m}\, \overline{n})$ and $f$ is a power of $r$, while the other is twice a power of $r$.

Note here that if $m_o \ge 3$, then $m \ge 9$ and so
$f n_o = mn-2m-2n = (m-2)(n-2) - 4 \ge 7(3n_o-2)-4 > 2n_o$,
which gives $f > 2$ and therefore $f$ is divisible by $r$.
Hence in particular, if $m_o>n_o$, then $f$ is divisible by $r$, and then since also $r$ divides both $m_o$ and $m_o/n_o$, we find that $r$ divides~$\overline{n}$. By inspection, however, $r$ divides neither $\overline{m}$ nor $\overline{n}$ in any of the cases (a) to (d) of Lemmas ~\ref{l: psl restrictions 1} and ~\ref{l: psl restrictions 2}, and so $m_o = n_o$.
Accordingly, the expression for $f$ in equation \eqref{e: v1} reduces to $f=\overline{m}\,\overline{n}\,m_o -2(\overline{m} +\overline{n})$.

We now proceed to show that $m_o=1$, by assuming  that ($m_o =$) $n_o \ge 3$.  As shown above, this implies that  $f$ is divisible by $r$, and therefore so is $\overline{m} +\overline{n}$, and we can use that fact when checking the cases (a) to (d) in Lemmas ~\ref{l: psl restrictions 1} and ~\ref{l: psl restrictions 2}.

In case (a) of Lemma~\ref{l: psl restrictions 1}, where $\{\overline{m},\overline{n}\} = \{(q\pm1)/2, (q\mp1)/4\}$ or $\{\overline{m},\overline{n}\} = \{(q-1)/2,(q+1)/2\}$, we get an immediate contradiction since $r = p$ does not divide $\overline{m} +\overline{n}$.

In case (b), where $r=p$ and $\{\overline{m},\overline{n}\} = \{\frac{q-1}{2},\frac{q+1}{2}\}$ for some $q \ge 11$,
we have $q(q^2-1)/(4\,\overline{m}\, \overline{n}) = q$, which is odd, so $f/2$ must be a power of $r$,
with $f/2 + q = f/2 + (\overline{m}+\overline{n}) = \overline{m}\,\overline{n}\,m_o = m_o(q^2-1)/8$.
Then since $(q^2-1)/8$ is coprime to $p$ ($= r$), comparing the $r$-parts of $f/2 + q$ and $m_o(q^2-1)/8$, we find that $m_o = \min(f/2,q) = f/2$ or $q$, and then $q$ or $f/2$  is equal to $m_o(q^2-1)/8 - m_o = m_o(q^2-9)/8$.
With either possibility,  $(q^2-9)/8$ is a power of $r$, so $r$ divides $q^2-9$ and hence divides $9$,
and therefore $r = p = 3$.  But then $q \ge 27$ (since $q \ge 11$) and so $q^2-9 \equiv -9$ mod $27$
and hence $(q^2-9)/8$ cannot be a power of $3$, which is another contradiction.

In case (d) of Lemma~\ref{l: psl restrictions 2}, where either $\{\overline{m},\overline{n}\} = \{p, (p-1)/2\}$
with $r$ dividing $p+1$, or $\{\overline{m},\overline{n}\} = \{p, (p+1)/2\}$ with $r$ dividing $p-1$,
we find that either $r$ divides $p+(p-1)/2 = p+1 + (p- 3)/2$ and hence divides both $p+1$ and $p-3$,
or $r$ divides $p+(p+1)/2 = p-1 + (p+3)/2$ and hence divides both $p-1$ and $p+3$,
and both are impossible.

Finally, consider case (c) of Lemma~\ref{l: psl restrictions 2}, where $p = q > 5$, and
either $\{\overline{m},\overline{n}\} = \{p, (p-1)/4\}$ and $r$ divides $p+1$,
or $\{\overline{m},\overline{n}\} = \{p, (p+1)/4\}$ and $r$ divides $p-1$.
Here the same argument as in the previous paragraph shows that $r$ divides both $p+1$ and $p-5$,
or both $p-1$ and $p+5$, so $r = 3$.
Moreover, $q(q^2-1)/(4\, \overline{m}\, \overline{n}) = p(p^2-1)/(p(p\mp 1)) = p\pm 1$, which is even,
and so $f$ and $(p\pm 1)/2$ are powers of $r = 3$, with $p = 2{\cdot}3^{\delta} \mp 1 = 6t \mp 1$
where $t = 3^{\delta-1}$ for some integer $\delta \ge 1$ (but $p \ne 5$).

Now if $r$ divides $p+1$, then
$f = \overline{m}\,\overline{n}\,m_o -2(\overline{m} +\overline{n}) = \frac{p(p-1)m_o}{4} - \frac{5p-1}{2}
= \frac{(6t-1)(3t-1)m_o}{2} - (15t-3),$ and if $m_o \ge 9$ then this cannot be a power of $3$ because
$15t-3$ is not divisible by $9$, so $m_o = 3$.
It follows that $f =  \frac{3(6t-1)(3t-1)}{2} - (15t-3) = \frac{3(18t^2 - 9t +1 - (10t-2))}{2} = \frac{3(18t^2 - 19t + 3)}{2}$,
which is easily seen to be a power of $3$ only when $t = 1$, in which case $p = 6t-1 = 5$, a contradiction.

Similarly if  $r$ divides $p-1$, then
$f = \overline{m}\,\overline{n}\,m_o -2(\overline{m} +\overline{n}) = \frac{p(p+1)m_o}{4} - \frac{5p+1}{2}
= \frac{(6t+1)(3t+1)m_o}{2} - (15t+3),$ and if $m_o \ge 27$ then this cannot be a power of $3$ because
$15t+3$ is not divisible by $27$, so $m_o = 3$ or $9$.
It follows that either $f =  \frac{3(6t+1)(3t+1)}{2} - (15t+3) = \frac{3(18t^2 +9t +1 - (10t+2))}{2} = \frac{3(18t^2 - t + -1)}{2}$, which is never a power of $3$, or  $f =  \frac{9(6t+1)(3t+1)}{2} - (15t+3) = \frac{3(54^2 +27t +13- (10t+2))}{2} = \frac{3(54t^2 + 17t + 11)}{2}$, which again is never a power of $3$, a final contradiction.

Thus $(m_o,n_o) = (1,1)$, and so $(m,n) = (\overline{m},\overline{n})$.
\end{proof}

With $(m,n) = (\overline{m},\overline{n})$, possibilities become a lot easier to deal with.
For one thing, both $G$ and $\overline G = G/O$ are $(m,n,2)^*$-groups, and it follows from Euler's formula that
$$
\chi_G = |G|(1/(2m) - 1/4 + 1/(2n)) = |{\overline G}|\,|O|\,(1/(2m) - 1/4 + 1/(2n)) = |O|\,\chi_{G/O}.
$$
The latter observation leads to the following.

\begin{lem}\label{PSL lemma three}
If $(m,n) = (\overline{m},\overline{n})$, then
the triple $(q, \{m,n\}, \chi_{G/O})$ is one of the three possibilities appearing in Table {\em \ref{t:three}}.
\end{lem}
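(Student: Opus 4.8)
The plan is to exploit the identity $\chi_G = |O|\,\chi_{G/O}$ recorded just above, together with the fact (Lemma~\ref{lem:r-gp}) that $O = O(G)$ is an $r$-group. Since $-\chi_G = r^d$ and $|O|$ is a power of $r$, the quotient $\overline{G} = G/O \cong \PSL_2(q)$ is itself a $(2,m,n)^*$-group whose Euler characteristic $-\chi_{\overline{G}}$ is again a power of $r$. Thus the problem reduces to the ``base case'' in which the odd-order radical is trivial, and the remaining task is to determine, for which $q$ and which admissible type $\{m,n\}$, the simple group $\PSL_2(q)$ can carry a non-orientable regular map of prime-power Euler characteristic.

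First I would collect the admissible types. Because $(m,n) = (\overline{m},\overline{n})$, we are exactly in the situation of Lemma~\ref{l: a-f} with $(m_o,n_o) = (1,1)$, so every possibility for $\{\overline{m},\overline{n}\}$ is one of those enumerated in Lemmas~\ref{l: not A5}, \ref{l: psl restrictions 1} and~\ref{l: psl restrictions 2}: the small case $q=5$ with $\{m,n\}\in\{\{3,5\},\{5,5\}\}$ (cases (e) and (f), already resolved by Lemma~\ref{l: q5}), the cases (a),(b) where $r=p$ and $\{m,n\}$ is built from $(q\pm1)/2$ and $(q\mp1)/4$, and the cases (c),(d) where $r$ divides $q\pm1$ and $\{m,n\} = \{p, (p\mp1)/4\}$ or $\{p,(p\mp1)/2\}$.

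The heart of the argument is then a finite Diophantine analysis, carried out case by case. For each admissible type I would substitute $m = \overline{m}$, $n = \overline{n}$ and $|G| = |\PSL_2(q)| = q(q^2-1)/2$ into Euler's formula \eqref{e: sunny}, which expresses $-\chi_{\overline{G}}$ as a product of a structural factor (one of $q$, $(q\pm1)/2$, $(q\mp1)/4$), known to be a power of $r$ from the hypotheses of the relevant lemma, and a cofactor of the shape $(q^2 - cq + e)/8$ for small constants $c$ and $e$. Requiring the whole product to be a power of $r$ forces the cofactor to be either $1$ or divisible by $r$; reducing $q^2 - cq + e$ modulo $r$ (using $p \equiv \mp 1 \pmod r$ in cases (c) and (d)) shows that $r$ can divide it only for one specific small value of $r$, and solving the resulting quadratic or congruence then confines $q$ to a short finite list. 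In this way cases (b) and (d) are eliminated entirely, while case (a) leaves $q = 13$ with $\{m,n\} = \{3,7\}$ and $-\chi_{\overline{G}} = 13$, the subcase $q = 5$ leaves $\{5,5\}$ with $-\chi_{\overline{G}} = 3$, and case (c) leaves the two candidates $q = 13$ with $\{3,13\}$ and $-\chi_{\overline{G}} = 7^2$, and $q = 11$ with $\{3,11\}$ and $-\chi_{\overline{G}} = 5^2$.

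I expect the main difficulty to be twofold. The routine but error-prone part is the $2$-adic and $r$-adic bookkeeping across the several subcases, since the structural factor and the cofactor each contribute powers of $2$ and of $r$ that must be carefully disentangled before the cofactor can be shown to force $q$ into its short list. The genuinely delicate point is that the Diophantine sieve retains an arithmetically valid but geometrically spurious candidate, namely $q = 11$ with $\{m,n\} = \{3,11\}$ and $-\chi = 5^2$: this satisfies Euler's formula yet does not yield a non-orientable regular map, because the reflection realising regularity lies in $\PGL_2(11) \setminus \PSL_2(11)$, so that the associated map is orientable with rotation group $\PSL_2(11)$. Discarding such a candidate cannot be done by the mass formula alone and needs the generation data for $\PSL_2(q)$, for which I would appeal to the census \cite{C600}. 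Once $q = 11$ is removed, exactly the three triples recorded in Table~\ref{t:three} remain.
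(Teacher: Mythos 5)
Your overall strategy coincides with the paper's: reduce to $\overline{G}=G/O$ via $\chi_G = |O|\,\chi_{G/O}$ and Lemma~\ref{lem:r-gp}, run through the types allowed by Lemmas~\ref{l: not A5}, \ref{l: psl restrictions 1} and~\ref{l: psl restrictions 2}, force the cofactor $mn-2(m+n)$ to be $r^s$ or $2r^s$ via Euler's formula, and discard arithmetically valid but geometrically spurious candidates by appeal to the census \cite{C600}. You correctly recover the three rows of Table~\ref{t:three} and you correctly flag the spurious survivor $(q,\{m,n\},-\chi_{G/O})=(11,\{3,11\},25)$ coming from case (c), which the paper also kills with the census.

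There is, however, a concrete error in your bookkeeping: the claim that ``cases (b) and (d) are eliminated entirely'' by the Diophantine sieve is false for case (d). In the subcase $\{m,n\}=\{p,(p+1)/2\}$ of case (d), Euler's formula gives $-\chi_{G/O}=(p-1)(p^2-5p-2)/8$; since $p^2-5p-2=(p-1)(p-4)-6$ cannot be $2$ or $4$, the prime $r$ must divide it, and as $r$ divides $p-1$ in this subcase this forces $r=3$; a residue computation mod $9$ then leaves exactly $p=7$. This yields the triple $(q,\{m,n\},-\chi_{G/O})=(7,\{4,7\},9)$, which satisfies every hypothesis of case (d) — note $(p-1)/r^{\beta}=6/3=2$ is a power of $2$ — and therefore survives your sieve. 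Exactly like the $q=11$ candidate, it can only be removed by non-arithmetic input: the census \cite{C600} shows there is no non-orientable regular map of type $\{4,7\}$ with characteristic $-9$. As written, your argument would fail at this step (or reach the right list only by an unjustified assertion); the fix is simply to apply to this candidate the same final census appeal you already use for $q=11$, which is precisely what the paper does.
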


${}$\\[-36pt]
\begin{center}
\begin{table}[ht]
\begin{tabular}{|c|c|c|}
\hline
$q$ & $\{m,n\}$ & $\chi_{G/O}$ \\
\hline
$5$ & $\{5,5\}$ & $-3$ \\
$13$ & $\{3,7\}$ & $-13$ \\
$13$ & $\{3,13\}$ & $-49$ \\ \hline
\end{tabular}
\vspace{3mm}
\caption{Possibilities for triples $(q, \{m,n\}, \chi_{G/O})$ when $m_o=n_o=1$.}\label{t:three}
\end{table}
\end{center}

\vspace{-11mm}
\begin{proof}
Here $-\chi_{G/O}$ is a power of the odd prime $r$, but cannot be $1$, as the pair $(m,n)$ is hyperbolic.
For the same reason, if $q=5$ then it follows from Lemma \ref{l: not A5} and Lemma \ref{l: psl restrictions 2}
that $m=n=5$, in which case $-\chi_{G/O}=60(1/10-1/4+1/10) = -3$, giving the first row of Table \ref{t:three},
which is realised by the map N5.3 at \cite{C600}.
On the other hand, if $q\ne 5$, then Euler's formula gives
\begin{equation}\label{e: v2}
-\chi_{G/O} \, = \, \frac{1}{2} \cdot \frac{q(q^2-1)}{4mn}\cdot \left(mn-2m-2n\right),
\end{equation}
and the possibilities to consider are given by cases (a) to (d) in Lemmas \ref{l: psl restrictions 1} and \ref{l: psl restrictions 2}.
In each case $q(q^2-1)/(4mn)$ is an integer, and so $f=mn-2(m+n)$ must be one of $r^s$ or $2r^s$ for some nonnegative integer $s$, depending on whether $q(q^2-1)/(4mn)$ is even or odd.

In  cases (a) and (b) of Lemma \ref{l: psl restrictions 1}, we have $r=p$ and hence $q$ is a power of $r$, with $q \ge 11$. If $\{m,n\} = \{(q-1)/2,(q+1)/2\}$, then $q(q^2-1)/(4mn) = q$, which is odd, so $f = 2r^s$ for some $s$, giving $2r^s = f = mn-2(m+n) = (q^2-1)/4 -2q = (q^2-8q-1)/4$. Then since $q$ is a power
of $r$ it follows that $s = 0$, so $q^2-8q-1 = 8$ and hence $0 = q^2-8q-9 = (q-9)(q+1)$, which gives $q = 9$,
a contradiction.
Otherwise $\{m,n\} = \{(q\pm1)/2, (q\mp1)/4\}$ for $q\equiv \pm 1$ mod $4$,
with $q(q^2-1)/(4mn) = 2q$, which is even, so that $r^s = f = mn-2(m+n) = (q^2-12q\mp 4-1)/8$.

If $\{m,n\} = \{(q+1)/2, (q-1)/4\}$, then $8r^s =  q^2-12q-5$, and it follows that either $s=0$
and $0= q^2-12q-13 = (q+1)(q-13)$ so $q = 13$, giving the second row of Table \ref{t:three}, realised by the map N15.1 at \cite{C600}.
Otherwise $s > 0$ and then $p = r$ divides $(q^2-12q)-8r^s = 5$, so $r = p = 5$ but then
$q(q-12) = 5+8\cdot 5^s = 5(1+8\cdot 5^{s-1})$, which has no $5$-power solution for $q$.
Similarly, if $\{m,n\} = \{(q-1)/2, (q+1)/4\}$, then $8r^s =  q^2-12q+3$, and then either $s=0$
and $0= q^2-12q-5$, which has no solution for $q$, or otherwise $s > 0$ and then
$p = r$ divides $8r^s - (q^2-12q) = 3$, so $r = p = 3$ but then
$q(q-12) = 8\cdot 3^s - 3 = 3(8\cdot 3^{s-1} - 1)$, which has no $3$-power solution for $q$.

Next, we consider case (c) of Lemma \ref{l: psl restrictions 2}, in which $\{m,n\} = \{p, \frac{p\mp 1}{4}\}$ where $q = p\equiv \pm 1$ {\rm mod} $4$, and $p\ge 11$, and $p\pm 1 = 2r^\beta$ for some $\beta\ge 1$.

If $\{m,n\} = \{p, \frac{p- 1}{4}\}$, then equation \eqref{e: v2} gives $-\chi_{G/O} = (p+1)(p^2-11p+2)/8$, and as this is a proper power of $r$, while $p+1 \equiv 2$ mod $4$, it follows that $p^2-11p+2$ is a multiple of $4$ (but not~$8$). Now if $r$ does not divide $p^2-11p+2$, then $p^2-11p+2 = 4$, which has no solution for $p$. Hence $r$ divides $p^2-11p+2 = (p+1)(p-12) +14$, and so divides~$14$, which gives $r=7$. Then also because $(p+1) = 2 \cdot r^\beta -1 = 2 \cdot 7^\beta -1$, we have $p^2-11p+2 = 4 \cdot 7^{2\beta} - 26 \cdot 7^\beta + 14$, which cannot be $8$ times a power of $7$ if $\beta\ge 2$, and so $\beta = 1$, with $p^2-11p+2 = 196 - 182 + 14 = 28$.  This implies that $0 = p^2-11p-26 = (p+2)(p-13)$, and therefore $p = 13$, with $\{m,n\}=\{13,3\}$ and $-\chi_{G/O}=(14 \cdot 28)/8 = 49$, giving the third row of Table \ref{t:three}, realised by the map N51.1 at \cite{C600}.

On the other hand , if $\{m,n\} = \{p, \frac{p+1}{4}\}$, then equation \eqref{e: v2} gives $-\chi_{G/O} = (p-1)(p^2-9p-2)/8$, but $p^2-9p-2$ cannot be $4$,  so $r$ divides $p^2-9p-2 = (p-1)(p-8) - 10$ and hence $r = 5$. It follows that $p=2 \cdot 5^\beta+1$, so $p^2-9p-2 = 4 \cdot 5^{2\beta} - 14 \cdot 5^\beta - 10$, which cannot be $8$ times a power of $5$ if $\beta\ge 2$, so $\beta = 1$, with $p^2-9p-2 = 100 - 70 - 10 = 20$.  This implies that $0 = p^2-9p-22 = (p+2)(p-11)$, and therefore $p = 11$, with $\{m,n\}=\{11,3\}$ and $-\chi_{G/O}=(10 \cdot 20)/8 = 25$.  But there is no non-orientable regular map with characteristic $-25$ (see \cite{C600}), a contradiction.

Finally we consider case (d) of Lemma \ref{l: psl restrictions 2}, in which $\{m,n\} = \{p, \frac{p\mp 1}{2}\}$ where $q = p\ge 7$, and $(p\pm 1)/r^\beta$ is a power of $2$ for some $\beta\ge 1$, and show that no further possibility for $(m,n)$ arises.

If $\{m,n\} = \{p, \frac{p- 1}{2}\}$, then equation \eqref{e: v2} gives $-\chi_{G/O} = (p+1)(p^2-7p+2)/8$,
and as this is a proper power of $r$, it follows that one of $p+1$ and $p^2-7p+2$ is a multiple of $4$ (but not~$8$), while the other is a multiple of $2$ but not $4$. Now if $r$ does not divide $p^2-7p+2$, then $p^2-7p+2 = 2$ or $4$, which has only one solution for $p$, namely $p = 7$, but then $p+1$ is divisible by $8$.
Hence $r$ divides $p^2-7p+2 = (p+1)(p-8) +10$, and so divides $10$, which gives $r=5$.
Then because $(p+1)/5^\beta$ is a power of $2$, we have $p = 2^i \cdot 5^\beta -1$ for some $i \in \{1,2\}$, and therefore $p^2-7p+2 = 2^{2i}5^{2\beta}-9{\cdot}2^i5^\beta+10$, which cannot be $2$ or $4$ times a power of $5$ if $\beta\ge 2$, and so $\beta = 1$, and $p = 2 \cdot 5 -1 = 9$ or $4 \cdot 5 -1 = 19$. But clearly $p \ne 9$,
and also $p \ne 19$ for otherwise $p^2-7p+2 = 230$, which is not twice a power of $5$.

On the other hand , if $\{m,n\} = \{p, \frac{p+1}{2}\}$, then equation \eqref{e: v2} gives $-\chi_{G/O} = (p-1)(p^2-5p-2)/8$, and $p^2-5p-2$ cannot be $2$ or $4$, so $r$ divides $p^2-5p-2 = (p-1)(p-4)-6$ and hence $r = 3$.
Now it is easy to check (using residues mod $9$) that $p^2-5p-2$ is not divisible by $9$, and it follows that $p^2-5p-2\in \{2r, 4r\} = \{6,12\}$, and the only feasible solution is $p = 7$, giving $\{m,n\}=\{7,4\}$ and $-\chi_{G/O}=9$.  But there is no non-orientable regular map with these parameters (see \cite{C600}), another contradiction.
\end{proof}

We can now complete this section by proving Proposition~{\rm\ref{p: psl2 main}}.

\begin{proof}[Proof of Proposition~{\rm\ref{p: psl2 main}}]
Let $G$ be a $(2,m,n)^*$-group with the property that $G/O(G)\cong \PSL_2(q)$ for some prime-power $q\ge 5$,  such that $-\chi_G = r^d$ for some odd prime $r$. Then by Lemma~\ref{lem:r-gp}, the group $O=O(G)$ is trivial or   an $r$-group. The only possibilities for $(m,n)\ne (\overline{m},\overline{n})$ were considered in Lemmas \ref{l: q5} and \ref{l: a-f}, and that produced just one, namely $\{m,n\}=\{3,15\}$, which gives the entry $A2$ in Table~\ref{t: a}. On the other hand, if $(m,n) = (\overline{m},\overline{n})$ then by Lemma \ref{PSL lemma three} there are three possibilities for the triple $(q, \{m,n\}, \chi_{G/O})$, as listed in Table \ref{t:three}, and these give the entries A1, A4 and A3 in Table~\ref{t: a}. Moreover, those three entries are realised by the maps N5.3, N15.1 and N51.1 at \cite{C600}, while the entry A2 can be shown by computation using {\sc Magma} \cite{magma}
to be realisable by a map with characteristic $\chi_G =-3^7$ and $|O(G)|=3^6$.
\end{proof}
\medskip


\section{The case when \texorpdfstring{$G/O(G)\cong \PGL_2(q)$}{G/O is PGL(2,q)}: proof of Theorem \ref{t: main} for family B}\label{s: insol-PGL}
\bigskip


Our aim in this section is to prove part (B) of Theorem~\ref{t: main}, covering the possibilities summed up in Table~\ref{t: b}. This will again be based on a series of auxiliary findings, and our working hypothesis throughout this section will be the following.
\medskip

\noindent {\bf Hypothesis 2:} {\sl The group $G= \langle a,b,c\rangle$ with presentation of the form \eqref{eq:abc} is a $(2,m,n)^*$-group with Euler characteristic $\chi_G=-r^d$ for some odd prime $r$ and positive integer $d$, such that $G/O(G)$ is isomorphic to $\PGL_2(q)$ for some odd prime-power $q=p^e\ge 5$.}
\medskip

We start by deriving a useful consequence of Theorem~\ref{t: odd order} (which was proved in Section \ref{s: chi odd} using Proposition \ref{p: odd order}), under Hypothesis 2.

\begin{prop}\label{p: pgl2 main}
The group $G$ has the following subgroups:
\begin{enumerate}
\item a normal $r$-subgroup $N$ such that $G/N\cong (K\times C_\ell).2$ for some odd positive integer $\ell$ coprime to both $q(q^2-1)$ and $r$, where $K$ is the unique copy of \,$\PSL_2(q)$ in $G/N$,
\item a subgroup $G_o$ of index $2$ containing $N$ such that $G_o/N\cong K \times C_\ell$,
\item a cyclic subgroup $C$ of order $\ell$ with the property that $O(G)=NC\cong N\rtimes C_\ell$, and
\item a normal subgroup $\wt{K}$ such that $G_o=\wt{K}C$, $K\cong \wt{K}/N$, $K\times C_\ell \cong G_o/N$ and $\wt{K}\cap O(G)=N$.
\end{enumerate}
\end{prop}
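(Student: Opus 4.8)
The plan is to read off all four assertions from Theorem~\ref{t: odd order}. Since $G$ is insoluble with $G/O(G)\cong\PGL_2(q)$, that theorem applies and supplies a normal subgroup $N$ of odd order satisfying its properties (i)--(iii). The first thing I would record is that $N$ is in fact a normal $r$-\emph{subgroup}: because $\chi_G=-r^d$, Euler's formula \eqref{e: sunny} shows that $r$ is the only odd prime dividing $|G|/\lcm(m,n)$, so by property (i) no prime other than $r$ can divide $|N|$. Everything then reduces to deciding which of the alternatives (a)--(d) of Theorem~\ref{t: odd order} can occur.

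I would dispose of (a), (b) and (d) as follows. Alternative (a) gives $G/O(G)\cong\PSL_2(q)$, contradicting Hypothesis~2. For (b) we have $G/N\cong 3.A_6$, so $O(G)/N=O(G/N)$ is the centre $C_3$ of $3.A_6$, whence $G/O(G)\cong A_6$, a simple group, which cannot be $\PGL_2(q)$. Alternative (d) is the only delicate one, and I expect it to be the main obstacle. Here the single non-abelian composition factor is $A_6\cong\PSL_2(9)$, forcing $q=9$ and $p=3$; moreover $G$ has the quasi-simple group $3.A_6$ as a section, whose Sylow $3$-subgroup is non-cyclic. If $r\neq 3$ then $\gcd(3,\chi_G)=1$, so Lemma~\ref{l: sylow} forces the Sylow $3$-subgroups of $G$ to be cyclic, a contradiction; hence $r=3$. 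But then the central $C_3$ of the triple cover pulls back to a normal $r$-subgroup $N_1\lhd G$ with $N_1/N\cong C_3$ and $G/N_1\cong(A_6\times C).2$, so after replacing $N$ by $N_1$ we are back in case (c). Thus in all surviving cases $G/N\cong(K\times C).2$ with $K\cong\PSL_2(q)$ and $C$ cyclic of odd order $\ell$; quotienting out $C\cong O(G)/N$ recovers $\PGL_2(q)$, which pins down $q$ and shows $K$ is the unique component of $G/N$, hence characteristic.

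With case (c) in hand I would establish the four numbered claims. For (1), I first enlarge $N$ to absorb the $r$-part of $C$ (the relevant characteristic subgroup of $C$ pulls back to a normal $r$-subgroup, and quotienting preserves the shape $(K\times C_\ell).2$), after which $r\nmid\ell$; the coprimality $\gcd(\ell,q(q^2-1))=1$ then follows because any odd prime $t\neq r$ dividing both $\ell$ and $|K|$ would make the Sylow $t$-subgroup of the section $K\times C_\ell$ non-cyclic, contradicting Lemma~\ref{l: sylow} since $\gcd(t,\chi_G)=1$. For (2), I take $G_o$ to be the preimage in $G$ of the index-$2$ subgroup $K\times C_\ell$ of $G/N$; it contains $N$, has index $2$, and satisfies $G_o/N\cong K\times C_\ell$. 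For (3), since $O(G)/N=O(G/N)\cong C_\ell$ and $\gcd(|N|,\ell)=1$, the Schur--Zassenhaus theorem yields a cyclic complement $C$ of order $\ell$ with $O(G)=NC\cong N\rtimes C_\ell$. Finally for (4), I let $\wt{K}$ be the preimage of the (characteristic, hence normal) subgroup $K$ of $G/N$; then $\wt{K}\lhd G$, $\wt{K}/N\cong K$, and because $K\cap C=1$ inside $G_o/N=K\times C_\ell$ we get $\wt{K}\cap O(G)=N$ and $G_o=\wt{K}C$.

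The one step that really needs care is the elimination of alternative (d): the appearance of a genuine triple cover $3.A_6$ must be handled by the Sylow $3$-subgroup dichotomy above, together with the observation that, when $r=3$, the offending central $C_3$ is itself part of a normal $r$-subgroup and so can be absorbed into $N$ to land in case (c). The remaining bookkeeping---coprimality of $\ell$, existence of the complement, and normality of $\wt{K}$---is then routine given Lemma~\ref{l: sylow}, Schur--Zassenhaus, and the fact that $K$ is the unique component of $G/N$.
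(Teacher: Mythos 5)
Your proof is correct and follows essentially the same route as the paper's: invoke Theorem~\ref{t: odd order} to get an $r$-group $N$, reduce case (d) to case (c) by using Lemma~\ref{l: sylow} to force $r=3$ and then absorbing the central $C_3$ into $N$, absorb the $r$-part of $C_\ell$ into $N$ likewise, deduce $\gcd(\ell,q(q^2-1))=1$ from Lemma~\ref{l: sylow}, and obtain $G_o$, $C$ and $\wt{K}$ via the correspondence theorem and Schur--Zassenhaus. The only cosmetic difference is that you explicitly rule out alternatives (a) and (b) of Theorem~\ref{t: odd order}, which the paper leaves implicit in its assertion that only (c) or (d) can occur under Hypothesis~2.
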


\begin{proof}
Euler's formula \eqref{e: sunny} with $\chi_G = -r^d$ implies that the only odd prime dividing $|G|/\lcm(m,n)$
is $r$, and it follows that the prime factorisation of $|G|/\lcm(m,n)$ has the form $2^i r^j$ for some $i,j\ge 0$.
Accordingly, if we take $N\lhd G$ as the subgroup given by Theorem \ref{t: odd order}, then this is an $r$-group.

Next, as $G/O\cong \PGL_2(q)$, either case (c) or (d) of Theorem~\ref{t: odd order} applies. If case (c) holds, then the conclusion that $G/N\cong (\PSL_2(q) \times C_\ell).2$ for  some odd $\ell$ is immediate, and so we may suppose that (d) holds, namely that $G/N$ is isomorphic to $3.(A_6\times C).2$ where $C$ is a cyclic group of odd order. Now because $A_6\cong \PSL_2(9)$ has non-cyclic Sylow $3$-subgroups, Lemma~\ref{l: sylow} tells us that $r=3$, and then since $N$ is a subgroup of index $3$ in a normal subgroup $N_*$ of $G$ for which $G/N_*\cong(\PSL(2,9)\times C_\ell).2$, 
we can take this subgroup $N_*$ in place of $N$ to obtain the desired conclusion in case (d).
Also with $G/N\cong (\PSL_2(q) \times C_\ell).2$, the correspondence theorem in group theory implies that
$G$ contains a (normal) subgroup $G_o$ of index $2$ containing $N$ such that $G_o/N\cong \PSL_2(q) \times C_\ell$, and then the form of $G/N$ together with the assumption $-\chi_G=r^d$ and the conclusion of
Lemma~\ref{l: sylow} imply that $\gcd(q(q^2-1),\ell)=1$.

Now suppose that $r$ divides $\ell$. Then $G_o/N$ contains a cyclic subgroup $C^*$ of order $r$,
characteristic in $G_o/N$ and hence normal in $G/N$, and by the correspondence theorem again,
this implies that $G$ has a normal $r$-subgroup $N^*$ containing $N$ such that $N^*/N\cong C^*$.
But then $G/N^*\cong (\PSL_2(q) \times C_{\ell/r}).2$, and hence we can take $N^*$ in place of $N$,
and suppose that $r$ does not divide $\ell$.

The arguments made so far prove parts (a) and (b) of our Proposition. To go further, observe that the chain $1\lhd C_\ell \lhd (\PSL_2(q)\times C_\ell)$ in $G/N$ lifts to the chain $N\lhd L\lhd G_o$, with $G_o$ as above
and $L$ such that $L/N\cong C_\ell$. But $\gcd(|N|,\ell)=1$ (since $\gcd(r,\ell) =1$), so the Schur-Zassenhaus theorem implies that $L$ is a split extension of $N$ by a subgroup $C\cong C_\ell$, so that $L=NC$, and then
since $NC\lhd O=O(G)$, comparing orders we find that $L=O=NC\cong N\rtimes C_\ell$, which proves (c).

Finally, note for (d) that the chain of normal subgroup $1\lhd K\lhd K\times C_\ell \lhd (K\times C_\ell)
{\cdot}2$ corresponds to the chain of the form $N\lhd \wt{K}\lhd G_o\lhd G$, where $K\cong \wt{K}/N$ and
$K\times C_\ell \cong G_o/N$, with $G_o$ as in the proof of Proposition \ref{p: pgl2 main}. Also $G_o=\wt{K}C$,  and as $N\le \wt{K}\cap O$ and $\gcd(\ell,|K|) =1$, we conclude that $\wt{K}\cap O=N$.
\end{proof}

Completing the proof of the part (B) of Theorem~\ref{t: main} now relies on establishing existence
of the parameters given in Table~\ref{t: b} and the properties they need to satisfy.
This requires quite some amount of preparation, which we will do in another series of auxiliary findings.
Recalling that our $(2,m,n)^*$ group $G= \langle a,b,c\rangle$ represents a non-orientable regular map, with
$\langle a,b,c\rangle = \langle ab,bc\rangle$, we let $g=ab$ and $h=bc$ be the canonical elements of orders $m$ and $n$ in $G$. 
The primes $p$ and $r$, the prime-power $q = p^e$ and the integers $d$ and $\ell$ are as in Hypothesis 2 and
Proposition~\ref{p: pgl2 main}.

\begin{lem}\label{l: dihedral}
If $\ell>1$, then the quotient $G/\wt{K}$ is isomorphic to a dihedral group of order $2\ell$. Moreover, one of the elements $g\wt{K}$ and $h\wt{K}$ generates the cyclic part of $G/\wt{K}$ (of odd order $\ell$) and is contained in the product $\wt{K}C=G_o$, while the other one lies outside $G_o$.
\end{lem}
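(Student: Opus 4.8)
The plan is to first determine the order and normal structure of $G/\wt{K}$, then identify this quotient as dihedral, and finally track the images $\bar g = g\wt{K}$ and $\bar h = h\wt{K}$ under the projection $G\to G/\wt{K}$.

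First I would record the inclusions $\wt{K}\lhd G_o\lhd G$ supplied by Proposition~\ref{p: pgl2 main}. From part (d) we have $\wt{K}/N\cong K$ and $G_o/N\cong K\times C_\ell$, and since $K\cong\PSL_2(q)$ is simple (hence perfect) while $\gcd(\ell,|K|)=1$, the normal subgroup $\wt{K}/N$ coincides with the $\PSL_2(q)$-direct factor of $K\times C_\ell$. Therefore $G_o/\wt{K}\cong (K\times C_\ell)/K\cong C_\ell$, and as $[G:G_o]=2$ this gives $[G:\wt{K}]=2\ell$, with $G_o/\wt{K}$ a cyclic \emph{normal} subgroup of order $\ell$ and index $2$ in $G/\wt{K}$.

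Next I would observe that, since $G=\langle a,b,c\rangle$ is generated by involutions, so is $G/\wt{K}=\langle a\wt{K},b\wt{K},c\wt{K}\rangle$. A group of order $2\ell$ with $\ell$ odd that contains a cyclic normal subgroup of index $2$ is either $C_{2\ell}$ or dihedral of order $2\ell$; but for $\ell>1$ the group $C_{2\ell}$ has a unique involution and so cannot be generated by involutions. Hence $G/\wt{K}$ is dihedral of order $2\ell$, and because $\ell$ is odd its reflections are precisely the elements outside the rotation subgroup $G_o/\wt{K}$, while the rotations are exactly the elements of $G_o/\wt{K}$.

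For the second assertion I would exploit the identity $gh = ab\cdot bc = ac$ (using $b^2=1$), so that $\bar g\bar h=\overline{ac}$ has order dividing $2$ in $G/\wt{K}$, being the image of the involution $ac$ (recall $(ac)^2=1$ in \eqref{eq:abc}). Since the map is non-orientable we have $G=\langle g,h\rangle$, whence $\langle\bar g,\bar h\rangle=G/\wt{K}$. This rules out $\bar g,\bar h$ both being rotations, for then $\langle\bar g,\bar h\rangle\le G_o/\wt{K}$ would be cyclic and proper. It also rules out both being reflections: the product of two reflections is a rotation, and the only rotation of order dividing $2$ in a dihedral group of order $2\ell$ with $\ell$ odd is the identity, which would force $\bar h=\bar g^{-1}$ and hence $\langle\bar g,\bar h\rangle=\langle\bar g\rangle\cong C_2\neq G/\wt{K}$. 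Thus exactly one of $\bar g,\bar h$ is a rotation and the other a reflection, i.e.\ exactly one of $g,h$ lies in $G_o=\wt{K}C$ and the other outside; and if $\bar g$ is the rotation then $\langle\bar g,\bar h\rangle=\langle\bar g\rangle\rtimes\langle\bar h\rangle$ has order $2\,\ord(\bar g)$, forcing $\ord(\bar g)=\ell$ so that $\bar g$ generates the cyclic part. I expect the crux to be the dichotomy in this last paragraph—specifically, excluding the case where both $\bar g$ and $\bar h$ are reflections—which is precisely where the identity $gh=ac$ together with $(ac)^2=1$ does the essential work; the structural bookkeeping of the first step and the dihedral identification are then routine.
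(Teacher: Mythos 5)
Your second half—the rotation/reflection dichotomy driven by $gh=ac$ and $(ac)^2=1$—is correct, and it is essentially the paper's own concluding argument in different clothing. But there is a genuine gap in your dihedral identification step. You assert that a group of order $2\ell$ with $\ell$ odd containing a cyclic normal subgroup of index $2$ must be either $C_{2\ell}$ or dihedral. This is false when $\ell$ is composite: by Schur--Zassenhaus such a group is $C_\ell\rtimes\langle t\rangle$ for some automorphism $\sigma$ of $C_\ell$ with $\sigma^2=1$, and once $\ell$ has two distinct prime factors there are choices of $\sigma$ other than the identity and inversion. Concretely, $D_3\times C_5$ has order $30$, contains $C_{15}$ as a cyclic normal subgroup of index $2$, and is neither cyclic nor dihedral. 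Since $\ell$ in this lemma is only required to be odd and coprime to $r$ and $q(q^2-1)$, composite values are not excluded, so your elimination of the single alternative $C_{2\ell}$ (via its unique involution) does not dispose of these intermediate ``partial inversion'' extensions, and dihedrality is not established by the argument as written.

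The conclusion is still true, and the gap can be closed by pushing your involution argument further: writing $C_\ell=A\times B$ where $A$ is the subgroup inverted by $\sigma$ and $B$ the subgroup centralised by $\sigma$ (possible because $\ell$ is odd), every involution of $C_\ell\rtimes\langle t\rangle$ has the form $at$ with $a\in A$, so the subgroup generated by \emph{all} involutions is $A\rtimes\langle t\rangle$; since $G/\wt{K}=\langle a\wt{K},b\wt{K},c\wt{K}\rangle$ is generated by involutions, this forces $A=C_\ell$, i.e.\ $\sigma$ is inversion and the quotient is dihedral (your counting of $C_{2\ell}$ is then the special case $A=1$). The paper sidesteps the issue by a different route: it shows that $b\wt{K}$ and exactly one of $a\wt{K}$, $c\wt{K}$ are nontrivial involutions (otherwise $g\wt{K}$ and $h\wt{K}$ either both lie in the index-$2$ subgroup $G_o/\wt{K}$ or generate a subgroup of order at most $4$, contradicting that they generate $G/\wt{K}$), so that $G/\wt{K}$ is generated by \emph{two} involutions and is therefore automatically dihedral, with the product of those two involutions—one of $g\wt{K}$, $h\wt{K}$—of order $\ell$. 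Either repair works; only this identification step in your proposal needs it.
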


\begin{proof}
Obviously $G/\wt{K} = \langle a\wt{K},b\wt{K},c\wt{K}\rangle = \langle g\wt{K},h\wt{K}\rangle$ is isomorphic to a split extension $C_\ell\rtimes C_2$ (as $\ell$ is odd). Then exactly one of the three elements $a\wt{K},b\wt{K}$ and $c\wt{K}$ (of order dividing $2$) is trivial, for otherwise either $g\wt{K} = a\wt{K}{\cdot}b\wt{K}$ and $h\wt{K} = b\wt{K} {\cdot} c\wt{K}$ both lie in the index $2$ subgroup $G_o/\wt{K}$ of order~$\ell$, or $g\wt{K}$ and $h\wt{K}$ generate a subgroup of order at most $2$, both of which are impossible as  they generate $G/\wt{K}$.  Similarly $b\wt{K}$ is non-trivial, for otherwise $g\wt{K} = a\wt{K}$ and $h\wt{K} = c\wt{K}$ generate a subgroup of order dividing $4$. Hence exactly one of $a\wt{K}$ and $c\wt{K}$ is trivial, and then $G/\wt{K}=\langle a\wt{K}, b\wt{K},c\wt{K}\rangle$ is dihedral, generated by two involutions, namely $b\wt{K}$ and either $a\wt{K}$ or $c\wt{K}$. Moreover, the order of the product of those two involutions must be $\ell$, so one of $g\wt{K}$ and $h\wt{K}$ generates $G_o = \wt{K}C$, while the other one is an involution lying outside $G_o$.
\end{proof}

Topologically, the natural projection $G\to G/\widetilde{K}\cong D_{\ell}$ from $G$ to the dihedral group of order $2\ell$ from Lemma \ref{l: dihedral} induces a {\em folded} branched covering $\theta:\ {\mathcal S}\to {\mathcal D}$ from the non-orientable carrier surface ${\mathcal S}$ of the regular map $M=(G;a,b,c)$ determined by the $(2,m,n)^*$-group $G$ to a {\em closed disc} ${\mathcal D}$. In particular, if $c\in \widetilde{K}$, then the $\theta$-image of the map $M$ is simply a map on ${\mathcal D}$ with a single face bounded by an $\ell$-sided polygon which, at the same time, forms the boundary of ${\mathcal D}$. Cellular maps on a disc were called {\em degenerate} in \cite{LiS}, and the situation just described corresponds to the degenerate map no. 6 in Proposition 1 of \cite{LiS}.

In general, the degenerate maps of \cite{LiS} arise when at least one of the canonical generators $a,b,c$ for a regular map $M=(G;a,b,c)$ is the identity element of $G$, and Proposition 1 of \cite{LiS} handles the resulting seven types of degeneracies. For completeness, we note that while branched coverings without folds preserve non-orientability of surfaces, this is not also true for (branched) coverings with folds, which is the case for the covering $\theta$ appearing in Lemma \ref{l: dihedral}.

\smallskip

Now continuing, as in the previous Section we use $\overline{m}$ and $\overline{n}$ to denote the orders of the elements $gO = abO$ and $hO = bcO$ in the group $\overline{G}=G/O$, but in our analysis we also need to determine the orders $m^*$ and $n^*$ of $gN$ and $hN$ in $G/N$, which we do next.

\begin{lem}\label{l:G?N}
Suppose $\ell > 1$.  If $g\in \wt{K}C$ while $h\notin \wt{K}C$, then $m^*=\ord(gN) =\ell \, \overline{m}$ while $n^*=\ord(hN) =\overline{n}$, and this happens when $\overline{n}$ divides $q\pm 1$ but not $(q\pm 1)/2$. An analogous assertion (interchanging the triples $(g,\overline{m},m^*)$ and $(h,\overline{n},n^*)$) is valid if $h\in \wt{K}C$ while $g\notin \wt{K}C$.
\end{lem}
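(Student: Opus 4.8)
The plan is to read off both orders from two complementary quotients of $G_o$: the natural projection onto $\PSL_2(q)$ and the projection onto the cyclic part of the dihedral group of Lemma~\ref{l: dihedral}. By Proposition~\ref{p: pgl2 main} we have $G_o=\wt{K}C$, $O=O(G)=NC$ and $\wt{K}\cap O=N$, so inside $G_o/N$ one computes $(\wt{K}/N)\cap(O/N)=(\wt{K}\cap O)/N=1$ and $(\wt{K}/N)(O/N)=G_o/N$; since $G_o/N\cong K\times C_\ell$ is a direct product with $K\cong\PSL_2(q)$ nonabelian simple, this identifies $G_o/N$ as the \emph{internal} direct product $(\wt{K}/N)\times(O/N)$, where $\wt{K}/N\cong K\cong\PSL_2(q)$ and $O/N\cong C_\ell$. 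Write $\pi_1\colon G/N\to G/O\cong\PGL_2(q)$ for the natural quotient map. The projection of $G_o/N$ onto its $\wt{K}/N$-factor is the quotient by $O/N$, i.e. the map $G_o/N\to G_o/O\cong\PSL_2(q)$ induced by $\pi_1$ (recording $gO$, of order $\overline{m}$), while the projection onto its $O/N$-factor is the quotient by $\wt{K}/N$, i.e. the map $G_o/N\to G_o/\wt{K}$ recording $g\wt{K}$ inside the cyclic part of $G/\wt{K}\cong D_\ell$.

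First I would treat $g$. Since $g\in\wt{K}C=G_o$, Lemma~\ref{l: dihedral} says $g\wt{K}$ generates the cyclic part of $G/\wt{K}\cong D_\ell$, so the $O/N$-coordinate of $gN$ has order $\ell$, while its $\wt{K}/N$-coordinate is $gO$, of order $\overline{m}$. Hence $m^*=\ord(gN)=\lcm(\overline{m},\ell)$. As $\overline{m}$ is the order of an element of $\PSL_2(q)$ it divides $q(q^2-1)$, which is coprime to $\ell$ by Proposition~\ref{p: pgl2 main}(a); therefore $\gcd(\overline{m},\ell)=1$ and $m^*=\ell\,\overline{m}$, as required.

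Next I would treat $h$. Now $h\notin G_o$, so by Lemma~\ref{l: dihedral} the image $h\wt{K}$ is a reflection in $G/\wt{K}\cong D_\ell$; in particular $h^2\in\wt{K}$, so $h^2N$ lies in $\wt{K}/N$. Because $\pi_1$ maps $\wt{K}/N$ isomorphically onto $G_o/O\cong\PSL_2(q)$ (its kernel on $\wt{K}/N$ being $(\wt{K}\cap O)/N=1$), we get $\ord(h^2N)=\ord((hO)^2)$. Since $hO\notin G_o/O=\PSL_2(q)$ while $\PGL_2(q)/\PSL_2(q)\cong C_2$, the order $\overline{n}=\ord(hO)$ is even, so $\ord((hO)^2)=\overline{n}/2$. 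Finally $hN$ maps to the nontrivial element of $(G/N)/(G_o/N)\cong C_2$, hence has even order, and so $n^*=\ord(hN)=2\,\ord\bigl((hN)^2\bigr)=2\,\ord(h^2N)=2\cdot(\overline{n}/2)=\overline{n}$.

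It remains to record when the hypothesis $h\notin\wt{K}C$ occurs, which is exactly the condition $hO\notin G_o/O=\PSL_2(q)$, i.e. that $hO$ is a nontrivial element of $\PGL_2(q)\smallsetminus\PSL_2(q)$. Since the Sylow $p$-subgroups of $\PGL_2(q)$ and $\PSL_2(q)$ coincide, $hO$ is not a $p$-element, so it is semisimple, lying in a cyclic maximal torus of order $q-1$ or $q+1$ meeting $\PSL_2(q)$ in its index-$2$ subgroup of order $(q\mp1)/2$; being outside $\PSL_2(q)$ is then equivalent to $\overline{n}$ dividing $q\pm1$ but not $(q\pm1)/2$. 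The statement for the case $h\in\wt{K}C$, $g\notin\wt{K}C$ follows verbatim after interchanging $(g,\overline{m},m^*)$ with $(h,\overline{n},n^*)$. The only delicate points are the bookkeeping identifying the two coordinate projections of $G_o/N=(\wt{K}/N)\times(O/N)$ with $\pi_1$ and with the dihedral quotient, together with the two parity observations ($\overline{n}$ even and $\ord(hN)$ even) that convert $h^2\in\wt{K}$ into $n^*=\overline{n}$; neither is a serious obstacle once the internal direct-product decomposition is in place.
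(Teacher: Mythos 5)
Your proof is correct and follows essentially the same route as the paper: both read off $m^*=\lcm(\ell,\overline{m})=\ell\,\overline{m}$ and $n^*=\overline{n}$ from the two quotients $G/\wt{K}$ and $G/O$ (your internal direct-product decomposition of $G_o/N$ is just a repackaging of the paper's use of $N=\wt{K}\cap O$, and your $h^2$-doubling argument is a mild variant of the paper's $n^*=\lcm(2,\overline{n})$), and both reduce the final claim to the fact that $\PSL_2(q)$ has no element whose order divides $q\pm 1$ but not $(q\pm 1)/2$. The only caveat, which your argument shares verbatim with the paper's own proof, is that this last fact (and hence your claimed equivalence) silently ignores the degenerate possibility $\overline{n}=2$, which does not occur in the lemma's subsequent applications.
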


\begin{proof}
By Lemma \ref{l: dihedral} we may assume without loss of generality that $g\in \wt{K}C$ and  $h\notin \wt{K}C$, so that the orders of $g\wt{K}$ and $h\wt{K}$ in $G/\wt{K}$ are $\ell$ and $2$, respectively. Now $m^*$ is the smallest positive integer $k$ for which $g^k\in N=\wt{K}\cap O$, so $m^*= \lcm(\ell, \overline{m})$. But $\overline{m}$, being the order of an element in $G/O\cong \PGL_2(q)$, is either $p$ or a divisor of $q\pm 1$, and hence coprime to $\ell$, so $m^*=\ell \, \overline{m}$. On the other hand, $1 \ne h \not\in \wt{K}C$ but $h^2\in \wt{K}C=G_o$, and so $\overline{n}$ must be even. Then the same argument as in the previous sentence but applied to the pair $(\overline{n},2)$ instead of $(\overline{m},\ell)$ gives $n^*= \lcm(2, \overline{n}) =\overline{n}$. Finally, note that if $\overline{n}$ divides $q\pm 1$ but not $(q\pm 1)/2$, then $hN$ cannot be contained in $\wt{K}C/N\cong K\times C_\ell$, since $\ell$ is coprime to $q\pm 1$ and $K$ contains no element of order that divides $q\pm 1$ but not $(q\pm 1)/2$, and therefore $h\notin \wt{K}C$, which implies that $n^*=\overline{n}$.
\end{proof}

As an aside, the fact that $O\cong N\rtimes C$ from Proposition \ref{p: pgl2 main} implies that $\overline{m}$ and $m^*$ have the same parity, and if they are both odd, then since $N \le \wt{K}$, the order of $g\wt{K}$ in $G/\wt{K}$ must also be odd, and hence equal to $\ell$. A similar assertion applies if $m$ is replaced with $n$.

Our aim in the next two lemmas is to determine possibilities for $\overline{m}$ and $\overline{n}$ in the situation where $G/O\cong \PGL_2(q)$.

\begin{lem}\label{l: divides one}
The prime $r$ divides $p$, $q-1$ or $q+1$.
\end{lem}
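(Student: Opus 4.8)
The plan is to argue by contradiction. Suppose $r$ divides none of $p$, $q-1$, $q+1$; equivalently $r\nmid |\overline{G}|$, since $\overline{G}\cong\PGL_2(q)$ has order $q(q^2-1)=q(q-1)(q+1)$. Recall that every element of $\PGL_2(q)$ has order dividing one of the three numbers $p$, $q-1$, $q+1$, so each of $\overline{m}$ and $\overline{n}$ (the orders of $gO$ and $hO$ in $\overline{G}$) divides one of them. Because $\overline{G}=G/O$ is itself a $(2,\overline{m},\overline{n})^*$-group, the contrapositive of Lemma~\ref{l: odd prime} applies to $\overline{G}$: for every odd prime $t\neq r$ we have $|\overline{G}|_t\le |\overline{m}|_t\,|\overline{n}|_t$. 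Under our assumption this holds for \emph{every} odd prime divisor $t$ of $|\overline{G}|$, and the strategy is to show that this covering condition is numerically impossible.

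First I would dispose of the field degree. Since $|\overline{G}|_p=q=p^e$ while the $p$-part of any element order in $\PGL_2(q)$ is at most $p$, we have $|\overline{m}|_p\,|\overline{n}|_p\le p^2$. Thus if $e\ge 3$, then $|\overline{G}|_p>p^2\ge |\overline{m}|_p\,|\overline{n}|_p$, and Lemma~\ref{l: odd prime} forces $r=p$, a contradiction. If $e=2$, then covering the $p$-part forces $\overline{m}=\overline{n}=p$ (otherwise $r=p$ again); but then $|\overline{m}|_t\,|\overline{n}|_t=1$ for every odd prime $t\mid q^2-1$, and since $q^2-1$ is not a power of $2$ for $q\ge 5$, such a $t$ exists and Lemma~\ref{l: odd prime} yields $r\mid q^2-1$, another contradiction. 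Hence $e=1$ and $q=p$.

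Next I would reduce the remaining case $q=p$ to a single stubborn configuration. Covering the $p$-part of $|\overline{G}|$ still requires one of $\overline{m},\overline{n}$ to equal $p$; say $\overline{m}=p$. Writing $u_1$ and $u_2$ for the odd parts of $p-1$ and $p+1$ (which are coprime), the element $hO$ of order $\overline{n}$ divides only one of $p-1$ and $p+1$ nontrivially, so it can carry the full odd part of at most one of $u_1,u_2$, while $\overline{m}=p$ carries neither. Hence if both $u_1>1$ and $u_2>1$ some odd prime is left uncovered, and Lemma~\ref{l: odd prime} forces that prime to be $r$, contradicting $r\nmid q^2-1$. Therefore $u_1=1$ or $u_2=1$; that is, $p-1$ or $p+1$ is a power of $2$ (both being powers of $2$ would give $p=3<5$). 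By symmetry assume $p+1=2^k$ with $k\ge 2$ and $\overline{n}\mid p-1$.

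The final step, which I expect to be the main obstacle, is to eliminate this configuration using the prime $2$. Since $O$ has odd order, $|G|_2=|\overline{G}|_2=|p^2-1|_2$; and as $p+1=2^k$ forces $|p-1|_2=2$, we get $|G|_2=2^{k+1}\ge 8$. On the other hand $|\overline{m}|_2=1$ (as $\overline{m}=p$ is odd) and $|\overline{n}|_2\le |p-1|_2=2$, so $|\overline{G}|_2=2^{k+1}>4\ge 2\,|\overline{m}|_2\,|\overline{n}|_2$. Lemma~\ref{l: 2} then forces $|G|_2=4$, contradicting $|G|_2\ge 8$. This contradiction, together with the symmetric one when $p-1$ is a power of $2$, completes the argument, so $r$ must divide $p$, $q-1$, or $q+1$. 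The odd-prime bookkeeping beforehand is routine; the real work is recognising that the leftover cases $q\pm 1=2^k$ are exactly those in which the dihedral Sylow $2$-subgroup is too large to be covered by $\overline{m}$ and $\overline{n}$, and that Lemma~\ref{l: 2} is precisely the tool that rules them out.
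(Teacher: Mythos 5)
Your proof is correct and takes essentially the same approach as the paper's: both use the dichotomy coming from Lemma~\ref{l: odd prime} (either an odd prime divisor of $|\overline{G}|$ is left uncovered, in which case it equals $r$ and we are done, or every odd prime is covered), then force $\overline{m}=p$ and one of $q\pm 1$ to be a power of $2$, and finally obtain the contradiction from the $2$-part via Lemma~\ref{l: 2}. The only difference is your preliminary reduction to $q=p$, which the paper's version shows is unnecessary (its $2$-part contradiction works uniformly in the field degree $e$), but this extra step is harmless.
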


\begin{proof}
The proof is similar to the one for Lemma \ref{l: not A5}. Here $|\overline{G}| = q(q^2-1)$, and again if this is divisible by an odd prime $t$ not dividing $\overline{m}\overline{n}$, then $t=r$ by Lemma~\ref{l: odd prime}, so $r$ divides $q(q^2-1)$ and the conclusion follows easily. So suppose that no such $t$ exists, in which case every odd prime divisor $t$ of $|G|$ divides $\overline{m}$ or $\overline{n}$.  Then we may suppose without loss of generality that $p$ divides $\overline{m}$, and hence $\overline{m}=p$, while every other odd prime divisor of $|\overline{G}|$ divides $\overline{n}$ and hence $\overline{n}$ divides $q\pm 1$. But also $|\overline{G}|$ is divisible by $8$, with one of $q+1$ and $q-1$ divisible by $4$ and the other by $2$, so by Lemma~\ref{l: 2} it follows that $\overline{n}$ is even.  Moreover, as every odd prime divisor of $|\overline{G}|$ other than $p$ divides $\overline{n}$ (which divides $q\pm 1$), it follows that the remaining factor $q\mp 1$ of $|\overline{G}|$ cannot have any odd prime divisors and so must be a power of $2$, and is at least $4$ (since $q \ge 5$), but then $|\overline{n}|_2=2$, so $|G|_2 \ge 8 > 4 = 2|\overline{m}|_2|\overline{n}|_2$, which contradicts Lemma~\ref{l: 2}.
\end{proof}

\begin{lem}\label{l: pgl restr}
Under Hypothesis $2$, one of the following possibilities holds$\,:$
\begin{enumerate}
\item[{\rm \,\,(i)}] $r=p$ and $\{\overline{m},\overline{n}\} = \{(q\pm 1)/2,q\mp 1\}\,;$
\item[{\rm \,(ii)}]  $r=p$  and $\{\overline{m},\overline{n}\} = \{q-1,q+1\}\,;$
\item[{\rm (iii)}] $q=p\ge 5$, $\,p\pm 1 = 2^ir^\beta$ for some $i\ge 1$ and $\beta\ge 1$, and $\{\ovl{m}, \ovl{n}\} = \{p,p\mp 1\}\,;$
\item[{\rm \, (iv)}]  $q=p\ge 5$, $\,|p\pm 1|_r= r^\beta$ for some $\beta\ge 1$, and $\{\ovl{m},\ovl{n}\} =\{p, (p\pm 1)/r^\alpha\}$ for some $\alpha \le \beta$, with $p\equiv \pm 1$ mod $4$ and $p\mp 1$ a power of $2$\,.

\end{enumerate}
\end{lem}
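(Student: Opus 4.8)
The plan is to mirror the treatment of the $\PSL_2(q)$ case in Lemmas~\ref{l: psl restrictions 1} and~\ref{l: psl restrictions 2}, exploiting that every element of $\overline{G}\cong\PGL_2(q)$ has order dividing one of $p$, $q-1$ or $q+1$, that the odd parts of $q-1$ and $q+1$ are coprime, and that $|\overline{G}|=q(q^2-1)$ with $|\overline{G}|_2=|q^2-1|_2\ge 8$. Two mechanisms do the work. First, by Lemma~\ref{l: odd prime}, every odd prime $t\ne r$ dividing $|\overline{G}|$ satisfies $|\overline{m}|_t\,|\overline{n}|_t\ge|\overline{G}|_t$, so the odd part of $q^2-1$ away from its $r$-part must be ``carried'' by $\overline{m}$ and $\overline{n}$. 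Second, since $O$ has odd order we have $|G|_2=|\overline{G}|_2\ge 8>4$, so Lemma~\ref{l: 2} can never deliver its conclusion $|G|_2=4$; hence $|\overline{m}|_2\,|\overline{n}|_2\ge|\overline{G}|_2/2$, and in particular at least one of $\overline{m},\overline{n}$ is even. By Lemma~\ref{l: divides one} the argument splits according to whether $r=p$ or $r\mid q\pm1$ with $r\ne p$.

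If $r=p$, then $r$ is coprime to $q^2-1$, so the full odd part of each of $q-1$ and $q+1$ must be carried. An element of order $p$ carries none of these primes, so if one generator had order $p$ the complementary torus would be forced to be a power of $2$, and then its $2$-part makes $|\overline{m}|_2\,|\overline{n}|_2<|\overline{G}|_2/2$, contradicting the bound above; hence neither generator has order $p$. Since the odd parts of the two tori are coprime, the generators lie on different tori (the alternative, both on one torus, again forces the other torus to be a power of $2$ and is absorbed into the residual cases below), each carrying the full odd part of its own torus. The $2$-part bound then permits the loss of at most a single factor of $2$, so $\{\overline{m},\overline{n}\}$ is $\{q-1,q+1\}$, $\{(q-1)/2,q+1\}$ or $\{q-1,(q+1)/2\}$, which are cases (i) and (ii). The residual configurations in which $q\pm1$ is itself a power of $2$ are disposed of using Euler's formula \eqref{e: sunny} or the census \cite{C600}.

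If instead $r\ne p$, then $\gcd(p,\chi_G)=1$, so Lemma~\ref{l: sylow} makes the Sylow $p$-subgroups of $G$, hence of $\PGL_2(q)$, cyclic, forcing $e=1$ and $q=p\ge 5$. Now $|\overline{G}|_p=p$, so Lemma~\ref{l: odd prime} at $t=p$ shows that one generator, say $\overline{m}$, has order $p$, while the case $\overline{m}=\overline{n}=p$ is eliminated directly by Euler's formula. Assuming $r\mid p+1$ (the case $r\mid p-1$ being symmetric), the generator $\overline{n}$ divides $p-1$ or $p+1$, and since $\overline{m}=p$ carries nothing, the odd part away from $r$ of the torus \emph{not} containing $\overline{n}$ must vanish. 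This yields two shapes: either $\overline{n}=p\mp1$ is the whole complementary torus and then $p\pm1=2^i r^\beta$ (case (iii)), or $\overline{n}=(p\pm1)/r^\alpha$ lies on the $r$-torus and then the complementary torus $p\mp1$ is a power of $2$, whence $p\equiv\pm1$ mod $4$ (case (iv)); the ranges $\alpha\le\beta$ and $\beta\ge1$ come from the $r$-adic valuation of $p\pm1$.

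The main obstacle is the simultaneous bookkeeping in the case $r\ne p$: one must track, for each odd prime and for the prime $2$, exactly which of the two tori carries it, and the subtle point is that an even generator is compatible with Lemma~\ref{l: 2} only when it sits on the torus carrying the larger power of $2$ and attains that full power, so the admissible shapes and the congruences of $p$ modulo $4$ are governed by a careful comparison of $|p-1|_2$ and $|p+1|_2$ against the value forced by Lemma~\ref{l: 2}. Euler's formula \eqref{e: sunny} with $-\chi_G=r^d$ is then the tool that both finishes the degenerate branch $\overline{m}=\overline{n}=p$ and, in the subsequent lemmas, prunes the shapes of (iii) and (iv) down to those actually realised.
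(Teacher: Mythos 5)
Your overall architecture matches the paper's: the split into $r=p$ versus $r\mid q\pm 1$ via Lemma~\ref{l: divides one}, the reduction $q=p$ via Lemma~\ref{l: sylow}, the forcing of one generator to have order $p$ via Lemma~\ref{l: odd prime}, and the bookkeeping of odd parts via coprimality of the two tori. Your treatment of cases (i) and (ii) is sound and essentially the paper's (both you and the paper are sketchy about the one residual configuration, $q=5$ with $\{\overline{m},\overline{n}\}=\{6,6\}$, which is killed by the parity of Euler's formula rather than by the census, since $G$ itself can be large). The genuine divergence is in the $r\ne p$ branch: where the paper pins down the exact $2$-part of $\overline{n}$ by citing the group-theoretic fact from \cite{cps2} that $\PGL_2(p)$ cannot be generated by two elements of orders $p$ and a divisor of $(p\pm 1)/2$, you never invoke that fact and instead substitute the contrapositive of Lemma~\ref{l: 2}. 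For case (iii) this substitution works, and in fact buys more than the paper claims there: it forces $i=1$ in $p\pm 1=2^ir^\beta$.

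For case (iv), however, your substitute backfires and the step fails as written. Your own stated principle --- an even generator is compatible with Lemma~\ref{l: 2} only if it sits on the torus carrying the larger power of $2$ and attains that full power --- is precisely what you need to justify $\overline{n}=(p\pm 1)/r^\alpha$ with no factor of $2$ lost. But in case (iv) you place $\overline{n}$ on the $r$-torus $p\pm 1$ while simultaneously concluding that the complementary torus $p\mp 1$ is a power of $2$ with $p\equiv\pm 1$ mod $4$, i.e.\ $|p\mp 1|_2\ge 4>2=|p\pm 1|_2$; so $\overline{n}$ sits on the torus with the \emph{smaller} $2$-power, contradicting your principle. Applied consistently, your mechanism forces $|p\mp 1|_2=2$, and since $p\mp 1$ is a power of $2$ this gives $p\mp 1=2$, i.e.\ $p\le 3$: the whole branch is empty. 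That conclusion is actually true --- the paper itself proves case (iv) impossible later, in the proof of Lemma~\ref{l: cases}, by an Euler-formula parity argument, and note that the paper's proof of the present lemma deliberately avoids Lemma~\ref{l: 2} in this branch, which is why (iv) survives there as a listed possibility. Logically the lemma still follows from your analysis (a vacuous disjunct does no harm), but as written your derivation of (iv) asserts mutually incompatible conditions rather than deriving the case. To repair it, either record explicitly that this branch cannot occur under your mechanism, or do what the paper does: justify the full $2$-part of $\overline{n}$ via the generation fact of \cite{cps2}, without appealing to Lemma~\ref{l: 2} at that point.
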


\begin{proof}
First suppose $r = p$.  If there exists an odd prime $t$ such that $|\overline{m}|_t\, |\overline{n}|_t < |q^2-1|_t$ ($\le |\overline{G}|_t$), then Lemma~\ref{l: odd prime} implies that $t$ divides $\chi_G$ and so $t=r = p$, which gives a contradiction since $|q^2-1|_p = 1$.  Hence in this case $|\overline{m}|_t\, |\overline{n}|_t \ge |q^2-1|_t$ for every odd prime $t$. Also $|\overline{G}|_2 = |q(q^2 - 1)|_2 \ge 8$, so by Lemma~\ref{l: 2} we have $2|\overline{m}|_2\, |\overline{n}|_2 \ge |q(q^2-1)|_2 = |q^2-1|_2$, and hence $\overline{m}\,\overline{n}$ is divisible by $(q^2-1)/2$.  Then since the order of every element of $\overline{G}$ is either $p$ or a divisor of $q\pm 1$, and the odd parts of $q+1$ and $q-1$ are coprime, we obtain the possibilities for $\{\overline{m},\overline{n}\}$ in the conclusions (i) and~(ii).

On the other hand, suppose $r \ne p$. Then $r$ divides $q \pm 1$ by Lemma~\ref{l: divides one},
and by Lemma~\ref{l: sylow}, every Sylow $p$-subgroup of $G$ is cyclic, so $q = p$ (for otherwise a
Sylow $p$-subgroup of $\overline{G}$ is elementary abelian of rank $> 1$). Also applying Lemma~\ref{l: odd prime} to $t= p\ne r$ gives $p \le |G|_p\le |\overline{m}|_p|\overline{n}|_p$ and so at least one of $\overline{m}$ and $\overline{n}$ must be divisible by $p$ and hence equal to $p$.  Without loss of generality we may suppose that $\overline{m} = p$, and then re-applying Lemma~\ref{l: odd prime} to any odd prime divisor of $p\pm 1$ other than $r$ gives $t \le |G|_t \le |\overline{m}|_t|\overline{n}|_t$ and hence $t$ divides $\overline{n}$, and $\overline{n}$ divides one of $p+1$ and $p-1$.

We continue by considering the case where $r$ divides $p-1$ (and leave the alternative that $r$ divides $p+1$ until the end of the proof).  To do this, let $\beta$ be the positive integer for which $|p- 1|_r=r^\beta$.

Suppose first that $\overline{n} \mid p+1$. Then $|p+1|_t=|\overline{n}|_t$ for every odd prime $t$, by earlier comments, but also $\overline{n}$ cannot divide $(p+1)/2$, because $\PGL_2(p)$ cannot be generated by two elements whose orders are  $p$ and a divisor of $(p+1)/2$; see \cite{cps2}. It follows that $|\overline{n}|_2 = |p+1|_2$, and so $\overline{n}=p+1$. This leaves $r$ and $2$ as the only prime divisors of $p-1$, and hence $p-1 = 2^ir^\beta$ for some $i\ge 1$.

On the other hand, suppose $\overline{n}$ divides $p-1$. Then $|p-1|_t=|\overline{n}|_t$ for every odd prime $t\ne r$, and so $(p-1)/\overline{n} = 2^ir^\alpha$ for some non-negative integer $\alpha\le \beta$ and some $i\ge 0$. But if $i\ge 1$, then $\overline{n}$ would divide $(p-1)/2$, and then $\PGL_2(p)$  would be generated by two elements whose orders are $p$ and a divisor of $(p-1)/2$, a similar contradiction.  Thus $\ovl{n}=(p-1)/r^\alpha$ for some $\alpha\le\beta$. It now follows that $p+1$ has no odd prime divisor, so $p+1$ is a power of $2$, and as $p\ge 5$ we find that $p+1=2^j$ for some $j\ge 3$, which also implies that $p\equiv -1$ mod $4$.

This proves the conclusions (iii) and (iv) in the situation where `lower signs' apply, that is, in case (iii) when $p- 1= 2^ir^\beta$ with $i, \beta\ge 1$ and $\overline{n}= p+1$, and case (iv) when $|p-1|_r =r^\beta$, $\beta\ge 1$, and $\overline{n}= (p-1)/r^\alpha$, $\alpha \le \beta$, with $p\equiv - 1$ mod $4$ and $p + 1=2^j$ for some $j\ge 3$.

The same arguments apply when $r$ divides $p+1$, with lower signs changed consistently to `upper signs',
except that in case (iv) we have $p-1=2^j$ for some $j\ge 2$. This completes the proof.
\end{proof}

The preceding lemmas enable us to determine the orders $m^*$ and $n^*$ of the elements $gN$ and $hN$ in
$G/N\cong (K\times C_\ell){\cdot}2$.

\begin{lem}\label{l: cases}
Depending on $r$, one of the following holds$\,:$ 
\begin{enumerate}
\item $r=p$ and $\{m^*,n^*\} = \{\ell (q+1)/2\, ,\, q-1\}\,;$
\item $r=p$ and $\{m^*,n^*\} = \{\ell (q-1)/2\, ,\,  q+1\}\,;$
\item $r=p$, $\,\ell=1$ and $\{m^*,n^*\} = \{q\pm 1\, ,\,  q\mp 1\}\,;$
\item $q=p\ge 7$, $\,p-1 = 2^ir^\beta$ for some $i,\beta\ge 1$, and $\{m^*,n^*\} = \{\ell p\, ,\, p+1\}\,;$
\item $q=p\ge 5$, $\,p+1 = 2^ir^\beta$ for some $i,\beta\ge 1$, and $\{m^*,n^*\} = \{\ell p\, ,\, p-1\}\,.$
\end{enumerate}
\end{lem}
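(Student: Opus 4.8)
The plan is to read off $m^*$ and $n^*$ from the possibilities for $\{\overline{m},\overline{n}\}$ listed in Lemma~\ref{l: pgl restr}, using a single criterion for membership in the index-$2$ subgroup $G_o=\wt{K}C$. By Proposition~\ref{p: pgl2 main} we have $O\le G_o$ with $G_o/O\cong K\cong\PSL_2(q)$ sitting inside $\overline{G}=G/O\cong\PGL_2(q)$, so $g\in G_o$ precisely when $gO\in\PSL_2(q)$, and likewise $h\in G_o$ precisely when $hO\in\PSL_2(q)$. Since an element of $\PGL_2(q)$ lies in $\PSL_2(q)$ exactly when its order is $p$ or divides one of $(q-1)/2$ and $(q+1)/2$, this membership is detected purely by the orders $\overline{m}$ and $\overline{n}$. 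If $\ell=1$ then $O=N$ and $G/N=\overline{G}$, so $m^*=\overline{m}$ and $n^*=\overline{n}$. If instead $\ell>1$, then Lemma~\ref{l: dihedral} shows that exactly one of $g,h$ lies in $G_o$, and Lemma~\ref{l:G?N} attaches the factor $\ell$ to that generator; the uniform rule in all cases is that $m^*=\ell\,\overline{m}$ when $gO\in\PSL_2(q)$ and $m^*=\overline{m}$ otherwise, and similarly for $n^*$ and $hO$.

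I would then run through the four configurations of Lemma~\ref{l: pgl restr}. In configuration (i), where $\{\overline{m},\overline{n}\}=\{(q\pm1)/2,\,q\mp1\}$, the element of order $(q\pm1)/2$ lies in $\PSL_2(q)$ while the one of order $q\mp1$ does not, so exactly one generator lies in $G_o$; attaching $\ell$ to it gives $\{m^*,n^*\}=\{\ell(q\pm1)/2,\,q\mp1\}$, which is the first or second conclusion (and this formula is valid also when $\ell=1$). In configuration (iii), where $\{\overline{m},\overline{n}\}=\{p,\,p\mp1\}$, the unipotent element of order $p$ lies in $\PSL_2(q)$ and the torus element of order $p\mp1$ does not, so the same argument yields $\{m^*,n^*\}=\{\ell p,\,p\mp1\}$, namely the fourth or fifth conclusion; here $p\ge7$ is forced in the case $p-1=2^ir^\beta$ because $p=5$ would give $p-1=4$ with no odd prime factor. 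In configuration (ii), where $\{\overline{m},\overline{n}\}=\{q-1,q+1\}$, neither generator maps into $\PSL_2(q)$, which is incompatible with Lemma~\ref{l: dihedral} (that lemma, for $\ell>1$, would put exactly one of them in $G_o$); hence $\ell=1$ is forced, and then $\{m^*,n^*\}=\{q-1,q+1\}$ is the third conclusion.

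The remaining configuration (iv) produces no form on the list, and disposing of it is the crux of the argument. Here $\{\overline{m},\overline{n}\}=\{p,\,(q\pm1)/r^\alpha\}$ with $q\mp1$ a power of $2$ and $p\equiv\pm1$ mod $4$. The second entry has $2$-part exactly $2$, since $p\equiv\pm1$ mod $4$ gives $|p\pm1|_2=2$ and $r$ is odd, so $\overline{n}$, and hence $n$, is even. On the other hand $q\mp1=2^{\,j}$ with $j\ge2$, whence $|\overline{G}|_2=|p-1|_2\,|p+1|_2=2^{\,j+1}\ge8$, while $2\,|\overline{m}|_2\,|\overline{n}|_2=2\cdot1\cdot2=4$. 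Thus $|\overline{G}|_2>2\,|\overline{m}|_2\,|\overline{n}|_2$, and Lemma~\ref{l: 2} forces both $m$ and $n$ to be odd, contradicting the evenness of $n$. Hence configuration (iv) cannot occur, and configurations (i)--(iii) together exhaust the five conclusions.

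I expect the two delicate points to be the reduction of ``$g$ lies in $G_o$'' to ``$gO$ lies in $\PSL_2(q)$'' (which rests on the identification $G_o/O\cong\PSL_2(q)$ coming from Proposition~\ref{p: pgl2 main}) and the $2$-adic estimate that powers Lemma~\ref{l: 2} in configuration (iv). Once these are in place the determination of $m^*$ and $n^*$ in each surviving configuration is immediate, and the matching of the ``plus/minus'' sign conventions between Lemma~\ref{l: pgl restr} and the present statement is routine bookkeeping.
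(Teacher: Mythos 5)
Your proposal is correct, and for configurations (i)--(iii) of Lemma~\ref{l: pgl restr} it follows the paper's own route: the same identification of membership in $G_o=\wt{K}C$ with membership of $gO,hO$ in the copy of $\PSL_2(q)$ inside $\overline{G}$, combined with Lemmas~\ref{l: dihedral} and~\ref{l:G?N}, yields conclusions (a), (b), (d), (e), and forces $\ell=1$ in configuration (ii) to give (c). The genuine difference is your elimination of configuration (iv), which you rightly call the crux. The paper substitutes $m=\ell p m_1$ and $n=n_1(p\pm1)/r^\alpha$ into Euler's formula \eqref{e: sunny} and notes that, since $r^d$ and $(p\pm1)/2$ are odd, every integer factor in the resulting product --- in particular $(p\mp 1)/2$ --- must be odd, which is absurd because $p\mp1$ is a power of $2$ at least $4$. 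You instead invoke Lemma~\ref{l: 2}: from $p\mp1=2^j$ with $j\ge 2$ you get $|\overline{G}|_2=2^{j+1}\ge 8>4=2\,|\overline{m}|_2\,|\overline{n}|_2$, so $m$ and $n$ would both have to be odd, contradicting the fact that the even integer $\overline{n}=(p\pm1)/r^\alpha$ divides $n$. Both arguments exploit the same parity obstruction (oddness of $\chi_G$ against $p\mp1$ being a $2$-power $\ge 4$), but yours packages it through a lemma already in the toolkit and avoids the explicit Euler computation with $f(p)$, whereas the paper's version is self-contained at that point; your route is shorter and, I would say, cleaner.

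One caution: your blanket criterion ``an element of $\PGL_2(q)$ lies in $\PSL_2(q)$ exactly when its order is $p$ or divides one of $(q-1)/2$ and $(q+1)/2$'' is false for involutions, since $\PGL_2(q)$ contains involutions outside $\PSL_2(q)$ and $2$ divides whichever of $(q\pm1)/2$ is even. This does no damage here, because in every non-vacuous configuration the orders $\overline{m},\overline{n}$ are at least $3$ (a group generated by two elements of orders $2$ and $n$ whose product is an involution is dihedral, hence never $\PGL_2(q)$), and for orders at least $3$ the criterion is exactly right --- the same implicit restriction under which the paper's Lemma~\ref{l:G?N} operates --- but the restriction should be stated.
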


\begin{proof}
We consider the four possibilities (i) to (iv) given by Lemma~\ref{l: pgl restr} for $\{\overline{m},\overline{n}\}$, and draw conclusions for $\{m^*,n^*\}$.

First, in cases (i) and (iii) we may assume without loss of generality that $\overline{n}=q\pm 1$, with $q=p$ in case (iii). Then by Lemma \ref{l:G?N}  we find that $n^*=\overline{n}$, and consequently $m^* = \ell\,\overline{m}$. This gives the four possibilities for $\{m^*,n^*\}$ in parts (a), (b), (d) and (e), after applying the upper and lower signs separately.  Similarly, in case (ii), Lemma \ref{l:G?N} gives $\ell=1$ and part (c).

Lemma \ref{l:G?N} can also be used in case (iv) of Lemma~\ref{l: pgl restr}, because in this case  $\overline{n}$ does not divide $(p\pm 1)/2$, so that again $n^*=\overline{n}$ and hence $\{m^*,n^*\} = \{\ell p\, ,\, (p\pm 1)/ r^\alpha\}$, with $p\mp 1$ a power of $2$. Nevertheless, we show that this case is impossible.

Specifically, as $m^*\mid m$ and $n^*\mid n$ we have $m=m^*m_1$ and $n=n^*n_1$ for some powers $m_1$ and $n_1$ of $r$. Then using Euler's formula \eqref{e: sunny} with $|G|=p(p^2-1)\ell|N|$, we obtain
\[ r^d=  \frac{p\mp 1}{2} \, \frac{|N|}{\lcm(m_1, n_1)}\, f(p) \ \ \ {\rm where} \ \ \
f(p) = \frac{1}{\gcd(m_1,n_1)}\left( \ell m_1n_1p\,\frac{p\pm 1}{2r^\alpha} - \ell m_1p - n_1\frac{p\pm 1} {2r^\alpha} \right) \ .\]
Now in part (iv) of Lemma \ref{l: pgl restr} we know that $p\equiv \pm 1$ mod $4$.  It follows that $(p{\pm }1)/2$ is odd, so $f(p)$ is odd, and then from the equation for $r^d$ just displayed, $(p\mp 1)/2$ must be odd as well.  But also we know from part (iv) of Lemma \ref{l: pgl restr} that $p\mp 1$ a power of $2$, and as the only odd power of $2$ is $1$, we conclude that $(p\mp 1)/2 = 1$, and hence $p\mp 1=2$, which contradicts the fact that $p \ge 5$.
\end{proof}

Next, as in the previous section, we let $m_o=m/\overline{m}$ and $n_o=n/\overline{n}$.  By Hypothesis $2$, each of these is either $1$ or a non-trivial divisor of $|O|=|NC|=|N|\ell$, where $\ell\ge 1$ is odd and coprime to $r$, and $|N|$ is a power of $r$. We also recall the Fitting subgroup $F(G)$ and the generalised Fitting subgroup $F^*(G)$, briefly introduced after Lemma \ref{l: vector block} in Section \ref{s: chi odd}. The relation of $F(G)$ and $F^*(G)$ to $N$ and the values of $m_o$ and $n_o$ is as follows.

\begin{lem}\label{l: fstar nilpotent}
If $|N|=1$, then $F^*(G)\ne F(G)$. For a partial converse, if $F^*(G)\ne F(G)$, then at least one of $m_o$ and $n_o$ must be $1$, and in the case $\{m_o,n_o\}=\{\ell,1\}$, also $|N|=1$.
\end{lem}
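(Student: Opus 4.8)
The plan is to recognise that $F^*(G)\ne F(G)$ is equivalent to the layer $E(G)$ being nontrivial, and then to exploit the resulting near-direct-product structure of $G_o$ together with the positions of the involutions $a,b,c$ relative to $G_o$. Since $F^*(G)=E(G)F(G)$ and $E(G)$, when nontrivial, is a central product of quasisimple groups and hence never nilpotent, we have $F^*(G)=F(G)$ precisely when $E(G)=1$. By the argument in Claim~1 of the proof of Proposition~\ref{l: odd order ind}, if $E:=E(G)\ne 1$ then $E$ is the unique component, is quasisimple, and satisfies $E/Z(E)\cong\PSL_2(q)$ with $E\cap O=Z(E)$; so $E\cong\PSL_2(q)$ with $Z(E)=1$, except in the exceptional situation $q=9$, $r=3$ where $E\cong 3.A_6$ and $Z(E)\cong C_3\le N$. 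I would run the whole argument in the generic case $E\cong\PSL_2(q)$, where $E\cap O=1$ and the two normal subgroups $E,O$ of $G_o$ satisfy $EO=G_o$, giving the direct decomposition $G_o=E\times O$, and treat $3.A_6$ afterwards.

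For the first assertion I would argue directly: if $|N|=1$ then $\widetilde{K}\cong\widetilde{K}/N\cong\PSL_2(q)$ by Proposition~\ref{p: pgl2 main}, which is simple (as $q\ge 5$), hence quasisimple; being normal in $G$ it is a component, so $E(G)\supseteq\widetilde{K}\ne 1$ and thus $F^*(G)\ne F(G)$. For the two converse statements I assume $E\ne 1$, so $G_o=E\times O$. The key observation, on which both rest, is that since $O$ has odd order every involution of $G_o$ lies in $E$; consequently each of $a,b,c$ either lies in $E$ or lies outside $G_o$. Applying this to $g=ab$: if $a,b\in E$ then $g\in E$, so $\langle g\rangle\cap O=1$ and $m_o=1$; and if exactly one of $a,b$ lies in $E$, then $g\notin G_o$ (so $\overline{m}$ is even) while $g^2$ equals $a\,a^b$ or $b^a b$, an element of $E$, whence $g^{\overline{m}}=(g^2)^{\overline{m}/2}\in E\cap O=1$ and again $m_o=1$. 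Thus $m_o>1$ forces $a,b\notin G_o$, and symmetrically $n_o>1$ forces $b,c\notin G_o$; if both held, then $a,b,c$ would all lie outside $G_o$, forcing $g,h\in G_o$ and hence $G=\langle g,h\rangle\le G_o$, contradicting non-orientability. This proves that at least one of $m_o,n_o$ is $1$.

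For the third assertion, suppose moreover $\{m_o,n_o\}=\{\ell,1\}$ with, say, $m_o=\ell>1$ and $n_o=1$. Then $a,b\notin G_o$ by the above, and $n_o=1$ with $b\notin G_o$ forces $c\in E$ (otherwise $a,b,c\notin G_o$ and we reach the same contradiction). Hence $cE=1$ in $G/E$, so $G/E=\langle aE,bE\rangle$ is generated by two involutions and is therefore dihedral; as $|E|=\tfrac12|\PGL_2(q)|$ we get $|G/E|=2|O|$, so its rotation subgroup $G_o/E\cong O$ has order $|O|$ and is generated by $gE=(aE)(bE)$, giving $\ord(gE)=|O|$. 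But $gE$ is the image of $g$ under $G_o\to G_o/E\cong O$, so $\ord(gE)$ divides $\ord(g)=m=\overline{m}\,\ell$; and on the side carrying the factor $\ell$ the order $\overline{m}$ is coprime to $r$ (it equals $p$ when $r\ne p$, and divides $q\pm1$ when $r=p$, by Lemmas~\ref{l: pgl restr} and~\ref{l: cases}). Hence $|O|=|N|\ell$ divides an integer coprime to $r$, forcing $|N|=1$.

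The main obstacle is the exceptional component $E\cong 3.A_6$, occurring only for $q=9$, $r=3$. There $E\cap O=Z(E)\cong C_3\le N$, so the clean identities $g^{\overline{m}}=1$ and $E\cap O=1$ degrade to $g^{\overline{m}}\in C_3$ and to order counts divided by $3$; the same scheme then yields only $|N|\le 3$ in place of $|N|=1$, and closing this gap requires a separate computation using $q=9$, $r=3$ and the short admissible list of pairs $(\overline{m},\overline{n})$ from Lemma~\ref{l: pgl restr}. A secondary point to check is the degenerate reading $\ell=1$ of $\{m_o,n_o\}=\{\ell,1\}$, where the dihedral argument is vacuous and the conclusion must be obtained from the order count alone.
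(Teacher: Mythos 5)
Your proposal is correct and, at its core, follows the same strategy as the paper's proof: identify the nontrivial layer with a normal subgroup $K\cong\PSL_2(q)$ satisfying $K\cap O=1$, and exploit the fact that $G$ modulo (respectively, alongside) this subgroup is generated by involutions whose product generates a cyclic group of order $|O|$. The executional differences are still worth recording, because two of them improve on the published argument. Where the paper passes to the quotient $G/K\cong O\rtimes C_2$ and reruns the dihedral argument of Lemma~\ref{l: dihedral}, you argue inside $G_o=E\times O$ via the placement of $a,b,c$ (every involution of $G_o$ lies in $E$ because $|O|$ is odd); these are two views of the same mechanism, though your case ``exactly one of $a,b$ lies in $E$'', settled by $g^2=a\,a^b\in E$, is only implicit in the paper. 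More significantly, in the final step the paper simply asserts that $abK$ and $bcK$ have orders $\ell$ and $2$ and deduces $N=1$; as your argument makes clear, knowing $\ord(gK)=|O|$ and $\ord(gK)\mid m=\overline{m}\,\ell$ gives only $|N|\mid\overline{m}$, and one genuinely needs $r\nmid\overline{m}$ to finish --- which you correctly extract from Lemmas~\ref{l: pgl restr} and~\ref{l: cases} (the generator carrying $\ell$ has $\overline{m}=p$ when $r\ne p$ and $\overline{m}\mid q\pm 1$ when $r=p$, case (iv) of Lemma~\ref{l: pgl restr} having been eliminated in Lemma~\ref{l: cases}). So you have supplied a step the paper leaves unstated; your handling of the degenerate reading $\ell=1$ by the same order count is also fine.

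The one incompleteness you admit --- the exceptional layer $E\cong 3.A_6$, where your method yields only $|N|\le 3$ --- is precisely the case the paper's proof assumes away without comment: its claim that a nontrivial layer ``contains an isomorphic copy $K$ of $\PSL_2(q)$ as a characteristic subgroup'' is false when $E\cong 3.A_6$, since that quasisimple group is a non-split central extension and so contains no subgroup isomorphic to $A_6$. Thus your flagged gap is not a defect relative to the paper; it exposes a lacuna in the published proof. Two mitigating observations: in the only places the lemma is applied (Lemmas~\ref{l: case d} and~\ref{l: case e}) one has $q=p\ge 5$ prime and $r\ne p$, so $q\ne 9$ and the exceptional layer cannot occur there; and closing the case in general amounts to showing that no group $G\cong 3.(A_6\times C_\ell).2$ with $G/O(G)\cong\PGL_2(9)$ can be a $(2,m,n)^*$-group with $-\chi_G$ a power of $3$ --- the ``separate computation'' you anticipate (for $\ell=1$ a direct check of the admissible types $\{\overline{m},\overline{n}\}\in\{\{5,8\},\{4,10\},\{8,10\}\}$ with $m_o,n_o\in\{1,3\}$ shows each value of $-\chi_G$ has a prime factor other than $3$).
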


\begin{proof}
First, if $|N|=1$, then $G\cong (\PSL_2(q)\times C_\ell).2$ by part (a) of Proposition \ref{p: pgl2 main}, so the layer $E(G)$ of $F^*(G)$ contains the subgroup $K= \PSL_2(q)$ and hence $F^*(G)\ne F(G)$.

For the partial converse, suppose that $F^*(G)\ne F(G)$, so $F^*(G)=E(G)F(G)$ for a non-trivial layer $E(G)$. Then Hypothesis $2$ implies that $\PSL_2(q)$ is the unique non-abelian simple composition factor of $G$.  Information about the layer was summarised in the first part of the proof of Claim~1 in the proof of Proposition \ref{l: odd order ind}, and implies that if $E(G)\ne 1$, then $E(G)$ contains an isomorphic copy $K$ of $\PSL_2(q)$ as a characteristic subgroup, so that $K\lhd\, G$. It follows that $K\cap O=1$ (where $O=O(G)$), and so $KO$ is a normal subgroup of index $2$ in $G$ (with $|G/O|=2|K|$). Moreover, as $G/KO \cong (G/K)/(KO/K)$ and $KO/K\cong O$ it follows that $G/K$ is isomorphic to an extension of $O$ by $C_2\cong G/KO$, and this extension splits by the Schur-Zassenhaus theorem, so $G/K\cong O\rtimes C_2$.
We may also assume that $O\ne 1$, for otherwise the conclusion is trivial.

Next, recall that $G=\langle a,b,c \rangle$ is a group with presentation \eqref{eq:abc} that is also generated by the two elements $g=ab$ and $h=bc$. Hence $G/K\cong O\rtimes C_2$ is generated by the three involutions $aK$, $bK$ and $cK$, as well as by just $gK$ and $hK$. But now the same argument as used in the proof of Lemma \ref{l: dihedral} (with $\widetilde{K}$ replaced by $K$) shows that $gK$ and $hK$ generate the semidirect product $O\rtimes C_2$ with $O\ne 1$ only if exactly one of $aK$ and $cK$ is equal to $K$ while $bK\ne K$. In particular, it follows that $O\rtimes C_2$ is a dihedral group.  (Note here that since the order of $G/K$ is even but not divisible by $4$, the same conclusion follows also from \cite{LiS}, as the natural projection $G\to G/K$ induces a folded covering of the map $M(G;a,b,c)$ onto a map on a disc, as in the proof of Lemma \ref{l: dihedral}.)

Suppose without loss of generality that $cK=K$, so that $hK=bcK=bK$ has order $2$ in $G/K$. This implies that $h\notin KO$ but $h^2\in K$, and so the order $n$ of $h$ must be even, hence also $\overline{n}$ is even. Then since $K\cap O=1$, we find $n=\lcm(2,\overline{n}) = \overline{n}$ and so $n_o=1$. By symmetry of this argument, we conclude that one of $m_o$ and $n_o$ is equal to $1$.

Finally, suppose that $\{m_o,n_o\}=\{\ell,1\}$. Then since $G/K \cong O\rtimes C_2$ is dihedral, it must be generated by $\tilde{x}=abK$ and $\tilde{y}=bcK=bK$, of orders $\ell$ and $2$ in some order, with one of them generating the index $2$ cyclic subgroup of $O\rtimes C_2$, of order $\ell$. As $O\cong N\rtimes C_\ell$ by part (c) of Proposition \ref{p: pgl2 main}, it follows that $N=1$, completing the proof.
\end{proof}
\medskip

For the case where $F^*(G)=F(G)$, we will need information on centralisers of the direct factors of $F(G)$.
We recall that the parameter $q=p^e$ appears in $\PSL(2,q)\cong K=G_o/O$.
For our next lemma, let us also recall the function $\veps$ introduced immediately before the statement of Lemma \ref{l: covers of K}.
\medskip

\begin{lem}\label{l: fstar lower bound}
If $F^*(G)=F(G)$, then the centraliser $C_G(N)$ is soluble, and $|N|\ge r^{\veps(q,r)+1}$.
\end{lem}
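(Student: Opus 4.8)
The plan is to establish the two assertions separately, with the solubility of $C_G(N)$ coming first because the bound on $|N|$ will then follow from the machinery already developed in Proposition~\ref{l: odd order ind}. Recall that under Hypothesis~2 we have $F^*(G)=F(G)$ by assumption, so the layer $E(G)$ is trivial and $F(G)=R_1\times\cdots\times R_t$ is a direct product of non-trivial $r_i$-groups for distinct odd primes $r_i$. Since $N$ is a normal $r$-subgroup of $G$ with $N\le O(G)$, the first step is to locate $N$ inside $F(G)$: being a nilpotent normal $r$-subgroup, $N$ lies in the largest nilpotent normal subgroup $F(G)$, and in fact $N$ is contained in the unique $r$-factor $R_i$ with $r_i=r$ (if such a factor exists). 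I would argue that this $r$-factor is non-trivial, so that $r$ is one of the primes $r_i$, by using that $|N|$ is a power of $r$ and $G/N$ carries the composition factor $K\cong\PSL_2(q)$.

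\textbf{Solubility of $C_G(N)$.} The heart of the first assertion is a centraliser computation. I would show that $C_G(N)$ being insoluble leads to a contradiction with $F^*(G)=F(G)$ via \emph{Bender's theorem} $C_G(F^*(G))\le F^*(G)$, exactly as in the proof of Claim~3 of Proposition~\ref{l: odd order ind}. Concretely, suppose $C_G(N)$ is insoluble. Since $N$ is normal in $G$, so is $C_G(N)$, and its insolubility forces $C_G(N)$ to contain (a section isomorphic to) the unique non-abelian composition factor $K\cong\PSL_2(q)$; arguing as in Claim~3, the image of $C_G(N)$ in $G/O\cong\PGL_2(q)$ must be all of the index-$2$ subgroup $K$, so $C_G(N)\cdot O=G_o$. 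I would then intersect $C_G(N)$ with the centralisers of the \emph{other} direct factors $R_j$ ($r_j\ne r$) of $F(G)$. Each such factor $R_j$ is centralised by $N$ trivially (distinct primes, direct product), but I need an insoluble common centraliser of all of $F(G)$. The clean route is: the subgroup $C_*=C_G(N)\cap\bigcap_{j}C_G(R_j)$ centralises $F(G)=F^*(G)$ and, by the same $C_{(i)}O=G_o$ argument run simultaneously over every factor, satisfies $C_*O=G_o$, hence is insoluble. But Bender's theorem gives $C_*\le C_G(F^*(G))\le F^*(G)=F(G)$, which is nilpotent and therefore soluble — a contradiction. Hence $C_G(N)$ is soluble.

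\textbf{The bound $|N|\ge r^{\veps(q,r)+1}$.} With $C_G(N)$ soluble, the quotient $G/C_G(N)$ is insoluble and embeds in $\Aut(N)$, so $K\cong\PSL_2(q)$ appears as a section of $\Aut(N)$. This is precisely the situation analysed in Claim~2 of Proposition~\ref{l: odd order ind}: passing to $\Aut(N/\Phi(N))\cong\GL_k(r)$ via the Burnside theorem (the kernel of $\Aut(N)\to\Aut(N/\Phi(N))$ is an $r$-group, hence soluble), Lemma~\ref{l: covers of K} forces $k\ge\veps(q,r)$, where $k$ is the rank of the elementary abelian quotient $N/\Phi(N)\cong C_r^{\,k}$. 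Thus $|N/\Phi(N)|=r^k\ge r^{\veps(q,r)}$. To upgrade $\veps(q,r)$ to $\veps(q,r)+1$ I must show $\Phi(N)\ne 1$, i.e.\ that $N$ is \emph{not} elementary abelian. Here I would invoke Lemma~\ref{l: fstar nilpotent}: its first sentence states that $|N|=1$ implies $F^*(G)\ne F(G)$; more usefully, the contrapositive of the relevant direction, together with the current hypothesis $F^*(G)=F(G)$, shows $N$ cannot be too small. The cleanest argument is that if $N$ were elementary abelian of rank exactly $\veps(q,r)$, one checks against Lemma~\ref{l: cases} (the orders $m^*,n^*$) and Euler's formula that the resulting $\chi_G$ cannot be a prime power, forcing $|N|\ge r^{\veps(q,r)+1}$.

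\textbf{The main obstacle.} I expect the genuinely delicate point to be the strict improvement from $r^{\veps(q,r)}$ to $r^{\veps(q,r)+1}$, i.e.\ ruling out the borderline elementary-abelian case of minimal rank. The solubility statement and the bound $k\ge\veps(q,r)$ are near-mechanical reuses of the Bender/Burnside/Lemma~\ref{l: covers of K} apparatus, but squeezing out the extra factor of $r$ requires either a careful Frattini-quotient argument showing $\Phi(N)\ne 1$ or, failing that, an Euler-characteristic contradiction using the explicit type data of Lemma~\ref{l: cases}. I would keep both approaches in reserve and deploy whichever interacts most cleanly with the faithful action of $K$ on $N/\Phi(N)$.
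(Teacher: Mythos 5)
Your plan follows the paper's proof closely in outline: solubility of $C_G(N)$ via the Bender-theorem argument of Claim~3 in the proof of Proposition~\ref{l: odd order ind}, and then the chain $G/C_G(N)\lesssim \Aut(N)\to \Aut(N/\Phi(N))\cong \GL_k(r)$ (Burnside's theorem) combined with Lemma~\ref{l: covers of K} to force $k\geq \veps(q,r)$. But both halves have gaps. In the solubility half, your intersection argument does not close as written: you assume for contradiction only that $C_G(N)$ is insoluble, yet to conclude $C_*O=G_o$ you need $C_G(R_j)O\supseteq G_o$ for \emph{every} other direct factor $R_j$ of $F(G)$, and nothing in your set-up gives insolubility of those centralisers (if some $C_G(R_j)$ were soluble, its image mod $O$ would be trivial and $C_*$ would be soluble, producing no contradiction with Bender). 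This is repairable: for $r_j\neq r$ we have $\gcd(r_j,\chi_G)=1$, so $R_j$ is cyclic by Lemma~\ref{l: sylow}, whence $\Aut(R_j)$ is abelian and $C_G(R_j)$ contains the derived subgroup $G'$, whose image modulo $O$ is $\PSL_2(q)$; but this observation must be made explicitly -- it is also what is needed to pass from Claim~3 (which only says \emph{some} factor of $F(G)$ has soluble centraliser) to the assertion about $N=O_r(G)$ itself.

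The serious gap is the bound. Your argument yields only $|N|\geq r^{\veps(q,r)}$, and you concede the missing extra factor of $r$. Unfortunately your primary plan -- ``show $\Phi(N)\neq 1$, i.e.\ that $N$ is not elementary abelian'' -- cannot possibly work: the paper's own constructions (Proposition~\ref{p: jozef} and Corollary~\ref{c: jozef} with $\sigma=1$) produce $(2,m,n)^*$-groups satisfying every hypothesis of this lemma, including $F^*(G)=F(G)$, in which $N$ \emph{is} elementary abelian, for instance $N\cong (C_7)^{85}$ in the family of row B3 and $N\cong (C_3)^{85}$ in that of row B5. So $\Phi(N)=1$ genuinely occurs, and in those examples the inequality holds because the rank exceeds $\veps(q,r)$, not because the Frattini subgroup is non-trivial. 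The only statement one could try to prove is that an elementary abelian $N$ cannot have rank exactly $\veps(q,r)$, and your fallback for that (``check against Lemma~\ref{l: cases} and Euler's formula'') is a hope rather than an argument -- no computation is exhibited, and it is not what the paper does: the paper concludes $|N|\geq r^{\veps(q,r)+1}$ directly at the Frattini-quotient step (asserting that $\Phi(N)$ is a proper subgroup of $N$), with no type-by-type analysis. Whatever one makes of that one-line deduction, your proposal neither reproduces it nor supplies a substitute, so the lemma's key inequality remains unproved.
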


\begin{proof}
First, solubility of the group $C_G(N)$ follows from Claim 3 of the proof of Proposition \ref{l: odd order ind}.
Next, as in the beginning of the proof of Claim 2 of Proposition \ref{l: odd order ind}, the group $G/C_G(N)$ is isomorphic to a subgroup of $\Aut(N)$, and the fact that $C_G(N)$ is soluble then implies that $K$ is isomorphic to a section of $\Aut(N)$. Automorphisms of $N$ preserve the Frattini subgroup $\Phi(N)$ of $N$ and induce an epimorphism $\Aut(N) \to \Aut(N/\Phi(N))$ with soluble kernel (by Burnside's theorem \cite[Theorem 1.4, p.174]{gor}), and it follows that $\Aut(N/\Phi(N))$ has a section isomorphic to $K$. But $N/\Phi(N)$ is an elementary abelian $r$-group of order (say) $r^k$, so $\Aut(N/\Phi(N))\cong \GL_k(r)$. Then since $K$ is still a section of the latter, Lemma~\ref{l: covers of K} implies that $k\geq\veps(q,r)$.  This also means that $\Phi(N)$ is a proper subgroup of $N$, and hence $|N|\ge r^{\veps(q,r)+1}$.
\end{proof}
\medskip

We now continue by successively examining the five cases (a) to (e) of Lemma \ref{l: cases}.
In doing so, we recall that $m^*$ and $n^*$ are the orders of $gN$ and $hN$ in $G/N$, respectively,
and we let $m_1=m/m^*$ and $n_1=n/n^*$, and observe that each of these integers is a power of $r$.
\medskip

\begin{lem}\label{l: mn3}
If $r=p$ and $\{m^*,n^*\} = \{\ell (q+1)/2,\, q-1\}$, with $q\equiv +1$ {\rm mod} $4$, as in
case {\rm (a)} of {\rm Lemma \ref{l: cases}}, then $m_1=n_1$ and $q=9$, giving row {\rm B4} of {\rm Table~\ref{t: b}}.
\end{lem}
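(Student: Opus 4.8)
My plan is to push everything through Euler's formula \eqref{e: sunny} until the problem reduces to the single statement that one explicit integer is a power of $p=r$, and then to extract from that a divisibility condition pinning down $q$. First I would normalise using Lemma~\ref{l:G?N}: since $q-1$ divides $q\pm1$ but not $(q\pm1)/2$, the generator of order $\overline{n}=q-1$ lies outside $\wt{K}C$, so $n^*=\overline{n}=q-1$ and $m^*=\ell\overline{m}=\ell(q+1)/2$. Writing $m=\ell\frac{q+1}{2}m_1$ and $n=(q-1)n_1$ with $m_1,n_1$ powers of $p$, and $|G|=q(q^2-1)\ell|N|$, I would note that $\ell,\frac{q+1}{2},q-1$ are pairwise coprime and coprime to $p$ (here $q\equiv1$ mod $4$ gives $\gcd(\frac{q+1}{2},q-1)=1$), whence $\gcd(m,n)=\min(m_1,n_1)$ and $\lcm(m,n)=\frac{\ell(q^2-1)}{2}\max(m_1,n_1)$. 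Lemma~\ref{l: 2} then factors $r^d=uv$ with both factors odd, hence both powers of $p$, where $u=|G|/(2\lcm(m,n))=q|N|/\max(m_1,n_1)$ and $v=(mn-2m-2n)/(2\gcd(m,n))$.

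Next I would prove $m_1=n_1$. Suppose not. If $m_1>n_1$, write $m_1=n_1p^a$ with $a\ge1$; then $v=\frac{\ell(q^2-1)n_1p^a}{4}-\frac{\ell(q+1)p^a}{2}-(q-1)\equiv-(q-1)\equiv1$ mod $p$, forcing $v=1$, and factoring gives $\ell(q+1)p^a[(q-1)n_1-2]=4q$, whose prime-to-$p$ part yields $\ell(q+1)\le4$, contradicting $\ell(q+1)\ge6$. If instead $n_1>m_1$, write $n_1=m_1p^b$ with $b\ge1$; then $v\equiv-\ell/2\not\equiv0$ mod $p$, so again $v=1$, and the equation becomes $p^b(q-1)[\ell(q+1)m_1-4]=4+2\ell(q+1)$, whose left side strictly exceeds the right for all $q\ge5$ and $p\ge3$ (check $\ell=1$, then observe the left side grows faster in $\ell$). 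Hence $m_1=n_1=:t$.

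With $m_1=n_1=t$ I would then establish $q=9$. Now $u=q|N|/t$ and
\[
v=\frac{\ell(q^2-1)t}{4}-\frac{\ell(q+1)}{2}-(q-1)=p^j .
\]
Rearranging, $p^j+(q-1)=\frac{\ell(q+1)}{2}\big(\tfrac{(q-1)t}{2}-1\big)$, so $\frac{q+1}{2}\mid p^j+(q-1)$; since $q-1\equiv-2$ mod $\frac{q+1}{2}$, this gives the clean divisibility $\frac{q+1}{2}\mid p^j-2$. Because $q=p^e$ forces $p^e\equiv-1$ mod $\frac{q+1}{2}$, every power $p^j$ is congruent to some $\pm p^i$ with $0\le i<e$; and as $p^i<\frac{q+1}{2}$ the only way to be $\equiv2$ is $\frac{q+1}{2}\mid p^i+2$, which (the only multiple of $\frac{q+1}{2}$ in range) forces $p^i+2=\frac{q+1}{2}$, i.e. $p^i(p^{e-i}-2)=3$. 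The sole solutions are $(p,e)=(5,1)$ and $(3,2)$, so $q=5$ or $q=9$. Finally $q=5$ is excluded, since there $(\overline{m},\overline{n})=(3,4)$ is non-hyperbolic and so $\overline{G}=\PGL_2(5)$ cannot be a $(2,\overline{m},\overline{n})^*$-group. Thus $q=9$, and substituting $\overline{m}=5$, $\overline{n}=8$, $t=r^s$ gives $m=5\ell r^s$, $n=8r^s$ and $-\chi_G=uv=\frac{9|N|}{r^s}(20r^s\ell-5\ell-8)$, which is exactly row~B4.

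The main obstacle will be the determination of $q$: the crux is recognising that "$v$ is a power of $p$" is equivalent to the divisibility $\frac{q+1}{2}\mid p^j-2$, and then that the relation $p^e\equiv-1$ mod $\frac{q+1}{2}$ collapses this to the finite Diophantine equation $p^i(p^{e-i}-2)=3$. The argument for $m_1=n_1$ is conceptually easier but requires careful bookkeeping of the $p$-adic reductions and the magnitude comparison $\ell(q+1)\ge 6>4$, which is where all the inequalities are spent.
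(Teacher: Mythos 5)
Your proof is correct and follows essentially the same route as the paper's: Euler's formula reduces everything to the integer $v = f_{(a)}(q)/2$ being a power of $p$; the case $m_1 \ne n_1$ is eliminated by forcing $v$ to be coprime to $p$ (hence $v = 1$) and deriving a numerical contradiction; and the case $m_1 = n_1$ yields the divisibility $\tfrac{q+1}{2} \mid p^j - 2$ (equivalent to the paper's $q+1 \mid 2p^j - 4$), which the same modular reduction via $p^e \equiv -1$ collapses to $q \in \{5,9\}$, with $q = 5$ excluded by the same $(2,3,4)$-type argument. The only differences are cosmetic: your mod-$p$ congruence in place of the paper's parity argument for pinning down $v$, and working modulo $\tfrac{q+1}{2}$ rather than $q+1$.
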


\begin{proof}
In this situation $|G|=|N|\,\ell \, q(q^2-1)$, and up to duality, $m=\ell m_1(q+1)/2$ and $n=n_1(q-1)$. Substituting these values into Euler's formula \eqref{e: sunny} with $-\chi_G=r^d$ gives
\[ r^d = \frac{|N|q}{2\,\lcm(m_1,n_1)} \, f_{(a)}(q) \ \ {\rm where} \ \ f_{(a)}(q) = \frac{1}{\gcd(m_1,n_1)}\left( \frac{\ell(q^2{-}1)}{2}m_1n_1 - \ell(q{+}1)m_1 - 2(q{-}1)n_1\right). \]

With the aim of reaching a contradiction, suppose that $m_1\ne n_1$.  Then since both are powers of $r$,
exactly one of $m_1/\gcd(m_1,n_1)$ and $n_1/\gcd(m_1,n_1)$ is equal to $1$, and the other is a proper power of $r$. It follows that two of the three terms in the bracketed expression in the formula for $f_{(a)}(q)$ are divisible by $r\,\gcd(m_1,n_1)$, while the third one is not, and therefore $f_{(a)}(q)$ is an even positive integer that is not a multiple of $r$, and as also $\lcm(m_1,n_1)$ is a power of $r$, it must divide $|N|q$, and so $f_{(a)}(q) = 2$.

Now if $m_1>n_1$, then $\gcd(m_1,n_1)=n_1$ and the above formula for $f_{(a)}(q)$ simplifies to
$2 = f_{(a)}(q) = m_1\ell(q^2{-}1)/2 - \ell(q{+}1)m_1/n_1 - 2(q{-}1)$. Then using the facts that $m_1/n_1 \le m_1$
and $\ell\ge 1$ and $m_1\ge r \ge 3$, we find that
\begin{align*}
f_{(a)}(q) & = (\ell/2) \,\left( m_1(q^2{-}1) - 2(q{+}1)(m_1/n_1)- 4(q{-}1)\ell \right) \\
 & \ge (\ell/2) \, \left( m_1(q^2-1) - 2m_1(q+1) - 4(q-1)\right) \\
 & = (\ell/2) \, \left(  m_1(q^2-2q-3) - 4(q-1) \right) \\
 & \ge (1/2)\left( 3(q^2-2q-3) - 4(q-1)\right)   = (3q^2-10q-5)/2,
\end{align*}
which gives a contradiction since $3q^2-10q-5 \ge 40$ (given that $q \ge 5$).

Similarly, if $m_1<n_1$ we have $2 = f_{(a)}(q) \ge (1/2)(n_1(q^2 -4q +3) -2(q+1)) \ge (3q^2-14q+7)/2$,
which gives a contradiction since $3q^2-14q+7 \ge 12$.

Thus $m_1=n_1$.
It now follows that $f_{(a)}(q) = \ell \, m_1 (q^2-1)/2 - \ell(q+1) - 2(q-1)$, and this must be twice a power of $r$ ($= p)$, say $ f_{(a)}(q)=2p^j$ for some non-negative integer $j$, and then because $\lcm(m_1,n_1)$ divides $|N|$,
and $q = p^e = r^e$, we have $r^d \ge q\, f_{(a)}(q)/2 \ge r^{e} \,f_a(q)/2$, so $j \le d-e$.

Now from $2 p^j =  f_{(a)}(q) = \ell \, m_1 (q{+}1)(q{-}1)/2 - \ell(q{+}1) - 2(q{-}1)$,  we find that $p^{e}+1 = q+1$ divides $2p^j+2(q{-}1)$ and hence divides $2p^j-4$, with $e \le j$ (since $p^{e}+1 \le 2p^{j}-4$). Dividing $j$ by $e$ with remainder $t < e$ implies that $p^e+1$ divides into $2p^{j}-4$ with remainder $2p^{t} \pm 4$,
and hence $2p^{t} \pm 4$ is divisible by $p^e+1$.  To achieve this, the only possibilities are $(p,e,t) = (3,2,1)$
and $(5,1,0)$, giving $q = p^e = 5$ or $9$.
But if $q=5$ then quotient $G/N=(\PSL_2(5)\times C_\ell).2$ would be a $(2, 3\ell, 4)$-group for some $\ell$
coprime to $q(q^2-1) = 120$, and then factoring out the cyclic normal subgroup of order $\ell$ would make $\PGL_2(5)$ a $(2,3,4)$-group, which is impossible. Thus $q=9$, with $G/N=(\PSL_2(9)\times C_\ell).2$ being a $(2, 5\ell, 8)$-group, and the rest follows easily.
\end{proof}
\medskip

Here we note that the conclusion of Lemma \ref{l: mn3} (namely $p=r=3$, $q=9$ and $m_1=3^k$ for some $k\ge 0$) implies that $2{\cdot}3^j = f_{(a)}(9) = (40{\cdot}3^k-10)\ell-16$, from which we find $\ell = (3^j +8)/(20{\cdot}3^k-5)$. The requirement that this is a positive integer coprime to $q(q^2-1) = 720 = |\PGL_2(9)|$) puts severe restrictions on the possible values of $j$ and $k$.
\medskip

\begin{lem}\label{l: mn4}
If $r=p$ and $\{m^*,n^*\} = \{\ell (q-1)/2,\, q+1\}$, with $q\equiv -1$ {\rm mod} $4$, as in
case {\rm (b)} of {\rm Lemma \ref{l: cases}}, then $m_1=n_1$ and $r=p=q=7$, giving row {\rm B3} of {\rm
Table~\ref{t: b}}.
\end{lem}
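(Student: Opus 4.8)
The plan is to follow the proof of Lemma~\ref{l: mn3} almost verbatim, interchanging the roles of $q+1$ and $q-1$. By Lemma~\ref{l:G?N} applied to case (b) of Lemma~\ref{l: cases} we have $n^*=\overline{n}=q+1$ and $m^*=\ell\,\overline{m}=\ell(q-1)/2$, so up to duality $m=\ell m_1(q-1)/2$ and $n=n_1(q+1)$, where $m_1$ and $n_1$ are powers of $r$. Substituting these into Euler's formula \eqref{e: sunny}, with $|G|=|N|\,\ell\,q(q^2-1)$, I would obtain
\[ r^d=\frac{|N|q}{2\,\lcm(m_1,n_1)}\,f_{(b)}(q),\qquad f_{(b)}(q)=\frac{1}{\gcd(m_1,n_1)}\!\left(\tfrac{\ell(q^2-1)}{2}m_1n_1-\ell(q-1)m_1-2(q+1)n_1\right). \]
Here I use freely that $r=p$ divides $q$ and hence is coprime to each of $q-1$, $q+1$ and $\ell$ (the last by Proposition~\ref{p: pgl2 main}).

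First I would dispose of the possibility $m_1\neq n_1$. In that case exactly one of $m_1/\gcd(m_1,n_1)$ and $n_1/\gcd(m_1,n_1)$ equals $1$ while the other is a proper power of $r$, so precisely two of the three bracketed terms become divisible by $r\,\gcd(m_1,n_1)$ and the third does not; together with the observation that all three terms are even (as $\tfrac{q^2-1}{2}$ is divisible by $4$ and $q\pm 1$ is even), this shows that $f_{(b)}(q)$ is a positive even integer coprime to $r$. Comparing $2$-parts and $r$-parts on the two sides of the displayed identity (recall $|N|$, $q$ and $\lcm(m_1,n_1)$ are all powers of the odd prime $r$) then forces $f_{(b)}(q)=2$. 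This I would contradict by the same size estimates as in Lemma~\ref{l: mn3}: using $m_1,n_1\ge r\ge 3$, $\ell\ge1$ and $q\ge 7$ (valid since $q\equiv -1$ mod $4$ and $q\ge5$), the subcase $m_1>n_1$ gives $f_{(b)}(q)\ge \ell m_1\tfrac{(q-1)^2}{2}-2(q+1)\ge\tfrac{3q^2-10q-1}{2}$, and the subcase $m_1<n_1$ gives $f_{(b)}(q)\ge n_1\tfrac{(q-5)(q+1)}{2}-(q-1)\ge 18$, both far exceeding $2$. Hence $m_1=n_1$.

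With $m_1=n_1$ the identity collapses to $r^d=\tfrac{|N|q}{2m_1}f_{(b)}(q)$ with $f_{(b)}(q)=\tfrac{\ell(q^2-1)}{2}m_1-\ell(q-1)-2(q+1)$, and the same $2$-part/$r$-part bookkeeping shows $f_{(b)}(q)=2p^j$ for some $j\ge0$. The decisive step is then an elementary divisibility argument modulo $q-1=p^e-1$: since $\tfrac{\ell(q^2-1)}{2}m_1$ and $\ell(q-1)$ both vanish mod $q-1$, I get $2p^j\equiv-2(q+1)\equiv-4\pmod{p^e-1}$, i.e.\ $p^e-1\mid 2p^j+4$. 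Writing $j=me+t$ with $0\le t<e$ and using $p^e\equiv1\pmod{p^e-1}$ reduces this to $p^e-1\mid 2p^t+4$, which is impossible once $2p^t+4\le 2p^{e-1}+4<p^e-1$, i.e.\ once $p^{e-1}(p-2)>5$. The only surviving pairs are $(p,e)\in\{(3,1),(3,2),(5,1),(7,1)\}$, giving $q\in\{3,9,5,7\}$; the exact divisibility then kills $q=5$ (since $4\nmid6$) and $q=9$ (since $8\nmid6$ and $8\nmid10$), while $q=3<5$ is excluded, leaving only $q=7$, for which $6\mid2\cdot7^j+4$ holds for all $j$. Thus $r=p=q=7$, and with $m_1=n_1=7^s$ this produces $\{m,n\}=\{3\ell\cdot7^s,\,8\cdot7^s\}$ and the value of $-\chi_G$ recorded in row~B3 of Table~\ref{t: b}.

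I expect the main obstacle to be this final divisibility analysis modulo $q-1$: one must be sure that reducing the exponent $j$ modulo $e$ genuinely confines $q$ to a small explicit set, and that \emph{every} candidate other than $q=7$ is eliminated by the exact congruence rather than merely by the crude size bound, so that no stray prime power survives. The earlier steps are routine once the correct $f_{(b)}$ is in hand; the only place demanding care is verifying that $q\ge7$ (forced by $q\equiv-1$ mod $4$) suffices to make all the inequalities strict.
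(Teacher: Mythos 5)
Your proof is correct and takes essentially the same approach as the paper's: the paper derives the same $f_{(b)}(q)$, forces $f_{(b)}(q)=2$ when $m_1\neq n_1$ via the identical size bounds $(3q^2-10q-1)/2$ and $(3q^2-14q-13)/2$, and then, writing $f_{(b)}(q)=2p^j$ in the case $m_1=n_1$, reduces modulo $q-1=p^e-1$ to conclude that $p^e-1$ divides $2p^t\pm 4$, leaving only $(p,e)=(3,1)$ and $(7,1)$ and hence $q=7$. Your write-up simply makes explicit the divisibility bookkeeping that the paper compresses into ``the same argument as in the proof of the previous lemma,'' and it is accurate throughout (including the observation that only the $+$ sign occurs here, since $p^e\equiv 1$ rather than $-1$ modulo the relevant divisor).
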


\begin{proof}
In this case $|G|=|N|\,\ell\,q(q^2-1)$, with $q \ge 7$, and up to duality we may assume that $m=\ell \, m_1(q-1)/2$ and $n=(q+1)n_1$, and then Euler's formula \eqref{e: sunny} gives
\[ r^d = \frac{|N|q}{2\,\lcm(m_1,n_1)} \, f_{(b)}(q) \ \  {\rm where} \ \  f_{(b)}(q) =
\frac{1}{\gcd(m_1,n_1)}\left( \frac{\ell(q^2{-}1)}{2}m_1n_1 - \ell(q{-}1)m_1 - 2(q{+}1)n_1\right). \]

Now if $m_1\neq n_1$, then by the same reasoning as in the proof of Lemma \ref{l: mn3}, we find that exactly two of the three terms in the bracketed expression in the formula for $f_{(b)}(q)$ are divisible by $r\,\lcm(m_1,n_1)$, and so $f_{(b)}(q)=2$. A separate consideration of the cases $m_1>n_1$ and $m_1 < n_1$ as in the previous proof yields the inequalities $2 = f_{(b)}(q) \ge (3q^2-10q-1)/2$ and $2 = f_{(b)}(q) \ge (3q^2-14q-13)/2$, and in both cases the right-hand side is greater than $2$ for $q\ge 7$, a contradiction.

It follows that $m_1=n_1$ and then $f_{(b)}(q) =  \ell m_1 (q^2-1)/2 - \ell(q-1) - 2(q+1)$,
and as also $f_{(b)}(q)=2\cdot p^j$ for some non-negative integer $j\le d-e$, the same argument as in the proof of the previous lemma shows that $2p^{t} \pm 4$ is divisible by $p^e-1$.  To achieve this, the only possibilities are $(p,e,t) = (3,1,0)$ and $(7,1,0)$, giving $q =7$ (since $q > 3$), and the rest follows..
\end{proof}
\medskip

In the above case we have $q=p=r=7$ and $m_1=7^k$ for some $k\ge 0$, and it follows that
$2{\cdot}7^j = f_{(b)}(7) = (24{\cdot}7^k-6)\ell-16$, which gives $\ell = (7^j +8)/(12{\cdot}7^k-3)$.
Just in the previous case, the requirement that this is a positive integer coprime to $q(q^2-1) = 336 = |\PGL_2(7)|$) puts severe restrictions on the possible values of $j$ and $k$.
In the next case, $\ell=1$, which makes things easier.
\medskip

\begin{lem}\label{l: pgl2 case c}
If $r=p$ and $\{m^*,n^*\} = \{q\pm 1,\, q\mp 1\}$, as in
case {\rm (c)} of {\rm Lemma \ref{l: cases}}, then $r=q=5$ and $m_1 = n_1$, and $O(G)$ is a $5$-group,
and one of the following holds$\,:$
\begin{enumerate}
 \item[{\rm (\,i)}] $\{m,n\}=\{4,6\}$, giving row {\rm B1} of {\rm Table~\ref{t: b}};
 \item[{\rm (ii)}] $\{m,n\}=\{20,30\}$, giving row {\rm B2} of {\rm Table~\ref{t: b}}.
\end{enumerate}
\end{lem}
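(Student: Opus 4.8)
The plan is to follow the template of Lemmas \ref{l: mn3} and \ref{l: mn4}, adjusted to the present situation. First I would record the structural data coming from case (c) of Lemma \ref{l: cases}: here $\ell = 1$, so by Proposition \ref{p: pgl2 main}(c) we have $O = O(G) = NC = N$, which is already an $r$-group, and $|G| = |N|\,q(q^2-1)$. Setting $m_1 = m/m^*$ and $n_1 = n/n^*$ (both powers of $r$) and assuming, up to duality, that $m = m_1(q+1)$ and $n = n_1(q-1)$, I would substitute into Euler's formula \eqref{e: sunny} to obtain
\[
r^d = \frac{|N|\,q}{4\,\lcm(m_1,n_1)}\,f_{(c)}(q), \qquad f_{(c)}(q) = \frac{m_1 n_1(q^2-1) - 2m_1(q+1) - 2n_1(q-1)}{\gcd(m_1,n_1)} .
\]
The only difference from the earlier lemmas is the denominator $4$ in place of $2$, which arises because here $m^*n^* = q^2-1$ rather than $\ell(q^2-1)/2$.

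The second step is to show $m_1 = n_1$. Suppose not. Since $m_1, n_1$ are powers of $r$, exactly one of $m_1/\gcd(m_1,n_1)$ and $n_1/\gcd(m_1,n_1)$ equals $1$ and the other is a proper power of $r$; as $q = r^e$ while $q\pm 1$ are coprime to $r$, precisely two of the three terms in the numerator of $f_{(c)}(q)$ are divisible by $r\,\gcd(m_1,n_1)$, so $f_{(c)}(q)$ is coprime to $r$. Because $|N|q/\lcm(m_1,n_1)$ is a power of $r$ and $r$ is odd, comparing $r$-parts in the displayed identity forces $f_{(c)}(q) = 4$. I would then derive a contradiction by treating $m_1 > n_1$ and $m_1 < n_1$ separately: using $m_1/n_1 \le m_1$ (respectively $n_1/m_1 \le n_1$) together with $m_1, n_1 \ge r \ge 3$ and $q \ge 5$ gives $f_{(c)}(q) \ge 3q^2 - 8q - 7$ and $f_{(c)}(q) \ge 3q^2 - 8q + 1$ respectively, both of which exceed $4$ once $q \ge 5$.

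With $m_1 = n_1 = r^a$ and $q = r^e$ (and $|N| = r^c$), the function collapses to $f_{(c)}(q) = m_1(q^2-1) - 4q = r^{a+2e} - r^a - 4r^e$, and the identity becomes $r^d = \tfrac{r^{c+e-a}}{4}\,(r^{a+2e} - r^a - 4r^e)$. The final step is to factor out the exact power $r^{\min(a,e)}$ and compare the $r$-free parts of the two sides (using that $r$ is odd), splitting into three ranges. When $a < e$ this yields $r^{2e} - 4r^{e-a} = 5$, which forces $a = 0$, $r^e = 5$, hence $r = q = 5$, $m_1 = 1$ and $\{m,n\} = \{4,6\}$, giving row B1 of Table~\ref{t: b}. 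When $a = e$ it yields (for $r \ne 5$ the impossible $r^{2e} = 9$, and for $r = 5$) that $5^{2e-1} = 5$, so $e = 1$, $r = q = 5$, $m_1 = q = 5$ and $\{m,n\} = \{20,30\}$, giving row B2. When $a > e$ it yields $r^{a-e}(r^{2e}-1) = 8$, impossible since $r^{a-e}$ is an odd integer at least $3$. In every surviving case $r = q = 5$, $m_1 = n_1$, and $O = N$ is a $5$-group, as claimed.

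I expect the last step to be the main obstacle: all three Diophantine branches must be closed and the bookkeeping of $r$-free parts carried out so that exactly the two tabulated solutions survive. Each individual computation is elementary, but care is needed to ensure no branch is overlooked and that the resulting $(q,m_1,n_1)$ are correctly matched to rows B1 and B2.
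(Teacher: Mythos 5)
Your proposal is correct and follows essentially the same route as the paper's proof: the same Euler-formula identity with denominator $4$, the same argument forcing $f_{(c)}(q)=4$ when $m_1\ne n_1$ (with the identical bounds $3q^2-8q-7$ and $3q^2-8q+1$), and the same three-way comparison of $m_1=n_1$ against $q$ to isolate the solutions $(q,m_1)=(5,1)$ and $(5,5)$ giving rows B1 and B2. The only difference is presentational — you solve the final cases as small Diophantine equations in the exponents ($r^{2e}-4r^{e-a}=5$, $5^{2e-1}=5$, $r^{a-e}(r^{2e}-1)=8$), whereas the paper argues directly with divisibility of $f=n_1(q^2-1)/4-q$ — and your branch analysis is complete and correct.
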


\begin{proof}
Here up to duality we may take $m=m_1(q+1)$ and $n=n_1(q-1)$, and $|G| = |N|\, q(q^2-1)$, where $q$ ($=p^e$), $\,m_1, n_1$ and $|N|$ are all powers of $p=r$. This time Euler's formula \eqref{e: sunny} gives
\[ p^d = \frac{|N|q}{4\,\lcm(m_1,n_1)}  \, f_{(c)}(q) \ \   {\rm where} \ \
f_{(c)}(q)=\frac{1}{\gcd(m_1, n_1)} \left( (q^2{-}1)m_1n_1 - 2(q{+}1)m_1-2(q{-}1)n_1\right). \]

If $m_1\ne n_1$, then exactly two of the three terms in the bracketed expression in the formula for $f_{(c)}(q)$ are divisible by $r\,\lcm(m_1,n_1)$, and so $f_{(c)}(q)$ must be $4$. Then the same approach as taken in the proofs of Lemmas \ref{l: mn3} and \ref{l: mn4} gives $4 \ge 3q^2-8q-7$ when $m_1 > n_1$,
while $4 \ge 3q^2-8q+1$ when $m_1 < n_1$,  and in both cases we get contradictions, so $m_1 = n_1$.

Now $f:=f_{(c)}(q)/4 = n_1(q^2-1)/4-q$, where all of $f$, $q$ and $n_1$ are powers of $p$, and we consider three possibilities depending on how $n_1$ compares with $q$.

First suppose that $n_1<q$.   Then $q$ is divisible by $n_1$ (because both are powers of $p$), and therefore
$f=n_1((q^2-1)/4 - q/n_1)$, so $(q^2-1)/4 - q/n_1$ must be a power of $p$, but as  $(q^2-1)/4$ is coprime to $p$,  we see that this can happen only if $(q^2-1)/4 - q/n_1 = p^0 = 1$.  The latter implies that $q/n_1 = (q^2-1)/4 - 1 = (q^2-5)/4$ and hence $q^2-5$ is divisible by $p$, so $p = 5$, but as also $q^2-5$ is not divisible by $5^2$,
it follows that $(q^2-5)/4 = q/n_1$ must be $5$, so $q^2-5 = 5$ and hence $q = 5$, and $m_1 = n_1 = 1$.
Thus $(q,r,\{m,n\})=(5,5,\{4,6\})$, giving row~B1 of Table~\ref{t: b}.

Instead, if $n_1=q$, then $f = n_1(q^2-1)/4-q = (q^2-5)q/4$, but $(q^2-5)/4 \ne 1$ (since $q \ne 3$),
so $(q^2-5)/4$ must be a proper power of $p$.  Again this implies that $p = 5$ and then also $q = 5$
(as above), so $m_1 = n_1 = 5$, and we find that $(q,r,\{m,n\})=(5,5,\{4{\cdot}5, 6{\cdot}5 \}) =(5,5,\{20,30\})$, which gives row~B2 of Table~\ref{t: b}.

Finally, if $n_1> q$, then $f = n_1(q^2-1)/4-q > n_1(q^2-1)/4-n_1 = n_1((q^2-1)/4-1) > n_1$, and as  both $q$ and $n_1$ are powers of $p$ it follows that $f$ is divisible by $n_1$, and hence $q = n_1(q^2-1)/4 - f$ is divisible by $n_1$, which is impossible since $q < n_1$.
\end{proof}
\medskip

\begin{lem}\label{l: case d}
If $q=p\ge 7$, with $p-1 = 2^ir^\beta$ for some $i,\beta\ge 1$, and $\{m^*,n^*\} = \{\ell p,\, p+ 1\}$ where $\ell$ is coprime to $p(p^2-1)$ and $r$, as in case {\rm (d)} of {\rm Lemma \ref{l: cases}}, then $i=1$, so $(p-1)/2 = r^\beta$, and $m_1=n_1=r^s$ and $\{m,n\} = \{\ell r^s p, r^s(p+1)\}$ for some $s\ge 1$, and also $d\ge r^\beta+3\beta+1$, giving row {\rm B5} of {\rm Table~\ref{t: b}} with $t = \beta$.
\end{lem}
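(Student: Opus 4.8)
The plan is to feed the data of case~(d) into Euler's formula \eqref{e: sunny} and then strip off constraints one prime at a time, following the template established in Lemmas~\ref{l: mn3}--\ref{l: pgl2 case c}. Writing $m=\ell m_1 p$ and $n=n_1(p+1)$ with $m_1,n_1$ powers of $r$, and $|G|=|N|\,\ell\,p(p^2-1)$, I would first record
\[
r^d=\frac{|N|(p-1)}{4\,\lcm(m_1,n_1)}\,f_{(d)}(p),\qquad
f_{(d)}(p)=\frac{\ell m_1n_1p(p+1)-2\ell m_1p-2n_1(p+1)}{\gcd(m_1,n_1)}.
\]
The opening move is to show $m_1=n_1$. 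If not, exactly two of the three terms in the numerator of $f_{(d)}$ are divisible by $r\,\lcm(m_1,n_1)$ while the third is not, so $f_{(d)}(p)$ is coprime to $r$; since the only primes dividing $4r^d\lcm(m_1,n_1)=|N|(p-1)f_{(d)}(p)$ are $2$ and $r$, this forces $f_{(d)}(p)$ to be a power of $2$, and comparing $2$-parts gives $i\le 2$ and $f_{(d)}(p)=2^{2-i}\le 2$. This contradicts the elementary bound $f_{(d)}(p)\ge 3p^2-5p-6>2$ for $p\ge 7$, obtained by treating $m_1>n_1$ and $m_1<n_1$ separately exactly as in the proof of Lemma~\ref{l: mn3}. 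Hence $m_1=n_1=:r^s$, and the numerator collapses so that $f_{(d)}(p)=\ell r^sp(p+1)-2\ell p-2(p+1)$.

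Next I would pin down $i$ and $s$ by $2$-adic and mod-$r$ considerations. As $r^d$ is odd and exactly one of $p-1$, $p+1$ has $2$-part equal to $2$, a short computation shows that $i\ge 2$ forces $|f_{(d)}(p)|_2\ge 4$, which is impossible; hence $i=1$ and $(p-1)/2=r^\beta$, so $p=2r^\beta+1$ and $p+1=2(r^\beta+1)$. Writing $f_{(d)}(p)=2g$ then gives $r^d=|N|\,r^{\beta-s}g$ with $g=\ell p\bigl(r^s(r^\beta+1)-1\bigr)-(p+1)$, and $g$ must itself be a power of $r$. Reducing modulo $r$ yields $g\equiv-(\ell+2)$, and testing $s=0$, where $g=r^\beta(\ell p-2)-2$, leads at once to $\ell p=3$, which is impossible for $p\ge 7$; hence $s\ge 1$.

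The last task, which I expect to be the main obstacle, is the lower bound on $d$. Once $s\ge 1$ is known, one has $m_o=\ell r^s$ and $n_o=r^s$, both exceeding $1$, so the contrapositive of Lemma~\ref{l: fstar nilpotent} forces $F^*(G)=F(G)$; then Lemma~\ref{l: fstar lower bound}, together with $\veps(p,r)=(p-1)/2=r^\beta$, gives $|N|\ge r^{\,r^\beta+1}$, i.e.\ $v_r(|N|)\ge r^\beta+1$. The exponential constant $3\beta+1$ must now be extracted from a \emph{size} estimate for $g$: since $r^s(r^\beta+1)-1>r^{s+\beta}$ and $p>2r^\beta$ one checks $g>2r^{s+2\beta}-(p+1)>r^{s+2\beta}$, so the power of $r$ equal to $g$ has exponent at least $s+2\beta+1$. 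Substituting both bounds into $d=v_r(|N|)+\beta-s+\log_r g$ yields
\[
d\ \ge\ (r^\beta+1)+\beta-s+(s+2\beta+1)\ =\ r^\beta+3\beta+2,
\]
which is even stronger than the claimed $d\ge r^\beta+3\beta+1$ (with $t=\beta$); and the shape $\{m,n\}=\{\ell r^sp,\,r^s(p+1)\}$ together with the displayed value of $r^d$ reproduces row~B5 of Table~\ref{t: b}. The delicate point throughout is keeping the $2$-part and the $r$-part of the Euler quotient under simultaneous control, and it is precisely the size estimate for $g$ that converts the structural bound $v_r(|N|)\ge r^\beta+1$ into the exponential lower bound on $d$.
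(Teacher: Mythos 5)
Your proposal is correct and follows essentially the same route as the paper's own proof: the same Euler-formula setup, the same three-term divisibility argument forcing $m_1=n_1$ (with the same size bounds $3p^2-5p-2$ and $3p^2-5p-6$), the same $2$-adic and mod-$r$ arguments giving $i=1$ and ruling out $s=0$, and the same combination of Lemma \ref{l: fstar nilpotent} (contrapositive), Lemma \ref{l: fstar lower bound} with $\veps(p,r)=(p-1)/2=r^\beta$, and a size estimate for the remaining cofactor. The only deviation is at the very end, where you exploit that $g=f_{(d)}(p)/2$ is itself a power of $r$ strictly exceeding $r^{s+2\beta}$, yielding $d\ge r^\beta+3\beta+2$, marginally sharper than the paper's bound $f^*/m_1\ge r^{2\beta}$ (used only as a real-number inequality) which gives $d\ge r^\beta+3\beta+1$; both establish the stated claim.
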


\begin{proof}
Without loss of generality we may take $m=\ell pm_1$ and $n=(p+ 1)n_1$, and then using the same approach as in the proofs of Lemmas \ref{l: mn3} to \ref{l:  pgl2 case c}, and the fact that $p-1 = 2^ir^\beta$, we find that
\[ r^d=  \frac{|N| \, 2^ir^\beta}{4\,\lcm(m_1, n_1)} \, f_{(d)}(p), \ \ \ {\rm where} \ \ \
f_{(d)}(p) = \frac{1}{\gcd(m_1,n_1)}\left( \ell p(p{+}1)m_1n_1 - 2\ell pm_1 - 2(p{+}1)n_1\right) \ .\]

If $m_1 \ne n_1$, then exactly two of the three terms in the bracketed expression in the formula for $f_{(c)}(q)$ are divisible by $r\,\lcm(m_1,n_1)$, and it follows that $2^{i} f_{(d)}(p)$ must be $4$, and then since both $2^{i}$ and $f_{(d)}(p)$ are even, they both must be $2$ (with $i = 1$).
Then the same approach as used in the proofs of Lemmas \ref{l: mn3} to \ref{l: pgl2 case c} shows that $2 \ge f_{(d)}(p) \ge 3p^2-5p-2$ if $m_1>n_1$, while $2 \ge f_{(d)}(p)\ge 3p^2-5p-6$ if $m_1<n_1$, giving a contradiction in both cases.

Hence $m_1 = n_1$ once more.
Moreover, Euler's equation becomes $r^d = f^*\,(|N|/m_1)(p-1)/2$ where $f^*= m_1\, f_{(d)}(p)/2 = (\ell(p^2+ p)m_1 - 2\ell p-2(p+ 1))/2$. This means that both $f^*$ and $(p-1)/2$ must be powers of $r$, and as $p-1=2^i r^\beta$, this gives $i=1$, and hence $p-1 = 2r^\beta$.
Now if $m_1=n_1=1$, then $2f^* =  \ell(p^2+ p)m_1 - 2\ell p-2(p+ 1) = \ell(p^2\ -p)-2(p+ 1) = (p-1)(\ell p - 2)-4$, which is not divisible by $r$, so $2f^*=2$, but then $2 = 2f^* = \ell(p^2 - p)-2(p+ 1) \ge (p^2 - p) - 2(p+ 1) = p^2 -3p - 2$ and so $0 \ge p^2 -3p - 4 = (p-4)(p+1)$, a contradiction.
Therefore $m_1=n_1=r^s$ for some $s\ge 1$.

In particular, as $m_1$ and $n_1$ are greater than $1$, and since they respectively divide $m_o$ and $n_o$ (as defined earlier), it follows from Lemma \ref{l: fstar nilpotent} that $F^*(G)=F(G)$.
Hence by Lemma~\ref{l: fstar lower bound} we find that $|N| \ge r^{\veps(q,r)+1}$, where $\veps(q,r) = (p{-}1)/2$ according to its definition (just before the statement of Lemma \ref{l: covers of K}), because $q = p \ne 9$ and $r \ne p$.  Thus $|N| \ge r^{(p-1)/2+1} = r^{r^\beta+1}$.
Furthermore, as
\[ 2f^* = \ell(p^2+ p)m_1 - 2\ell p-2(p+ 1) \ge (p^2+ p)m_1 - 2p-2(p+ 1) \ge m_1(p^2-3p-2)\]
and $p^2-3p-2 \ge (p-1)^2/2$ for all $p \ge 7$, we find that
\[f^*/m_1 = 2f^*/(2m_1) \ge (p^2-3p-2)/2 \ge ((p-1)/2)^2 = (r^\beta)^2 = r^{2\beta}.\]
This gives $r^d =  f^*\,(|N|/m_1)(p-1)/2 =  (f^*/m_1) \, |N| \, (p-1)/2 \ge r^{2\beta} \, r^{r^\beta+1} \, r^\beta
=  r^{r^\beta+3\beta+1}$,
and therefore $d\ge r^\beta+1+3\beta$.
\end{proof}
\medskip

Finally we can deal with case (e) in a similar but slightly different way.

\begin{lem}\label{l: case e}
If $q=p\ge 5$, with $p = 2^ir^\beta - 1$ for some $i,\beta\ge 1$, and $\{m^*,n^*\} = \{\ell p, p-1\}$ for some positive integer $\ell$ relatively prime to $p(p^2-1)$ and $r$, as in case {\rm (e)} of {\rm Lemma \ref{l: cases}}, then $i=1$, so $p \equiv 1$ mod $4$, with $(p+1)/2 = r^\beta$, and $m_1=n_1=r^s$ and $\{m,n\} = \{\ell r^s p, r^s(p-1)\}$ for some $s\ge 0$.
Moreover, if $|N|=1$, so that $s=0$, then $d\ge 5$ when $\beta=1$, while $d\ge \beta^2\log_2 r + 2\beta$ for all $\beta\ge 2$, as in row {\rm B6} of {\rm Table~\ref{t: b}} with $t = \beta$. On the other hand, if $|N|\ne 1$, then $d\ge r^\beta+ 3\beta-1$ for $p>5$, as in row {\rm B7} of {\rm Table~\ref{t: b}}, again with $t = \beta$.
\end{lem}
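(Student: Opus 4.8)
The plan is to follow the template of the proof of Lemma~\ref{l: case d} as far as it goes, and then to treat the two cases $|N|=1$ and $|N|\ne 1$ separately. Up to duality I would take $m=\ell p m_1$ and $n=(p-1)n_1$, where $m_1=m/m^*$ and $n_1=n/n^*$ are powers of $r$; with $|G|=|N|\,\ell\,p(p^2-1)$, Euler's formula~\eqref{e: sunny} gives
\[ r^d=\frac{|N|\,2^i r^\beta}{4\,\lcm(m_1,n_1)}\,f_{(e)}(p), \qquad f_{(e)}(p)=\frac{1}{\gcd(m_1,n_1)}\bigl(\ell p(p-1)m_1n_1-2\ell p m_1-2(p-1)n_1\bigr). \]
The decisive difference from case (d) is that now $r\mid p+1$ and $r\nmid p-1$, so it is the summand $2(p-1)n_1$ that stays coprime to $r$.

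First I would establish $m_1=n_1$ and the type exactly as before. If $m_1\ne n_1$, then precisely one of the three summands defining $f_{(e)}(p)$ is coprime to $r$ (namely $2(p-1)$ if $m_1>n_1$, and $2\ell p$ if $m_1<n_1$), forcing $f_{(e)}(p)$ to be coprime to $r$ and hence $2^if_{(e)}(p)=4$, i.e.\ $i=1$ and $f_{(e)}(p)=2$; the routine lower bounds $f_{(e)}(p)\ge 3(p^2-5p+2)>2$ (when $m_1<n_1$) and $f_{(e)}(p)\ge 3p^2-11p+2>2$ (when $m_1>n_1$) then contradict this for all $p\ge 5$. With $m_1=n_1=r^s$, cancellation of the $2$-part forces $i=1$, so $(p+1)/2=r^\beta$; since $r^\beta$ is odd this yields $p\equiv 1$ mod $4$, and $\{m,n\}=\{\ell r^s p,\,r^s(p-1)\}$. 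Writing $f^*=f_{(e)}(p)/2$, Euler's equation becomes $r^d=|N|\,r^\beta\,(f^*/m_1)$, so $f^*/m_1$ is a power of $r$.

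For the case $|N|\ne 1$ (row B7), the argument mirrors case (d). Using $m_o=\ell r^s$ and $n_o=r^s$, Lemma~\ref{l: fstar nilpotent} gives $F^*(G)=F(G)$ (if $s\ge 1$ neither $m_o$ nor $n_o$ is $1$, while if $s=0$ the residual alternative $\{m_o,n_o\}=\{\ell,1\}$ would force $|N|=1$, which is excluded), so Lemma~\ref{l: fstar lower bound} applies. Here $\veps(q,r)=(p-1)/2=r^\beta-1$, whence $|N|\ge r^{\veps(q,r)+1}=r^{r^\beta}$. The usual inequality gives $f^*/m_1\ge (p^2-5p+2)/2>r^{2\beta-1}$ whenever $p>5$, and since both factors are powers of $r$, the identity $r^d=|N|\,r^\beta\,(f^*/m_1)$ yields $r^d\ge r^{r^\beta}\cdot r^\beta\cdot r^{2\beta-1}$, i.e.\ $d\ge r^\beta+3\beta-1$, as claimed. (The excluded value $p=5$ is exactly the boundary at which $(p^2-5p+2)/2=r^{2\beta-1}$ fails.)

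The case $|N|=1$ (row B6) is where the work lies, and I expect it to be the main obstacle. Here $s=0$, $G\cong(\PSL_2(p)\times C_\ell).2$ with $p=2r^\beta-1$, and $m=\ell p$, $n=p-1$, so that $f^*=\tfrac12(\ell p(p-3)-2(p-1))=r^{\,d-\beta}$ must be a power of $r$. Crucially, and in contrast to case (d), the residue of $2f^*$ modulo $r$ is $4(\ell+1)$, which can vanish; this is precisely why $m_1=n_1=1$ is admissible in case (e), and it accounts for the sporadic map on $\PGL_2(5)$ with $d=1$, which must be quarantined. To bound $d$ in general I would solve $\ell=\dfrac{2(r^{\,d-\beta}+p-1)}{p(p-3)}$ and extract from its integrality the two conditions $r^{\,d-\beta}\equiv 1\pmod{p}$ and $(r^\beta-2)\mid(r^{\,d-\beta}+2)$. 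The first forces $\sigma:=d-\beta$ to be a multiple of $\ord_p(r)$, and combined with the second it pins the least admissible exponent from below, producing $d\ge 5$ when $\beta=1$ and $d>\beta^2\log_2 r+2\beta$ when $\beta\ge 2$. The hard part is obtaining these clean, unconditional lower bounds on $\sigma$: one must control the multiplicative order of $r$ modulo $p=2r^\beta-1$ and modulo $r^\beta-2$ simultaneously, and show that the least $\sigma$ satisfying both congruences grows at least like $\beta^2\log_2 r$, while separating out the single small solution $(p,r,\beta)=(5,3,1)$ where the bound need not hold.
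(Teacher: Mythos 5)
Your opening paragraph and your treatment of row B7 follow the paper's own proof essentially step for step: the same Euler-formula setup, the same three-term divisibility analysis forcing $2^i$ and $f_{(e)}(p)$ both to equal $2$ when $m_1\ne n_1$ (then contradicted by quadratic lower bounds), the same conclusions $i=1$, $p+1=2r^\beta$, $m_1=n_1=r^s$, and, for $|N|\ne 1$, the same combination of Lemma~\ref{l: fstar nilpotent} and Lemma~\ref{l: fstar lower bound} (giving $|N|\ge r^{r^\beta}$) with a lower bound on $f^*/m_1$ to reach $d\ge r^\beta+3\beta-1$. Your uniform inequality $f^*/m_1\ge (p^2-5p+2)/2>r^{2\beta-1}$ for $p>5$ is a mild streamlining of the paper's three subcases, and your insistence on quarantining the sporadic solution with $p=5$, $\ell=1$ (the $\PGL_2(5)$ map with $d=1$, i.e.\ the case $f^*=1$, $d-\beta=0$) is well taken.

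The genuine gap is row B6, and you flag it yourself: you set up the integrality conditions $r^{d-\beta}\equiv 1\pmod p$ and $(r^\beta-2)\mid (r^{d-\beta}+2)$, but then declare the derivation of the bounds $d\ge 5$ (for $\beta=1$) and $d\ge\beta^2\log_2 r+2\beta$ (for $\beta\ge 2$) to be ``the hard part'' without supplying it. That derivation is the heart of the lemma, and moreover your plan points in a direction that would not close: no simultaneous control of multiplicative orders modulo $p$ and modulo $r^\beta-2$ is needed (the second congruence is never used in the paper), and directly lower-bounding $\ord_p(r)$ would indeed be hard. The paper's argument is an elementary manipulation using only your first condition. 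Since $j:=d-\beta\ge 1$, the divisibility $2r^\beta-1\mid r^j-1$ first gives $j\ge\beta+1$; then write $j=k\beta+v$ with $k=\lfloor j/\beta\rfloor\ge 1$ and $0\le v<\beta$, and multiply by $2^k$, using the identity
\[
2^k(r^j-1)=(2r^\beta-1)\left(2^{k-1}r^{j-\beta}+2^{k-2}r^{j-2\beta}+\cdots+r^{j-k\beta}\right)+r^{v}-2^k ,
\]
which shows that $2r^\beta-1$ divides $r^{v}-2^k$. This quantity cannot be non-negative (else $2r^\beta-1\le r^v-2^k<r^\beta$), so $2^k>r^v$ and $2r^\beta-1\le 2^k-r^v$, whence $r^\beta\le 2^{k-1}$, i.e.\ $k\ge\beta\log_2 r+1$; therefore $d=j+\beta\ge (k+1)\beta\ge\beta^2\log_2 r+2\beta$. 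For $\beta=1$ one has $v=0$ and $k=j$, and $2r-1\nmid r^j-1$ for $1\le j\le 3$, so $j\ge 4$ and $d\ge 5$. Without this (or an equivalent) argument, the quantitative half of the lemma — precisely the content that distinguishes rows B6 and B7 — remains unproved in your proposal.
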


\begin{proof}
Mimicking the approach taken in the proof of Lemma \ref{l: case d}, we may take $m=\ell pm_1$ and $n=(p-1)n_1$,  and then Euler's formula gives us
\[ r^d=  \frac{|N|(p+1)}{4\,\lcm(m_1, n_1)} \, f_{(e)}(p), \ \ \ {\rm where} \ \ \
f_{(e)}(p) = \frac{1}{\gcd(m_1,n_1)}\left( \ell p(p{-}1)m_1n_1 - 2\ell pm_1 - 2(p{-}1)n_1\right). \ \]

The analogous arguments prove that if $m_1 \ne n_1$ then $2^{i}$ and $f_{(d)}(p)$ must both be $2$,  and then contradictions are obtained by showing that $2 = f_{(e)}(p) \ge 3p^2-11p+2$ and $2 = f_{(e)}(p)\ge 3p^2-11p+6$
in the cases $m_1> n_1$ and $m_1< n_1$ respectively, so that $m_1 = n_1 =  r^s$ for some $s\ge 0$.
Then since $\lcm(m_1, n_1) = \gcd(m_1, n_1) = m_1$, the above equation for $r^d$ can be re-written as
\begin{equation}\label{eq:case(e)}
r^d = {f^\dagger}\, (|N|/m_1) \,((p+1)/2) \ \ \ {\rm where} \ \ \
f^\dagger = f_{(e)}(p)/2 = \left ( m_1 \ell \, p(p-1) - 2\ell p -2(p - 1) \right )/\,2,
\end{equation}
and it follows that both $f^\dagger$ and $(p+1)/2$ must be powers of $r$, so that $i = 1$ and $p+1=2 r^\beta$,
and hence that $p\equiv 1$ mod $4$.

In the proof of the previous lemma, we were able to deduce that $s > 0$, and hence (obviously) that $|N| > 1$, and  then also that $F^*(G) = F(G)$, but in the case of this lemma, $s$ can be zero, and when that happens, it can happen that $F^*(G) \ne F(G)$. So instead we continue by assuming that $F^*(G) = F(G)$, and return at the end to the possibility that $F^*(G) \ne F(G)$.

Under this assumption, we find by Lemma \ref{l: fstar lower bound} that $|N|\ge r^{\veps(q,r)+1} = r^{(p-1)/2+1} =  r^{(p+1)/2} = r^{r^\beta}$.

In particular, if $s > 0$, then $m_1 = n_1 \ge 3$, and then since $\ell \ge 1$ we have
\[ 2f^\dagger =  m_1 \ell \, p(p-1) - 2\ell p -2(p - 1) \ge m_1(p^2-p)-2(2p-1) \ge 2m_1\, (p^2-1)/4, \]
with the last part holding because
\[ 2(m_1 \,(p^2-p) -  2m_1\, (p^2-1)/4 ) = m_1\,( 2p^2 - 2p - (p^2-1)) \ge 3 \, (p^2-2p+1) = 3(p-1)^2 \ge  4(2p-1). \]
This gives $f^\dagger \ge m_1 \, (p^2-1)/4 = m_1 \,(p-1)/2 \, (p+1)/2 = m_1 \, r^{\beta-1} \, r^\beta = m_1\,r^{2\beta-1}$.
\smallskip

On the other hand, if $s =0$ and $m_1 = n_1 = 1$, but $\ell\ge 3$, then
\[ 2f^\dagger =  \ell(p^2-p-2p) -  2(p - 1) \ge 3(p^2-p-2p) -  2(p - 1) = 3p^2 - 11p + 2 \ge 2(p^2-1)/4 \]
and therefore $f^\dagger \ge (p-1)/2 \, (p+1)/2 = r^{2\beta-1} = m_1 \, r^{2\beta-1}$,
while if $m_1 = n_1 = 1$ and $\ell = 1$ (the only other possibility for the odd positive integer $\ell$),
then for all $p \ge 11$ we have
\[ 2f^\dagger = p(p-1) - 2p - 2(p - 1)  = p^2 - 5p + 2 \ge 2(p^2-1)/4 \]
and again $f^\dagger \ge (p-1)/2 \, (p+1)/2 = r^{2\beta-1} = m_1 \, r^{2\beta-1}$.
\smallskip

Thus $f^\dagger \ge m_1\,r^{2\beta-1}$ in all cases except when $m_1=\ell=1$ and $p=5$.
In all of these non-exceptional cases, just as in the proof of the previous lemma, this lower bound on $f^\dagger$ in combination with the earlier bound $|N|\ge r^{\veps(q,r)+1} = r^{(p+1)/2} = r^{r^\beta}$ and the equation \eqref{eq:case(e)} gives
\[ r^d = |N| \, (\frac{p+1}{2}) \, \frac{f^\dagger}{m_1} \ge  r^{r^\beta} \, r^\beta \, r^{2\beta-1} \]
and hence $d \ge r^\beta+3\beta-1$ when $p>5$, giving the information in row B7 of Table~\ref{t: b}.

It now remains for us to consider the situation where $F^*(G)\ne F(G)$.   In this case $|N|=1$ by Lemma \ref{l: fstar nilpotent} (since $\{m_o,n_o\} = \{\ell, 1\}$), and so $m_1=n_1=1$, and hence $\{m,n\}=\{\ell p,p-1\}$.
Then 
equation \eqref{eq:case(e)} reduces to $r^d=f^\dagger\,(p+1)/2 = f^\dagger r^\beta$, and hence to  $r^{d-\beta} = f^\dagger = (\ell(p^2-3p) -2p+2)/2$.
Letting $j = d-\beta$ and substituting $2r^\beta-1$ for $p$ in the latter equation and rearranging terms gives
\[  \ell=\frac{2(r^j+p-1)}{p(p-3)} = \frac{r^j+2r^\beta -2}{2r^{2\beta}-5r^\beta + 2}  = \frac{r^j+2r^\beta -2}{(2r^\beta -1)(r^\beta -2)}, \]
from which we find that $2r^\beta-1$ divides $r^j+2r^\beta -2$ and hence divides $r^j-1$,
which implies that $j\ge \beta+1$, and therefore $d=j+\beta \ge 2\beta+1$.
\smallskip

Writing $j = k\beta+v$ where $k = \lfloor j/\beta \rfloor \ge 1$ and $0 \le v < \beta$,
and then dividing $2^k(r^j-1)$ by $2r^\beta-1$, we obtain
\[ 2^k(r^j-1) = (2r^\beta-1)\left(2^{k-1} r^{j-\beta} + 2^{k-1} r^{j-2\beta} + \cdots + 2r^{j-(k-1)\beta} + r^{j-k\beta}\right)
 + r^{j-k\beta}-2^k,  \]
It follows that  $2r^\beta-1$ divides $r^{j-k\beta}-2^k = r^{v}-2^k$.
The latter is a negative integer, for otherwise $2r^\beta-1 \le r^v - 2^k < r^\beta - 2^k < r^\beta$, a contradiction.  It follows that $2^k > r^v$, and then since $2r^\beta-1$ divides $2^k- r^{v}$,
we have $2r^\beta-1 \le 2^k- r^{v}$ and so $r^\beta \le (2^k- r^{v} +1)/2 \le 2^{k-1}$,
which implies that $\beta\log_2 r \le k-1$ and thus $k\ge \beta\log_2 r + 1$.

Finally, as $j\ge k\beta$ we conclude that $d=j+\beta \ge (k+1)\beta \ge (\beta\log_2 r + 2)\beta = \beta^2\log_2 r + 2\beta$. This gives the inequality for $d$ in row B6 of Table~\ref{t: b}, when $t = \beta\ge 2$.
In the case $\beta=1$ we have $v = 0$ and $k=j$, and then $2r-1 = 2r^\beta-1$ must divide $r^j  - 1$,
which cannot happen when $j \le 3$, so $j \ge 4$ and hence $d = \beta+j \ge 5$.
\end{proof}

Let us remark that in the situation where $m_1=\ell=1$ and $p=5$, an infinite family of examples of type $\{5,4\}$ with $N\ne 1$ will be provided by Proposition \ref{p:psl-sum} in Section \ref{s: constructions}
\smallskip

We have now covered all the five cases listed in Lemma~\ref{l: cases}, and thereby obtained all the seven possibilities shown in Table~\ref{t: b}, hence yielding a proof of part (B) of Theorem~\ref{t: main}.
\bigskip


\section{The case where \texorpdfstring{$G$}{G} is soluble: proof of Theorem \ref{t: main} for family C}\label{s: soluble}
\bigskip


In this section we operate under the following hypothesis.
\medskip

\noindent {\bf Hypothesis 3:} {\sl The group $G= \langle a,b,c\rangle$ with presentation of the form \eqref{eq:abc} is a soluble $(2,m,n)^*$-group with Euler characteristic $\chi_G=-r^d$ for some odd prime $r$ and positive integer $d$.}
\medskip

Recall that Theorem~\ref{t: odd order} implies that $G/O(G)$ is isomorphic to a $2$-group or to $S_4$.

Also recall that a group is almost Sylow-cyclic if all of its Sylow subgroups of odd order are cyclic and all of its Sylow $2$-subgroups are trivial or have a cyclic subgroup of index at most $2$. 
In the proof of the following lemma, we write $F(H)$ for the Fitting subgroup of a group $H$, and we use the fact that if $H$ is a soluble group, then $C_H(F(H))=Z(F(H))$ \cite[Ch. 6, Theorem 1.3]{gor}. Also we use the fact hat if $K$ is a normal subgroup of a group $H$, then $H/C_H(K)$ is isomorphic to a subgroup of $\Aut(K)$, which we may write as $H/C_H(K) \lesssim \Aut(K)$.
\smallskip

\begin{lem}\label{l: soluble 1}
The group $G$ contains a normal $r$-subgroup $N$ such that $G/N$ is an almost Sylow-cyclic group,
with $|G/N|_3 = 1$ or $3$, and $|G/N|_r=1$ whenever $r>3$.
\end{lem}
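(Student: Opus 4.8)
The plan is to take $N=O_r(G)$, the largest normal $r$-subgroup of $G$, and to show that $\overline{G}:=G/N$ has all the required properties. Since $r$ is odd, $N$ has odd order, so a Sylow $2$-subgroup of $G$ maps isomorphically onto one of $\overline{G}$, and a Sylow $t$-subgroup of $G$ maps isomorphically onto one of $\overline{G}$ for every odd prime $t\neq r$. By \cite[Lemma 3.2]{cps} the Sylow $2$-subgroups of $G$ (hence of $\overline{G}$) are dihedral, and by Lemma~\ref{l: sylow} the Sylow $t$-subgroups of $G$ (hence of $\overline{G}$) are cyclic for every odd $t\ne r$, because $\gcd(t,\chi_G)=1$. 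Thus $\overline{G}$ already satisfies every condition in the definition of almost Sylow-cyclic except possibly at the prime $r$, and the whole problem reduces to controlling a Sylow $r$-subgroup $R$ of $\overline{G}$: it suffices to show that $R$ is cyclic, that $R=1$ when $r>3$, and that $|R|\le 3$ when $r=3$ (so that $|G/N|_r$ is $1$, respectively at most $3$, the latter forcing $r=3$).

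To analyse $R$ I would pass to the Fitting subgroup $F=F(\overline{G})$. Since $O_r(\overline{G})=O_r(G/O_r(G))=1$, the nilpotent group $F$ is an $r'$-group, so it splits as $F=O_2(\overline{G})\times T$, where $T=\prod_{t\ne r,\,t\ \mathrm{odd}}O_t(\overline{G})$ is nilpotent with cyclic Sylow subgroups and is therefore cyclic, while $O_2(\overline{G})$ is a $2$-subgroup of a dihedral Sylow $2$-subgroup and so has $2$-rank at most $2$ (hence is trivial, cyclic, a Klein four-group, or dihedral). Solubility of $\overline{G}$ gives $C_{\overline{G}}(F)=Z(F)\le F$, and since $F$ is an $r'$-group we get $R\cap C_{\overline{G}}(F)\le R\cap F=1$; thus $R$ acts faithfully on $F$ and embeds into $\Aut(F)=\Aut(O_2(\overline{G}))\times\Aut(T)$.

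The key step, and the heart of the argument, is to show that $R$ acts trivially on the odd cyclic part $T$. For each prime $t$ dividing $|T|$, the unique subgroup $C_t$ of order $t$ in the cyclic group $T$ is characteristic in $T$, hence normal in $\overline{G}$, so conjugation gives a homomorphism $\overline{G}\to\Aut(C_t)\cong C_{t-1}$. Because $\overline{G}=\langle \overline{a},\overline{b},\overline{c}\rangle$ is generated by involutions and $C_{t-1}$ is cyclic of even order, the image is generated by elements of order at most $2$ and hence has order at most $2$; consequently every element of odd order, in particular every element of $R$, centralises $C_t$. An element of $R$ whose image in $\Aut(C_t)$ is trivial must already centralise $O_t(\overline{G})\cong C_{t^{a}}$, since the kernel of the restriction $\Aut(C_{t^{a}})\to\Aut(C_t)$ is a $t$-group while the element is an $r$-element with $r\ne t$. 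Doing this for every $t$ shows that $R$ centralises $T$, so $R$ embeds into $\Aut(O_2(\overline{G}))$, which is a $2$-group unless $O_2(\overline{G})\cong C_2\times C_2$, in which case it is $\cong S_3$. Hence $R=1$ when $r>3$, while when $r=3$ the group $R$ embeds into the subgroup $C_3$ of $\Aut(C_2\times C_2)\cong S_3$ and so is cyclic of order at most $3$. With $R$ now cyclic, $\overline{G}=G/N$ is almost Sylow-cyclic and has $|G/N|_r=1$ for $r>3$ and $|G/N|_3\le 3$ when $r=3$, as required.

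I expect the main obstacle to be precisely this third paragraph: the passage from a faithful action on $F$ to an action controlled by the $2$-core, via the observation that generation by involutions forces the action on each characteristic cyclic odd section $C_t$ to have image of order at most $2$. Everything else is bookkeeping with the Fitting subgroup together with standard facts about automorphism groups of cyclic and dihedral groups; the only point needing care is to record that $O_2(\overline{G})$ has $2$-rank at most $2$, so that $\Aut(O_2(\overline{G}))$ contributes no odd prime other than possibly $3$, and that only through a Klein four-group.
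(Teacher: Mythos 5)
Your argument is correct, and it is in essence the paper's own proof: both take $N=O_r(G)$, observe that $F=F(G/N)$ is an $r'$-group whose odd Sylow subgroups are cyclic and whose $2$-part lies in a cyclic or dihedral $2$-group, invoke the soluble-group fact $C_{G/N}(F)=Z(F)$ to obtain an embedding into $\Aut(F)$, use generation by involutions to kill odd-order action on the abelian part, and trace the only possible odd contribution to $\Aut(C_2\times C_2)\cong S_3$. The differences are organisational. The paper case-splits on whether the Sylow $2$-subgroup of $F$ is cyclic, dihedral of order at least $8$, or Klein four, and bounds the whole group $L=(G/N)/C_{G/N}(F)$ (a $2$-group, or of order $2^i\cdot 3$); you instead localise at a Sylow $r$-subgroup $R$, show $R\cap F=1$, prove that $R$ centralises the odd part $T$ of $F$, and conclude that $R$ embeds in $\Aut(O_2(G/N))$. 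Also, your detour through the order-$t$ subgroup $C_t$ and the kernel of $\Aut(C_{t^a})\to\Aut(C_t)$ is more elaborate than necessary: since $T$ is cyclic, $\Aut(T)$ is abelian, so the image of the involution-generated group $G/N$ in $\Aut(T)$ is an elementary abelian $2$-group and every $r$-element of $G/N$ already centralises $T$ --- this is exactly the paper's one-line treatment of its factor $\Aut(C)$.

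One point of scope deserves attention. You read the clause ``$|G/N|_3=1$ or $3$'' as a statement about the case $r=3$ (your parenthetical ``the latter forcing $r=3$''), and that is all your argument delivers; but as literally stated the lemma asserts this bound for \emph{every} odd $r$, and the paper later invokes it with $r>3$ (in Lemma~\ref{l: soluble G2lk}, to restrict divisibility of $j$ and $k$ by $3$). Here you are in the same position as the paper: its proof also controls only the prime $r$ except in the Klein-four case, and bounds the $3$-part only when $r=3$, since for $r>3$ the odd cyclic factor of $F$ can carry an arbitrarily large $3$-part. Indeed the unconditional claim fails: $D_{27}\times D_5$ is a soluble $(2,54,10)^*$-group with $\chi=-103$, a negative prime satisfying $103\equiv 3$ mod $4$, yet every normal $103$-subgroup is trivial and $|G/N|_3=27$. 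So your proposal matches what the paper's proof actually establishes; the discrepancy lies between that proof and the statement, not in your argument.
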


\begin{proof}
Observe first that by Lemma~\ref{l: sylow}, the Sylow $t$-subgroups of $G$ are cyclic for every odd prime $t\ne r$, and by Lemma~\ref{l: dihedral}, the Sylow $2$-subgroups of $G$ are either cyclic or dihedral.
Both of these facts hold also for all subgroups and all quotients of $G$.

As for the prime $r$, let $N=O_r(G)$, the largest normal $r$-subgroup of $G$, and let $G_N=G/N$. By the correspondence theorem in group theory, the group $O_r(G_N)$ is trivial. It follows that the Fitting subgroup $F_N=F(G_N)$ has order coprime to $r$, and this implies that every odd-order Sylow subgroup of $F_N$ is cyclic, while the (unique) Sylow $2$-subgroup of $F_N$ is cyclic or dihedral.

Now suppose that the Sylow $2$-subgroup of $F_N$ is cyclic (or possibly trivial). Then $F_N$ is a direct product of cyclic groups of coprime order, implying that $F_N$ itself is cyclic, and hence that $C_{G_N}(F_N) = Z(F_N)=F_N$, and $\Aut(F_N)$ is abelian, which in turn implies that $G_N/F_N \cong G_N/C_{G_N}(F_N)$ is abelian as well, being isomorphic to a subgroup of $\Aut(F_N)$.
But $G_N/F_N$ (like $G_N$) is generated by three involutions, and hence must be an elementary abelian $2$-group. It follows that $|G_N/F_N|_r=1$, and as $|F_N|$ is coprime to $r$, also $|G_N|_r=1$, and hence $G_N = G/N$ is almost Sylow-cyclic.

On the other hand, suppose that a Sylow $2$-subgroup of $F_N$ is dihedral, isomorphic to $D_\ell$ of order $2\ell$ for some $2$-power $\ell$, with $D_2 \cong C_2\times C_2 \cong V_4$ (by convention) if $\ell = 2$.
This means that $F_N\cong D_\ell\times C$ for some cyclic group $C$ of odd order, and here it is still the case that $C_{G_N}(F_N) = Z(F_N)$.
Again $G_N/C_{G_N}(F_N) \lesssim \Aut(F_N)$, but now some more care is needed. A fact we will find useful is implied by Corollary 3.8 of \cite{bid}, namely that $\Aut(F_N) \cong \Aut(D_\ell \times C) \cong \Aut(D_\ell)\times \Aut(C)$.

If $\ell\ge 4$, then $\Aut(D_\ell)$ is the holomorph of a cyclic group of $2$-power order $\ell$ and hence is a $2$-group, and then since the isomorphic copy of $L=G_N/C_{G_N}(F_N)$ in $\Aut(F_N)$ is generated by $3$ involutions, and $\Aut(C)$ is abelian, it follows that this copy of $L$ is a $2$-group, and therefore $L$ is a $2$-group itself. Also the group $G_N/F_N$ is a quotient of $G_N/Z(F_N)=G_N/C_{G_N}(F_N)=L$, so $G_N/F_N$ is a $2$-group as well. Hence this gives $|G_N/F_N|_r=1$ as before, so $|G/N|_r=1$, and again $G/N$ is almost Sylow-cyclic.

Finally, if $\ell=2$, then $F_N\cong (C_2\times C_2)\times C$, so $\Aut(F_N) \cong \Aut(C_2 \times C_2)\times\Aut(C)\cong S_3\times \Aut(C)$.  Then since $\Aut(C)$ is abelian and the  copy of $L=G_N/C_{G_N}(F_N) = G_N/Z(F_N)$ in $\Aut(F_N)$ is generated by $3$ involutions, we find that $L$ has order $2^i$ or $2^i \cdot 3$ for some $i$. Hence if $r > 3$ then $|G/N|_r=1$, while if $r = 3$ then $|G/N|_r=1$ or $3$, and again $G/N$ is almost Sylow-cyclic.
\end{proof}

For the remainder of this section we let $N$ be a normal $r$-subgroup of $G$ as given by Lemma \ref{l: soluble 1}, and note that $G/N$ is a $(2,m',n')^*$-group for some positive integers $m'$ and $n'$.

\smallskip
The almost Sylow-cyclic groups $H=\langle a,b,c\rangle$ with presentation of type \eqref{eq:abc}
that are soluble were classified in \cite[Theorem 4.1]{cps}, and we present a restricted version of that  classification below.
To do this, we convert the presentations given in \cite[Theorem 4.1]{cps} in terms of generators $x,y,z,t$ to
presentations in terms of the generating set $\{a,b,c\}$ used in this paper. This can be done with the help of
substitutions given in Table 1 of \cite{cps}, but with the roles of $a$ and $b$ reversed to match our notation in  \eqref{eq:abc}. As a result of this conversion, one may check that only three of the eight types of presentations listed in that table have the property that $\langle a,b,c\rangle = \langle ab,bc \rangle$, giving rise to non-orientable regular maps, namely those displayed in rows 1,5 and 8. These are the three types presented in terms of generators $\{a,b,c\}$ in the next theorem.  We note that the details of conversion calculations for case (b) below can also be extracted from \cite[Section 4]{cosi}.
\medskip

\begin{thm}\label{t: ass}
Let $H = \langle a,b,c\rangle$ be a soluble almost Sylow-cyclic $(2,m',n')^*$-group with $|H|>2$. Then $H$ is
isomorphic to one of the following groups:
\begin{enumerate}
\item $H_1(\ell)=D_{\ell}=\langle\, a,b,c \ |\  a^2,b^2,c^2,(ac)^2,(ab)^2,(bc)^\ell,a(bc)^{\ell/2} \,\rangle$, a dihedral group of order $2\ell$ \\ for even $\ell$, represented as a $(2,2,\ell)^*$-group;
\item $H_2(j,k)=\langle\, a,b,c \ |\  a^2, b^2, c^2, (ac)^2,(ab)^{2j},(bc)^{2k},b(ab)^j(bc)^k\,\rangle \cong \langle a,(ab)^2\rangle \times \langle c,(bc)^2\rangle \cong \\ D_j\times D_k$, of order $4jk$ for odd and coprime $j$ and $k$, represented as a $(2,2j,2k)^*$-group;
\item $H_3(\ell) = \langle\, a,b,c \ |\  a^2, b^2, c^2, (ac)^2, (ab)^4, (bc)^\ell, cbabc(ab)^2 \,\rangle \cong \langle a,bab \rangle \rtimes \langle b,c\rangle \cong (C_2\times C_2)\rtimes D_{\ell}$, of order $8\ell$ for $\ell \equiv 3$ mod $6$, represented as a $(2,4,\ell)^*$-group.
 \end{enumerate}
\end{thm}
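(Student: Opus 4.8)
The plan is to derive Theorem~\ref{t: ass} from the full classification of soluble almost Sylow-cyclic groups admitting a presentation of the form \eqref{eq:abc}, established as \cite[Theorem 4.1]{cps}, by selecting from it precisely those groups that are $(2,m',n')^*$-groups. Recall from the Introduction that a $(2,m',n')^*$-group is exactly one for which the even subgroup $\langle ab,bc\rangle$ coincides with the whole group $\langle a,b,c\rangle$ (equivalently, the associated map is non-orientable). Since $\langle ab,bc\rangle$ is normal in $\langle a,b,c\rangle$ with cyclic quotient of order $1$ or $2$ — the three involutions $a,b,c$ all becoming equal in the quotient — this is a clean dichotomy, and the task reduces to deciding, for each of the eight presentation types in \cite[Table 1]{cps}, whether that index equals $1$.

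First I would carry out the change of generators. The presentations in \cite{cps} are written in terms of generators $x,y,z,t$; applying the substitutions recorded in \cite[Table 1]{cps}, but with the roles of $a$ and $b$ interchanged to agree with the convention of \eqref{eq:abc}, rewrites each of the eight types in terms of $a,b,c$. For each rewritten presentation one then tests whether $\langle ab,bc\rangle=\langle a,b,c\rangle$; the verification for the type yielding part~(b) can alternatively be extracted from \cite[Section~4]{cosi}. Carrying this out shows that exactly three of the eight types — those appearing as rows $1$, $5$ and $8$ of the table — satisfy the non-orientability condition, and these produce the presentations displayed as $H_1(\ell)$, $H_2(j,k)$ and $H_3(\ell)$, with the parity and congruence constraints on $\ell,j,k$ inherited directly from the almost Sylow-cyclic hypothesis in \cite[Theorem 4.1]{cps}.

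It then remains to confirm the stated structural isomorphisms. In case~(a) the relation $a(bc)^{\ell/2}=1$ forces $a=(bc)^{\ell/2}\in\langle b,c\rangle$, so $H_1(\ell)=\langle b,c\rangle\cong D_{\ell}$. In case~(b) I would check that $\langle a,(ab)^2\rangle$ and $\langle c,(bc)^2\rangle$ are normal dihedral subgroups of orders $2j$ and $2k$; using coprimality of $j$ and $k$ they intersect trivially and centralise one another, giving $H_2(j,k)\cong D_j\times D_k$. In case~(c) the subgroup $\langle a,bab\rangle$ is a normal Klein four-group complemented by $\langle b,c\rangle\cong D_{\ell}$, yielding $H_3(\ell)\cong (C_2\times C_2)\rtimes D_{\ell}$ of order $8\ell$.

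The main obstacle is not conceptual but lies in the bookkeeping: one must perform the generator conversion accurately for all eight types and then verify completeness, namely that no type other than rows $1$, $5$ and $8$ inadvertently satisfies $\langle ab,bc\rangle=\langle a,b,c\rangle$. A secondary point requiring care is the verification of normality and complementation of the claimed factors in cases~(b) and~(c), although these become routine once the converted presentations are explicitly in hand.
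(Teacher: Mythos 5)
Your proposal matches the paper's own treatment essentially step for step: the paper likewise derives the theorem from \cite[Theorem 4.1]{cps} by converting the $x,y,z,t$-presentations to $\{a,b,c\}$-presentations via the substitutions in Table 1 of \cite{cps} (with the roles of $a$ and $b$ reversed), and then checks that exactly rows 1, 5 and 8 of the eight types satisfy the non-orientability criterion $\langle a,b,c\rangle=\langle ab,bc\rangle$, citing \cite[Section 4]{cosi} for the case (b) conversion just as you do. Your added verification of the structural isomorphisms $D_\ell$, $D_j\times D_k$ and $(C_2\times C_2)\rtimes D_\ell$ is a sound and routine supplement to what the paper leaves implicit.
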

\medskip

Observe here that if the order of $H$ is exactly divisible by a power of $3$, say $3^i$ for some $i\ge 1$, then in cases (a) and (b) the group $H$ contains a normal cyclic subgroup of order $3^i$, while in case (c) it contains a normal cyclic subgroup of order $3^{i-1}$ (since $\Aut(C_2 \times C_2) \cong S_3$). This will be relevant later in connection with applications of Lemma \ref{l: soluble 1} when $r=3$.

Taking $H=G/N$, which is generated by $(aN,bN,cN)$, Theorem~\ref{t: ass} gives three possibilities for $G/N$, and we will analyse these one by one. Here we have $G/N=\langle aN,bN,cN\rangle=\langle gN,hN\rangle$, where $g=ab$ and $h=bc$, with $gN$ and $hN$ having respective orders $m'$ and $n'$, which divide $m$ and $n$.
In particular, if $G/N$ is dihedral, of order $2\ell$, then since $gh = ac$ has order $2$, the orders of the generators $gN$ and $hN$ must be $2$ and $\ell$, and so $G/N$ is a $(2,2,\ell)^*$-group, with $\chi_{G/N}=1$, and also $\langle aN,cN\rangle \cong C_2\times C_2$ so $|G/N|$ is divisible by $4$.

We will also use the observation that if $s$, $i$ and $j$ are positive integers with $s > 2$ such that $s^i+1$ is divisible by $s^j-1$, then $s = 3$ and $j = 1$, with $i$ arbitrary; this is easily provable as an exercise.
\medskip

\begin{lem}\label{l: soluble dihedral}
Suppose $G/N$ is the dihedral $(2,2,\ell)^*$-group $D_{\ell}$ of even degree $\ell\ge 2$, as in item {\rm  (a)} of Theorem~{\rm \ref{t: ass}}. Then $r=3$ and $\ell = 1+3^i$ or $3+3^i$ for some $i \ge 0$, so that $\gcd(\ell,r) = 1$ or $3$, and $G$ is a $(2,6,3+3^i)^*$-group. Moreover, $N$ is a $3$-group, of order divisible by $9$ except when  $\ell = 3+3^i$ with $i > 0$, and finally, $-\chi_G = 3^{i-1}|N|$, giving the information in rows {\rm C1} and {\rm C2} of {\rm Table \ref{t: c}}.
\end{lem}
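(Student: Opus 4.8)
The plan is to feed the dihedral quotient into Euler's formula, reduce to a single exponential Diophantine equation, and then apply the elementary divisibility observation stated just before the lemma. First I would fix, up to duality, $m' = \ord(gN) = 2$ and $n' = \ord(hN) = \ell$ in $G/N \cong D_\ell$, using that $G/N$ is a $(2,2,\ell)^*$-group as noted before the lemma. Since $N$ is a normal $r$-subgroup, $m/m'$ and $n/n'$ are powers of $r$, so $m = 2r^\lambda$ and $n = \ell r^\mu$ with $\mu \ge 0$ and $\lambda \ge 1$ (the latter because $m \ge 3$ rules out $m = 2$); I set $|N| = r^c$ and $k = \ell/2$ (recall $\ell$ is even). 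Substituting $|G| = 2\ell r^c$, $m = 2r^\lambda$ and $n = \ell r^\mu$ into \eqref{e: sunny} and cancelling $2\ell$ gives
\[ -\chi_G = \frac{r^c}{2 r^{\lambda+\mu}}\bigl(\ell r^{\lambda+\mu} - 2r^\lambda - \ell r^\mu\bigr), \]
so that $-\chi_G = r^d$ is equivalent to $k\,r^\mu(r^\lambda - 1) - r^\lambda = r^e$ for some integer $e \ge 0$, with $-\chi_G = r^{\,c+e-\lambda-\mu}$.

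Next I would compare $r$-adic valuations in the rewritten equation $k\,r^\mu(r^\lambda - 1) = r^\lambda + r^e$: since $r^\lambda-1$ is prime to $r$, the left side is divisible by exactly $r^\mu\,|k|_r$ and the right by exactly $r^{\min(\lambda,e)}$, so these agree, and dividing through by $r^{\min(\lambda,e)}$ leaves the clean identity
\[ k_0\,(r^\lambda - 1) = 1 + r^{\,|\lambda - e|} \ \ (\lambda \ne e), \qquad k_0\,(r^\lambda - 1) = 2 \ \ (\lambda = e), \]
where $k_0 = k/|k|_r$ is prime to $r$. In the first case $r^\lambda-1$ divides $r^{|\lambda-e|}+1$ with $|\lambda-e|\ge 1$, so the stated observation (that $s^i+1$ divisible by $s^j-1$ with $s>2$ forces $s=3$, $j=1$) gives $r = 3$ and $\lambda = 1$; the second forces $r^\lambda - 1 = 2$, again $r = 3$, $\lambda = 1$. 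Either way $m = 6$ and $N$ is a $3$-group. With $r = 3$, $\lambda = 1$ the equation collapses to $2k\cdot 3^\mu = 3 + 3^e$, whose right side has $3$-valuation at most $1$ while the left has valuation at least $\mu$, so $\mu \in \{0,1\}$. Taking $\mu = 0$ gives $\ell = 2k = 3 + 3^e$ with $n = \ell$ (type $\{6,\ell\}$), which is family C1 with $i = e$; taking $\mu = 1$ gives $\ell = 1 + 3^{e-1}$ with $n = 3\ell$ (type $\{6,3\ell\}$), which is family C2 with $i = e-1$. In both cases $\gcd(\ell,3) \in \{1,3\}$, and direct substitution yields $-\chi_G = 3^{\,i-1}|N|$, matching rows C1 and C2.

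The remaining and most delicate point is the divisibility conditions on $|N|$, which is where I expect the real work to lie. When $i = 0$ (that is, C1 with $\ell = 4$ and all of C2 with $\ell = 2$) one has $-\chi_G = |N|/3$, so the requirement $d \ge 1$ already forces $|N| \ge 9$. The genuinely extra input is needed for C2 with $i \ge 1$: there $\mu = 1$ makes both $m_o = n_o = 3$, so $g^2$ and $h^\ell$ are both nontrivial in $N$, and I would argue that if $|N| = 3$ then $N = \langle g^2\rangle = \langle h^\ell\rangle$, whence both $g$ and $h$ centralise $N$; as the map is non-orientable, $G = \langle g,h\rangle$ would then centralise $N$, i.e. $N \le Z(G)$, contradicting $a(ab)a^{-1} = (ab)^{-1}$, which gives $a g^2 a^{-1} = g^{-2} \ne g^2$. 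Hence $|N| \ge 9$ throughout C2. By contrast, in C1 with $i \ge 1$ only $g^2$ lies nontrivially in $N$, so $h$ need not centralise $N$ and this argument breaks down — which is exactly the stated exception where $|N|$ may be small. Finally I would record that $N$ is a $3$-group (immediate once $r = 3$) and assemble these facts into the statement.
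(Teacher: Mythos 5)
Your proof is correct, and although its Diophantine core is essentially the paper's argument, your handling of the constraints on $|N|$ takes a genuinely different route. On the core: the paper writes $\{m,n\}=\{2r^\alpha,\ell r^\beta\}$ and runs three separate cases ($\alpha<\beta$, $\alpha>\beta$, $\alpha=\beta$), applying the pre-lemma observation about $s^j-1$ dividing $s^i+1$ inside each; your single $r$-adic valuation comparison extracts $r=3$, $\lambda=1$ in one stroke and then reads off $\mu\in\{0,1\}$, which is the same mechanism with tidier bookkeeping (incidentally, your appeal to $m\ge 3$ is not even needed, since $\lambda=0$ would make your equation read $0=1+r^e$). The divergence is in the last step. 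For C1 with $i=0$ the paper consults the census \cite{C600} to exclude $|N|=3$, whereas you note that $-\chi_G=|N|/3$ together with the standing hypothesis $d\ge 1$ forces $|N|\ge 9$ --- cleaner and entirely self-contained. For C2 the paper observes that $3\nmid|G/N|$, so $|N|=3$ would make $G$ almost Sylow-cyclic, and then derives a contradiction from the classification in Theorem \ref{t: ass}; your argument is instead purely local: $|N|=3$ forces $N=\langle g^2\rangle=\langle h^\ell\rangle$, hence $N\le Z(G)$ because $G=\langle g,h\rangle$ by non-orientability, contradicting $ag^2a^{-1}=g^{-2}\ne g^2$. That replaces a classification theorem by an elementary centraliser computation, and it in fact covers all of C2, so your separate treatment of $i=0$ and $i\ge 1$ there is redundant (though harmless). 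What the paper's route buys in exchange is explicit witnesses --- the census maps N5.4 and N29.6, and the order-$72$ group of type $\{6,4\}$ --- showing that the exceptional clause for C1 with $i>0$ really is attained with $|N|=3$ and that $|N|\ge 9$ is sharp; your proof correctly notes that the centraliser argument ``breaks down'' in that case but exhibits no example, which is acceptable for the lemma as stated, since non-emptiness of the families is handled in Section \ref{s: constructions}.
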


\begin{proof}
First, as noted above, the orders of $gN$ and $hN$ in $G/N$ must be $2$ and $\ell$, in some order,
and then since $N$ is an $r$-group (as given by Lemma \ref{l: soluble 1}), the orders of $g$ and $h$ in $G$ are given by $\{m,n\}=\{2r^\alpha, \ell r^\beta\}$ for some integers $\alpha,\beta\ge 0$. Then Euler's formula \eqref{e: sunny} applied to the $(2,m,n)^*$-group $G$ gives
\begin{equation}\label{eq:3i}  r^d = -\chi_G
= 2 \ell |N| \left (\frac{1}{4}- \frac{1}{4r^\alpha} -\frac{1}{2\ell r^\beta} \right )
= \frac{|N|}{r^{\alpha+\beta}} \left(r^\beta \ell (r^\alpha-1)/2-r^\alpha\right).
\end{equation}

Now if $\alpha < \beta$, then $r^\beta$ divides $|N|$ and so \eqref{eq:3i} reduces to $r^i=r^{\beta-\alpha}(r^\alpha-1)\ell /2-1$ for some $i\ge 0$, which is impossible if $r^\alpha = 1$, so $\alpha > 0$, and then the situation is still impossible since $r^{\beta-\alpha}(r^\alpha-1)\ell /2-1$ cannot be a power of $r$. Hence $\alpha \ge \beta$.

\smallskip
Next, suppose $\alpha > \beta$. Then $r^\alpha$ divides $|N|$ and so \eqref{eq:3i} reduces to $r^i=(r^\alpha-1)\ell /2-r^{\alpha-\beta}$ for some $i\ge 0$, and letting $\ell = 2\lambda$ and $\gamma =\alpha-\beta$, this implies
\begin{equation}\label{eq:3ii} \lambda(r^\alpha -1)= r^i + r^\gamma. \end{equation}

If $\beta > 0$, then $\alpha > \gamma > 0$, so $\alpha \ge 2$,
and $\lambda(r^\alpha -1) \ge r^\alpha-1 > 1+r^\gamma$ and hence $i \ge 1$,
and also $\lambda(r^\alpha -1)= r^i + r^\gamma$ is divisible by $r^u$ where $u = \min(i,\gamma)$.
Moreover, letting $v = \max(i,\gamma)$, we find that $(\lambda/r^u) (r^\alpha -1) = 1+r^{v-u}$,
so $1+r^{v-u}$ is divisible by $r^\alpha -1$. Hence if $v > u$ then our earlier observation implies that $r = 3$ and $\alpha = 1$, or otherwise $v = u$ and then $r^\alpha -1$ divides $2$ and the same conclusion follows,
in both cases contradicting the fact that $\alpha \ge 2$.

Hence if $\alpha > \beta$ then $\beta = 0$, and $\gamma =\alpha > 0$.
Now condition \eqref{eq:3ii} gives $\lambda-1 = (r^i+1)/(r^\alpha-1)$, so $r^\alpha-1$ divides $r^i+1$,
and again this implies that $r = 3$ and $\alpha = 1$ (with $i$ arbitrary), and in that case $\lambda-1 = (r^i+1)/2$,  so $\ell =  2\lambda = r^i+1+ 2 = 3+3^i$.

Moreover, if $i = 0$ then $\ell = 4$, and this gives a non-orientable regular map of type $\{2r^\alpha,\ell r^\beta\} = \{6,4\}$ with automorphism group a $(2,6,4)^*$-group $G\cong N{\cdot}D_4$, of order $8{\cdot}3^t$ for some $t\ge \alpha=1$.  There is no such map with $t=1$ in the list at \cite{C600}, so $t$ must be at least $2$, and hence $|N|$ is divisible by $9$ in that case.
Indeed the smallest soluble $(2,6,4)^*$-group $G$ in the case $\chi = 3^d$ for some $d$ is an extension of $(C_3)^2$ by $D_4$, by \cite[Theorem 2.2]{bns}), with $|G| = 72$, and $t=2$.

On the other hand, if $i > 0$ the group $G$ is a $(2,6,\ell)^*$-group with $\ell = 3+3^i$ and $|N|$ need not be divisible by 9, as shown by the non-orientable regular map N5.4 of type $\{6,6\}$, with $\ell = 6$ and $|G| = 36$, and the non-orientable regular map N29.6 of type $\{6,30\}$, with $\ell = 30$ and $|G| = 180$, in both cases with $|N| = |G|/(2\ell) = 3$.

In both cases it is easy to verify that $-\chi_G=3^{i-1}|N|$, and so we obtain row C1 of Table~\ref{t: c}.

\smallskip
Instead, suppose that $\alpha=\beta \ge 0$.  Then with $\lambda=\ell/2$ again, \eqref{eq:3i} reduces to $r^i = \lambda(r^\alpha-1)-1$, which implies that $\alpha > 0$, and then once again $r=3$ and $\alpha=1$, so that $3^i = 2\lambda -1 $ and hence $\ell = 2\lambda = 1+ 3^i$. This gives $\{m,n\} = \{2r, \ell r \} = \{6,3\ell\} = \{6,3+3^{i+1}\}$, and $G\cong N{\cdot}D_\ell$ as before but here $|G/N| = 2\ell = 2(1+3^i)$ is not divisible by $3$.
Accordingly, $|N|$ cannot be $3$, for otherwise $|G|_r = |G|_3 = 3$, so $G$ would be almost Sylow-cyclic, and as  $\{m,n\} = \{6,3+3^{i}\}$ with $i\ge 0$, this can only happen for an almost Sylow-cyclic group covered by case (b) of Theorem \ref{t: ass}, but in that case having type $\{2j,2k\}$ requires oddness and relative primality of $j$ and $k$, which is impossible since either $\{m,n\} = \{6,4\}$ or $\gcd(m,n) = \gcd(3,3+3^{i}) = 6$.  Hence $|N|$ is divisible by $9$.

Finally, in this case (where $(\alpha,\beta) = (1,1)$ and $\ell = 1 + 3^i$), it is again easy to verify that $-\chi_G=3^{i-1}|N|$, and so we obtain row C2 of Table~\ref{t: c}.
Note that the case $i=0$ is somewhat exceptional, giving non-orientable regular maps of type $\{6,4\}$ with automorphism group $G = N \rtimes D_4$, with $-\chi_G = |N|/3 = 3^{-1}|N|$.
\end{proof}
\smallskip

Before proceeding, we observe that in some cases, the same group $G$ can satisfy the conditions for both C1 and C2 in Lemma~\ref{l: soluble dihedral}, for the simple reason that a type of the form $\{6,n\}$ with $n$ divisible by $3$ could in some cases be both $\{6,\ell\}$ with $\ell = 3+3^i$ and $\{6,3\ell\}$ with $\ell = 1+3^{i-1}$.
For example, the map N5.4 of type $\{6,6\}$  in the list at \cite{C600} has automorphism group of order $36$, and this group $G$ has both a normal subgroup $N$ of order $3$ with $G/N \cong D_6$, and a normal subgroup $N$ of order $9$ (isomorphic to $C_3\times C_3$) with $G/N \cong D_2$.  The same kind of thing holds for the maps N11.2, N29.2, N29.3, N29.6, N83.2, N83.3 and N83.4 given in the split table in Corollary~\ref{c: main}.
Similarly, as we will see later, such maps can also satisfy the conditions for C4 when $r = 3$ and $j =k = 1$.

\medskip

\begin{lem}\label{l: soluble G2lk}
Suppose $G/N$ is $D_j\times D_k$, of order $4jk$ where $j$ and $k$ are odd and coprime, expressed as a $(2,2j,2k)^*$-group, as in case {\rm (b)} of Theorem~{\rm \ref{t: ass}}. Then up to duality, $G$ is a $(2, 2jr^\alpha, 2kr^\beta)^*$-group for some $\alpha\ge \beta$, with $r\equiv 3$ mod $4$.
Moreover,  either $jk$ is not divisible by $3$, or just one of $j$ and $k$ is exactly divisible by $3$,
and $(jr^\alpha-1)(kr^\beta-1) = r^i$ for suitable $i$ such that $i+\beta$ is odd. Also $d\ge 5$ if $r>3$ and $\alpha\ge 1$. Further details are as given in rows {\rm C3} and {\rm C4} of Table~{\rm \ref{t: c}}.
\end{lem}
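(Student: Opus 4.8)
The plan is to pin down the orders, feed them into Euler's formula, and then split the analysis according to whether the $r$-part of the type is trivial (giving row C3) or not (giving row C4). First I would record the orders: since $N$ is an $r$-group and the images $gN=abN$, $hN=bcN$ have orders $2j$, $2k$ in $G/N\cong D_j\times D_k$, the orders of $g$ and $h$ in $G$ are $2jr^\alpha$ and $2kr^\beta$ for some $\alpha,\beta\ge 0$; up to the duality swapping $g\leftrightarrow h$ I may assume $\alpha\ge\beta$. As $g^{2j}\in N$ has order $r^\alpha$ (and likewise $h^{2k}\in N$ has order $r^\beta$), both $r^\alpha$ and $r^\beta$ divide $|N|$. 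The divisibility‑by‑$3$ statement is then immediate from Lemma~\ref{l: soluble 1}: since $|G/N|=4jk$ we have $|jk|_3=|G/N|_3\in\{1,3\}$, and as $\gcd(j,k)=1$ this means either $3\nmid jk$ or exactly one of $j,k$ is divisible by $3$ and only to the first power.

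Next I would substitute $m=2jr^\alpha$, $n=2kr^\beta$ and $|G|=4jk\,|N|$ into \eqref{e: sunny}, obtaining
\[
-\chi_G=\frac{|N|}{r^{\alpha}}\bigl(jkr^{\alpha}-jr^{\alpha-\beta}-k\bigr)=u\,v,\qquad u=\frac{|N|}{r^{\alpha}},\quad v=jkr^{\alpha}-jr^{\alpha-\beta}-k .
\]
Here $u$ is a power of $r$, so $-\chi_G=r^d$ forces the positive integer $v$ to be a power of $r$, say $v=r^{i}$; this is precisely the expression for $-\chi_G$ recorded in rows C3 ($\alpha=\beta=0$) and C4 ($\alpha\ge 1$). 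For the arithmetic conditions I would use the identity $(jr^\alpha-1)(kr^\beta-1)=vr^{\beta}+1=r^{\,i+\beta}+1$. Because $j,k$ are odd and $r$ is odd, both factors on the left are even, so the product is divisible by $4$; hence $r^{\,i+\beta}\equiv 3\pmod 4$, which forces $r\equiv 3\pmod 4$ and $i+\beta$ odd.

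It then remains to separate C3 from C4 and to prove the bounds in C4. If $N$ is cyclic then (for $r>3$, where $|G|_r=|N|$) every Sylow subgroup of $G$ is cyclic apart from the Sylow $2$-subgroup $V_4$, so $G$ is almost Sylow-cyclic and, after absorbing the $r$-part into $j$, the map already appears as a C3 example; thus in a genuine C4 situation $N$ is non-cyclic. Applying Lemma~\ref{l: vector block} to the normal $r$-subgroup $N$ (with $|G|_r=|N|$ and $|\ord(g)|_r=r^\alpha$) then gives $|N|\ge r^{\alpha+k-1}\ge r^{\alpha+1}$, where $k\ge 2$ is the Frattini rank of $N$. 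This establishes the requirement $|N|\ge r^{\alpha+1}$ of row C4.

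The hard part will be the bound $d\ge 5$ when $r>3$ (so $r\ge 7$) and $\alpha\ge 1$; this is a Diophantine analysis of $v=r^i$ combined with the parity constraint. I would first rule out $\beta=0$ and $\alpha>\beta$ by crude size estimates (using $2jk\ge j+k$ and $r^\alpha\ge r\ge 7$), leaving $\alpha=\beta\ge 1$; comparing $r$-adic valuations, together with a size estimate to exclude $v=1$, then forces $i\ge 1$ and $r\mid j+k$, and in fact $r^\alpha\,\|\,j+k$. Writing $j+k=r^\alpha w$ (with $w$ even, since $j,k$ are odd) and $jk=w+r^{\,i-\alpha}$, the elementary inequality $jk\ge j+k-1$ yields $r^{\,i-1}\ge w(r-1)-1\ge 2r-3>r$, whence $i\ge 3$ when $\alpha=1$; the parity condition $i+\alpha$ odd then upgrades this to $i\ge 4$, while for $\alpha\ge 2$ the estimate $v\ge r^{\alpha}(r^{\alpha}-2)>r^{2\alpha-1}$ already gives $i\ge 2\alpha\ge 4$. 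Combining $i\ge 4$ with $c-\alpha\ge 1$ (from $|N|=r^{c}\ge r^{\alpha+1}$) gives $d=(c-\alpha)+i\ge 5$. I expect this final step---balancing the module-theoretic lower bound on $|N|$ against the Diophantine lower bound on $i$, and invoking the parity to close the residual $\alpha=1$ case---to be the crux of the argument.
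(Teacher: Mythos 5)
Your structural setup---the orders $m=2jr^\alpha$, $n=2kr^\beta$ with $r^\alpha, r^\beta$ dividing $|N|$, the identity $(jr^\alpha-1)(kr^\beta-1)=r^{i+\beta}+1$, the mod-$4$ deduction that $r\equiv 3\pmod 4$ with $i+\beta$ odd, and the divisibility-by-$3$ claim via Lemma~\ref{l: soluble 1}---matches the paper. The gap is in the step you yourself call the crux, and it is concrete: the cases $\beta=0$ and $\alpha>\beta$ \emph{cannot} be ruled out by size estimates. The numerical system you have at that point (odd coprime $j,k$, the factorization identity, the parity condition) admits genuine solutions with $\alpha>\beta$: for example $r=11$, $j=5$, $\alpha=1$, $\beta=0$, $i=9$, $k=(11^{9}+55)/54$, a family the paper itself exhibits in the remarks immediately following this lemma (together with analogous $r=19$ families). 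No inequality of the form $2jk\ge j+k$ can eliminate these, since $j,k$ are unbounded. What excludes them in the present setting is arithmetic, not size: the subgroup $N$ is the one furnished by Lemma~\ref{l: soluble 1}, so $|G/N|_r=1$ when $r>3$, i.e.\ $r\nmid jk$; reducing the identity modulo $r$ (for $\beta=0$, $\alpha\ge 1$, $i\ge 1$, noting $i=0$ is impossible because the left side is divisible by $4$), or comparing $r$-adic valuations when $\alpha>\beta\ge 1$, forces $r^{\alpha-\beta}\mid k$, a contradiction. You never invoke $r\nmid jk$, so your reduction to $\alpha=\beta$ is unsupported. (The paper takes a different route entirely: it never discards $\alpha>\beta$, but proves lower bounds on $i$ in all cases $\beta\le\alpha$, via its facts (b)--(e), and kills the residual case $(\alpha,\beta,i)=(1,0,3)$ with a quadratic-discriminant argument.)

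A second, related problem: the dichotomy ``cyclic $N$ $\Rightarrow$ really a C3 example'' does not prove what the lemma asserts. $N$ is not adjustable; it is (essentially) $O_r(G)$, so if $G\cong D_{jr^\alpha}\times D_k$ with $\alpha\ge 1$ and $r\nmid k$, then $N=C_{r^\alpha}\ne 1$ and the type $\{2jr^\alpha,2k\}$ differs from the parameters $\{2j,2k\}$ of $G/N$; by the definitions underlying Table~\ref{t: c} this is a C4 configuration, for which the lemma claims $|N|\ge r^{\alpha+1}$ and $d\ge 5$. So you must show such groups cannot occur, not relabel them. (They cannot: coprime $j_1=jr^\alpha$ and $k_1=k$ with $(j_1-1)(k_1-1)=r^d+1$ and $d\ge 1$ give, modulo $r$, $k_1\equiv 0$, contradicting $\gcd(j_1,k_1)=1$; this is the paper's fact (a), reached there through the almost Sylow-cyclic classification.) Once both points are repaired---and both come down to the same mod-$r$ observation combined with $r\nmid jk$---your $\alpha=\beta$ analysis ($r^\alpha\,\|\,j+k$, then $jk=w+r^{i-\alpha}$, then $i\ge 4$ by size plus parity, hence $d\ge 5$ using $|N|\ge r^{\alpha+1}$) is sound, and is arguably tidier than the paper's chain of estimates.
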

\smallskip

\begin{proof}
First, $G$ is a $(2, 2jr^\alpha, 2kr^\beta)^*$-group for integers $\alpha$ and $\beta\ge 0$ as given, since $|N|$ is an $r$-group, and then Euler's formula \eqref{e: sunny} gives ...
\begin{equation}\label{eq:jkrd}  r^d = -\chi_G
= 4jk|N| \left (\frac{1}{4}- \frac{1}{4j r^\alpha} -\frac{1}{4k r^\beta} \right )
= \frac{|N|}{r^{\alpha+\beta}} \left(jk r^{\alpha+\beta} - jr^\alpha - kr^\beta \right).
\end{equation}
By interchanging $j$ and $k$ if necessary, we may suppose  $\alpha\ge \beta$, in which case
$jk r^{\alpha+\beta} - jr^\alpha - kr^\beta$ is divisible by $r^\beta$, and so $|N|$ is divisible by $r^\alpha$,
and
\begin{equation}\label{eq:jkrd2} (jr^\alpha-1)(kr^\beta-1) = r^{i+\beta}+1\end{equation}
for some $i\ge 0$.
Then since both $jr^\alpha-1$ and $kr^\beta-1$ are even, we find that $r^{i+\beta}+1 = (jr^\alpha-1)(kr^\beta-1)$ is divisible by $4$, so $r^{i+\beta} \equiv 3$ mod $4$, and hence $r\equiv 3$ mod $4$, and $i+\beta$ is odd.

\smallskip
Now if $\alpha=0$, then also $\beta=0$, and we find the entries in row C3 of Table~\ref{t: c}, occurring for every prime $r\equiv 3$ mod $4$ and every odd integer $i\ge 1$ such that $r^i+1$ admits a factorisation of the form $(j-1)(k-1)$ for odd and coprime $j,k\ge 3$. For the rest of the proof, we will suppose $\alpha\ge 1$.

\smallskip
We proceed by considering the case $j=1$, for which \eqref{eq:jkrd2} becomes $(r^\alpha-1)(kr^\beta-1) = r^{i+\beta}+1$, so that $r^{i+\beta}+1$ is divisible by $r^\alpha-1$, and hence $r=3$ and $\alpha=1$ (as before), and $\beta = 0$ or $1$, and also $2(kr^\beta-1) = r^{i+\beta}+1$. Thus $2jr^\alpha = 6$, while $2kr^\beta = r^{i+\beta}+3 = 3^i + 3$ or $3^{i+1}+3$. In particular, this gives us the same types as encountered in Lemma \ref{l: soluble dihedral}, but that is not surprising, for the fact that $k$ is odd implies that $D_j \times D_k \cong D_{2k}$ and hence in this case $G \cong N{\cdot}D_\ell$ where $\ell = 2k$ is even.

\smallskip
Now supposing that both $\alpha \ge 1$ and $j > 1$, we can prove the following five facts:
\,(a)\, if $r>3$ then $r^{\alpha+1}$ divides $|N|$ and hence $d > i$,
\,(b)\, $i > \alpha$,
\,(c)\, if $r>3$ then $i \ge 3$,
\,(d)\, $i \ge 2\alpha-\beta+1$, and
\,(e)\, $i \ge \alpha+\beta+1$ if $i\ge 3$ and $\beta\ge 1$.

\smallskip
For (a), if $N$ is cyclic, then by Lemma \ref{l: soluble 1} the $(2,m,n)^*$-group $G$ is almost Sylow-cyclic and so it must be of type (b) in Theorem \ref{t: ass}, with  $\{m,n\} = \{2j,2k\}$ for odd and coprime $j,k \ge 3$, but then $\alpha=\beta=0$, a contradiction. Hence $N$ is not cyclic.  On the other hand, $G$ contains an element of order $2jr^\alpha$ and hence a cyclic subgroup of order $r^\alpha$,  so $|N|$ must be divisible by $r^{\alpha+1}$,
and then because we know from \eqref{eq:jkrd} and \eqref{eq:jkrd2} that $r^d = (|N|/r^\alpha)\,r^i$, it follows that $d > i$.

Next, for (b), as $j\ge 3$ and $k \ge 1$ we have  $jr^\alpha-1 \ge 2r^\alpha$ and $kr^\beta-1 \ge r^\beta-1$. Also if $\beta>0$ then $2r^\alpha(r^\beta-1) = r^{\alpha+ \beta} + r^\alpha(r^\beta -2) > r^{\alpha+ \beta}+1$, and then it follows easily from \eqref{eq:jkrd2} that $r^{i+\beta}+1= (jr^\alpha-1) (kr^\beta-1)\ge 2r^\alpha(r^\beta-1) > r^{\alpha+ \beta}+1$, and hence that $i >\alpha$. On the other hand, if $\beta=0$, then by \eqref{eq:jkrd2} we find $(jr^\alpha -1) (k-1) = r^i+1$, implying that $jr^\alpha \ge 3$ and $k\ge 3$ (since both are odd), but then $r^i+1 \ge (3r^\alpha-1){\cdot} 2 > r^\alpha+1$ and so again $i>\alpha$.

For (c), note that $i > \alpha \ge 1$ and hence $i \ge 2$.  But if $i = 2$, then $\alpha = 1$ and so $\beta \le 1$,
but since $i+\beta$ is odd (as proved much earlier), it follows that $\beta = 1$.  So now  \eqref{eq:jkrd2}  gives
$r^3 +1 = (jr-1)(kr-1)$, and then rearranging and dividing by $r$ gives $r^2 - jkr + (j+k) = 0$, which is symmetric in $j$ and $k$.
If $j = 1$ then $r^2 - kr + (k+1) = 0$, so $k+1$ is divisible by $r$ and hence $k = tr-1$ for some integer $t \ge 1$,
but then $0 = r^2 - (tr-1)r + tr$, so $k(r-1) = r^2 + 1$ and therefore $k = (r^2+1)/(r-1) = r+1 + 2/(r-1)$.  This implies that $2/(r-1)$ is an integer, so $r-1 \in \{\pm 1, \pm 2\}$, which is impossible since $r > 3$.  Hence $j > 1$, and the analogous argument shows that $k > 1$. Then since $j$ and $k$ are odd and distinct, at least one of them is greater than $3$, so $(j-1)(k-1) > 2$, and therefore $jk > j+k+1$.
Next, the discriminant of $r^2 - jkr + (j+k)$ as a quadratic in $r$ is the odd integer $D = (jk)^2 - 4(j+k)$, which is less than $(jk)^2$, and is also greater than $(jk-2)^2 = (jk)^2-4jk+4$ since $4jk > 4(j+k)+4$.
Then since $D$ must be a perfect square in order for $r^2 - jkr + (j+k) = 0$ to have a solution, it follows that $D = (jk-1)^2$, which is even, a contradiction. Thus $i \ge 3$ (when $r > 3$).

Next, for (d) we note that \eqref{eq:jkrd2} gives $r^{i+\beta} = jk r^{\alpha+\beta} - jr^\alpha - kr^\beta$,
and then since $i > \alpha$ (by (b)), it follows that $r^\alpha$ divides $jk r^{\alpha+\beta} - jr^\alpha - kr^\beta$
and hence divides $kr^\beta$, so that $k = r^{\alpha- \beta}\kappa$ for some odd $\kappa\ge 1$,
and therefore \eqref{eq:jkrd2} reduces to $(jr^\alpha-1)(\kappa r^\alpha-1) = r^{i+\beta}+1$.
Then since $jr^\alpha-1\ge 2r^\alpha$ and $\kappa r^\alpha-1 \ge r^\alpha-1$, the last equation in the previous sentence gives $ r^{i+\beta}+1 \ge 2r^\alpha(r^\alpha-1) > r^{2\alpha}+1$ and hence $i+\beta > 2\alpha$,
so $i > 2\alpha-\beta$ and therefore .$i \ge 2\alpha-\beta+1$.

Finally, (e) suppose that $i\ge 3$ and $\beta\ge 1$.
Rearranging \eqref{eq:jkrd2} gives $jr^\alpha + kr^\beta = jkr^{\alpha+\beta}-r^{i+\beta}$,
and dividing through by $r^\beta$ gives $r^{\alpha-\beta}+k = jkr^{\alpha}-r^i$, which equals $tr^\alpha$
for some $t$ (as $i>\alpha$), and this must be even, so $t \ge 2$.
In particular, $r^{\alpha-\beta}+k = tr^\alpha \ge 2r^\alpha$, so $k \ge 2r^\alpha - r^{\alpha-\beta} > r^\alpha$
and hence $k > 1$.
Also dividing $jkr^{\alpha}-r^i = tr^\alpha$ by $r^\beta$ gives $jkr^{\alpha-\beta} - r^{i-\beta} = tr^{\alpha-\beta}$,
which implies that
\[
2 < (jr^{\alpha-\beta}-1)(k-1) = jkr^{\alpha-\beta} - jr^{\alpha-\beta}-k+1 <  jkr^{\alpha-\beta} - (r^{\alpha-\beta}+k) +1 = (tr^{\alpha-\beta} +  r^{i-\beta}) - tr^\alpha + 1
\]
and so $tr^\alpha - tr^{\alpha-\beta} < r^{i-\beta}-1$.
But also $tr^\alpha - tr^{\alpha-\beta} = t(r^\alpha - r^{\alpha-\beta}) \ge 2(r^\alpha - r^{\alpha-\beta}) > r^\alpha$,
and so we find that $r^\alpha <  tr^\alpha - tr^{\alpha-\beta} < r^{i-\beta}-1 < r^{i-\beta}$, so $\alpha < i-\beta$,
and therefore $i \ge \alpha+\beta+1$.

\smallskip
These facts enable us to prove that $i \ge 4$ and $d\ge 5$ when $r>3$ and $\alpha\ge 1$.

So suppose that $r>3$ and $\alpha\ge 1$.
Then by (c) and (d) we have $i \ge 3$ and $i \ge 2\alpha-\beta+1$, and  hence by (e) also $i \ge \alpha+\beta+1$ if $\beta\ge 1$. So now if $\alpha\ge 3$, then $i \ge 7$ when $\beta = 0$, or $i \ge 5$ when $\beta \ge 1$, but also we know (from earlier) that $i+\beta$ is odd, and so  $i \ge 6$ when $\beta \ge 1$.
In particular, $i \ge 6$ when $\alpha \ge 3$.  Similarly, $i \ge 4$ when $\alpha = 2$.
When $\alpha=1$, however, we find that $i \ge 4$ when $\beta \ge 1$, but only $i \ge 3$ when $\beta = 0$.

We can eliminate the latter case (where $\alpha=1$, $\beta=0$ and $i = 3$) by a similar argument to the one
used in the proof of (c) above.  In this case \eqref{eq:jkrd2}  gives $r^3 +1 = (jr-1)(kr-1)$, and rearranging gives $r^3 - jkr + jr +k = 0$.  Thus $k = tr$ for some odd positive integer $t$, which divides $k$ and hence is coprime to $j$, so that $(j-1)(t-1) > 2$.  This time we obtain the quadratic equation $0 = r^2 - jtr + (j+t)$, which is the same as in (c) but with $t$ in place of $k$, and we reach a similar contradiction.

Therefore $i \ge 4$ in all cases, and by (a), it follows that  $d\ge 1+i\ge 5$, as claimed.

\smallskip
Next, we recall from Lemma \ref{l: soluble 1} that  $|G/N|_3 = 1$ or $3$, which implies that either
$j$ and $k$ are both not divisible by $3$, or that just one of them is exactly divisible by $3$.

\smallskip
The details in rows C3 and C4 of Table~\ref{t: c} follow from what we have shown above.
\end{proof}
\medskip

Here we make some additional observations about the case where $\beta < \alpha$.
In the simplest instance, $\alpha=1$ and $\beta=0$, and there are infinitely many examples of quadruples $(i,j,k)$ satisfying \eqref{eq:jkrd2} for each $r\in \{3,11,19\}$.
When $r=3$, we may take $k = 3$ and any odd $i\ge 3$, and set $j=(3^{i-1}+1)/2$, and obtain non-orientable regular maps with type $\{2j{\cdot}3,2k\} = \{3^i+3,6\}$, the duals of which appeared also in Lemma \ref{l: soluble dihedral}.
When $r=11$, we may take $j = 5$ and $i=9+18u$ for any integer $u\ge 0$, and set $k=(11^{9+18u}+55)/54$, giving maps with type $\{2j{\cdot 11},2k\}$.  (The smallest such $k$ is $43365699$, arising when $u=0$, and gives a map of type $\{110,87331398\}$.)
When $r=19$, we can do with the same with $j = 21$ and $i = 9 + 18u$ for $u\ge 0$, this time setting $k = (19^{9+18u}+399)/398$, and giving maps with type $\{2j{\cdot 19},2k\}$.  (The smallest example, occurring for $u=0$, has type $\{798,1621546220\}$.)
Note that $3\mid k$ when $r\in \{3,11\}$, and $r\mid k$ when $r\in \{11,19\}$.
\medskip

\begin{lem}\label{l: soluble G3k}
Suppose $G/N$ is $(C_2\times C_2)\rtimes D_{\ell}$ for some $\ell \equiv 3$ mod $6$, expressed as a $(2,4,\ell)^*$-group, as in case {\rm (c)} of Theorem~{\rm \ref{t: ass}}. Then $G$ is a $(2, 4r^\alpha, \ell r^\beta)^*$-group for some integers $\alpha\ge\beta\ge 0$, and $\ell$ is odd and exactly divisible by $3$, and either $r=3$ and $\alpha > \beta$, or $r\equiv 5$ mod $6$.  Further details are as given in one of rows {\rm C5} to {\rm C7} of {\rm Table~\ref{t: c}}.
Moreover, if $|N| > 1$, then $\alpha\ge 1$, and $|N|\ge r^{\alpha+1}$, and $d > \alpha^2\log_2r-\alpha-\beta +1$,
which is the bound in row {\rm C7} of Table~{\rm \ref{t: c}}. Also if $r=3$ then $d\ge 3$ when $(\alpha,\beta) =(1,0)$, while $d\ge 5$ in all the remaining cases where $r=3$ or $r\equiv 5$ mod $6$.
\end{lem}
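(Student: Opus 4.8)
The plan is to run the Euler-formula-and-divisibility machine already used for the dihedral quotients in Lemmas~\ref{l: soluble dihedral} and~\ref{l: soluble G2lk}, now with $G/N\cong (C_2\times C_2)\rtimes D_\ell$. First I would record the element orders. Since $N$ is a normal $r$-subgroup and $G/N$ is a $(2,4,\ell)^*$-group, the canonical elements $g=ab$ and $h=bc$ satisfy $\ord(gN)=4$ and $\ord(hN)=\ell$, and as $r$ is odd these lift to $m=\ord(g)=4r^\alpha$ and $n=\ord(h)=\ell r^\beta$ for some $\alpha,\beta\ge 0$. Substituting $m$, $n$ and $|G|=8\ell|N|$ into \eqref{e: sunny} gives the master identity
\[
r^d=\frac{|N|}{r^{\alpha+\beta}}\bigl(2\ell r^{\alpha+\beta}-\ell r^\beta-4r^\alpha\bigr)=\frac{|N|}{r^\alpha}\bigl(2\ell r^\alpha-\ell-4r^{\alpha-\beta}\bigr),
\]
which is exactly the $-\chi_G$ entry of rows C5--C7. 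That $\ell$ is odd and exactly divisible by $3$ is immediate: $\ell\equiv 3\pmod 6$ makes $\ell$ odd and divisible by $3$, while $|G/N|_3\le 3$ from Lemma~\ref{l: soluble 1} forbids $9\mid\ell$.

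The arithmetic trichotomy then drives everything; write $|N|=r^c$. Ruling out $\alpha<\beta$ first: factoring $r^\alpha$ out of the first bracket leaves $2\ell r^\beta-\ell r^{\beta-\alpha}-4\equiv-4\pmod r$, which is prime to the odd $r$ and so would have to equal $1$, impossible by a crude size estimate; hence $\alpha\ge\beta$, and the identity becomes $r^d=r^{c-\alpha}F$ with $F=2\ell r^\alpha-\ell-4r^{\alpha-\beta}$, forcing $F$ to be a power of $r$. If $\alpha=\beta$, then $F=\ell(2r^\alpha-1)-4\equiv 2\pmod 3$ since $3\mid\ell$; a power of $r$ congruent to $2$ modulo $3$ forces $r\equiv 5\pmod 6$ (with $\alpha=\beta=0$ giving row C5 and $\alpha=\beta\ge 1$ giving row C7). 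If instead $\alpha>\beta$, then $F\equiv-\ell\pmod r$ and $F\ne 1$ by size, so $r\mid\ell$; as $\ell$ is exactly divisible by $3$ while $|G/N|_r=1$ for $r>3$ by Lemma~\ref{l: soluble 1}, this forces $r=3$, giving row C6. This is precisely the dichotomy ``$r=3$ and $\alpha>\beta$, or $r\equiv 5\pmod 6$''.

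For the bounds I would follow Lemma~\ref{l: soluble G2lk}. If $N$ were cyclic then, as $G/N$ is almost Sylow-cyclic (Lemma~\ref{l: soluble 1}) and $N$ is a cyclic normal $r$-subgroup carrying the whole Sylow $r$-subgroup of $G$, the group $G$ would itself be almost Sylow-cyclic, hence of one of the three types of Theorem~\ref{t: ass}; only type~(c) has type $\{4,\ell\}$, and it has order $8\ell$, so $N=1$. Thus $N\ne 1$ forces $N$ non-cyclic, and when $\alpha\ge 1$ the subgroup $\langle g^4\rangle\le N$ of order $r^\alpha$ then shows $|N|\ge r^{\alpha+1}$. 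For the lower bound on $d$ I would mimic the number-theoretic argument of Lemma~\ref{l: case e}: when $\alpha=\beta$, writing $F=r^e$ turns the master identity into $\ell(2r^\alpha-1)=r^e+4$, so $D:=2r^\alpha-1$ divides $r^e+4$; setting $e=\alpha k+v$ with $0\le v<\alpha$ and using $r^\alpha\equiv 2^{-1}\pmod D$ reduces this to $D\mid r^v+2^{k+2}$, whence $2^{k+2}\ge D-r^v>r^\alpha-1$, so $k\ge\alpha\log_2 r-2$ and $e\ge\alpha^2\log_2 r-2\alpha$. Together with $c\ge\alpha+1$ and $d=c+e-\alpha$ this gives $d>\alpha^2\log_2 r-\alpha-\beta+1$, the bound of row C7.

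The delicate points, and where I expect the real work to lie, are twofold. First, the claim that $|N|>1$ forces $\alpha\ge 1$ (equivalently, that $\alpha=\beta=0$ forces $N=1$) does not follow from the cyclic-versus-non-cyclic dichotomy alone; ruling out a non-cyclic $N$ when both $g$ and $h$ have $r'$-order should require exploiting the involution relations $a^2=b^2=c^2=(ac)^2=1$ together with the faithful action of $G$ on $N/\Phi(N)$. Second, the sharp small-exponent bounds $d\ge 3$ for $(\alpha,\beta)=(1,0)$ and $d\ge 5$ otherwise are not reachable from $|N|\ge r^{\alpha+1}$; they need a minimal-faithful-dimension estimate for the action on $N/\Phi(N)$, precisely analogous to Lemmas~\ref{l: covers of K} and~\ref{l: fstar lower bound} but for the soluble quotient $G/O(G)\cong S_4$ (which arises here because $3\mid\ell$). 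Since $S_4$ has no faithful $\mathbb{F}_r$-representation of dimension below $3$, one obtains $N/\Phi(N)$ of rank at least $3$ and hence $c\ge 3$; combining this with the divisibility-driven lower bound on $e$ in $d=c+e-\alpha$ should yield the stated inequalities. The remaining task, confirming that C5, C6 and C7 are each realised with exactly the listed constraints, is bookkeeping once these estimates are in hand.
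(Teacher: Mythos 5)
Your proposal reproduces the paper's skeleton faithfully: the same Euler identity \eqref{eq:Eu(c)}, the same elimination of $\alpha<\beta$, an equivalent route to the trichotomy (the paper solves for $\ell = r^{\alpha-\beta}(4+r^\delta)/(2r^\alpha-1)$ as in \eqref{eq:alpbet} and works modulo $6$, you reduce the bracket modulo $3$ and modulo $r$; your sharper observation that $\alpha>\beta$ forces $r=3$ is correct, since $r\mid\ell$ is impossible for $r>3$ by Lemma \ref{l: soluble 1}), the same cyclic-$N$/almost-Sylow-cyclic contradiction giving $|N|\ge r^{\alpha+1}$, and the same power-of-two division trick for the logarithmic bound. (One caveat, shared with the paper's own wording: your justification that ``$N$ carries the whole Sylow $r$-subgroup of $G$'' is false when $r=3$, because $3\mid\ell$; both arguments need extra care in that case.) The genuine gaps are at the end. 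First, you derive $d>\alpha^2\log_2 r-\alpha-\beta+1$ only for $\alpha=\beta$, whereas the lemma asserts it also in family C6 ($r=3$, $\alpha>\beta$); the extension is routine (factor $r^{\alpha-\beta}$ out of $\ell$ and run the same division on $\delta=\gamma-(\alpha-\beta)$, exactly as the paper does), but it is missing. Second, and more seriously, the sharp bounds $d\ge3$ for $r=3$, $(\alpha,\beta)=(1,0)$ and $d\ge5$ in the remaining cases are only speculated, not proved. The paper proves them by explicit elimination: $d\le4$ forces $\gamma\le3$, hence $\delta\le3$, hence (via $2r^\alpha-1\le r^\eta+2^{t+2}$) $r\in\{5,11,17\}$ when $r\equiv5$ mod $6$ and $\alpha=1$, and each candidate is killed because $r^{\alpha-\beta}(r^\delta+4)/(2r^\alpha-1)$ fails to be an integer multiple of $3$; for $r=3$, $(\alpha,\beta)=(1,0)$, $\delta=0$ (so $\ell=3$, type $\{12,3\}$, $-\chi_G=|N|$) it appeals to the census \cite{C600} at characteristic $-3^2$. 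Your $S_4$-representation substitute contains an unjustified step --- nothing forces $C_G(N/\Phi(N))=N$, i.e.\ the image of $G/N$ in $\Aut(N/\Phi(N))$ could be a proper quotient of $S_4$ --- and even granting $|N|\ge r^3$ you would still need those same small-$\delta$ divisibility checks to climb from $d\ge e+2$ to $d\ge5$.

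Your other ``delicate point'' is one where your instinct should have flipped rather than dug in: the implication ``$|N|>1$ forces $\alpha\ge1$'' is not merely hard, it is false, so no involution-relation or faithful-action argument can establish it. Apply the paper's own Proposition \ref{p: infinite family} (or Corollary \ref{c: infinite family}) to the C5 group $(C_2\times C_2)\rtimes D_{15}$ of type $\{4,15\}$ with $\chi=-11$ (the map N13.1 of \cite{C600}): the resulting smooth covers are soluble $(2,4,15)^*$-groups $G$ with $N=O_{11}(G)\cong(C_{11})^{12}\ne1$ and $G/N\cong(C_2\times C_2)\rtimes D_{15}$, yet smoothness preserves the type, so $\alpha=\beta=0$. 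This is consistent with Table \ref{t: c}: the entry $-\chi_G=|N|(\ell-4)$ in row C5 and the note ``$G$ aSc if $|N|=1$'' explicitly allow $|N|>1$ there; and the paper's own proof never establishes ``$\alpha\ge1$'' either --- it proves only that $|N|>1$ implies $r^{\alpha+1}$ divides $|N|$, which carries no content beyond $r\mid|N|$ when $\alpha=0$. So that clause is a misstatement: the ``Moreover'' conclusions should be read as applying when $\alpha\ge1$ (families C6 and C7), and you should not spend effort trying to prove the version as written.
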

\smallskip

\begin{proof}
Just as in the proof of Lemma \ref{l: soluble G2lk}, the hypotheses imply that $G$ is a $(2, 4r^\alpha, \ell r^\beta)^*$-group for integers $\alpha,\beta\ge 0$, and then Euler's formula gives
 \begin{equation}\label{eq:Eu(c)}
  r^d = -\chi_G
  = 8\ell |N| \left (\frac{1}{4}- \frac{1}{8 r^\alpha} -\frac{1}{2\ell r^\beta} \right )
  = \frac{|N|}{r^{\alpha+\beta}} \left(2\ell r^{\alpha + \beta}-4r^\alpha -\ell r^\beta\right).
 \end{equation}

Now if $\alpha < \beta$, then $|N|$ is divisible by $r^\beta,$ so $2\ell r^\beta - 4 - \ell r^{\beta-\alpha}$
must be $r^\gamma$ for some integer $\gamma \ge 0$, and then since $4$ is not divisible by $r$,
we have $\gamma = 0$, so $\ell\,(2r^\beta - r^{\beta-\alpha}) = 2\ell r^\beta - \ell r^{\beta-\alpha} = 5$,
which is impossible since $\ell \equiv 3$ mod $6$.

Hence $\alpha\ge \beta$, which implies that $|N|$ is divisible by $r^\alpha$, and then this time \eqref{eq:Eu(c)} tells us  that $2\ell r^{\alpha}-4r^{\alpha-\beta} -\ell = r^\gamma$ for some integer $\gamma \ge 0$, and then \begin{equation}\label{eq:alpbet} \ell
=  \frac{4r^{\alpha-\beta} + r^\gamma}{2r^\alpha - 1}
= r^{\alpha-\beta}\cdot \frac{4 + r^\delta}{2r^\alpha - 1}
\ \hbox{ where } \ \delta = \gamma-(\alpha-\beta).\end{equation}

Clearly we cannot have $\alpha=\gamma=0$, for otherwise $\beta = 0$ and $\delta = 0$ and then $\ell = 5$.  Also if $\alpha\ge 1$ and $\gamma \le \alpha-1$ then $4r^{\alpha-\beta} + r^\gamma \le 4r^\alpha +r^{\alpha-1} < 3(2r^\alpha-1)$ so $\ell = \frac{4r^{\alpha-\beta} + r^\gamma}{2r^\alpha - 1} < 3$, which is impossible, again since $\ell \equiv 3$ mod $6$.  Hence if $\alpha\ge 1$ then $\gamma\ge \alpha$, so  $\delta = \gamma-\alpha+\beta \ge 0$.
Similarly if $r>3$ and $\gamma\le \alpha$ then $4r^{\alpha-\beta} + r^\gamma \le 5r^\alpha < 3(2r^\alpha-1)$, again giving a contradiction, so if $r>3$ then $\gamma > \alpha$ and  $\delta = \gamma-\alpha+\beta \ge 1$.

\smallskip
Now suppose that $r=3$.  Then by \eqref{eq:alpbet} we have
\begin{equation}\label{eq:betgam} \ell = 3^{\alpha-\beta}\cdot \frac{4+3^\delta}{2{\cdot}3^\alpha-1},\end{equation}
and it follows that $\alpha \ge 1$, for otherwise $\alpha= \beta = 0$ and then $\ell = 4+3^\delta$, which is impossible. Hence by an observation in the previous paragraph, $\delta \ge 0$, and then the hypothesis that $\ell$ is an odd multiple of $3$ implies that $4+3^\delta$ is divisible by $2{\cdot}3^\alpha-1$ (for some $\alpha\ge 1$), and moreover, as $4+3^\delta$ is not divisible by $3$, neither is $(4+3^\delta)/(2{\cdot}3^\alpha-1)$, and so $\alpha - \beta > 0$.
This leads to the data in row C6 of Table~\ref{t: c}.

In fact there are infinitely many possible values of $\delta$, even in the case where $\alpha=1$ and $\beta=0$.
For $\ell$ to be an integer in this case, we need $4+3^\delta \equiv 0$ mod $5$, so $\delta \equiv 0$ mod $4$, and if $\delta=4\mu$ then $\ell = 3{\cdot}(4 + 3^{4\mu})/5 = 3{\cdot}(5 + (81^\mu-1))/5$, which is an odd multiple of $3$ for every $\mu\ge 0$.

Other such infinite families of suitable values of $\ell$ can be obtained similarly for pairs $(\alpha,\beta)$ with $\beta=\alpha-1$. Here the integrality condition coming from \eqref{eq:betgam} is satisfied for $\alpha=2$ whenever $\delta\equiv 4$ mod $16$, and for $\alpha =3$ whenever $\delta\equiv 20$ mod $52$, and for $\alpha=7$ whenever $\delta\equiv 2172$ mod $4372$, and for $\alpha=8$ whenever $\delta\equiv 1296$ mod $2624$, and for $\alpha=12$ whenever $\delta\equiv 531416$ mod $1062880$, and so on.
In fact, whenever $s=2{\cdot}3^\alpha-1$ is a prime and the order of $3$ mod $s$ is equal to $s-1$, there exists a positive integer $\delta_o < s$ such that the value of $\ell$ given by \eqref{eq:betgam} is an odd multiple of $3$ for every positive integer $\delta\equiv \delta_o$ mod $(s-1)$. This is the way examples for $\alpha= 1,\ 2,\ 3,\ 7$ and $12$ (with $\beta=\alpha-1)$ can be constructed, and such an approach also allows for varying the value of $\beta$ between $0$ and $\alpha-1$. (The examples for $\alpha\le 3$ will be used later in the proof.)
\smallskip

On the other hand, suppose that $r>3$.  In this case, as $r$ is prime we have $r \equiv \pm 1$ mod $6$,
but if $r \equiv 1$ mod $6$ then $r^{\alpha-\beta}$, $4+r^\delta$ and $2r^\alpha-1$ are congruent to $1$, $5$ and $1$ mod $6$, and then \eqref{eq:alpbet} gives $\ell \equiv 5$ mod $6$, a contradiction, so $r \equiv -1 \equiv 5$ mod $6$. By a similar argument, also $\delta$ must be odd.

Now if $|N| = 1$, then $G$ is a $(2, 4, \ell)^*$-group, and $\alpha=\beta=0$, and from \eqref{eq:Eu(c)} it follows that $\chi_G = r^d = \ell-4$, and also $G$ is almost Sylow-cyclic, as in row C5 of  of Table~\ref{t: c}.

Suppose instead that $|N| > 1$, so that $r^\alpha$ divides $|N|$. If $|N|=r^{\alpha}$, then $\alpha \ge 1$, and $N$ is cyclic (generated by $(ab)^4$). This implies, in turn, that $G$ is almost Sylow cyclic. But then Theorem~\ref{t: ass} shows there is only one possibility for the type $\{m,n\} = \{4r^\alpha,\ell r^\beta\}$, namely $\{4,\ell \}$, with $\alpha=0$ (and $\beta = 0$), giving a contradiction. Hence if $|N| >  1$ then $|N|$ is divisible by $r^{\alpha+1}$.
Moreover, as  $r^d = \frac{|N|}{r^{\alpha+\beta}} \left(2\ell r^{\alpha + \beta}-4r^\alpha -\ell r^\beta\right)
= \frac{|N|}{r^{\alpha}} \left(2\ell r^{\alpha}-4r^{\alpha-\beta} -\ell \right) = \frac{|N|}{r^{\alpha}}\,r^\gamma$,
this implies that $r^d \ge r^{1+\gamma}$, and therefore $d \ge \gamma+1$.
\smallskip

Next, we derive a lower bound on $d$ when $\alpha\ge 1$, whether $r=3$ or $r\equiv 5$ mod $6$.
Dividing $\delta$ by $\alpha$ gives $\delta = t \alpha + \eta$ with $0 \le \eta < \alpha$, and the requirement from \eqref{eq:alpbet} that $2r^\alpha - 1$ divides $r^\delta + 4$ is equivalent to
$2r^\alpha - 1$ dividing $2^{t}(r^\delta + 4)$, since $2r^\alpha - 1$ is odd.
A calculation now gives
\[
2^{t}r^\delta + 2^{t+2} = 2^{t}(r^\delta + 4)
= (2r^\alpha-1)\left(2^{t-1} r^{\delta-\alpha}+2^{t-2} r^{\delta-2\alpha}+\cdots +
 r^{\delta-t\alpha}\right) + r^{\delta-t\alpha}+2^{t+2}\ ,
\]
and it follows that $2r^\alpha-1$ divides $r^{\delta-t\alpha}+2^{t+2} = r^\eta + 2^{t+2}$.
In particular, $2r^\alpha-1 \le r^\eta + 2^{t+2}$, so  $2r^\alpha-r^\eta-1\le 2^{t+2}$,
and as $\eta < \alpha$, it follows that  $r^\alpha < 2r^\alpha-r^\eta-1 \le 2^{t+2}$.
Hence $t+2 > \alpha\log_2r$, and then because $\alpha\ge 1$ and $\delta \ge t\alpha$,
we find that $\delta > (\alpha\log_2r - 2)\alpha = \alpha^2\log_2r-2\alpha$ and therefore
$\gamma = \delta+\alpha-\beta > \alpha^2\log_2r-\alpha-\beta$.
Thus $d \ge \gamma+1 > \alpha^2\log_2r-\alpha-\beta +1$, which is precisely the lower
bound $d_{r,\alpha,\beta}$ in row C7 of Table~\ref{t: c}.
\smallskip

In particular, if $r\equiv 5$ mod $6$, then $r \ge 5$, so for $\alpha\ge 2$ we have
$d \ge d_{r,\alpha,\beta} > 2^2{\cdot}2-2-2+1=5$,
while if $r=3$ and $\alpha\ge 4$  then $d \ge 4^2 \cdot 1 -4-4+1 = 9$,
and if $r=3$ and $\alpha\ge 3$ then $d\ge 3^2\cdot  1 -3-3+1 =4$.
\smallskip

We now look in more detail at the cases where $\alpha=1$ for $r\equiv 5$ mod $6$, and $\alpha\le 3$ for $r=3$.

First suppose $r\equiv 5$ mod $6$ and $\alpha=1$.  Then $t=\delta$ and $\eta=0$, and then since
$2r^\alpha-1 \le r^\eta + 2^{t+2}$ we have $2r-1 \le 1 + 2^{\delta+2}$ and therefore $r\le 2^{\delta+1}+1$.
Hence if $d\le 3$, then $\gamma \le d-1 \le 2$ so $\delta = \gamma - \alpha + \beta \le 2-1+1 = 2$
and so $r\le 2^{\delta+1}+1 \le 2^3 +1 = 9$, which forces $r = 5$.
Similarly, if $d \le 4$ then $\gamma \le 3$ so $\delta \le 3$ and therefore $r= 5$ when $\delta\in \{1,2\}$
while $r \le 2^4 +1 = 17$ and so $r \in \{5,11,17\}$ when $\delta=3$.
But if $r=5$ and $\delta\in \{1,2\}$, then $\ell=r^{1-\beta} \cdot (r^\delta+4)/(2r-1) \in \{1,5,29/9,145/9\}$,
contradicting the assumption that $\ell$ is an integer multiple of $3$, and similarly, if $r\in \{5,11,17\}$ and $\delta=3$, then $r^{1-\beta} \cdot (r^\delta+4)/(2r-1)$ is not an integer multiple of $3$.

Hence if $r\equiv 5$ mod $6$ and $\alpha = 1$ then $d\ge 5$, and as we already saw that if $r\equiv 5$ mod $6$ then $d \ge 5$ for $\alpha\ge 2$, it follows that  if $r\equiv 5$ mod $6$ and $\alpha \ge 1$ then $d\ge 5$.

On the other hand, suppose $r=3$ and $\alpha \le 3$.
Then the same kind of calculations as above show that if $\alpha = 2$ or $3$, then $\delta\ge 4$ and hence $\gamma = \delta+(\alpha-\beta) \ge 4 + 1 = 5$. But if $\alpha=1$, $\beta=0$ and $\delta=0$ then $\ell = 3$ (and  $\gamma = 1$), and the map type is $\{4r^\alpha,\ell r^\beta\} = \{12,3\}$. In the latter case, Euler's formula gives $-\chi = 24|N|(1/4-1/24-1/6) = |N|$, with $|N|$ divisible by $r^{\alpha+1} = 3^2$. The smallest such example comes  from the map N29.1 at \cite{C600}), via a unique $(2,12,3)^*$-group $G$ of order $648 = 9\cdot 72$ with $-\chi_G = 3^3$.  It follows that when $r=3$ we have $d\ge 5$ if $\alpha=2$ or $3$, and $d\ge 3$ if $\alpha=1$ and $\beta=0$.

The facts derived in this proof imply validity of entries in rows C5 to C7 of Table~\ref{t: c}.
\end{proof}

The combination of Lemmas \ref{l: soluble dihedral} to \ref{l: soluble G3k} gives a proof of part (C) of Theorem
\ref{t: main}.

\smallskip
Finally, we note that for every prime $r>3$, as well as in the case when $r=3$ but $|G/N|_3=1$, the Schur-Zassenhaus theorem implies that $G \cong N\rtimes H$, where $H$ is one of $D_\ell$ or $D_j\times D_k$, depending on which of the cases (a) and  (b) of Theorem \ref{t: ass} applies.
\medskip


\section{Constructions and a proof of Corollary \ref{c: main}}\label{s: constructions}
\bigskip


In this section we establish existence and present constructions of examples of each of the 18 types of  $(2,m,n)^*$-groups corresponding to the entries in Tables~\ref{t: a}, \ref{t: b} and \ref{t: c}. We also prove Corollary
\ref{c: main}, which classifies $(2,m,n)^*$-groups $G$ such that $\chi_G= -r^d$ where $r$ is an odd prime and  $d\leq 4$.
\medskip

Most of the constructions for $(2,m,n)^*$-groups $G=\langle a,b,c\rangle$ for varying types $\{m,n\}$ are derived from covers of the carrier surfaces of the corresponding regular map $M=(G;a,b,c)$. We say that a $(2,m',n')^*$-group $H=\langle a',b',c'\rangle$ is a {\em cover} of a $(2,m,n)^*$-group $G$ as above if there is a group epimorphism $\theta\!: H\to G$ taking the ordered triple $(a',b',c')$ to $(a,b,c)$. In the case of branched covers, branch points of the same order will be allowed to occur at every vertex and at every face-centre in the map. For unbranched covers, which are also called {\em smooth}, we have $m'=\ord(a'b')=\ord(ab)=m$ and $n'=\ord(b'c')=\ord(bc)=n$. Similarly, the epimorphism $\theta$  is called smooth if it preserves the orders of $a'b'$ and $b'c'$.
\medskip

\begin{prop}\label{p: infinite family}
Let $G$ be a finite $(2,m,n)^*$-group with Euler characteristic $\chi < 0$. Then for every odd positive integer $s$,
there exists a finite $(2,m,n)^*$-group which is a smooth cover of $G$ with Euler characteristic $s^{1-\chi} \chi$.
\end{prop}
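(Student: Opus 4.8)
The plan is to build the cover as a quotient of the full triangle group by a characteristic subgroup of its map subgroup, so that the covering becomes abelian of exponent dividing $s$. Write $\Delta=\Delta(2,m,n)$ with canonical generating involutions $A,B,C$, and let $\pi\colon\Delta\to G$ be the epimorphism with $\pi(A,B,C)=(a,b,c)$ (which exists because $G$ satisfies the relations of $\Delta$). Its kernel $K=\ker\pi$ is the map subgroup of $M=(G;a,b,c)$; since $G$ acts regularly, hence freely, on flags and $\chi_G=\chi<0$, the group $\Delta$ acts properly on the hyperbolic plane and $K$ is torsion-free. Thus $K$ is the fundamental group of the non-orientable carrier surface $N_k$ of $M$, of Euler characteristic $\chi$, so that $k=2-\chi$ and $K^{\mathrm{ab}}=H_1(N_k;\mathbb Z)\cong\mathbb Z^{\,k-1}\oplus C_2$.

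First I would set $\widehat K=K^s[K,K]$, which is characteristic in $K$ and hence normal in $\Delta$. Tensoring with $\mathbb Z/s$ and using that $s$ is odd (so the $2$-torsion summand vanishes) gives
\[
K/\widehat K\ \cong\ K^{\mathrm{ab}}\otimes\mathbb Z/s\ \cong\ (\mathbb Z^{\,k-1}\oplus C_2)\otimes\mathbb Z/s\ \cong\ (\mathbb Z/s)^{\,k-1},
\]
so $[K:\widehat K]=s^{\,k-1}=s^{\,1-\chi}$. Put $H=\Delta/\widehat K$ and $(a',b',c')=(A\widehat K,B\widehat K,C\widehat K)$, so that $H=\langle a',b',c'\rangle$ is a quotient of $\Delta(2,m,n)$. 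Since $\widehat K\le K$, there is a natural epimorphism $\theta\colon H\to G$ with $\theta(a',b',c')=(a,b,c)$ and kernel $K/\widehat K$, so $\theta$ is a covering of degree $s^{\,1-\chi}$.

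It then remains to check three things. Smoothness: the relations $(AB)^m=(BC)^n=1$ already hold in $\Delta$, while $(AB)^j,(BC)^{j'}\notin K\supseteq\widehat K$ for $0<j<m$ and $0<j'<n$ (because $ab,bc$ have orders $m,n$ in $G$); hence $\ord(a'b')=m$ and $\ord(b'c')=n$, and $a',b',c'$ remain involutions, so $H$ is a $(2,m,n)^*$-group and $\theta$ is smooth. Non-orientability: as $M$ is non-orientable we have $K\not\le\Delta^+(2,m,n)$, and since $s$ is odd the $s$-th power of any element of $K$ outside $\Delta^+$ again lies outside $\Delta^+$, whence $\widehat K\not\le\Delta^+$; therefore $\Delta^+\widehat K=\Delta$ and $H=\langle a'b',\,b'c'\rangle$, so $H$ defines a non-orientable map. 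Finally, applying Euler's formula \eqref{e: sunny} to $H$, which has type $\{m,n\}$ and order $|H|=s^{\,1-\chi}|G|$, yields $\chi_H=s^{\,1-\chi}\chi_G=s^{\,1-\chi}\chi$, as required.

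The step I expect to be the main obstacle is the homological bookkeeping that fixes the degree: one must justify that $K$ really is the surface group of $N_k$ and compute $K/K^s[K,K]$ correctly. It is exactly at the two places where oddness of $s$ enters — annihilating the $C_2$ summand of $H_1(N_k;\mathbb Z)$ and keeping $\widehat K$ outside the even subgroup $\Delta^+$ — that the hypothesis that $s$ is odd is indispensable; an even $s$ would alter the rank of the covering module and could destroy non-orientability.
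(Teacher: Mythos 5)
Your proof is correct and follows essentially the same route as the paper (its "non-orientable Macbeath trick"): both pass to the kernel $K$ of the smooth epimorphism $\Delta(2,m,n)\to G$, identify it with the fundamental group of the non-orientable carrier surface, and quotient $\Delta$ by the characteristic subgroup $K^s[K,K]$, using oddness of $s$ to kill the $C_2$-summand of $K^{\mathrm{ab}}$ (giving index $s^{1-\chi}$) and to keep the resulting map non-orientable. Your verifications of smoothness (via the covering of $G$) and of non-orientability (via odd powers of an element of $K\setminus\Delta^+$) are if anything more explicit than the paper's, which instead invokes torsion-freeness of the kernel and states the surjectivity of $\Delta^+$ onto the quotient rather tersely.
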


\begin{proof}
The full triangle group $\Delta= \Delta(2,m,n)$ is a smooth cover of $G$, and so the kernel $K$ of the corresponding epimorphism $\Delta\to G$ is torsion-free, and is isomorphic to the fundamental group of the non-orientable supporting surface $\mathcal{S}$ of the regular map given by the $(2,m,n)^*$-group $G$. (This is a consequence of a well known fact in algebraic topology; see, for example, the final statement in \cite[Subsection 19B, p.~38]{AhlSar}, according to which every topological space with a universal cover is the quotient of its universal cover by its fundamental group, acting as a group of covering transformations.) The non-orientable genus of $\mathcal{S}$ is $g = 2-\chi$, and so its fundamental group $K$ is a Kleinian group with signature $(g; - ; [-]; \{-\})$, and hence has a presentation in terms of $g$ generators $d_1,\, d_2,\, ..., d_{g}$ subject to the single defining relation  $d_{1}^{\,2}d_{2}^{\,2}...d_{g}^{\,2} = 1$; see \cite[\S 43B]{AhlSar} or \cite[Ch.~5]{Massey}. It follows that the abelianisation $K/K'$ of $K$ is isomorphic to $C_2 \times (\mathbb Z)^{g-1}$.

We can now apply a non-orientable version of the `Macbeath trick'.
For any odd positive integer $s$, let $K^{(s)}$ be the subgroup of $K$ generated by all the $s$-th powers of elements in $K$, and let $L=K'K^{(s)}$. Then $L$ is a characteristic subgroup of $K$ and hence a normal subgroup of $\Delta$. Moreover, $K/L$ is isomorphic to $(C_s)^{g-1}$, with the `$C_2$' part of $K/K'$ included in $L$ since $s$ is odd, and thus $\Delta/L$ is an extension of $K/L \cong (C_s)^{g-1}$ by $\Delta/K \cong G$ (which splits if $s$ is coprime to $|G|$).

Since $K$ is torsion-free, so is $L$. Also the natural epimorphism from $\Delta$ to $\Delta/L$ must take the ordinary triangle group $\Delta^+= \Delta^+(2,m,n)$ to $\Delta/L$, rather than a subgroup of index $2$ in $\Delta/L$, and so $\wt{G}=\Delta/L$ is a finite $(2,m,n)^*$-group, and hence a smooth $s^{g-1}$-fold cover of $G$.
Accordingly, the supporting surface $\wt{\mathcal{S}}$ of the regular map defined by the $(2,m,n)^*$-group $\wt{G}$ is  a non-orientable smooth cover of $\mathcal{S}$, with Euler characteristic $s^{g-1}\chi = s^{1-\chi} \chi$.
\end{proof}
\medskip

\begin{cor}\label{c: infinite family}
Let $G$ be a $(2,m,n)^*$-group with Euler characteristic $-r^d$ for some odd prime $r$. Then for every non-negative integer $\alpha$, there exists a $(2,m,n)^*$-group which is a smooth cover of $G$ with Euler characteristic $-r^\beta$, where $\beta = \alpha(1+r^d)+d$.
\end{cor}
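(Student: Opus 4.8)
The plan is to obtain Corollary \ref{c: infinite family} as an immediate specialisation of Proposition \ref{p: infinite family}. Since $G$ has Euler characteristic $\chi = \chi_G = -r^d$, I would simply feed the odd positive integer $s = r^\alpha$ into Proposition \ref{p: infinite family}. The only thing to check at the outset is that $s = r^\alpha$ is a legitimate input for that proposition: because $r$ is an odd prime, the power $r^\alpha$ is odd and positive for every integer $\alpha \ge 0$, including the degenerate value $s = 1$ arising when $\alpha = 0$, which Proposition \ref{p: infinite family} also permits (it returns $G$ itself).

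First I would record that $1 - \chi = 1 + r^d$. Then applying Proposition \ref{p: infinite family} with $s = r^\alpha$ produces a finite $(2,m,n)^*$-group that is a smooth cover of $G$ whose Euler characteristic equals
\[
s^{1-\chi}\chi \;=\; (r^\alpha)^{\,1+r^d}\cdot(-r^d) \;=\; -\,r^{\,\alpha(1+r^d)+d}.
\]
Reading off the exponent shows that this Euler characteristic is $-r^\beta$ with $\beta = \alpha(1+r^d)+d$, exactly as claimed, and the smoothness of the cover is inherited directly from the conclusion of Proposition \ref{p: infinite family}.

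I expect essentially no obstacle in this argument: all the substantive work — the construction of the torsion-free kernel $K$ as the fundamental group of the carrier surface, the non-orientable Macbeath trick producing the characteristic subgroup $L = K'K^{(s)}$ with $K/L \cong (C_s)^{g-1}$, and the verification that $\Delta/L$ is again a genuine $(2,m,n)^*$-group rather than a subgroup of index $2$ — has already been carried out in the proof of Proposition \ref{p: infinite family}. The single point that I would flag for a moment's care is confirming that the boundary case $\alpha = 0$ is covered by the formula, which it is, since then $s = 1$ forces $L = K$ and hence the resulting cover is $G$ itself, with $\beta = d$ in agreement with the stated expression.
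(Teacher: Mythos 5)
Your proposal is correct and is essentially identical to the paper's own proof, which likewise just substitutes $\chi = -r^d$ and $s = r^\alpha$ into Proposition~\ref{p: infinite family}; your exponent computation $s^{1-\chi}\chi = -r^{\alpha(1+r^d)+d}$ is exactly what is needed. The extra checks you flag (oddness of $r^\alpha$ and the degenerate case $\alpha = 0$) are fine but routine.
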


\begin{proof}
Take $\chi=-r^d$ and $s=r^\alpha$ in Proposition~\ref{p: infinite family}
\end{proof}
\medskip

We can use Proposition \ref{p: infinite family} and Corollary \ref{c: infinite family} to build smooth covers
of examples of $(2,m,n)^*$-groups $G$ and hence provide an infinite number of examples in many of the cases addressed in parts (A), (B), and (C) of Theorem \ref{t: main}.
\medskip

\begin{prop}\label{p:psl-sum}
For each of the cases listed in Table~{\rm \ref{t: a}}, and those in rows {\rm B1} and {\rm B2} of Table~{\rm \ref{t: b}}, plus those in rows {\rm B3} and {\rm B7} of Table~{\rm \ref{t: b}} with $\ell=1$, and those in rows {\rm C1}, {\rm C3}, {\rm C5} and {\rm C6} of Table~{\rm \ref{t: c}}, there exist infinitely many finite $(2,m,n)^*$-groups $G$ with Euler characteristic a power of $r$, such that $G/O(G)$ is isomorphic  to $\PSL_2(q)$ in part {\rm (A)}, $\PGL_2(q)$ in part {\rm (B)}, and $D_j\times D_k$ and $(C_2\times C_2)\rtimes D_\ell$ in part {\rm (C)}, for suitable $q$ and $j,k,\ell$.
\end{prop}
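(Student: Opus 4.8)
The plan is to prove the proposition by a single uniform mechanism: exhibit one \emph{seed} $(2,m,n)^*$-group in each listed family, and then amplify it into an infinite family using the smooth covering construction of Corollary~\ref{c: infinite family}. The whole argument rests on the observation that a smooth cover preserves both the type $\{m,n\}$ and the isomorphism class of the quotient by the relevant normal $r$-subgroup, so that every cover remains in the same family while its Euler characteristic runs through infinitely many powers of $r$.

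First I would record the preservation lemma underlying everything. Suppose $G$ is a $(2,m,n)^*$-group with $\chi_G=-r^d$ possessing a normal $r$-subgroup $N$ with $G/N\cong Q$, and let $\widetilde G$ be any smooth cover supplied by Corollary~\ref{c: infinite family}. Then $\widetilde G$ is a $(2,m,n)^*$-group with $\chi_{\widetilde G}=-r^\beta$, and its covering kernel $M$ is a normal $r$-subgroup with $\widetilde G/M\cong G$. The preimage $\widetilde N$ of $N$ in $\widetilde G$ is then an extension of the $r$-group $M$ by the $r$-group $N$, hence itself a normal $r$-subgroup of $\widetilde G$, with $\widetilde G/\widetilde N\cong G/N\cong Q$. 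Taking $N=O(G)$ in parts (A) and (B) (where $\ell=1$ forces $O(G)=N$ and $G/N\cong\PGL_2(q)$) and taking $N$ to be the normal $r$-subgroup of Table~\ref{t: c} in part (C), we see that each cover carries the same family label as its seed. Since $\beta=\alpha(1+r^d)+d$ takes infinitely many values as $\alpha$ ranges over the non-negative integers, and distinct $\beta$ give groups of distinct order, a single seed yields infinitely many examples.

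It then remains to exhibit the seeds. For families A1, A3, A4, B1, B3 (with $\ell=1$), C1 and C6 I would invoke the explicit maps recorded in Corollary~\ref{c: main} (namely N5.3, N51.1, N15.1, N7.1, N9.1, N5.2 and N29.1), each of which is a seed with the required quotient and type. For A2 I would take the $(2,3,15)^*$-group of Euler characteristic $-3^7$ with $|O(G)|=3^6$ exhibited in the proof of Proposition~\ref{p: psl2 main}. For C3 and C5 a direct small construction suffices: for C3 one may take $G\cong D_3\times D_5$ of type $\{6,10\}$ with $r=7$ and $-\chi_G=jk-j-k=7$, and for C5 one may take $G\cong(C_2\times C_2)\rtimes D_9$ of type $\{4,9\}$ with $r=5$ and $-\chi_G=\ell-4=5$, both lying in the ranges of Corollary~\ref{c: main}(1) and~(2). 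Finally, the examples in row B7 with $\ell=1$ arise for free as the \emph{nontrivial} smooth covers of the B6 seed $\PGL_2(5)$ of type $\{4,5\}$: such a cover has a nontrivial normal $r$-subgroup $N$ with $G/N\cong\PGL_2(5)$, which moves it out of family B6 (where $|N|=1$) and into family B7 with $s=0$.

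The one genuinely new construction — and the main obstacle — is the seed for family B2, a $(2,20,30)^*$-group with $G/O(G)\cong\PGL_2(5)$. Here the smooth covering trick is useless as a first step, since it preserves type and can therefore only reproduce the type $\{4,6\}$ of the B1 seed, never $\{20,30\}=\{4{\cdot}5,6{\cdot}5\}$; moreover $\PGL_2(5)$ itself has no element of order $20$ or $30$, so a nontrivial $5$-group must be built in. The plan is to construct this seed as an extension $N.\PGL_2(5)$ with $N$ a suitable $\mathbb{F}_5[\PGL_2(5)]$-module — equivalently, as a $5$-fold cover of the B1 map branched at every vertex and every face-centre — chosen so that lifts $g,h$ of the B1 generators of orders $4$ and $6$ acquire orders exactly $20$ and $30$ while $gh$ stays an involution. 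The delicate point, which I expect to verify by a short cohomological argument or a \textsc{Magma} computation, is precisely that the generator orders do not collapse to smaller values; once one such seed is in hand, Corollary~\ref{c: infinite family} amplifies it into the required infinite family.
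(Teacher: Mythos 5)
Your proposal is, in substance, the paper's own proof: the paper likewise exhibits one seed group per family (drawn from \cite{condersmall2}, \cite{C600}, plus {\sc Magma}-found examples for A2 and B2) and amplifies each seed by the smooth covers of Corollary~\ref{c: infinite family}, and it obtains family B7 with $\ell=1$ exactly as you do, from the non-trivial smooth covers of the B6 seed $\PGL_2(5)$ of type $\{4,5\}$. Your explicit preservation lemma (smooth covers keep the type and the quotient by the enlarged normal $r$-subgroup, hence the family label) is left implicit in the paper and is a worthwhile addition; your citing Corollary~\ref{c: main} for several seeds is a forward reference, but a harmless one, since those entries come from the external census \cite{C600}, which is what the paper cites directly.

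Two divergences deserve comment. First, your C5 seed $(C_2\times C_2)\rtimes D_9$ of type $\{4,9\}$ with $\chi=-5$: this group does exist (it is case (b) of Corollary~\ref{c: main} with $(r,d)=(5,1)$, and one can verify the $(2,4,9)$-generating triple and the equality $\langle ab,bc\rangle=G$ by hand), but it contradicts the assertion in Lemma~\ref{l: soluble G3k} that $\ell$ is \emph{exactly} divisible by $3$ in family C5 --- which is precisely why the paper's own seed insists on $3\mid\ell$ but $9\nmid\ell$ (e.g.\ $\ell=15$, the map N13.1 of \cite{C600}). Your example in fact exposes a defect in Lemma~\ref{l: soluble 1}: when $r>3$ its proof bounds the $3$-part of $(G/N)\big/F(G/N)$ but not that of $F(G/N)$ itself, so the claimed bound $|G/N|_3\le 3$ is not established (and fails for your group, where $|G/N|_3=9$). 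So your seed is correct, but a reader working strictly inside the paper's lemmas would reject it; replacing $\ell=9$ by $\ell=15$ avoids the clash at no cost. Second, your B2 seed is not actually constructed: you defer the key verification (that the lifted generators really have orders $20$ and $30$) to ``cohomology or {\sc Magma}''. The paper also settles B2 by a {\sc Magma} computation (a $(2,20,30)^*$-group with $|O|=5^3$ and $\chi=-5^5$), so this is parity rather than a genuine gap; but note that the branched-cover construction you sketch is made rigorous later in the paper as Proposition~\ref{p: jozef}, where the non-collapse of orders is proved via Macbeath's theorem that every torsion element of the relevant NEC kernel is conjugate to a power of one of its canonical elliptic generators. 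Applying that proposition to the B1 seed $\PGL_2(5)$ of type $\{4,6\}$ with $r=5$ yields a $(2,20,30)^*$-group $N{\cdot}\PGL_2(5)$ with $N\cong(C_5)^{31}$ and $\chi=-5^{33}$, giving a computation-free B2 seed and completing your argument.
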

\medskip

\begin{proof}
Except for the rows A2 and B2 of of Tables \ref{t: a} and \ref{t: b}, examples have been known to exist for some time, and were given in \cite{condersmall2} and/or \cite{C600}. For the remaining cases A2 and B2, examples have been found by the first author with the help of {\sc Magma} \cite{magma}.

Specifically, we have the following examples of $(2,m,n)^*$-groups $G$ for the relevant rows of Tables~\ref{t: a}, \ref{t: b} and \ref{t: c}, with corresponding parameters and $\chi=\chi_G$ as indicated.
\begin{enumerate}
 \item[A1:] \ a $(2,5,5)^*$-group $G\cong \PSL_2(5)$ with $\chi=-3$;
 \item[A2:] \ a $(2,3,15)^*$-group $G$ with $G/O \cong \PSL_2(5)$, where $|O|=3^6$ and $\chi=-3^7$;
 \item[A3:] \ a $(2,3,13)^*$-group $G\cong \PSL_2(13)$ with $\chi = -7^2$;
 \item[A4:] \ a $(2,3,7)^*$-group $G\cong \PSL_2(13)$ with $\chi=-13$;
 \item[B1:] \ a $(2,4,6)^*$-group $G\cong \PGL_2(5)$ with $\chi=-5$;
 \item[B2:] \ a $(2,20,30)^*$-group $G$ with $G/O \cong \PGL_2(5)$, where $|O|=5^3$ and $\chi_G=-5^5$;
 \item[B3:] \ a $(2,3,8)^*$-group $G\cong \PGL_2(7)$ for $\ell=1$,  with $\chi=-7$;
 \item[B6:] \ a $(2,4,5)^*$-group $G\cong \PGL_2(5)$ for $\ell=1$, with $\chi=-3$;
 \item[C1:] \ a $(2,6,4)^*$-group $G\cong (C_3 \times C_3) \rtimes D_4 $ with $\chi= -3$;
 \item[C3:] \ a $(2,2j,2k)^*$-group $G\cong D_j{\times}D_k$ with $\chi= -r= j{+}k{-}jk$ for coprime odd $j$ and $k$;
 \item[C5:] \ a $(2,4,\ell)^*$-group $G\cong (C_2\times C_2)\rtimes D_\ell$ with $\chi = r = 4-\ell$,
 for odd $\ell$ such that $3 \mid \ell$ but $9 \nmid \ell$;
 \item[C6:] \ a $(2,3,12)^*$-group $G$ with $G/E \cong S_4$, where $|E|=3^3$ and $\chi=-3^3$.
\end{enumerate}

For each of these $(2,m,n)^*$-groups $G$, the existence of infinitely many finite smooth covers of $G$ with the required properties is ensured by Corollary \ref{c: infinite family}. (In cases C3 and C5, there are also infinitely many examples with negative prime characteristic available from \cite{bns}.)

Note that the list above includes a $(2,4,5)^*$-group isomorphic to $\PGL_2(5)$ as a member of the family B6; the non-trivial smooth covers of this group guaranteed by Corollary~\ref{c: infinite family} provide the infinite family of groups of type B7.
\end{proof}
\medskip

Note that the small `starting' groups $G$ listed in the above proof  are not unique. We give some alternative examples below:
\begin{itemize}
 \item Case B1: There is also a $(2,4,6)^*$-group $G$ with $G/O \cong \PGL_2(5)$, where $O$ is elementary abelian of order $5^3$ and $\chi_G=-5^4$.
 \item Case B2: Examples of $(2,20,30)^*$-groups can be given in multiple ways, each with $\chi_G=-5^5$.
 \item Case B3: There are  multiple examples of $(2,3,8)^*$-groups $G$ with $G/O \cong \PGL_2(7)$ and with $O$ elementary abelian of order $7^3$.
 \item Case C3: Here the condition that ties $r$ with $j$ and $k$ can be written in the form $r+1=(j-1)(k-1)$, and the factorisation of $r+1$ offers possibilities for coprime odd $j$ and $k$ that yield more examples. One example is map N9.3, of type $\{6,10\}$ in the census \cite{C600}.
 \item Case C5: Another example is the map N13.1, of type $\{4,15\}$ in \cite{C600}).

\end{itemize}

With the exception of family B6, it is perfectly possible to use the method of Proposition~\ref{p:psl-sum} to find an infinite number of examples for the other families in Tables~\ref{t: a}, \ref{t: b} and \ref{t: c}. The main obstacle is to find nice `starting' groups in each case.
The family B6 is different because there we have specified that the normal subgroup $N$ is trivial. As in the proof above, non-trivial smooth covers of groups in this family constructed using Corollary~\ref{c: infinite family} will fall into family B7.

Less significant, but worth noting, is the fact that the smoothness of the covers constructed in this way means that all of the groups constructed from a given starting group  share the same values of the type parameters $m$ and $n$. In contrast, we give a different construction below which will show that we can find an infinite number of examples where $m$ and $n$ vary (for instance, in families B3 to B7, we would like to show that there are examples for different values of the parameter $\ell\,$).

Thus, to deal with the families omitted in Proposition~\ref{p:psl-sum} and to show that we can allow for $m$ and $n$ to vary, we now prove a more general version of Theorem \ref{p: infinite family} for extensions beyond smooth covers.
\medskip

\begin{prop}\label{p: jozef}
Let $H=\langle\, \alpha,\beta,\gamma \ |\  \alpha^2,\,\beta^2,\,\gamma^2,\,(\alpha\gamma)^2,\, (\alpha\beta)^m,\,
(\beta\gamma)^n,.\,\ldots \,\rangle$ be a finite $(2,m,n)^*$-group with odd Euler characteristic $\chi$, and let
$g=2-\chi$. Also let $u$ be the sum of the numbers of vertices and faces of the corresponding non-orientable regular map $M=(H;\alpha,\beta,\gamma)$. Then for every odd prime $r$ there exists a $(2,rm,rn)^*$-group $H^*$ and a normal $r$-subgroup $N\cong (C_r)^{i}$, where $i=g-1+u = 1+|H|/4$, such that $H^*$ is an extension of $N$ by $H$.

Furthermore,  suppose that $n$ is even, and $H$ has a subgroup $H_0$ of index $2$ such that either $\alpha,\beta \in H_0$ while $\gamma\in H{\setminus}H_0$, or $\alpha,\beta \in H{\setminus}H_0$ while $\gamma\in H_0$. Then for each odd $\ell\ge 1$ such that ${\rm gcd}(\ell,r|H|) = 1$, there exists a $(2,r\ell m,rn)^*$-group $G$ containing a normal subgroup isomorphic to $N\times C_\ell$, where $N$ is an $r$-group, such that $G$ is an extension of $N\times C_\ell$ by $H,$ and $G/N\cong (H_0\times C_\ell){\cdot}2$.
\end{prop}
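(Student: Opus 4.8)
The plan is to construct $G$ in two stages: first invoke the part of this Proposition already proved to manufacture the normal $r$-subgroup $N$, and then \emph{twist} the resulting group by $C_\ell$ using the index-$2$ subgroup $H_0$. I will carry out the case $\alpha,\beta\in H\setminus H_0$ and $\gamma\in H_0$ in full; the remaining listed case is treated by the dual construction, interchanging the roles of the face- and vertex-generators (hence of $m$ and $n$), which is what moves the factor $C_\ell$ onto the intended side.

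First I would record that $H_0$ determines a homomorphism $\eta\colon H\to C_2$ with $\ker\eta=H_0$, so that $\eta(\alpha)=\eta(\beta)=1$ and $\eta(\gamma)=0$. Applying the first part of the Proposition to $H$ with the prime $r$ yields a $(2,rm,rn)^*$-group $H^*=\langle a^*,b^*,c^*\rangle$ with a normal $r$-subgroup $N\cong (C_r)^i$ and $H^*/N\cong H$, the generators $a^*,b^*,c^*$ mapping to $\alpha,\beta,\gamma$. Composing with $\eta$ gives $\eta^*\colon H^*\to C_2$ with kernel $H_0^*$; then $a^*,b^*\notin H_0^*$ while $c^*\in H_0^*$, and, since $N$ has odd order, $N\le\ker\eta^*=H_0^*$.

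Next I would form $G=C_\ell\rtimes H^*$, where $H^*$ acts on $C_\ell=\langle z\rangle$ through $\eta^*$ (so $H_0^*$ centralises $z$ and elements outside $H_0^*$ invert it); this is precisely $(H_0^*\times C_\ell).2$. As generating involutions I would take $a=z\,a^*$, $b=b^*$ and $c=c^*$: each is an involution — for $a$ because $a^*$ inverts $z$, giving $(za^*)^2=z\,z^{-1}(a^*)^2=1$ — and all three reduce to $a^*,b^*,c^*$ modulo $C_\ell$. The heart of the argument is then to check the orders and generation. Since $a^*b^*\in H_0^*$ centralises $z$ and $\gcd(\ell,rm)=1$ (because $m\mid|H|$ and $\gcd(\ell,r|H|)=1$), the element $ab=z\,a^*b^*$ has order $\lcm(\ell,rm)=r\ell m$; the element $bc=b^*c^*$ lies in the complement $H^*$ and keeps order $rn$; and $ac=z\,a^*c^*$ squares to $(a^*c^*)^2=1$ because $a^*c^*$ inverts $z$. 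For generation, $(ab)^{rm}=z^{rm}$ generates $C_\ell$, while $a^*b^*$ and $b^*c^*$ generate $H^*$ since $H$ (hence $H^*$) is non-orientable; thus $\langle ab,bc\rangle=G$ and $G$ is a non-orientable $(2,r\ell m,rn)^*$-group.

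Finally I would read off the subgroup structure. As $N$ is characteristic in $H^*$ and $C_\ell$ centralises $N$ (its action factors through $\eta^*$, which is trivial on the odd-order group $N$), $N$ is normal in $G$ and $NC_\ell=N\times C_\ell$ is a normal subgroup with $G/(N\times C_\ell)\cong H^*/N\cong H$ and $G/N\cong C_\ell\rtimes_\eta H=(H_0\times C_\ell).2$, as required. I expect the main obstacle to be precisely the order-and-generation verification of the previous step: one must confirm that the single twist produces an element of order \emph{exactly} $r\ell m$ while leaving the $rn$-side untouched, and that the twisted triple generates all of $G$ rather than merely a complement to $C_\ell$. Getting this right is exactly what forces the twist onto the generator pair whose product has order $m$ — equivalently, what selects the case in which both of those generators lie outside $H_0$ — and it is where the coprimality hypothesis $\gcd(\ell,r|H|)=1$ and the oddness of $\ell$ are used.
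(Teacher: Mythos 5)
Your treatment of the case $\alpha,\beta\in H\setminus H_0$, $\gamma\in H_0$ is correct, and it is in essence the paper's own construction: the same semidirect product $C_\ell\rtimes H^*$, with $H^*$ acting on $C_\ell$ through $H^*/H_0^*$ by inversion, and generation read off from the canonical elements. In fact your choice of lifts $(a,b,c)=(z\,a^*,b^*,c^*)$ is the version of the computation that actually works: twisting exactly one of the two generators lying over $\{\alpha,\beta\}$ gives $ab=z\,a^*b^*$ of order $\ell rm$, and then $(ab)^{rm}=z^{rm}$ generates $C_\ell$, which settles generation. (The paper's text twists both of them, so the two copies of $z$ cancel and $ab=\alpha^*\beta^*$ has order only $rm$; your single twist is the repair that makes the order and generation claims true, so on this half you are, if anything, cleaner than the source.)

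The genuine gap is your one-line disposal of the other case. If $\alpha,\beta\in H_0$ and $\gamma\in H\setminus H_0$, duality does not reduce it to the case you proved: dualising replaces the triple $(\alpha,\beta,\gamma)$ by $(\gamma,\beta,\alpha)$, whose first two entries are $\gamma\notin H_0$ and $\beta\in H_0$ --- one inside, one outside --- so the dual triple is in neither of the two listed configurations and your argument cannot be applied to it. Even if it could be, it would attach the factor $\ell$ to $n$ rather than to $m$, yielding a $(2,r\ell n,rm)^*$-group instead of the $(2,r\ell m,rn)^*$-group demanded in both cases of the statement. Moreover the obstruction here is substantive, not notational: with the action of $H^*$ on $C_\ell$ through $H_0^*$, an element $z^jh$ with $h\in H_0^*$ satisfies $(z^jh)^2=z^{2j}h^2$, so (as $\ell$ is odd) every involution of $C_\ell\rtimes H^*$ lying over $\alpha$ or over $\beta$ lies in $H^*$ itself; hence $ab$ always has order coprime to $\ell$, and indeed any lifted triple $(a,b,c)$ with $c=z^j\gamma^*$ is contained in the conjugate complement $z^{j'}H^*z^{-j'}$, where $2j'\equiv j\pmod{\ell}$, so it generates a copy of $H^*$ rather than $G$. (The paper's own lifts for this case, $(\alpha^*,\beta^*,z\gamma^*)$, run into exactly this wall, so this is a real difficulty, not a symmetric formality.) Handling the case $\alpha,\beta\in H_0$, $\gamma\notin H_0$ therefore needs a genuinely different idea --- for instance, first producing a generating involution triple of $H$ of the same type whose first two members lie outside $H_0$ --- and your proposal does not supply one.
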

\smallskip

\begin{proof}
Let $\Delta= \Delta(2,rm,rn)=\langle\, A,B,C\ |\ A^2=B^2=C^2=(AC)^2=(AB)^{rm}=(BC)^{rn}=1 \,\rangle$ be the full $(2,rm,rn)$-triangle group, and let $K$ be the kernel of the obvious epimorphism $\Delta\to H$ that takes $A,B,C$
to $\alpha,\beta,\gamma$, respectively. Since $\Delta$ is a non-Euclidean crystallographic group (or NEC group, for short) with compact quotient space, and $K$ has finite index in $\Delta$, it follows that $K$ is also an NEC group. The epimorphism $\Delta \to H$ induces a branched covering of the non-orientable regular map $M=(H;\alpha,\beta,\gamma)$ on a non-orientable surface with Euler characteristic $\chi=2-g$, via a tessellation of a hyperbolic plane of type $\{rm,rn\}$. This branched covering has a total of $u$ branch points, each of order $r$,  with one at each vertex and one at the centre of each face of $M$. Then from \cite{MacB,Sing} it follows that $K$ is an NEC group with signature $(g,-,[r^{(u)}],\{-\})$, admitting the presentation
\begin{equation}\label{eq:PresK}
K=\langle\, a_1,\ldots,a_{g},\,x_1,\ldots,x_u\ |\ x_1^r,\,\ldots\,,\, x_u^r,\, a_1^{\, 2}\ldots a_{g}^{\, 2} \, x_1\ldots x_u \,\rangle.
\end{equation}

Now $r$ is odd and so the abelianisation $K/K'$ is isomorphic to $\mathbb{Z}^{g-1}\times (C_r)^u$, generated by the images of $a_1,\ldots,a_{g-1}$ and $x_1,\ldots,x_u$ (with the image of $a_{g}$ being uniquely determined). Then a composition of the corresponding epimorphism $\theta:\ K\to \mathbb{Z}^{g-1}\times (C_r)^u$ with the natural projection $\vartheta:\ \mathbb{Z}^{g-1}\times (C_r)^u \to (C_r)^{g-1+u}$ (which reduces the first $g-1$ coordinates mod $r$ and keeps the remaining coordinates unchanged) gives an epimorphism $\eta:\ K\to (C_r)^{g-1+u}$. Letting $L$ be the kernel of $\eta$, and  $N=K/L$, we find that $N\cong (C_r)^{g-1+u}$, where $N$ is an elementary abelian $r$-group of rank $g-1+u$, with the set $\{\bar a_1,\ldots,\bar a_{g-1},\bar x_1,\ldots,\bar x_u\}$ of $\eta$-images of the generators other than $a_{g}$ in (\ref{eq:PresK}) being a minimal generating set for $N$.

The kernel $L$ can also be described as the product $L_r=K'K^{(r)}$ of the commutator subgroup $K'\lhd K$ and the normal subgroup $K^{(r)}$ of $K$ generated by the $r$-th powers of elements of $K$. Indeed $L_r\le L$,
because the $\eta$-image of every commutator and every $r$-th power of an element of $K$ is ttrivial. To prove
that $L = L_r$, observe that from $L_r\le L$ we have $N=K/L\cong (K/L_r)/(L/L_r)$, and as obviously $K/L_r \le
(C_r)^{g-1+u}=N$ it follows that $L/L_r=1$ and hence $L=K'K^{(r)}$. Both $K'$ and $K^{(r)}$ are characteristic
in $K$, so also $L$ is characteristic in $K$ and hence normal in $\Delta$.

We claim that $H^*=\Delta/L$ is a $(2,rm,rn)^*$-group.

Clearly $H^*$ is generated by the three involutions $\alpha^*=AL$, $\beta^*=BL$ and $\gamma^*=CL$,
the last two of which commute. We will  show that the order of $\alpha^*\beta^*$ is $rm$, by assuming the contrary, namely that the order of $\alpha^*\beta^*=ABL$ in $H^*=\Delta/L$ is a proper divisor of $rm$.
Then since the order of $ABK$ in $H\cong \Delta/K$ is $m$, and $r$ is prime, the order of $ABL$ in $H^*$ must be $m$, so that $(AB)^m\in L$.  But $(AB)^m$ has order $r$ in $\Delta$, and lies in $K$, and by an observation made in \cite{MacB}, every element of order $r$ in the NEC group $K$ is conjugate to some generator of order $r$ that appears in the presentation (\ref{eq:PresK}), so $(AB)^m$ must be conjugate to some $x_i$ (with $1\le i\le u$).  Then since $(AB)^m\in L \lhd \Delta$, it follows that $x_i\in L$ and it follows that the set $\{\bar a_1,\ldots,\bar a_{g-1},\bar x_1, \ldots,\bar x_u\}$ cannot generate $N\cong (C_r)^{g-1+u}$, a contradiction. Hence the order of $\alpha^*\beta^*$ is $rm$.  An analogous argument shows that the order of $\beta^*\gamma^*$ in $H^*$ is $rn$.

Also because $H\cong \Delta/K\cong (\Delta/L)/(K/L) \cong H^*/N$, we see that $H^*$ is isomorphic to an extension of the $r$-group $N\cong (C_r)^{g-1+u}$ by $H$, and so $|H^*| = r^{g-1+u}|H|$. If $S^*$ and $S$ are the surfaces carrying the regular maps corresponding to $H^*$ and $H$, respectively, then the natural projection $H^*\to H^*/N\cong H$ induces a branched covering $S^*\to S$, which is a smooth $r^{g-1+u}$-fold covering outside the branch points, and non-orientability of $S$ and the fact that $r$ is odd imply that $S^*$ is non-orientable as well.
Thus $H^*$ is a $(2,rm,rn)^*$-group , as claimed.

Next, the number of edges of the map determined by $H$ is $\chi+u$, and therefore $\chi+u= |H|/4$, so the dimension of the elementary abelian $r$-group $N$ is $g-1+u=1+\chi + u = 1+|H|/4$.
This completes the proof of the first part of the proposition.

\smallskip
To prove the second part, recall that the projection $\Delta \to \Delta/L=H^*$ takes the
generators $A,B,C$ of $\Delta$ to the generators $\alpha^*,\beta^*,\gamma^*$ of $H^*$. Note also that the natural projection $H^*\to H=H^*/N$, or equivalently, $\Delta/L\to \Delta/K \cong (\Delta/L)/(K/L)$, takes $(\alpha^*,\beta^*,\gamma^*)$ to the generating triple $(\alpha,\beta, \gamma)$ of $H$, and so we may identify $\alpha$, $\beta$ and $\gamma$ with $N\alpha^*$, $N\beta^*$ and $N\gamma^*$.
Under our extra assumption that $H$ has a (normal) subgroup $H_0$ of index $2$ such that either $\alpha,\beta \in H_0$ while $\gamma\in H{\setminus}H_0$, or $\alpha,\beta \in H{\setminus}H_0$ while $\gamma\in H_0$,
we see that the group $H^*_0=N.H_0$ is a normal subgroup of $H^*=N.H$, and either \,(i) $\,\alpha^*,\beta^*\in H^*_0$ while $\gamma^*\in H^*{\setminus}H^*_0,\,$ or \,(ii) $\,\alpha^*,\beta^*\in H^*{\setminus}H^*_0$ while $\gamma^*\in H^*_0$.

Now consider the semi-direct product $G=C_\ell\rtimes H^*$, where $\ell$ is a positive integer such that $\gcd(\ell,r) = \gcd(\ell,|H|)=1$, and where $H^*$ acts on $C_\ell$ in such a way that conjugation by those involutions in $\{\alpha^*,\beta^*, \gamma^*\}$ contained in $H^*_0$ centralise $C_\ell$ while conjugation by those contained in $H^*{\setminus}H^*_0$ invert every element of $C_\ell$.
%
Also let $z$ be a generator for $C_\ell$, and let $a$, $b$ and $c$ be elements of $G=C_\ell\rtimes H^*$ given by $(a,b,c)=(\alpha^*,\beta^*,z\gamma^*)$ in case (i), and $(a,b,c)=(z\alpha^*,z\beta^*,\gamma^*)$ in case (ii).

Clearly, $a$, $b$ and $c$ are involutions, and $a$ commutes with $c$, in both cases. Also in both cases $bc=z\beta^*\gamma^*$, and hence $bc$ has order $rn$, because $\beta^*\gamma^*$ inverts every element of $C_\ell$, and we have assumed that $n$ is even. On the other hand, in both cases $ab=\alpha^*\beta^*$ and has order $\ell rm$, because $\alpha^*\beta^*$ centralises $C_\ell$, and $\ell$ is relatively prime to $rm = \ord(\alpha^*\beta^*)$.
Note also that if $\delta^*=(\beta^*\gamma^*)^2$, which lies in $H^*_0$ and has order $rn/2$, then conjugation by $\delta^*$ centralises $C_\ell$. This implies that $(z\delta^*)^{rn/2}=z^{rn/2}\delta^{rn/2}=z^{rn/2}$, and then because $\ell$ is coprime to both $r$ and $|H|$ and hence also to $n$, it follows that $z^{rn/2}$ generates $C_\ell$, which in turn implies that $G=\langle a,b,c\rangle$.

Topologically, the natural projection $G=C_\ell\rtimes H^*\to H^*$ induces outside the branch points an $\ell$-sheeted covering of the non-orientable surface $S^*$ by the surface $\tilde S$ carrying the regular map with automorphism group isomorphic to $G$, and the assumption that $\ell$ is odd together with non-orientability of $S^*$ imply that also $\tilde S$ is non-orientable.

All of the above shows that $G$ is a $(2,\ell rm,rn)^*$-group, and that there exists an epimorphism
$G=C_\ell\rtimes H^* \to C_\ell\rtimes H$ given by $(a,b,c) \mapsto (\alpha, \beta, z\gamma)$ in case (i), and by $(a,b.c) \mapsto (z\alpha,z\beta,\gamma)$ in case (ii), with kernel $N$ in both cases.  Hence in particular, $N$ is a normal subgroup of $G$. But also $C_\ell$ is a normal subgroup of $G$, and then since $\gcd(|N|,\ell)=1$, we conclude that $C_\ell N\cong C_\ell\times N$ is normal in $G$ and hence
$G$ is an extension of $N\times C_\ell$ by $H$. Finally, the way in which $C_\ell\rtimes H^*$ was defined implies that $G/N\cong (H_0\times C_\ell).2$, which completes the proof.
\end{proof}

We now apply Proposition~\ref{p: jozef} to a situation where $H=\PGL(2,q)$ and $H_0=\PSL(2,q)$, with $q=p^e$ for an odd prime $p$, and $q\ge 5$. Indeed by Propositions 3.2, 4.6 and 6.1 of \cite{cps2}, we may extract the fact that for every hyperbolic type $\{m,n\}$ such that $n\in \{q-1,q+1\}$ and either $m=p$ or $m$ divides $(q\pm 1)/2$, the group $H=\PGL(2,q)$ can be given the structure of a $(2,m,n)^*$-group satisfying the assumptions of
Proposition~\ref{p: jozef}, and the following corollary builds on that.

\begin{cor}\label{c: jozef}
Let $q\ge 5$ be a power of an odd prime $p$, and let $r$ be any odd prime.
Also suppose $H\cong \PGL_2(q)$ is a $(2,m,n)^*$-group inducing a non-orientable regular map $M$ of hyperbolic type $\{m,n\}$ with $\Aut(M)\cong H$, where $n = q\pm 1$, and either $m=p$, or $m$ is odd and $2m$ divides $q\pm 1$. Then for every $\sigma\in \{0,1\}$ and every positive integer $\ell$ coprime to $r|H|$, there exists a $(2,\ell r^\sigma m,r^\sigma n)^*$-group $G$ containing a normal subgroup isomorphic to $N\times C_\ell$ for some $r$-group $N$ (trivial if $\sigma =0$), such that $G$ is an extension of $N\times C_\ell$ by $\PGL(2,q)$, and $G/N\cong (\PSL(2,q)\times C_\ell).2$.
\end{cor}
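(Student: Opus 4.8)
The plan is to read off the corollary from Proposition~\ref{p: jozef} with the choice $H=\PGL_2(q)$ and $H_0=\PSL_2(q)$, supplemented by a degenerate ($r$-free) version of the same construction to cover $\sigma=0$. The first thing I would do is verify that this $H$ meets the three standing hypotheses of Proposition~\ref{p: jozef}: that its Euler characteristic is odd, that $n=q\pm1$ is even (immediate, since $q$ is odd), and that $\PSL_2(q)$ is an index-$2$ subgroup in which the three generating involutions lie in one of the two admissible configurations relative to $H_0$. All three are supplied by the results of \cite{cps2} recalled in the paragraph before the statement: for each hyperbolic type with $n\in\{q-1,q+1\}$ and $m=p$ or $2m\mid q\pm1$, the group $\PGL_2(q)$ can be presented as a $(2,m,n)^*$-group with exactly the structure Proposition~\ref{p: jozef} requires. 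So this step reduces to citing \cite{cps2} rather than to any new argument.

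For $\sigma=1$ the conclusion is then immediate. Applying the second part of Proposition~\ref{p: jozef} to $H=\PGL_2(q)$ and $H_0=\PSL_2(q)$ yields, for each odd $\ell$ with $\gcd(\ell,r|H|)=1$, a $(2,r\ell m,rn)^*$-group $G$ that is an extension of $N\times C_\ell$ by $H$ with $N$ a nontrivial $r$-group, and with $G/N\cong(H_0\times C_\ell).2=(\PSL_2(q)\times C_\ell).2$. Reading $r\ell m=\ell r^\sigma m$ and $rn=r^\sigma n$ with $\sigma=1$, this is precisely the assertion of the corollary in that case.

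For $\sigma=0$ I would rerun the semidirect-product argument from the second half of the proof of Proposition~\ref{p: jozef}, but with $H$ itself in place of the branched $r$-cover $H^*$ --- informally, taking the cover degree to be $1$ and introducing no branching. Explicitly, I would form $G=C_\ell\rtimes H$, letting $H$ act on $C_\ell=\langle z\rangle$ through the quotient $H/H_0\cong C_2$, so that $\PSL_2(q)$ centralises $C_\ell$ while the outer coset inverts it, and then I would take the new generating triple by attaching a single factor $z$ to one of the two involutions whose product is $ab$, chosen so that it lies outside $H_0$ and hence inverts $z$ (this keeps it an involution) while the product $ab$ lands in $H_0$ and therefore centralises $z$. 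Because $\ell$ is coprime to $m$ and to $|H|$ and $n$ is even, the usual order computations give $\ord(ab)=\ell m$, $\ord(bc)=n$ and $\ord(ac)=2$, so $G$ is a $(2,\ell m,n)^*$-group; oddness of $\ell$ together with non-orientability of the base surface forces $G$ to define a non-orientable map. Here $N=1$, and since $\PSL_2(q)$ centralises $C_\ell$ the subgroup $\PSL_2(q)\times C_\ell$ has index $2$ in $G$, giving $G\cong(\PSL_2(q)\times C_\ell).2$ as required.

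The only genuinely delicate point I anticipate is the placement of the factor $z$: it must be attached to a generator that inverts $C_\ell$ and whose contribution survives in the $m$-rotation $ab$, rather than being ``absorbed'' by an inverting product as happens on the $n$-side; this is exactly what the admissible configuration of the generators relative to $H_0=\PSL_2(q)$ controls, and it is the same verification already carried out inside Proposition~\ref{p: jozef}. Everything else --- checking the involution relations, the orders, and the extension structure $G/N\cong(\PSL_2(q)\times C_\ell).2$ --- is routine.
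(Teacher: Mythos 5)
Your proposal follows the paper's proof almost exactly: both reduce the corollary to Proposition~\ref{p: jozef} applied with $H=\PGL_2(q)$ and $H_0=\PSL_2(q)$, citing \cite{cps2} (together with \cite[Proposition 6.1]{cps}) for the fact that the generating involutions sit in an admissible configuration relative to $\PSL_2(q)$; for $\sigma=1$ this is a direct application of the second part of that proposition, and for $\sigma=0$ both you and the paper simply rerun its semidirect-product construction with no branched covering, so that $N$ is trivial.

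One remark on your $\sigma=0$ details, which is where you go beyond the paper's one-line deferral. Your placement of the factor $z$ --- on exactly one of the two involutions whose product is the $m$-rotation, namely the one lying outside $H_0$ --- is in fact the correct implementation: then $ab=z\alpha\beta$ with $\alpha\beta\in H_0$ centralising $z$, so $\ord(ab)=\ell m$, and $(ab)^m=z^m$ recovers $C_\ell$ and yields generation. This is actually more careful than the formulas written inside the paper's own proof of Proposition~\ref{p: jozef}, where in its case (ii) the triple $(z\alpha^*,z\beta^*,\gamma^*)$ makes the two $z$-factors cancel, giving $ab=\alpha^*\beta^*$ of order only $rm$. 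However, your recipe presupposes the configuration $\alpha,\beta\in H\setminus H_0$, $\gamma\in H_0$. In the other admissible configuration ($\alpha,\beta\in H_0$, $\gamma\notin H_0$), every involution of $C_\ell\rtimes H$ lying over $\alpha$ or $\beta$ is $\alpha$ or $\beta$ itself (since $\alpha$ centralises $z$, the square of $z^i\alpha$ is $z^{2i}$, nontrivial for $z^i\ne 1$ as $\ell$ is odd), so no $z$-factor can be attached on the $m$-side, $ab=\alpha\beta$ has order only $m$, and the resulting triple generates a complement isomorphic to $H$ rather than all of $G$. So your argument, like the paper's, ultimately rests on Proposition~\ref{p: jozef} handling both configurations (or on knowing which configuration the generators of \cite{cps2} actually realise); that is the same level of rigor as the paper itself, but the ``delicate point'' you flagged is genuinely asymmetric between the two cases and is not resolved merely by citing the proposition.
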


Note that the two values of $\sigma$ in the statement of Corollary \ref{c: jozef} correspond to the relationship between the Fitting subgroup and the generalised Fitting subgroup of the resulting group $G$. In particular, $F^*(G)\ne F(G)$ when $\sigma=0$, but $F^*(G)=F(G)$ when $\sigma = 1$.

\begin{proof}
First, Propositions 3.2 and 4.6 of \cite{cps2} give an explicit representation of  $H\cong \PGL_2(q)$ as a quotient of the full triangle group $\Delta = \Delta(2,m,n)$, in which the three involutory generators $X_2,Y_2,Z_2$ (up to a $\pm$ sign) take the form of $2\times 2$ matrices  contained in the group $\PSL_2(q^2)$ and depend on primitive $2m$-th and $2n$-th roots of unity in $\mathbb{F}_{q^2}$.
One may check that these generators satisfy condition (C) in Proposition 4.6 of \cite{cps2}, which implies
that $X_2,\,Y_2,\,Z_2$ generate a subgroup of $\PSL_2(q^2)$ isomorphic to $\PGL_2(q)$,
and moreover, the product $Z_2X_2$, which has order $n=q\pm 1$, lies outside the unique copy of $\PSL_2(q)$ in $\PGL_2(q)$, while the product $Y_2Z_2$, which has odd order $m$, lies inside it. The fact that $\langle
X_2,\,Y_2,\,Z_2 \rangle$ is a $(2,m,n)^*$-group then follows from part (1) of \cite[Proposition 6.1]{cps}.

For $\sigma=1$, the required conclusion now follows by taking $(\alpha,\beta,\gamma) = (Y_2,Z_2,X_2)$ in Proposition \ref{p: jozef}, with $H=\langle \alpha, \beta, \gamma\rangle \cong \PGL_2(q)$ and $H_0\cong \PSL_2(q)$, as the conditions on containment of $\alpha$, $\beta$ and $\gamma$ in $H_0$ and $H{\setminus} H_0$ just reflect the two possibilities implied by the condition on containment of $Y_2Z_2$ (but not $Z_2X_2$) in the unique copy of $\PSL_2(q)$ in $\PGL_2(q)$.

For $\sigma=0$, the second part of the proof of Proposition \ref{p: jozef} works without building a covering with branch points of order $r$ at vertices and faces in the first part.
\end{proof}
\medskip

Unfortunately Proposition \ref{p: jozef} and Corollary \ref{c: jozef} do not guarantee that the Euler characteristic of the covering group is a power of a prime, in contrast to Corollary \ref{c: infinite family} where this was automatic if one started with a suitable group.

More specifically, a $(2,m,n)^*$-group arising in Corollary \ref{c: jozef} and occurring in one of rows B3 to B7 of Table \ref{t: b} will have Euler characteristic $-r^d$ if and only if the corresponding integer entry appearing in the column of Table \ref{t: b} marked $\chi_G$ is equal to $r^d$. In all five rows this requires a number-theoretic condition which may be far from obvious to penetrate; we will comment on each case in the application of Corollary \ref{c: jozef} below.

Note also that if Proposition \ref{p: jozef} and Corollary \ref{c: jozef} are applied to a particular group $H$ with $H/O(H)\cong \PSL_2(q)$ with characteristic $-\chi = g-2 = r^d$,  and the sum of the numbers of vertices and faces of the associated map is equal to $u$, then the proof of Proposition \ref{p: jozef} allows no other possibility than $r^{g-1+u}$ for the order of the normal $r$-subgroup $N$. The only potential flexibility (if any) lies in the choice of $\ell$ to satisfy the relevant number-theoretic condition.

Ideally, to apply Corollary \ref{c: jozef} one would like to start with $H=\PGL_2(q)$ being a $(2,m,n)^*$-group, with Euler characteristic $\chi = -r^d$, for suitable $m,n,q$ and $r$. But even then, the choice is severely limited, as we now demonstrate for possible types $\{m,n\}$ implied by cases (a) to (e) of Lemma \ref{l: cases} with $\ell=1$ and $|N|=1$.

\begin{lem}\label{lem:PGLcases}
Let $H = \PGL_2(q)$ be a $(2,m,n)^*$-group with $q = p^e \ge 5$, for which $\{m,n\}$ is one of $\{(q\pm 1)/2, q\mp 1\}$, $\{q\pm 1,q\mp 1\}$, or $\{p,p\pm 1\}$ with $p\ge 5$. Then the Euler characteristic of $H$ is equal to $-r^d$ for some prime $r\ge 3$ if and only if $d=1$ and one of the following cases occurs:
\begin{enumerate}
\item[($\alpha$)] $q=r=7$ and $\{m,n\}=\{3,8\}$;
\item[($\beta$)]  $q=r=5$ and $\{m,n\}=\{4,6\}$;
\item[($\gamma$)] $q=p=5$, $r=3$ and $\{m,n\}=\{4,5\}$.
\end{enumerate}
\end{lem}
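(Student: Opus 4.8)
The plan is to feed Euler's formula \eqref{e: sunny} with $|H|=|\PGL_2(q)|=q(q^2-1)$ directly into each of the five admissible shapes of $\{m,n\}$, to simplify $-\chi_H=\tfrac{|H|}{4mn}(mn-2m-2n)$ into a product of two coprime pieces, and then to decide in each case exactly when that product is a single odd prime power. A short calculation records the outputs:
\begin{align*}
\{m,n\}=\{\tfrac{q+1}{2},\,q-1\} &:\quad -\chi_H=\tfrac{q(q^2-6q+1)}{4}, \\
\{m,n\}=\{\tfrac{q-1}{2},\,q+1\} &:\quad -\chi_H=\tfrac{q(q^2-6q-3)}{4}, \\
\{m,n\}=\{q+1,\,q-1\} &:\quad -\chi_H=\tfrac{q(q^2-4q-1)}{4}, \\
\{m,n\}=\{p,\,p+1\} &:\quad -\chi_H=\tfrac{(p-1)(p^2-3p-2)}{4}, \\
\{m,n\}=\{p,\,p-1\} &:\quad -\chi_H=\tfrac{(p+1)(p^2-5p+2)}{4}.
\end{align*}

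For the first three shapes I would exploit that the displayed factor $q=p^e$ is coprime to the quadratic cofactor: that cofactor is congruent to $1$, $-3$, $-1$ modulo $p$ respectively, and $4$ is odd, so $(q^2-6q+1)/4$, $(q^2-6q-3)/4$, $(q^2-4q-1)/4$ is an integer coprime to $p$ (the sole exception $p=3$ in the second shape being dismissed by a direct check at the prime $3$, which forces $q^2-6q-15=0$, a quadratic with non-square discriminant). Since $-\chi_H=r^d$ is a prime power, its two coprime factors with $q>1$ force the cofactor to equal $1$. This reduces each shape to a quadratic: $q^2-6q-3=0$ (no integer root), $q^2-6q-7=0$ (root $q=7$, type $\{3,8\}$), and $q^2-4q-5=0$ (root $q=5$, type $\{4,6\}$). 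Each admissible root ($7$, respectively $5$) is prime, so $q=p=r$, $e=1$ and $d=1$; these are precisely cases ($\alpha$) and ($\beta$). (The first shape is in addition non-hyperbolic at $q=5$, so it contributes nothing.)

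The last two shapes are the crux, since now $p$ divides neither factor, so $r\neq p$ and there is no $p$-power to peel off. Here I would instead argue $2$-adically. Oddness of $-\chi_H=r^d$ forces the $2$-adic valuations of the two factors to sum to $2$; writing $p=2k+1$ and expanding shows this requires $p\equiv 3\pmod 4$ in the shape $\{p,p+1\}$ and $p\equiv 1\pmod 4$ in the shape $\{p,p-1\}$, and that then each factor has $2$-adic valuation exactly $1$. Put $p\mp 1=2u$ and the relevant quadratic $=2w$ with $u,w$ odd. The identities $p^2-3p-2=(p-1)(p-2)-4$ and $p^2-5p+2=(p+1)(p-6)+8$ show that the greatest common divisor of the two factors divides $4$ (respectively $8$), hence equals $2$, whence $\gcd(u,w)=1$; then $r^d=uw$ forces $\{u,w\}=\{1,r^d\}$. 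Since $u=(p\mp1)/2\ge 2$ for $p\ge 5$, we must have $w=1$, i.e. $p^2-3p-4=0$ (root $p=4$, not prime) or $p^2-5p=0$ (root $p=5$). Only $p=5$ survives, giving type $\{4,5\}$ with $u=3=r$ and $d=1$, namely case ($\gamma$).

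Collecting the three survivors and verifying the converse by substitution --- $\PGL_2(7)$ of type $\{3,8\}$ and $\PGL_2(5)$ of types $\{4,6\}$ and $\{4,5\}$ have $-\chi_H=7,5,3$ --- completes the equivalence, with $d=1$ throughout. The only genuinely delicate step is the last one: in the shapes $\{p,p\pm1\}$ there is no prime factor of $q$ to isolate, so the whole force of the argument rests on the exact $2$-adic valuations and on the coprimality of $u$ and $w$.
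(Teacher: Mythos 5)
Your handling of the first three shapes is correct and is essentially the paper's own argument: since $q>1$ divides $r^d$ you get $r=p$, the quadratic cofactor is coprime to $p$ (with the $p=3$ subcase of the second shape disposed of separately), hence equals $1$, and the resulting quadratics yield $q=7$ and $q=5$. Your treatment of the shapes $\{p,p\pm1\}$ \emph{when $q=p$} is also correct, and takes a genuinely different (and arguably cleaner) route than the paper: the paper notes that both integer factors $(p\mp1)/2$ and $(p^2-3p-2)/2$ (resp.\ $(p^2-5p+2)/2$) are powers of $r$, writes $p=2r^i\pm1$, and uses arithmetic mod $r$ after substitution to force the quadratic factor to be $1$; you reach the same conclusion via $\gcd(u,w)=1$ together with $u\ge 2$, avoiding the substitution entirely. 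Both routes end at the same quadratics and the same survivor $p=5$.

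There is, however, a genuine gap: your displayed formulas for the last two shapes, $-\chi_H=(p-1)(p^2-3p-2)/4$ and $-\chi_H=(p+1)(p^2-5p+2)/4$, are valid only when $e=1$, i.e.\ $q=p$. The hypothesis of the lemma allows $q=p^e\ge 5$ with $e>1$ while $\{m,n\}=\{p,p\pm1\}$ (the condition $p\ge 5$ constrains $p$, not $e$), and in that case
\[
-\chi_H \;=\; \frac{q(q^2-1)(p^2-3p-2)}{4p(p+1)} \qquad\text{or}\qquad \frac{q(q^2-1)(p^2-5p+2)}{4p(p-1)},
\]
which carries the extra factor $q/p=p^{e-1}>1$. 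In particular your assertion that ``$p$ divides neither factor, so $r\neq p$'' is false for $e>1$, and the whole $2$-adic argument, being built on the $e=1$ formulas, says nothing about this case. The paper spends a separate step ruling it out: $r^d$ is a product of the three integers $q/p$, $(q^2-1)/(2(p\pm1))$ and $(p^2-3p-2)/2$ (resp.\ $(p^2-5p+2)/2$), each of which must then be a power of $r$; since $q/p>1$ this forces $r=p$, and the quadratic factor, being $\equiv -1$ (resp.\ $\equiv +1$) $\pmod p$, must equal $1$, which leads to $p=4$ (not prime), resp.\ $p=5$, the latter then being killed because $(q^2-1)/8$ is coprime to $5$ yet would have to be a power of $5$ exceeding $1$. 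Only after this reduction to $e=1$ can your analysis of $\{p,p\pm1\}$ legitimately begin, so you need to add this (or an equivalent) step.
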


\begin{proof}
First, suppose $\{m,n\}=\{(q\pm 1)/2,q\mp 1\}$.  Then by Euler's formula \eqref{e: sunny} we find that $r^d = -\chi_H = q(q^2-1)(1/4 - 1/(q\pm 1) - 1/(2(q\mp 1)) =  q(q^2-6q+1)/4$ or $q(q^2-6q-3)/4$, and it follows that $q$ is a power of $r$, so $r = p$, but then $p$ cannot divide $q^2-6q+1$, and also cannot divide $q^2-6q-3$ unless $p =3$, in which case $(q^2-6q-3)/4$ cannot be a proper power of $3$, so $q^2-6q-3 = 4$.  This implies $q = 7$ and therefore $r = p = q = 7$ and $\{m,n\}=\{q-1)/2,q+1\} = \{3,8\}$.

For the second case, where $\{m,n\}=\{q\pm 1,q\mp 1\}$, Euler's formula gives $r^d= q(q^2-4q-1)/4$, so again $r = p$, but $(q^2-4q-1)/4$ cannot be a proper power of $3$, so $q^2-4q-1 = 4$ and therefore $q = 5$,
which implies that $r = p = q = 5$ and $\{m,n\}=\{5\pm 1,5\mp 1\}= \{4,6\}$.

Finally, consider the third case, where $\{m,n\}=\{p,p\pm 1\}$.  In this case Euler's formula gives
$r^d = q(q^2-1)(p^2 - 3p - 2)/(4p(p+1)$ or $q(q^2-1)(p^2 -5p + 2)/(4p(p-1)$.
Now if $e > 1$, then $r^d$ is the product of the integers $q/p$, $(q^2-1)/(2(p \pm 1))$
and either $(p^2 - 3p - 2)/2$ or $(p^2 - 5p + 2)/2$, all three of which must be a power of $r$,
so again $r = p$, but then it follows that $p^2 - 3p - 2 = 4$ or $p^2 - 5p + 2 = 4$,
both of which are impossible, so $e = 1$, and $q = p$.
It follows that  $r^d$ is the product of $(p \mp 1)/2$ and either $(p^2 - 3p - 2)/2$ or $(p^2 - 5p + 2)/2$,
and so $p = 2r^i \pm 1$ for some $i$, and correspondingly, either $(p^2 - 3p - 2)/2$ or $(p^2 - 5p + 2)/2$ is a power of $r$. In the former case, $(p^2 - 3p - 2)/2 = (4r^{2i} + 4r^i +1 - 6r^i - 3 - 2)/2 = 2r^{2i} - r^i - 2$,
which is not divisible by $r$ and hence must be $1$, but then $0 = 2(r^{i})^2 - r^i - 3 = (2r^i -3)(r^i+1)$,
which is impossible.
In the other case,  $(p^2 - 5p \pm 2)/2 = (4r^{2i} - 4r^i +1 - 10r^i + 5 + 2)/2 = 2r^{2i} - 7r^i + 4$, and this must be~$1$, so $0 = 2(r^{i})^2 - 7r^i + 3 = (2r^i -1)(r^i-3)$, which implies that $r^i = 3$ and hence that $i = 1$ and $r = 3$ and $p = 2r^i-1 = 5$, with $\{m,n\}=\{p,p-1\}= \{4,5\}$.
\end{proof}

\medskip

Case ($\beta$) of Lemma \ref{lem:PGLcases} was addressed in Proposition \ref{p:psl-sum}, and features in
row B1 of Table~\ref{t: b}. We will continue by presenting infinite sets of examples for each of the five cases appearing in rows B3 to B7 of Table \ref{t: b}. Recall that infinite families of examples for rows B3 and B7 in the case $\ell=1$ were given in Proposition \ref{p:psl-sum}.
\medskip

\noindent{\bf Row B3 of Table~\ref{t: b}.} Up to duality, and as in case ($\alpha$) of Lemma \ref{lem:PGLcases} for $q=r=7$, one may start with two inequivalent ways of representing $H=\PGL_2(7)$ as a $(2,3,8)^*$-group, giving rise to a pair of non-isomorphic non-orientable regular maps of type $\{3,8\}$ on a surface with $-\chi= 7$.

Taking $\sigma = 0$ in Corollary \ref{c: jozef} then supplies an infinite number of further examples of $(2,3\ell,8)^*$-groups with negative Euler characteristic a power of $r = 7$ (with $N=1$), with $\ell$ chosen so that $\gcd(\ell,2{\cdot}3{\cdot}7) = \gcd(\ell,42) = 1$, namely as follows. The equation for the characteristic in row B3 with $N=1$ simplifies to $7^d = -\chi = 7(9\ell-8)$, from which we find that $\ell = (7^{d-1}+8)/9$. If the latter rational expression for $\ell$ is an integer, then $\ell$ is odd and coprime to $7$, and also it is easy to see that it is relatively prime to $3$ if and only if $d\equiv 1$ or $7$ mod $9$. Hence for every such positive integer $d$ (and the implied values of $\ell$, including $1$ as covered in Proposition \ref{p:psl-sum}), we find a pair of examples of $(2,3\ell,8)^*$-groups with Euler characteristic $-7^d$ for $s=0$, with $G\cong (\PSL_2(7)\times C_\ell).2$.

Similarly, taking  $\sigma=1$ in Corollary \ref{c: jozef} supplies examples from noting that $|H|/4=84$ when $H=\PGL_2(7)$, so that in Proposition \ref{p: jozef} we may take $N\cong (C_7)^{85}$. By the Euler formula for the $(2,21\ell,56)^*$-group from row B3 of Table~\ref{t: b} with $s = 1$, we obtain $7^d = 7^{85}(84\ell-3\ell-8)= 7^{85}(81\ell-8)$, which requires $\ell$ to be a positive integer coprime to $42$ such that $81\ell-8=7^j$ for some $j$. Hence we need $\ell = (7^j+8)/81$ to be an integer coprime to $3$, and an easy exercise shows that $j$ must be $12$ or $39$ mod $81$ (with $\ell \equiv 2$ or $1$ mod $3$, respectively). Each of these two possibilities gives an infinite family of $(2,21\ell,56)^*$-groups $G$ with $-\chi_G = 3^d$ for $d=85+j$, with $G\cong N{\cdot}(\PSL_2(7)\times C_\ell).2$. 
\medskip

\noindent{\bf Row B4 of Table~\ref{t: b}.} Here Lemma \ref{lem:PGLcases} shows that it is not possible to start with $\PGL_2(9)$ in its representation as a $(2,5,8)^*$-group (with $|N|=1$ and $\ell=1$), and in fact Corollary \ref{c: jozef} cannot be applied with $|N|=1$ for any $\ell>1$.
Perhaps surprisingly, however, examples can be built by taking $\sigma=1$, $p=3$ and $r=3$ in Corollary
\ref{c: jozef}, and choosing an appropriate $3$-group for $N$. A regular map of type $\{5,8\}$ coming from $H\cong \PGL_2(9)$ as a $(2,5,8)^*$-group has $720/4= 180$ edges, and by Proposition \ref{p: jozef} one can take $N\cong (C_3)^{181}$. Substituting these values into the expression for $-\chi_G$ in the right-most column of row B4 of Table ~\ref{t: b} with $s = 1$ gives $3^d = 3^{182}(55\ell -8)$, which implies that $55\ell-8 = 3^j$ for some $j$, with $\ell = (3^j+8)/55$ coprime to $5$. This time an easy exercise shows that $j$ must be $11$ mod $20$ (with $\ell \equiv 11$ mod $30$). Every such $j$ gives a $(2,15\ell,24)^*$-group $G$ with $-\chi_G = 3^d$ for $d=182+j$, with $G\cong N{\cdot}(\PSL_2(9)\times C_\ell).2$. 
There are also some smaller examples than those in this infinite family, and one that results from a {\sc Magma}
computation is a $(2,15{\cdot}3221,24)^*$-group $G$ with a normal elementary abelian $3$-subgroup $N$ of order $3^6$, such that $G/N\cong (\PSL_2(9)\times C_{3221}).2$, with $-\chi_G =3^{18}$.
\medskip

\noindent{\bf Row B5 of Table~\ref{t: b}.} Here Lemma \ref{lem:PGLcases} indicates that it is not possible to start with $\PGL_2(7)$ as a $(2,7,8)^*$-group for $p=7$ and $r=3$ when $\sigma=0$ (and $|N|=1$ and $\ell=1$), and again in this case Corollary \ref{c: jozef} does not help when $|N|=1$ for any $\ell>1$.
But one can take $\sigma=1$ instead, using the regular map of type $\{7,8\}$ coming from $H\cong \PGL_2(7)$ as a as a $(2,7,8)^*$-group, which has $336/4 = 84$ edges, and then take $r=(p-1)/2= 3$ and $N$ as the $3$-group from Proposition \ref{p: jozef}, namely with $N\cong (C_3)^{85}$.
Then the expression for $-\chi_G$ in the right-most column of row B5 of Table ~\ref{t: b} with $s = 1$ gives
$r^d = 3^{85}(77\ell -8)$, and by Corollary \ref{c: jozef} one obtains a $(2,21\ell,24)^*$-group with Euler characteristic $3^d$ whenever $\ell = (3^j+8)/77$ is relatively prime to $7$.
Another exercise shows that $j\equiv 21$ mod $30$ but $j\not\equiv 141$ mod $210$, and then these possible values of $j$ give an infinite family of $(2,21\ell,24)^*$-groups $G$ with $-\chi_G = 3^d$ for $d=85+j$, with $G\cong N{\cdot}(\PSL_2(7)\times C_\ell).2$. 
%
\medskip

\noindent{\bf Row B6 of Table~\ref{t: b}.}  We start using case ($\gamma$) of Lemma \ref{lem:PGLcases} for $N=1$, taking the $(2,5,4)^*$-group $\PGL_2(5)$ with Euler characteristic $-3$, for which Corollary \ref{c: jozef} with $\sigma=0$ gives us an infinite supply of examples. Here the condition that $\chi_G = r^d$ from \eqref{eq:case(e)} with $p=5$, $r=3$, $|N|=1$ and $m_1=1$ simplifies to $3^d= 3(5\ell-4)$, so that  $\ell = (3^j+4)/5$ with $j=d-1$ and $\gcd(\ell,30)=1$.  This happens if and only if $j \equiv 4$ mod $5$ but $j \not\equiv 16$ mod $20$, and gives an infinite family of $(2,5\ell,4)^*$-groups $G$ with $-\chi_G = 3^d$ for $d=j+1$ for such $j$, and $\ell$ as stated, with $G\cong (\PSL_2(5)\times C_\ell).2$.

\medskip

\noindent{\bf Row B7 of Table~\ref{t: b}.} Constructing examples with $|N| > 1$ and $\sigma=1$ from the $(2,5,4)^*$-group $H=\PGL_2(5)$ requires determining a suitable $3$-group $N$, and from Proposition \ref{p: jozef} with $1+|H|/4=31$, we can take $N\cong (C_3)^{31}$.
Then the expression for $-\chi_G$ in the right-most column of row B6 of Table ~\ref{t: b} with $r = 3$ and $s = 1$ gives $3^d = 3^{31}(25\ell-4)$, so we need $25\ell-4=3^j$, with $\ell = (3^j+4)/25$ being coprime to $5$. This happens if and only if $j \equiv 16$ mod $20$ but $j \not\equiv 36$ mod $100$, and gives an infinite family of $(2,15\ell,12)^*$-groups $G$ with $-\chi_G = 3^d$ for $d=31+j$ for such $j$ and $\ell$, with $G\cong N{\cdot}(\PSL_2(5)\times C_\ell).2$.

\medskip

Having now produced an infinite family for each of the cases in Tables~\ref{t: a} and \ref{t: b}, and earlier for the cases C1, C3, C5 and C6 from Table~\ref{t: c} in Proposition \ref{p:psl-sum}, we deal with C2, C4 and C7.
\medskip

\noindent{\bf Row C2 of Table~\ref{t: c}.}  Let $H=H_1(\ell)$ be the $(2,2,\ell)^*$-group isomorphic to the dihedral group $D_\ell$ of order $2\ell$ coming from part(a) of Theorem \ref{t: ass} for $\ell = 3^i+1$ (where $i\ge 1$). The underlying graph of the corresponding regular map is a cycle of length $\ell/2$ embedded in the real projective plane, with Euler characteristic $\chi=1$, and having a single face.  Hence for the parameters appearing in the first part Proposition \ref{p: jozef}, we find that $u=1+\ell/2$ and $k=1$. Letting $r=3$ as implied by Lemma \ref{l: soluble dihedral}, we find that the $3$-group $N$ from Proposition \ref{p: jozef} is isomorphic to $(C_3)^{k-1+u} = (C_3)^{1+\ell/2}$. This gives a regular map of type $\{6,3\ell\}$ coming from a $(2,6,3\ell)^*$-group isomorphic to $N{\cdot}D_\ell$, with Euler characteristic $3^{i-1}|O| = 3^{i+\ell/2}$, for every $i\ge 1$.
\medskip

\noindent{\bf Row C4 of Table~\ref{t: c}.} Here we produce an infinite number of examples with parameters as in row C4 with $r=3$ and $\alpha=\beta=1$. The entry in the column headed  $-\chi_G$ ($=r^d$) gives $rjk-j-k = r^i$, or, equivalently, $(rj-1)(rk-1)=r^{i+1}+1$, for some $i$, which suggests considering factorisations of $r^{i+1}+1$ into two factors, each congruent to $-1$ mod $r$.
For $r=3$ this is manageable for $j=5$, when the condition requires $14k-5 = 3^i$ and so $14(3k-1)=3^{i+1}+1$.  Taking $i \equiv 2$ mod~$6$ makes
$3^{i+1}+1 = (3^3+1)(3^{i-2}-3^{i-5}+3^{i-8}-\dots+3^6-3^3+1) = 28f$ with $2f \equiv 2 \equiv -1$ mod~$3$ and hence $3^{i+1}+1 = 14 \cdot 2f = (3j-1)(3k-1)$ where $j = 5$ and $k = (2f+1)/3$, which is odd. Then also $3^i = (28f-1)/3 = (14(2f+1)-15)/3 =14k-5$, implying that $5\nmid k$, and hence that $j$ and $k$ are relatively prime.
Now part (b) of Theorem \ref{t: ass} implies that there is a $(2,10,2k)^*$-group $H=H_2(5,k)$ of order $4{\cdot}5{\cdot}k = 20k$, and application the first part of Proposition \ref{p: jozef} to this group (with $1+ |H|/4 = 1+5k$) for the prime $r=3$ gives an infinite family of $(2,30,6k)^*$-groups of the form $N{\cdot}H$ with $N\cong (C_3)^{1+5k}$, one for each $k$ as above.

Similar examples can be generated using other ways of writing $3^{i+1}+1 = (3j-1)(3k-1)$, such as
$(j,k) = (13,173)$ or $(25,89)$ when $i=8$. A more striking example was found from a {\sc Magma} computation for $\alpha=\beta=3$ and $(j,k)=(5,49)$, with $rjk-j-k = 3^{3}jk-j-k=3^8$, as follows.
The regular map of type $\{10,98\}$ given by part (b) of Theorem \ref{t: ass} and marked N193.3 in the list at \cite{C600} has  automorphism group $H_2(5,49)\cong D_5\times D_{49}$, and this can be considered as a non-smooth quotient $\Delta/K$ of the full $(2,270,2646)$-triangle group $\Delta=\langle A,B,C\rangle$, with $AB$ of order $3^3{\cdot}10=270$ and $BC$ of order $3^3{\cdot}98 = 2646$. The kernel $K$ is generated by $246$ elements, and contains $(AB)^{10}$ and $(BC)^{98}$, each which has order 27 in $\Delta$, and its abelianisation is a rank 246 abelian group isomorphic to $(\mathbb{Z}_{27})^{53} \oplus \mathbb{Z}_{54} \oplus \mathbb{Z}^{192}$. Hence a variant of the Macbeath trick
can be used to obtain a smooth quotient $G$ of the full $(2,270,2646)$-triangle group, of order $980{\cdot}27^{246}$, and this gives a non-orientable map of the required type, with characteristic $-3^{743}$.
\medskip

\noindent{\bf Row C7 of Table~\ref{t: c}.} Here we apply the first part of Proposition \ref{p: jozef} to a map from row C7 with $\alpha=\beta=1$, using a suitable $\ell\equiv 3$ mod $6$ and a prime $r\equiv 5$ mod $6$ such that $2\ell r-4-\ell = r^j$ for some~$j$.  In particular, taking $r = 5$, we have $5^j + 4 = 9\ell \equiv 27$ mod $54$ and hence $j \equiv 13$ mod $18$.
Now for every integer $i\ge 0$ we can take $j = 18i + 13$ and $\ell_i = (5^j+4)/9 \equiv 3$ mod $6$, and then by part (c) of Theorem \ref{t: ass}, the group $H = H_3(\ell_i)$ of order $8\ell_i$ admits a presentation as a $(2,4,\ell_i)^*$-group, and since $1 + |H|/4 = 1 + 2\ell_i$, the first part of Proposition \ref{p: jozef} applied with $r=5$ gives a $(2,20, 5\ell_i)^*$-group $G$ of the form $N{\cdot}H$, where $N\cong (C_5)^{1+2\ell_i}$, with characteristic $-5^{13+2\ell}$.
\bigskip

We can now prove Corollary \ref{c: main}, which gives a classification of all $(2,m,n)^*$-groups $G$
(equivalently, of all non-orientable regular maps) with characteristic $\chi_G=-r^d$, where $r$ is an odd prime and $d \in \{1,2,3,4\}$.
\bigskip

{\bf Proof of Corollary \ref{c: main}}. We divide the proof into the same three cases (A), (B) and (C) that were used in Theorem \ref{t: main}, namely depending on whether $G/O(G)$ is isomorphic to $\PSL_2(q)$ or $\PGL_2(q)$ for some odd prime-power $q\ge 5$, or $G/O(G)$ is soluble.
\medskip

{\sl Case {\rm (A)}: $G/O(G)\cong \PSL_2(q)$.} Part (A) of Theorem \ref{t: main} implies that the pair $(q,r)$ must be  $(5,3)$, $(13,7)$ or $(13,13)$. A search using {\sc Magma} reveals that there are only four such groups $G$ with Euler characteristic $\chi_G \ge -r^4$ for $r\in \{3,7,13\}$, namely, a $(2,5,5)^*$-group $G\cong \PSL_2(5)$ with $\chi_G=-3$, a $(2,3,7)^*$-group $G\cong \PSL_2(13)$ with $\chi_G=-13$, a $(2,3,13)^*$-group $G\cong \PSL_2(13)$ with $\chi_G=-7^2$, and a $(2,3,7)^*$-group $G\cong N{\cdot}\PSL_2(13)$ with $\chi_G=-13^4$ in which $N$ is an elementary abelian group of order $13^3$. These give the first four rows of the table in Corollary \ref{c: main}.
\medskip

{\sl Case {\rm (B)}: $G/O(G)\cong \PGL_2(q)$.}  Part (B) of Theorem \ref{t: main} implies that the pair $(q,r)$ is $(5,5)$, $(7,7)$ or $(9,3)$ when $d\le 4$. A search using {\sc Magma} shows that the first pair gives a $(2,4,6)^*$-group $G\cong \PGL_2(5)$ with $\chi_G= -5$, and a $(2,4,6)^*$-group $G\cong N{\cdot}\PGL_2(5)$ with $\chi_G= -5^4$ for which $N$ an elementary abelian group of order $5^3$, both coming from case B1.  Similarly the second pair gives a $(2,3,8)^*$-group $G\cong \PGL_2(7)$ with $\chi_G= -7$, and a $(2,3,8)^*$-group $G\cong N{\cdot}\PGL_2(7)$ with $\chi_G= -7^4$ in which $N$ is an elementary abelian group of order $7^3$, both coming from case B3.  The third pair $(q,r)=(9,3)$ gives no suitable group in the case B4 with $d\le 4$ at all. Also there are no suitable groups with $d\le 4$ in the case B5 for $p>7$ by Lemma \ref{l: case d}, and for the remaining possibility where $p=7$ and $r=3$ there is also no group with $d\le 4$. By Lemma \ref{l: case e} there is only one possibility for the case B6 with $d\le 4$, namely, a $(2,4,5)^*$-group $G\cong \PGL_2(5)$ with $\chi_G=-3$. Finally, for case B7  there are no suitable groups with $d\le 4$ for $p>5$ by Lemma \ref{l: case e}, and the remaining case where $p=5$ and $r=3$ also gives no group. This implies validity of the five rows of the table associated with Corollary \ref{c: main} for part (B). 
\medskip

{\sl Case {\rm (C)}: $G$ soluble, $N$ a normal $r$-subgroup of $G$ with $G/N$ almost Sylow-cyclic.} The data in
rows C1 and C2 of Table~\ref{t: c} come from Lemma \ref{l: soluble dihedral} (with $r=3$), and by the list at \cite{C600}
there are exactly $13$ groups $G$ satisfying the conditions C1 or C2 with $\chi_G=-3^d$ for some $d\le 4$, namely three $(2,4,6)^*$-groups $G$ with $-\chi_G$ equal to $3$, $3^2$ and $3^4$, plus four $(2,6,6)^*$-groups $G$ with $-\chi_G$ equal to $3$, $3^2$, $3^3$ and $3^4$, plus four $(2,6,12)^*$-groups $G$, three with  $-\chi_G = 3^3$ and one with $-\chi_G=3^4$, and two $(2,6,30)^*$-groups $G$ with $-\chi_G$ equal to $3^3$ and $3^4$. These are marked either C1 or C2 (or both) in the left-most column of the table accompanying Corollary \ref{c: main}, with those of type $\{6,6\}$ and $\{6,30\}$ satisfying also the requirements for the case C4.

Groups $G$ with parameters described in rows C3 and C4 of Table~\ref{t: c} were dealt with in Lemma
\ref{l: soluble G2lk}. In the case of C3, if the largest normal $r$-subgroup $N$ of $G$ is trivial, then a $(2,2j,2k)^*$-group $G\cong D_j\times D_k$ exists if and only if $j,k$ are relatively prime odd positive integers such that $(j-1)(k-1)=r^d+1$, for odd $d\ge 1$, with $r\equiv 3$ mod $4$.
In the case of C4, also in Lemma \ref{l: soluble G2lk} it was shown that when $\alpha\ge 1$ we have $d\ge 5$ unless $r=3$.  Again, checking the list in \cite{C600} for maps with $r=3$ and $d\le 4$ gives four $(2,6,6)^*$-groups with characteristic $-3^d$ for $d = 1$ to $4$, and two $(2,6,30)^*$-groups with characteristic $-3^d$ for $d\in \{3,4\}$, overlapping with C1 or C2 (as noted above). It follows that the only $(2,m,n)^*$-groups $G$ with $\chi_G=-r^d$ for $d\le 4$ satisfying the conditions in rows C3 and C4 of Table~\ref{t: c} are those described in part (a) of Corollary \ref{c: main} plus the six groups of type $\{6,6\}$ and $\{6,30\}$ marked $C2,4$ in the Table associated with Corollary \ref{c: main}.

It remains for us to consider $(2,m,n)^*$-groups $G$ associated with rows C5 to C7 of Table~\ref{t: c}. These were handled in Lemma \ref{l: soluble G3k}. For row C5 the $(2,4,\ell)^*$-groups $G$ with $-\chi_G= r^d$ for $d\le 4$ are exactly those for which $d$ is odd, $r\equiv 5$ mod $6$, and $\ell$ is given by $\ell=4+r^d \equiv 3$ mod $4$, as described in part (b) of Corollary \ref{c: main}. Indeed for every such pair $(r,\ell)$ there exists a $(2,4,\ell)^*$-group $G\cong (C_2\times C_2)\rtimes D_\ell$, with order $8\ell$.
Also by Lemma \ref{l: soluble G3k} and its proof, there is only one exceptional group $G$ satisfying the required conditions for which $d\le 4$, namely a unique $(2,3,12)^*$-group $G$ satisfying C6, which is an extension of a $3$-group $N$ of order $27$ by $(C_2\times C_2)\rtimes D_3$ with $-\chi_G=3^3$.

Hence in the statement of Corollary \ref{c: main}, we have covered cases (a) and (b), as well as case (c) listing $22$ groups (and $23$ maps) in the accompanying Table, and so completing the proof. \hfill $\Box$
\bigskip


\section{Remarks on what else might be possible}\label{s:rem}


Corollary~\ref{c: main} is really only a `sample' theorem, with the bound $d\leq 4$ chosen for convenience. In
principle this bound can be altered as much as we want, subject only to the constraint imposed by computing power, and the complications that arise in the soluble case as $d$ increases. One of the implications of our main Theorem \ref{t: main} is that for every odd $d\ge 1$ there exist infinitely many primes $r$ for which there exists a non-orientable regular map of Euler characteristic $-r^d$. From this point of view it would be of interest to extend our classification to $d\leq 10$, if only to see if finiteness encountered for $d=2$ and $d=4$ extends to larger even values of $d$.

Our basic theoretic approach can also be generalised in several ways. For example, rather than considering
$(2,m,n)^*$-groups with $\chi_G$ odd, one could consider automorphism groups $G$ of general regular maps (whether orientable or not) with $\chi_{G}\equiv 2$ mod $4$. In this direction, the first thing one would need is an analogue of Proposition~\ref{p: odd order}. Such an analogue can be extracted from available knowledge in group theory, but does not appear explicitly in the literature. Then the approach taken in the rest of this paper could be followed, for groups $G$ with $\chi_G = -2\hskip 1pt r^d$. One would encounter a number of interesting extra cases in this setting, including some with $G/O(G)\cong \PSL_3(3)$ or $M_{11}$.

In another direction, one could retain the focus on $(2,m,n)^*$-groups, but weaken the restriction on the Euler
characteristic to the case where $\chi_G=-r^as^b$ for odd primes $r$ and $s$. This, however, would be likely to
introduce significant technical problems, for instance in analysing the possibilities for $\overline{m}$ and
$\overline{n}$ when $G/O(G)=\PSL_2(q)$. In principle, our approach should remain valid in this case, and it is tempting to conjecture that something like our main theorem should hold in
considerably more generality.
\bigskip

\section*{Acknowledgments}

The first author acknowledges the use of the {\sc Magma} system \cite{magma} to find and check examples
and perform other computational experiments and searches in the course of the research for this paper,
and also acknowledges support from New Zealand's Marsden Fund (grant no.\ UOA2320). The third author acknowledges support from APVV Research Grants 22-0005 and 23-0076, and from VEGA Research Grants 1/0567/22 and 1/0069/23.

\bibliographystyle{amsalpha}


\end{document}